\newtheorem{theorem}{Theorem}[subsection]
\newtheorem*{theorem*}{Theorem}
\newtheorem{lemma}[theorem]{Lemma}
\newtheorem*{lemma*}{Lemma}
\newtheorem{prop}[theorem]{Proposition}
\newtheorem{corollary}[theorem]{Corollary}
\newtheorem{definition}[theorem]{Definition}
\newtheorem{claim}[theorem]{Claim}
\newtheorem*{claim*}{Claim}
\newtheorem{example}[theorem]{Example}
\newtheorem{conjecture}[theorem]{Conjecture}
\newtheorem{remark}[theorem]{Remark}
\crefname{claim}{claim}{claims}
\crefname{prop}{proposition}{propositions}
\crefname{prop}{proposition}{propositions}
\newcommand{\ZZ}{\mathbb{Z}}
\newcommand{\CC}{\mathbb{C}}
\newcommand{\RR}{\mathbb{R}}
\newcommand{\NN}{\mathbb{N}}
\newcommand{\into}{\hookrightarrow}
\newcommand{\tensor}{\otimes}
\newcommand{\li}{i}
\newcommand{\lj}{j}
\newcommand{\uu}[1]{\underline{\underline{#1}}}
\DeclareMathOperator{\Crit}{Crit}
\DeclareMathOperator{\ev}{ev}
\DeclareMathOperator{\ind}{ind}
\DeclareMathOperator{\id}{id}
\DeclareMathOperator{\Area}{Area}
\DeclareMathOperator{\grad}{grad}
\newcommand{\CP}{\mathbb{CP}}
\DeclareMathOperator{\val}{val}
\DeclareMathOperator{\Flux}{Flux}
\newcommand{\CF}{{CF^\bullet}}
\newcommand{\CM}{{CM^\bullet}}
\newcommand{\HF}{{HF^\bullet}} 
\numberwithin{equation}{section}
\begin{document}

\title{Lagrangian Cobordisms and Lagrangian Surgery}
\author{Jeff Hicks}
\begin{abstract}
    
Lagrangian $k$-surgery modifies an immersed Lagrangian submanifold by topological $k$-surgery while removing a self-intersection.
Associated to a $k$-surgery is a Lagrangian surgery trace cobordism. 
We prove that every Lagrangian cobordism is exactly homotopic to a concatenation of suspension cobordisms and Lagrangian surgery traces. This exact homotopy can be chosen with as small Hofer norm as desired.
Furthermore, we show that each Lagrangian surgery trace bounds a holomorphic teardrop pairing the Morse cochain associated with the handle attachment to the Floer cochain generated by the self-intersection. 
We give a sample computation for how these decompositions can be used to algorithmically construct bounding cochains for Lagrangian submanifolds.
In an appendix, we describe a 2-ended embedded monotone Lagrangian cobordism which is not the suspension of a Hamiltonian isotopy following a suggestion of Abouzaid and Auroux.
 \end{abstract}
\maketitle
\setcounter{tocdepth}{1}
\tableofcontents
\section{Introduction}
\subsection{Overview: Lagrangian Cobordisms}
A Lagrangian cobordism $K:L^+\rightsquigarrow L^-$ is a Lagrangian submanifold $K\subset X\times \CC$ whose projection to $\CC$ fibers over the real axis outside of a compact set. This determines a set of ``ends'' for the Lagrangian cobordism given by Lagrangians $L^+, L^-\subset X$ (see \cref{def:cobordism}).
\begin{figure}
    \centering
    \begin{tikzpicture}

\begin{scope}[shift={(6,0)}]

\fill[gray!20]  (-4.5,4) rectangle (1.5,2);
\draw[thick] (1.5,3.5) .. controls (1,3.5) and (0,3.5) .. (-0.5,3) .. controls (-1,2.5) and (-3,2.5) .. (-3,3) .. controls (-3,3.5) and (-1,3.5) .. (-0.5,3) .. controls (0,2.5) and (1,2.5) .. (1.5,2.5);

\end{scope}

\draw[pattern=north west lines, pattern color=orange] (5.5,3) .. controls (5,2.5) and (3,2.5) .. (3,3) .. controls (3,3.5) and (5,3.5) .. (5.5,3);
\node[above] at (5.5,3.4) {$(q_-\to q_+)$};
\node[left] at (3,3) {$e$};
\node (v1) at (3,1) {$e$};
\node (v2) at (5.5,0) {$(q_-\to q_+)$};
\draw  (v1) edge[->] node[fill=white]{$T^A$} (v2);
\node at (2.5,3.5) {$\mathbb C$};
\node[left] at (2.5,1) {$CF^0(K^{0,1})$};
\node[left] at (2.5,0) {$CF^1(K^{0,1})$};
\node[fill, circle, scale=.3] at (3,3) {};
\end{tikzpicture}     \caption{An immersed null cobordism $K^{0, 1}\subset \CC^0\times \CC$ of the $0$-dimensional Whitney sphere $S^0\subset \CC^0$. 
    The patterned region represents a holomorphic teardrop pairing Morse cochain with self-intersection.}
    \label{fig:basic}
\end{figure}
These Lagrangian submanifolds were first discussed by Arnold in \cite{arnol1980lagrange}. 
Lagrangian cobordisms give an equivalence relation on Lagrangian submanifolds which is coarser than Hamiltonian isotopy (in the sense that if $L^+, L^-$ are Hamiltonian isotopic, they are also Lagrangian cobordant, with $K$ having topology $L^+\times \RR$). 

The largest collection of constructions for Lagrangian cobordisms come from \cite{haug2015lagrangian}, which introduced a $k$-surgery operation on Lagrangian submanifolds and an associated $k$-trace cobordism for $0 \leq k \leq n-1$, where $n=\dim(X)/2$.
When $k=0$, this is the Polterovich surgery introduced in \cite{polterovich1991surgery}.
Crucially, Haug provides a geometric meaning to the surgery model described by \cite{audin1994symplectic}, which writes down a $k$-trace cobordism for $0\leq k \leq n$. 
When $k=n$, the cobordism constructed in \cite{audin1994symplectic} is a null-cobordism for the Whitney sphere. 
For the simplest example, $k=n=0$, the Whitney sphere is an immersion 
\[\li^{0,0,+}:S^0=\{q^+, q^-\}\to  \CC^0=\{\text{pt}\},\]
and the Polterovich surgery trace/Whitney null-cobordism is the curve drawn in \cref{fig:basic}.
  
These surgery traces have some nice properties:
\begin{itemize}
    \item As manifolds, the ends $L^-, L^+$ of a Lagrangian surgery trace cobordism $K^{k, n-k+1}: L^+\rightsquigarrow L^-$ differ by topological surgery. The ``height'' on the Lagrangian trace cobordism given by projection to the real coordinate $\pi_\RR: K^{k, n-k+1}\to \RR$ has a single critical point of index $n-k$. 
    \item As an embedding, $L^-$ has one fewer self-intersection than $L^+$. If $K^{k, n-k+1}$ is graded, one Floer generator associated with this self-intersection lives in degree $n-k-1$. 
\end{itemize}

\subsection{Overview: Floer Cohomology and surgery}

The Floer complex of an embedded Lagrangian submanifold $\CF(L)$ is a deformation of the cochain group $\CM(L)$ by incorporating the counts of holomorphic disks with boundary on $L$ into the differential and product structures of $\CM(L)$. 
The underlying cochain group $\CM(L)$ can be singular cochains, Morse cochains, or differential forms.
The deformation enhances $\CF(L)$ with a filtered $A_\infty$ structure counting configurations of holomorphic polygons with boundary on $L$.
The algebra structure of $\CF(L)$ contains an abundance of data about the Lagrangian $L$.
Most importantly, when $\CF(L)$ is a tautologically unobstructed or weakly unobstructed-$A_\infty$ algebra it has homology groups $HF^\bullet(L)$ which are invariant under Hamiltonian isotopy. 
Similarly, whenever  $ L^+, L^-$ embedded monotone Lagrangian cobordant, Biran and Cornea \cite{biran2013lagrangian} show that  $\CF(L^+)$ and $\CF(L^-)$ are homotopy equivalent.

For monotone Lagrangian cobordisms with multiple ends, there is a known relation between Polterovich surgery, Lagrangian cobordisms, and exact sequences of Floer cohomology.
By using a neck stretching argument to compare holomorphic polygons on the summands of the connect sum to holomorphic polygons with boundary on the surgery,  \cite{fukaya2007lagrangian} shows that when Lagrangians $L^+=L^0\cup L^1$ and $L^-$ are related by Polterovich surgery that $L^-$ arises as a mapping cone on $L^0\to L^1$ in the Fukaya category. 
Because there exists a surgery trace cobordism between $L^+$ and $L^-$, we can obtain the same result from \cite{biran2013lagrangian} without appealing to neck-stretching.
Related work of \cite{nadler2020stable,tanaka2018surgery} shows that a surgery exact triangle in the Fukaya category arises from the Lagrangian Polterovich surgery trace.
In  the immersed setting, \cite{palmer2019invariance}, \citeauthor{palmer2019invariance} showed that Polterovich surgery leaves Floer cohomology invariant upon incorporating a bounding cochain.

\subsection{Overview: Monotone Lagrangian Cobordisms}
Without placing additional restrictions on the kinds of Lagrangian cobordisms we consider, the equivalence relation given by Lagrangian cobordisms is far too flexible: for example, whenever $L^+$ and $L^-$ are Lagrangian isotopic, they are embedded Lagrangian cobordant (although the Lagrangian cobordism may be non-orientable).
One way to re-impose some rigidity on our Lagrangian cobordisms is to ask that they have well-defined Floer cohomology. In particular:
\begin{itemize}
    \item if we require $K$ to be exact and embedded with $\dim (K)>6$, then $K$ has the topology of $L\times \RR$ for some $L\subset X$   (\cite{suarez2017exact}); or
    \item  by asking our Lagrangian cobordisms to be monotone and embedded  we learn that the ends $L^+, L^-$ are equivalent in the Fukaya category (\cite{biran2013lagrangian}).
\end{itemize}
These conditions are so rigid as to make it difficult to find any Lagrangian cobordisms at all! In the first setting, the only known examples of Lagrangian cobordisms are those arising from Hamiltonian isotopy; in the second case, we provide (to our knowledge) the first known example of a 2-ended monotone Lagrangian cobordism which is not a suspension in \cref{app:wideornarrow}. 
Previous work of the author (\cite{hicks2019wall}) shows that embedded Lagrangian cobordisms which are unobstructed by bounding cochain have enough flexibility to make their construction feasible, yet enough rigidity to provide meaningful Floer theoretic results. 
 \subsection{Outline and results}
This paper contributes two observations to the theory of Lagrangian cobordisms.
\begin{itemize}
    \item  The first (\Cref{par:one}) is that Lagrangian cobordisms are flexible enough that they admit a handle decomposition into standard pieces given by $k$-surgeries. 
    This result only uses standard techniques in symplectic geometry and does not contain any Floer-theoretic computations. 
    \item The second provides rigidity. In \Cref{sec:teardrops} we relate the geometry of Lagrangian cobordisms to Floer theory. For each standard $k$-surgery, we exhibit a holomorphic teardrop with boundary on our Lagrangian cobordism, which pairs the surgery handles of the cobordism with its self-intersections (\cref{fig:basic}). Obstructions arising from the holomorphic teardrop can possibly occur in the local model for $0$-surgery, and unobstructedness of this curvature term provides a rigidity criterion for Lagrangian cobordism.
\end{itemize}
If a 0-surgery is the Lagrangian connect sum at $q$ of two Lagrangians $L_0, L_1$ which intersect at several points, then the bounding cochain making the surgery trace unobstructed restricts to a bounding cochain on the immersed Lagrangian submanifold $L_0\cup L_1$. This Lagrangian equipped with bounding cochain is isomorphic to the mapping cone at the intersection point; thus the holomorphic teardrops which appear on the Lagrangian surgery trace give a construction of the \cite{fukaya2007lagrangian} exact surgery triangle.
We also provide numerous examples of Lagrangian cobordisms.
\begin{itemize} 
    \item \Cref{sec:applications} contains some examples where we use the Floer cohomology of surgery traces to compute obstructedness/unobstructedness of Lagrangian cobordisms.
    \item In \cref{app:wideornarrow} we construct a 2-ended monotone Lagrangian cobordism which is not given by a Hamiltonian isotopy. The construction comes from a proposal due to Abouzaid and Auroux to construct a neither ``wide-nor-narrow'' monotone Lagrangian submanifold, which we also include. 
\end{itemize}
We now give a more detailed outline of the paper.

\Cref{subsec:background} reviews known constructions of Lagrangian cobordisms and Floer theory of Lagrangian cobordisms. 
We focus on Biran and Cornea's theorem that ``monotone Lagrangian cobordisms provide equivalences in the Fukaya category,'' and explore the limitations that monotonicity places on this theorem. 
We give a simple example of an oriented obstructed Lagrangian submanifold whose ends are non-isomorphic in the Fukaya category.

In \cref{subsec:decompositions}, we provide some standard tools for decomposing Lagrangian cobordisms. 
In short: when decomposing a Lagrangian cobordism $K\subset X\times \CC$, we can consider decompositions that have boundaries fibering over the $X$ or $\CC$ coordinate.
In \cref{prop:tdecomposition} we show that Lagrangian cobordisms are decomposable along the $\CC$-coordinate, and in \cref{prop:splittingX} we give a method for decomposing Lagrangian cobordisms along the $X$-coordinate. 

These decompositions are used in \cref{subsec:highersurgery} to construct the standard surgery handle.
We review the parameterization of the Whitney sphere and the Lagrangian null-cobordism for the Whitney sphere in \cref{subsubsec:whitneysphere}.
The parameterization is compared to the Lagrangian surgery handle from  \cite{audin1994symplectic}.
In addition to the parameterization of the surgery handle, \cref{fig:whitneysphere,fig:handle11,fig:handle02,fig:handle03,fig:handle12} provide plots of these handles as projections to the $\CC$ coordinate and as multisections of the $T^*\RR^n$; we hope that these examples provide the reader with intuition on the construction and geometry of surgery handles.
We use this particular parameterization of the surgery handle in \cref{subsec:cobordismsaresurgery}, where we prove the main result of this section.
\begin{theorem*}[Restatement of \cref{thm:cobordismsaresurgery}]
    Let $K: L^+\rightsquigarrow L^-$ be a Lagrangian cobordism.
    $K$ is exactly homotopic to the concatenation of surgery trace cobordisms and suspensions of exact homotopies. Furthermore, the Hofer norm of this exact homotopy can be made as small as desired.
\end{theorem*}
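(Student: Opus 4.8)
The plan is to put $K$ into a normal form for the height function $\pi_\RR\colon K\to\RR$ (the real part of the projection to $\CC$), and then read off the decomposition by slicing. First I would perturb $K$ by a compactly supported exact homotopy, fixing the ends, so that $\pi_\RR$ is Morse with pairwise distinct critical values, and so that near each critical point $p$ of index $\lambda$ the Lagrangian $K$ coincides with the local model of the standard surgery handle $K^{k,\,n-k+1}$ of \cref{subsec:highersurgery} with $n-k=\lambda$; if $K$ is immersed I would also move its double points to regular levels of $\pi_\RR$, matching the double-point data of the standard handle wherever a double point sits inside a handle neighbourhood. Since the parameterized surgery handle is a multisection over a ball in $T^*\RR^n$ agreeing with a trivial suspension outside that ball, this normal form is reachable by a $C^1$-small (hence Hofer-small) perturbation together with a Moser-type straightening in a Weinstein neighbourhood of each critical point, using \cref{prop:splittingX} to reduce the comparison to these local pieces along the $X$-coordinate.

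Next I would apply \cref{prop:tdecomposition} to cut $K$ along the $\CC$-coordinate at regular values $a_0<\dots<a_m$ of $\pi_\RR$ interleaving its critical values, writing $K$ up to exact homotopy as a concatenation $K_1\#\dots\# K_m$ of Lagrangian cobordisms $K_i\colon L_{i-1}\rightsquigarrow L_i$ between consecutive slices, each $K_i$ carrying at most one critical point of $\pi_\RR$. For a piece $K_i$ with no critical point, $\pi_\RR|_{K_i}$ is a submersion, so $K_i$ is the trace of a Lagrangian isotopy $\{L_s\}$; this isotopy has vanishing flux — each slice $L_s$ projects to a single vertical line in $\CC$, so a cylinder swept between loops in two slices has zero $\omega_\CC$-area and hence, by the Lagrangian condition, zero $\omega_X$-area — so it is a Hamiltonian isotopy and $K_i$ is exactly homotopic to the suspension of an exact homotopy. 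For a piece $K_i$ carrying a single critical point of index $\lambda$, the normal form makes $K_i$ agree near that point with the standard handle $K^{k,n-k+1}$, $k=n-\lambda$; completing $K_i$ along its two legs by trivial suspensions — exactly how the model surgery trace is assembled in \cref{subsec:highersurgery} — exhibits $K_i$, up to exact homotopy, as a concatenation of standard surgery traces (read in either direction, a reversed trace being needed precisely when both adjacent slices are embedded, so that an auxiliary double point must be created and then cancelled) together with suspensions.

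Concatenating the pieces yields the asserted decomposition. For the Hofer-norm bound, the exact homotopy carrying $K$ to this form is a composition of the Morsifying perturbation, the Weinstein straightenings, and the leg completions, each generated by a Hamiltonian supported near $K$ whose oscillation can be made arbitrarily small; subadditivity of the Hofer norm under composition then forces the total below any prescribed $\epsilon$. The step I expect to be the main obstacle is the normal-form step: proving that an arbitrary nondegenerate critical point of $\pi_\RR$ on $K$ can be brought by a small exact homotopy to the \emph{specific} parameterized surgery handle of \cref{subsec:highersurgery}, matching both the imaginary part of the $\CC$-coordinate and the double-point data. This is a Moser/Weinstein comparison of two Lagrangian germs with the same Morse index; once it is in hand, the remaining work — associativity of concatenation, uniformity of the Hofer estimates across pieces, and the embedded-versus-immersed bookkeeping — is routine.
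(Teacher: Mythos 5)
Your outline (Morsify $\pi_\RR$, normalize near the critical points, cut along the $\CC$-coordinate between critical values, split off the handle along the $X$-coordinate, absorb the rest into suspensions) is the same skeleton as the paper's proof, and your flux argument for the critical-point-free pieces is fine (it is \cref{prop:suspension}). The genuine gap is exactly the step you flag and then dispose of too quickly: the normal form at a critical point is \emph{not} a Moser/Weinstein comparison of two Lagrangian germs "with the same Morse index," and it is not achievable by a $C^1$-small perturbation. The Morse index of a critical point of $\pi_\RR$ does not determine the local structure of the cobordism: as \cref{exam:zipper} shows, the same index can come from a surgery germ, an anti-surgery germ, or a degenerate germ, the distinction being governed by the primitive of the flux (the $\pi_{\jmath\RR}$-component), i.e.\ by whether the nearby slices become immersed on the positive or the negative side of the critical value. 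Converting an anti-surgery germ into the standard surgery handle therefore requires an exact homotopy that is $C^1$-large (it changes which nearby slices are immersed, creating and cancelling self-intersections) and is only small in Hofer norm because it is supported in a small neighbourhood with small primitive oscillation. This is precisely the content of \cref{claim:connectingmorse} and \cref{prop:goodposition}: one writes $K$ near the critical point as $dF$ over $\RR^n\times\jmath\RR$ in a Weinstein chart, observes that the height is $\partial_sF$, interpolates this Morse function to the model $\partial_s G^{k,n-k+1}$, and adds a correction so the primitives match near the chart boundary without introducing new critical points. Your proposal states the desired conclusion of this step but supplies no argument that would survive the surgery/anti-surgery ambiguity, so as written it does not close.

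A second, related inconsistency: your parenthetical allowing "reversed traces when both adjacent slices are embedded" is at odds both with your own normal-form claim (if every critical germ has already been made the standard surgery handle, no reversed traces can occur) and with the statement being proved, which permits only surgery traces and suspensions of exact homotopies. Anti-surgery traces are not among the allowed pieces; in the paper they are eliminated precisely because \cref{prop:goodposition} replaces every critical germ by a surgery germ, with the attendant birth/death of self-intersections absorbed into suspensions of exact \emph{homotopies} of immersed slices (cf.\ \cref{fig:antisurgerytosurgery}). So either you prove the strong normal form (and then the reversed-trace clause is unnecessary), or you keep reversed traces and prove a weaker statement than the theorem. Also note a small bookkeeping slip: in the paper's conventions the critical point of $K^{k,n-k+1}$ has index $k+1$, not $n-k$.
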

The proof of \cref{thm:cobordismsaresurgery} shows Lagrangian cobordism can be placed into a good position by an exact homotopy.
We additionally comment on the relation between Lagrangian surgery and anti-surgery.
We acknowledge that exact homotopy is a high price to pay in order to place a Lagrangian cobordism in standard position. However, it is the strongest equivalence relation we can hope to place as the standard form must be immersed. 
We conjecture that unobstructedness of a Lagrangian submanifold $K$ is preserved by exact isotopies whose Hofer norm is smaller 
than the valuation of the bounding cochain on $L$. In particular, if $K$ is embedded and tautologically unobstructed, the conjecture states that the standard decomposition (with transverse intersections, given by \cref{thm:cobordismsarebottleneckedsurgery}) is unobstructed.

\Cref{sec:teardrops} investigates the relation between self-intersections of Lagrangian submanifolds and topology of the Lagrangian surgery trace.
In \cref{subsubsec:observations}, we show that the ends of a surgery trace cobordism can be equipped with a Morse function so that the critical points of the negative end are in index-preserving bijection with the self-intersections of the positive end. 
It follows that whenever $K: L^-\rightsquigarrow L^+$ is embedded and graded that $\chi(L^-)=\chi(L^+)$.
The remainder of the section extends this relation to Floer cohomology.
Given a two-ended \emph{monotone} Lagrangian cobordism $K:L^+\rightsquigarrow L^-$ and any other monotone Lagrangian $L'$, \cite{biran2013lagrangian} construct a homotopy equivalence $\CF(L^+, L')\to \CF(L^-, L')$, from which the above equality of Euler characteristics follows (\cref{thm:birancornea}). As a result, $L^+$ and $L^-$ are quasi-isomorphic objects in the Fukaya category.  We call this isomorphism the ``continuation map'' associated to a Lagrangian cobordism $K$.
We adopt the name continuation map from the homotopy equivalence on Floer cohomology groups induced by a Hamiltonian isotopy (which is also called the continuation map).

\Cref{subsec:doublebottleneck} lays the groundwork by defining Lagrangian cobordisms with double bottlenecks, which allow us to discuss Lagrangian cobordisms whose ends are immersed. We have a version of \cref{thm:cobordismsaresurgery} where  the pieces in the decomposition are Lagrangian cobordisms with double bottlenecks. When $(K, t^-, t^+): (L^+, H^+)\to (L^-, H^-)$ is a Lagrangian cobordism with double bottlenecks, we prove that for appropriate definition of Floer cochains,
\[\chi^{si}(L^-)=\chi^{si}(L^+)=\chi^{bot}(K,t^-, t^+).\]
We then provide a short review of immersed Floer cohomology in \cref{subsec:immersedoverview}.
In \cref{subsec:teardrops}, we conjecture that the homotopy equivalence from \cref{thm:birancornea} can be extended to the non-monotone immersed setting via a pairing of Floer cochains witnessed by a holomorphic teardrop.
\begin{theorem*}[Restatement of \cref{thm:teardropexistence}]
    The standard Lagrangian surgery trace bounds a holomorphic teardrop pairing the critical point of the surgery trace with the self-intersection in Floer cohomology. 
\end{theorem*}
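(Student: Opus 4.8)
The plan is to reduce the statement to an explicit model computation inside the standard surgery handle, where the teardrop can be written down by hand, and then to verify that it is rigid and counted with unit coefficient. I would work in the Morse--Bott (pearl) model for the immersed Floer cohomology of $K^{k,n-k+1}$ reviewed in \cref{subsec:immersedoverview}, in which the generators are the critical points of a Morse function on $K^{k,n-k+1}$ together with the self-intersection point $e$, and the coefficient of a critical point $q$ in $m_1(e)$ counts rigid holomorphic teardrops: maps of a disk with one boundary puncture asymptotic to $e$ and a boundary marked point constrained to the (un)stable manifold of $q$. In this language the theorem asserts that the coefficient of the handle-core critical point $q_0$ --- the unique critical point of $\pi_\RR$ on $K^{k,n-k+1}$, of index $n-k$ --- in $m_1(e)$ is $\pm 1$; this is also the leading-order term of the Biran--Cornea continuation map of \cref{thm:birancornea} in the immersed setting, and the potential obstruction term flagged in the introduction. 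Since $\pi_\CC(K^{k,n-k+1})$ is contained in a bounded region and the handle is monotone, a maximum-principle and monotonicity argument confines every teardrop of sufficiently small energy to the local model of \cref{subsec:highersurgery}, so it suffices to produce and classify teardrops there.

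Inside the local model I would exploit the product structure of the standard parameterization from \cref{subsec:highersurgery}: after the exact homotopy of \cref{thm:cobordismsaresurgery}, the handle is, up to the suspension directions, a product of a low-dimensional Whitney null-cobordism factor carrying the self-intersection $e$ with cotangent-fiber directions carrying the Morse/handle data. Choosing $J$ to be a product almost complex structure, the desired teardrop is the product of the visibly holomorphic teardrop in the Whitney factor --- the bigon region drawn in \cref{fig:basic}, whose boundary traces the two branches of the immersion meeting at $e$ --- with constant and gradient-trajectory maps in the remaining factors, with the boundary marked point sent to $q_0$. A short index count shows this configuration is rigid once the single incidence condition at $q_0$ is imposed, consistent with the bookkeeping $|e| = n-k-1$ and $m_1$ raising degree by one.

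It then remains to show that this teardrop is regular and that it is the unique rigid teardrop contributing to the coefficient of $q_0$. Regularity should follow either from automatic transversality where the dimension permits, or from a direct computation: the linearized operator of the product curve splits into a Riemann--Hilbert problem in the Whitney factor, which after doubling across the seam becomes an operator on a bundle pair over the sphere of the correct degree, together with standard Morse-trajectory operators in the remaining factors. Uniqueness should follow by applying the open mapping theorem to the $\CC$-projection of an arbitrary such teardrop: the projection is holomorphic with boundary on the projected handle and image in the bounded region cut out by $\pi_\CC(K^{k,n-k+1})$, and the combinatorics of that projection, together with the interior and the constrained marked point, forces the product form above; a compactness argument then excludes broken configurations limiting onto $(e,q_0)$. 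Finally I would pin down the sign by comparing the canonical orientation of the zero-dimensional moduli space with the chosen spin structures, confirming the coefficient is $\pm1$ so the pairing is non-degenerate.

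The step I expect to be the main obstacle is this simultaneous regularity-and-uniqueness analysis in the local model: exhibiting one teardrop is immediate, but surjectivity of the linearization for a product curve with a boundary corner, and the exclusion of spurious teardrops and broken pearls with the same asymptotics, both require tight control of the $\CC$-projection and of the behavior near the puncture at $e$. A secondary subtlety is that the product normal form is only reached up to an exact homotopy of small Hofer norm, so I would need the teardrop count to be invariant under such a deformation; this again reduces to an energy estimate keeping all relevant curves inside the model throughout the homotopy.
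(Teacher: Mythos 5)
Your outline follows essentially the same route as the paper: work in the local (double-bottleneck) model with the standard complex structure and Morse function $\pi_\RR$, exhibit the explicit teardrop on the Whitney isotropic contained in the slice $K^{k,n-k+1}_{A,B}|_0$, prove regularity by splitting the linearized operator into one-dimensional Riemann--Hilbert problems that are automatically regular, and use the evaluation/incidence condition at the handle critical point to get a single rigid treed teardrop. The differences are worth flagging. First, for uniqueness the paper does not run an open-mapping analysis of the $\CC$-projection; it imports the classification of all standard-$J$ teardrops on the Whitney sphere from \cite{alston2019immersed} (\cref{claim:whitney}), so the step you correctly identify as the main obstacle is resolved by citation rather than reproved. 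Second, your product framing is not literally available: the parameterization of \cref{def:localsurgerytrace} couples every coordinate through $x_0$, so the handle is not a product of a Whitney factor with cotangent-fiber directions, and the normal summands of the linearization are not Morse-trajectory operators. What saves the computation (and is exactly how the paper's regularity argument in \cref{claim:whitney} goes, via \cref{claim:indexOfHandle}) is that the teardrop lies in the coordinate subspace carrying the Whitney isotropic and the Lagrangian boundary condition splits into line bundles $\lambda_i\subset\mathcal L_i$ \emph{along that particular boundary curve}, with the transverse summands being Maslov-index-$0$ Riemann--Hilbert problems of Fredholm index $1$. Third, the confinement-by-monotonicity step is unnecessary here, since \cref{thm:teardropexistence} is a statement about the local model itself (the bottleneck already blocks escape in the cobordism direction), and monotonicity of the immersed handle is neither used nor obviously the right notion in the presence of teardrops. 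With the product claim replaced by the along-the-curve splitting and the classification either cited or proved, your plan matches the paper's proof.
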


\Cref{sec:applications} discusses when we can and cannot find an extension of \cref{thm:birancornea} to a (non-monotone) Lagrangian cobordism.
\Cref{subsec:initialcomputation} uses the pairing from \cref{thm:teardropexistence} to justify why the example Lagrangian cobordism given in \cref{fig:obstructedLagrangian} does not construct a continuation map in the Fukaya category. 
\Cref{subsec:samplecomputation} applies this conjectural framework to a computation yielding a continuation map associated with a Lagrangian surgery trace cobordism in specific examples (\cref{fig:obstructedpants,fig:bigdiagram}). In these examples, the holomorphic teardrop contributes to a curvature term $m^0: \Lambda\to \CF(K^{A, B})$ in Floer cohomology.
The existence of a bounding cochain or obstruction of $\CF(K^{A, B})$ either yields or precludes the construction of a continuation map on Floer cohomology between $L_{E_b}$ and $S^1_E\cup S^1_{-E}$.
In the case of \cref{fig:obstructedpants}, the Lagrangians $L_{E_b}$ and $S^1_E\cup S^1_{-E}$ are disjoint, so the result is obvious; however the comparison to the setting of \cref{fig:bigdiagram} which only differs in terms of the areas $A$ and $B$ is illustrative.

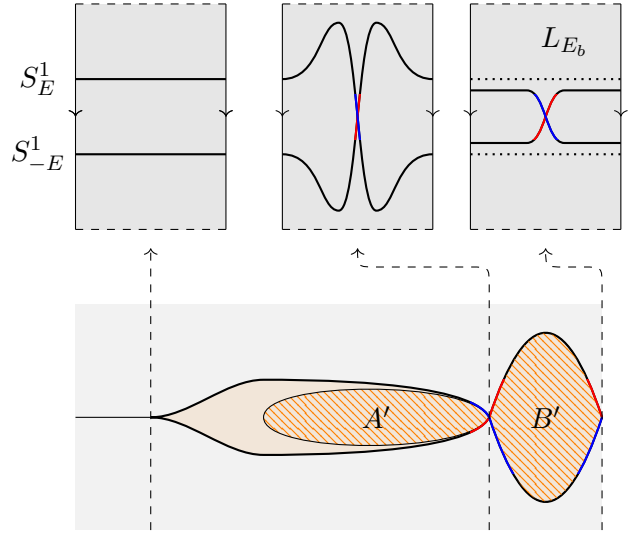
\begin{figure}
    \centering
    
\begin{tikzpicture}

\begin{scope}[shift={(-9,-12.5)},decoration={    markings,   , mark=at position 0.5 with {\arrow{>}}}]
\begin{scope}[decoration={    markings,   , mark=at position 0.5 with {\arrow{>}}}]

\fill[gray!20]  (-4,-3) rectangle (-2,0);
\draw[postaction={decorate}] (-4,0) -- (-4,-3);
\draw[postaction={decorate}](-2,0)--(-2,-3);
\draw[dashed] (-4,0) -- (-2,0)  (-2,-3) -- (-4,-3);
\end{scope}
\begin{scope}[shift={(5.25,0)}]

\fill[gray!20]  (-4,-3) rectangle (-2,0);
\draw[postaction={decorate}] (-4,0) -- (-4,-3);
\draw[postaction={decorate}](-2,0)--(-2,-3);
\draw[dashed] (-4,0) -- (-2,0)  (-2,-3) -- (-4,-3);

\end{scope}
\begin{scope}[shift={(2.75,0)}]

\fill[gray!20]  (-4,-3) rectangle (-2,0);
\draw[postaction={decorate}] (-4,0) -- (-4,-3);
\draw[postaction={decorate}](-2,0)--(-2,-3);
\draw[dashed] (-4,0) -- (-2,0)  (-2,-3) -- (-4,-3);

\end{scope}

\draw[thick] (-4,-2) -- (-2,-2);
\draw[thick] (-4,-1) -- (-2,-1);
\fill[gray!10]  (-4,-4) rectangle (3,-7);

\draw[fill=brown!20,thick] (-3,-5.5) .. controls (-2.5,-5.5) and (-2,-6) .. (-1.5,-6) .. controls (-1,-6) and (1.25,-6) .. (1.5,-5.5) .. controls (1.25,-5) and (-1,-5) .. (-1.5,-5) .. controls (-2,-5) and (-2.5,-5.5) .. (-3,-5.5);
\draw[fill=brown!20,thick] (3,-5.5) .. controls (2.5,-4) and (2,-4) .. (1.5,-5.5) .. controls (2,-7) and (2.5,-7) .. (3,-5.5);
\draw[pattern=north west lines, pattern color=orange] (3,-5.5) .. controls (2.5,-4) and (2,-4) .. (1.5,-5.5) .. controls (2,-7) and (2.5,-7) .. (3,-5.5);
\draw (-4,-5.5) -- (-3,-5.5);
\draw[pattern=north west lines, pattern color=orange] (1.5,-5.5) .. controls (1.25,-6) and (-1.5,-6) .. (-1.5,-5.5) .. controls (-1.5,-5) and (1.25,-5) .. (1.5,-5.5);

\node at (-4.5,-1) {$S^1_{E}$};
\node at (-4.5,-2) {$S^1_{-E}$};

\node at (0,-5.5) {$A'$};
\node at (2.25,-5.5) {$B'$};

\begin{scope}[shift={(-0.5,0)}]

\clip  (1.75,-5) rectangle (3.75,-6.25);
\draw[red, thick] (-1,-6) .. controls (-0.5,-6) and (1.75,-6) .. (2,-5.5) .. controls (2.5,-4) and (3,-4) .. (3.5,-5.5);
\draw[blue, thick] (3.5,-5.5) .. controls (3,-7) and (2.5,-7) .. (2,-5.5) .. controls (1.75,-5) and (-0.5,-5) .. (-1,-5);

\end{scope}

\begin{scope}[shift={(-2.25,-0.5)}]
\draw[thick] (1,-1.5) .. controls (1.5,-1.5) and (1.5,-2.25) .. (1.75,-2.25) .. controls (2,-2.25) and (2,0.25) .. (2.25,0.25) .. controls (2.5,0.25) and (2.5,-0.5) .. (3,-0.5);
\draw[thick] (1,-0.5) .. controls (1.5,-0.5) and (1.5,0.25) .. (1.75,0.25) .. controls (2,0.25) and (2,-2.25) .. (2.25,-2.25) .. controls (2.5,-2.25) and (2.5,-1.5) .. (3,-1.5);

\clip  (1.8,-0.7) rectangle (2.2,-1.3);
\draw[red, thick] (1.75,-2.25) .. controls (2,-2.25) and (2,0.25) .. (2.25,0.25);
\draw[blue, thick] (1.75,0.25) .. controls (2,0.25) and (2,-2.25) .. (2.25,-2.25);

\end{scope}
\begin{scope}[shift={(0.25,-0.5)}]
\draw[thick] (1,-1.35) .. controls (1.5,-1.35) and (1.5,-1.35) .. (1.75,-1.35) .. controls (2,-1.35) and (2,-0.65) .. (2.25,-0.65) .. controls (2.5,-0.65) and (2.5,-0.65) .. (3,-0.65);
\draw[thick] (1,-0.65) .. controls (1.5,-0.65) and (1.5,-0.65) .. (1.75,-0.65) .. controls (2,-0.65) and (2,-1.35) .. (2.25,-1.35) .. controls (2.5,-1.35) and (2.5,-1.35) .. (3,-1.35);

\draw[thick, dotted] (1,-1.5) -- (3,-1.5);
\draw[thick, dotted] (1,-0.5) -- (3,-0.5);
\clip  (1.8,-0.7) rectangle (2.2,-1.3);
\draw[red, thick] (1.75,-1.35) .. controls (2,-1.35) and (2,-0.65) .. (2.25,-0.65);
\draw[blue, thick] (1.75,-0.65) .. controls (2,-0.65) and (2,-1.35) .. (2.25,-1.35);
\end{scope}

\end{scope}

\draw[dashed, ->] (-12,-19.5) -- (-12,-15.75);
\draw[dashed, ->] (-7.5,-19.5) .. controls (-7.5,-19) and (-7.5,-17) .. (-7.5,-16.5) .. controls (-7.5,-16.25) and (-7.5,-16.25) .. (-7.5,-16.25) .. controls (-7.5,-16) and (-7.5,-16) .. (-7.75,-16) .. controls (-8,-16) and (-8.75,-16) .. (-9,-16) .. controls (-9.25,-16) and (-9.25,-16) .. (-9.25,-15.75);
\draw[dashed, ->] (-6,-19.5) .. controls (-6,-19) and (-6,-17) .. (-6,-16.75) .. controls (-6,-16.5) and (-6,-16.5) .. (-6,-16.25) .. controls (-6,-16) and (-6,-16) .. (-6.25,-16) .. controls (-6.5,-16) and (-6.5,-16) .. (-6.5,-16) .. controls (-6.75,-16) and (-6.75,-16) .. (-6.75,-15.75);
\node at (-6.5,-13) {$L_{E_b}$};
\end{tikzpicture}     \caption{A Lagrangian cobordism corresponding to a Lagrangian surgery which does not yield a continuation map in the Fukaya category. The Lagrangian cobordism is obstructed.}
    \label{fig:obstructedpants}
\end{figure}

Finally, \cref{app:wideornarrow} has some constructions of monotone Lagrangian submanifolds. In \cref{subsec:notwidenornarrow} we complete a proposal of Abouzaid-Auroux to construct a neither narrow-nor-wide Lagrangian submanifold. The ideas of this construction are employed in \cref{subsec:oneended} to construct an embedded oriented 1-ended monotone Lagrangian cobordism. We then use this cobordism, along with a surgery trace, to construct a 2-ended monotone Lagrangian cobordism in \cref{subsec:2ended}.
 
\subsection{Acknowledgments}

I would particularly like to thank Luis Haug with whom I discussed many of the ideas of this paper; much of my inspiration comes from his work in \cite{haug2015lagrangian}. Additionally, the catalyst of this project was a discussion with Nick Sheridan at the 2019 MATRIX workshop on ``Tropical geometry and mirror symmetry,'' and the major ideas of this paper were worked out during an invitation to ETH Z\"urich from Ana Cannas da Silva for a mini-course on ``Lagrangian submanifolds in toric fibrations''.
The inclusion of a construction of a neither wide nor narrow Lagrangian in \cref{subsec:widenornarrow} was at the suggestion of Mohammed Abouzaid, who along with Denis Auroux is responsible for the idea of looking at the diagonal in $X\times \bar X$ for the construction. The fleshing out of this idea and subsequent extension to monotone Lagrangian cobordisms was worked on in a series of discussions with Cheuk Yu Mak. 
I also thank two anonymous referees, whose comments made this article substantially more intelligible and coherent.
Finally, I've benefitted from many conversations with  Paul Biran, Octav Cornea, Mark Gross, Andrew Hanlon, Ailsa Keating,  and Ivan Smith on the topic of Lagrangian cobordisms.
Some figures in this paper were created using the \texttt{matplotlib} library \cite{hunter2007matplotlib}.
This work is supported by EPSRC Grant EP/N03189X/1 (Classification, Computation, and Construction: New Methods in Geometry) and ERC Grant 850713 (Homological mirror symmetry, Hodge theory, and symplectic topology). 
 
\section{Background}
\label{subsec:background}

\subsection*{Notation}
      $X$ will always denote a symplectic manifold of dimension $2n$.
      There will be many Lagrangian submanifolds of varying dimensions in this article.
      The dimension of a submanifold will be determined by reverse alphabetical order, so 
      \[\dim(J)-2=\dim(K)-1=\dim(L)=n=\dim(M)+1.\]

      In this paper, we will frequently take local coordinates for a Lagrangian submanifold $U\subset L\subset X$, and identify the Weinstein neighborhood with a neighborhood of $U\subset \RR^n\subset \CC^n$. 
      We will denote the coordinates near $U$ by $(q_i+\jmath p_i)$.
      Unless otherwise stated, all Lagrangian submanifold considered are possibly immersed.
\subsection{Lagrangian Homotopy and Cobordism}
A homotopy of Lagrangian submanifolds is a smooth map $\li_t: L\times \RR\to X$ with the property that at each $t_0\in \RR$, $i_{t_0}: L\to X$ is an immersed Lagrangian submanifold.
For each $t_0\in \RR$, a homotopy of Lagrangian submanifolds yields a closed cohomology class $\Flux_{t_0}(\li_t)\in H^1(L, \RR)$, called the flux class of $\li_t$ at $t_0$. 
The value of the flux class on chains $c\in C_1(L, \RR)$ is defined by 
\[\Flux_{t_0}(\li_t)(c):= \int_{\li_t: c\times [0, t_0]\to X}\omega.\]
If $\Flux_{t_0}(\li_t)$ is exact for all $t_0\in \RR$, we say that this homotopy is an exact homotopy. 
In the case that $\li_t$ is an exact \emph{isotopy}, there exists a time dependent Hamiltonian $H_t: X\times \RR\to \RR$ with the following properties:
\begin{itemize}
    \item $H_t|_L$ is a primitive for the flux class in the sense that $dH_{t_0}|_{L}=\Flux_{t_0}(\li_t)$.
    \item The isotopy is generated by the Hamiltonian flow $\phi_t: X\times \RR\to X$ in the sense that \[\li_t(L)=\phi_t(\li_0(L)).\]
\end{itemize}
Even when $\li_t$ is only a \emph{homotopy} we will denote by $H_t: L\times \RR\to X$ the primitive to the flux class. 
The \emph{Hofer norm} of such an isotopy is defined to be 
\[\int_\RR \left(\sup_{q\in L} H_t(q)- \inf_{q\in L} H_t(q)\right) dt,\]
which does not depend on the choice of primitive $H_t(q)$.
 Lagrangian cobordisms are an extension of the equivalence relation of exact homotopy.
\begin{definition}[\cite{arnol1980lagrange}]
	Let $L^+, L^-$ be (possibly immersed) Lagrangian submanifolds of $X$. 
	A \emph{2-ended Lagrangian cobordism} with ends $L^+, L^-$ is a (possibly immersed) Lagrangian submanifold $K\subset (X\times \CC,\omega_X+\omega_\CC)$ for which there exists a compact subset $D\subset \CC$ so that : 
            \[K\setminus( \pi_\CC^{-1}(D))=(L^+\times \RR_{>t^+}) \cup( L^-\times \RR_{<t^-}).\]
      The sets  $\RR_{>t^+}$ and  $\RR_{<t^-}$ are rays pointing along the negative and positive real axis of $\CC$, starting at some values $t_-<0<t_+$.
	We denote such a cobordism $K:L^+\rightsquigarrow L^-$. 
	\label{def:cobordism}
\end{definition}
There is a more general theory of $k$-ended Lagrangian cobordisms, although for simplicity of notation we will only discuss the 2-ended case, and always use ``Lagrangian cobordism'' to mean ``2-ended Lagrangian cobordism''. 
All of the decomposition results of this paper extend to the $k$-ended setting.
\begin{figure}
      \centering
      \begin{tikzpicture}

\begin{scope}[decoration={    markings,   mark=at position 0.66  with {\arrow{>}} , mark=at position 03.3  with {\arrow{>}}}, shift={(5,4.5)}]

\fill[gray!20]  (-4,-3) rectangle (-3,-1);
\draw[postaction={decorate}] (-4,-1) -- (-4,-3);
\draw[postaction={decorate}](-3,-1)--(-3,-3);
\draw[dashed] (-4,-1) -- (-3,-1)  (-3,-3) -- (-4,-3);
\end{scope}
\begin{scope}[decoration={    markings,   mark=at position 0.66  with {\arrow{>}} , mark=at position 0.33  with {\arrow{>}}}, shift={(3.5,4.5)}]

\fill[gray!20]  (-4,-3) rectangle (-3,-1);
\draw[postaction={decorate}] (-4,-1) -- (-4,-3);
\draw[postaction={decorate}](-3,-1)--(-3,-3);
\draw[dashed] (-4,-1) -- (-3,-1)  (-3,-3) -- (-4,-3);
\end{scope}
\begin{scope}[decoration={    markings,   mark=at position 0.66  with {\arrow{>}} , mark=at position 0.33  with {\arrow{>}}}, shift={(2,4.5)}]

\fill[gray!20]  (-4,-3) rectangle (-3,-1);
\draw[postaction={decorate}] (-4,-1) -- (-4,-3);
\draw[postaction={decorate}](-3,-1)--(-3,-3);
\draw[dashed] (-4,-1) -- (-3,-1)  (-3,-3) -- (-4,-3);
\end{scope}
\fill[gray!20]  (-2,1) rectangle (2,-1);
\draw[fill=brown!20] (2,0) .. controls (1.5,0) and (1.5,0) .. (1,0) .. controls (0.5,0) and (0.5,0.5) .. (0,0.5) .. controls (-0.5,0.5) and (-0.5,0) .. (-1,0) .. controls (-1.5,0) and (-1.5,0) .. (-2,0) .. controls (-1.5,0) and (-1.5,0) .. (-1,0) .. controls (-0.5,0) and (-0.5,-0.5) .. (0,-0.5) .. controls (0.5,-0.5) and (0.5,0) .. (1,0);

\node at (-2.5,0) {$\mathbb C$};
\draw (1,2.5) -- (2,2.5);
\draw (-2,2.5) .. controls (-1.5,1.5) and (-1.5,3.5) .. (-1,2.5);
\draw[dashed] (-1.5,1.5) -- (-1.5,-1) ;
\draw[dashed] (1.5,1.5) -- (1.5,-1);
\node[] at (1.5,2) {};

\draw (0.5,2.5) .. controls (0,3) and (0,2) .. (-0.5,2.5);
\draw[dashed] (0,1.5) -- (0,-1);
\node at (2.55,2.5) {$T^*S^1$};
\node at (0,3.75) {$K|_0$};
\node at (-1.5,3.75) {$L^-$};
\node at (1.5,3.75) {$L^+$};
\node at (0,0) {$K$};

\end{tikzpicture}       \caption{
            The shadow projection of a Lagrangian cobordism $K\subset T^*S^1 \times \CC$ to the $\CC$ factor. This Lagrangian cobordism is the suspension of a Hamiltonian isotopy.
            }
      \label{fig:shadowprojection}
\end{figure}
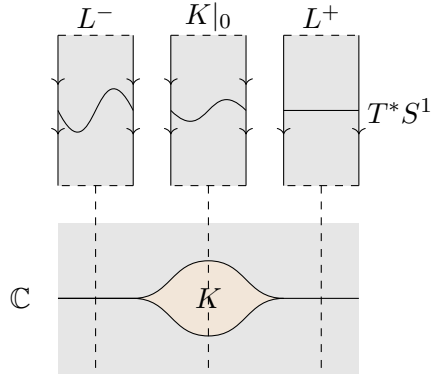
The projection to the $\CC$ component of a Lagrangian cobordism is called the \emph{shadow projection} of the Lagrangian cobordism; the infimal area of a simply connected region containing the image of the projection is called the \emph{shadow} of a Lagrangian cobordism \cite{cornea2019lagrangian}. 
We will denote this quantity by $\Area(K)$.
See \cref{fig:shadowprojection} for a diagram of a Lagrangian cobordism. 
We call the $\RR$-component of $X\times \CC$ the \emph{cobordism parameter}, and the projection to this coordinate will be denoted by $\pi_\RR: X\times \CC\to \RR$.
Given a Lagrangian cobordism $K$, we will abuse notation and use $\pi_\RR: K\to \RR$ to denote the cobordism parameter restricted to $K$.
\begin{claim}
      Let $t_0\in \RR$ be a regular value of the projection $\pi_\RR: K\to \RR$. 
      The slice of $K$ at $t_0$,
      \[K|_{t_0}:= \pi_X(\pi_\RR^{-1}(t_0)\cap K)\]
       is a (possibly immersed) Lagrangian submanifold of $X$.
\end{claim}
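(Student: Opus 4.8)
The plan is to recognize $K|_{t_0}$ as the symplectic reduction of $K$ along a coisotropic hypersurface. Write the $\CC$-coordinate as $s+\jmath u$, so $\omega_\CC=ds\wedge du$ and $\pi_\RR=s$, and set $C:=\pi_\RR^{-1}(t_0)=X\times\{s=t_0\}\subset X\times\CC$. Being a hypersurface, $C$ is coisotropic; since $ds$ vanishes on $C$, the ambient form restricts there to $\omega_X$, its characteristic distribution $(TC)^\omega$ is spanned pointwise by $\partial_u$, the leaves of the characteristic foliation are the lines $\{x\}\times\{t_0\}\times\RR_u$, and the reduction map $C\to X$ is exactly $\pi_X|_C$, which carries $\omega_X|_C$ to $\omega_X$ on the quotient $X$. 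So the whole statement is an instance of the standard fact that a Lagrangian meeting a coisotropic transversally reduces to an (immersed) Lagrangian; I would just spell this out in the case at hand.

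First I would unwind the regular-value hypothesis. Saying that $t_0$ is a regular value of $\pi_\RR|_K$ means that at every $p\in K\cap C$ some $v\in T_pK$ has $ds(v)\neq0$, i.e.\ $T_pK\not\subset T_pC$; equivalently $K$ is transverse to $C$ in $X\times\CC$. Hence $L':=K\cap C=\pi_\RR^{-1}(t_0)\cap K$ is a smooth submanifold of dimension $\dim K-1=n$.

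Next I would show $\pi_X|_{L'}\colon L'\to X$ is an immersion, so that $K|_{t_0}=\pi_X(L')$ is genuinely an immersed $n$-manifold. Since $\ker(d\pi_X|_C)=\langle\partial_u\rangle$ and $\partial_u\in T_pC$, the kernel of $d(\pi_X|_{L'})$ at $p$ is $T_pK\cap\langle\partial_u\rangle$, so it is enough to check $\partial_u\notin T_pK$. This is where the Lagrangian condition enters: $\omega_{X\times\CC}(\partial_u,\,\cdot\,)=-\,ds(\cdot)$, so if $\partial_u$ were tangent to the Lagrangian $K$ at $p$ then $ds$ would vanish on all of $T_pK$, contradicting that $t_0$ is regular. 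Thus $d(\pi_X|_{L'})$ is injective everywhere. Finally, for $v,w\in T_pL'\subset T_pK$ one has $\omega_X\big(d\pi_X(v),d\pi_X(w)\big)=\omega_{X\times\CC}(v,w)=0$ because $K$ is Lagrangian, so $K|_{t_0}$ is isotropic; being of dimension $n=\tfrac12\dim X$, it is Lagrangian.

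The only step with any real content is the immersivity of $\pi_X|_{L'}$, i.e.\ that the imaginary (characteristic) direction is never tangent to $K$ over a regular value; the argument above shows this is forced jointly by the Lagrangian condition and the regular-value hypothesis, and everything else is the routine bookkeeping of linear symplectic reduction. As a sanity check, for $t_0>t^+$ (resp.\ $t_0<t^-$) this construction returns $K|_{t_0}=L^+$ (resp.\ $L^-$), matching \cref{def:cobordism}.
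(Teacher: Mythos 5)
Your argument is correct. In fact the paper states this claim without proof, so there is nothing to compare against: the assertion is treated there as a standard fact, and your write-up supplies exactly the expected justification. The reduction framing is the right one: $C=X\times\{s=t_0\}$ is coisotropic with characteristic direction $\partial_u$, transversality of $K$ to $C$ is precisely the regular-value hypothesis, and the two key verifications — that $\partial_u$ can never be tangent to $K$ over a regular value (since $\iota_{\partial_u}\omega=-ds$ would then kill $ds$ on all of $T_pK$), and that $\omega_X$ pulls back to zero on the slice because the $\CC$-components of vectors tangent to $C$ lie along the characteristic direction — are both stated and proved correctly. One small presentational remark: since $K$ is allowed to be immersed, the cleanest phrasing is to work on the abstract manifold $K$ with immersion $\lj:K\to X\times\CC$, set $L':=(\pi_\RR\circ\lj)^{-1}(t_0)$, and show that $\pi_X\circ\lj|_{L'}$ is a Lagrangian immersion; this is exactly what your computation does, but writing $K\cap C$ as a subset of $X\times\CC$ blurs the distinction between the domain and the image, and the conclusion "$K|_{t_0}$ is a possibly immersed Lagrangian" should be read as "is the image of a Lagrangian immersion," which your construction produces.
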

Lagrangian cobordance extends the equivalence relation of exactly homotopic.
We call a Lagrangian cobordism \emph{a suspension} if $\pi_\RR: K\to \RR$ has no critical points.
\begin{prop}[\cite{audin1994symplectic}]
      Let $L^+, L^-\subset X$ be two Lagrangian submanifolds. $L^+$ and $L^-$ are exactly homotopic if and only if there exists a suspension Lagrangian cobordism $K:L^+\rightsquigarrow L^-$ between these two Lagrangians. 
      \label{prop:suspension}
\end{prop}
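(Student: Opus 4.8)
The plan is to prove both implications of \cref{prop:suspension}; together they are the classical \emph{suspension correspondence}, and the whole argument rests on one linear-algebra identity for a Lagrangian $K\subset X\times\CC$. If $\xi$ is a vector field on $K$ whose $\pi_\CC$-pushforward has unit real part, then $\iota_\xi(\omega_X+\omega_\CC)$ vanishes on $TK$; pairing this $1$-form against a vector tangent to a slice $\pi_\RR^{-1}(t)$ — whose $\pi_\CC$-image then has vanishing real part — forces $\li_t^*(\iota_{V_t}\omega_X)$ (where $V_t$ is the velocity of the slice-homotopy) to equal, up to sign, the differential of the imaginary $\CC$-coordinate of $K$ restricted to that slice. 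That imaginary coordinate is an honest function on the slice, so its differential is exact: this is precisely what makes the slice-homotopy exact, and conversely it tells us how to build the suspension from a primitive.

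For ``suspension $\Rightarrow$ exact homotopy'': I would assume $\pi_\RR\colon K\to\RR$ has no critical points and (using, as always, that $K$ is compact away from its product ends) integrate the horizontal lift of $\partial_{\Re}$ for an Ehresmann connection chosen to equal the tautological $\RR\partial_{\Re}$ in the end region. The resulting flow is complete and gives a diffeomorphism $\Phi\colon L\times\RR\xrightarrow{\ \sim\ }K$ which is the obvious product identification near the ends. Set $\li_t:=\pi_X\circ\Phi(\cdot,t)$, a homotopy of immersed Lagrangians with $\li_t=\li^\pm$ for $\pm t$ large, and let $\xi:=\Phi_*\partial_t$, the vector field of the first paragraph. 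Unwinding $\iota_\xi(\omega_X+\omega_\CC)|_{TK}=0$ gives $\li_t^*(\iota_{V_t}\omega_X)=\pm d\mu_t$, where $\mu_t\colon L\to\RR$ is the imaginary $\CC$-coordinate of $\Phi(\cdot,t)$; hence $\tfrac{d}{dt}\Flux_t(\li_\bullet)$, which is (up to sign) the class of $\li_t^*(\iota_{V_t}\omega_X)$, is exact for every $t$, and integrating from the base point $t=0$ (where $\Flux_0=0$) shows $\Flux_t$ is exact for all $t$. So $\li_t$ is an exact homotopy between $L^+$ and $L^-$.

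For ``exact homotopy $\Rightarrow$ suspension'': I would first reparametrize the time variable by a monotone function so that the exact homotopy $\li_t$ is constant ($=\li^\pm$) for $\pm t$ large — monotone reparametrization preserves exactness of the flux, so this is harmless. Differentiating the Lagrangian condition $\li_t^*\omega_X\equiv0$ in $t$ shows $\li_t^*(\iota_{V_t}\omega_X)$ is closed, and its class equals, up to sign, $\tfrac{d}{dt}\Flux_t$, which is exact because each $\Flux_t$ is; so I can choose $\tilde H_t\colon L\to\RR$, smooth in $t$ and identically $0$ for $|t|$ large, with $\li_t^*(\iota_{V_t}\omega_X)=-d\tilde H_t$ (for the appropriate sign). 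Then define $\Sigma\colon L\times\RR\to X\times\CC$ by $\Sigma(q,t)=\big(\li_t(q),\,t+\jmath\,\tilde H_t(q)\big)$ and compute $\Sigma^*(\omega_X+\omega_\CC)$ on a pair of tangent vectors directly: using that each $\li_t$ is a Lagrangian immersion together with the identity $\li_t^*(\iota_{V_t}\omega_X)=-d\tilde H_t$, the $\omega_X$-contribution is exactly cancelled by the $\omega_\CC$-contribution, so $\Sigma$ is Lagrangian; it is an immersion because the real $\CC$-coordinate of $\Sigma$ is $t$, which already pins down the $\RR$-factor; and for $|t|$ large $\tilde H_t\equiv0$ and $\li_t=\li^\pm$, so $\Sigma(L\times\RR)$ agrees with $(L^+\times\RR_{>t^+})\cup(L^-\times\RR_{<t^-})$ outside a compact subset of $\CC$. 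Thus $K:=\Sigma(L\times\RR)$ is a Lagrangian cobordism $L^+\rightsquigarrow L^-$, and $\pi_\RR\circ\Sigma=t$ has no critical point, i.e.\ it is a suspension.

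I expect the main obstacle to be careful sign bookkeeping: the correspondence must be oriented so that the $\omega_X$- and $\omega_\CC$-contributions in the suspension genuinely cancel (equivalently, so that $\mu_t$ above is compatible with the sign convention in the definition of $\Flux$). The only other point requiring attention is in the first implication, where completeness of the lifted flow — hence the global trivialization $\Phi$ — uses the standing assumption that $K$ is compact away from its cylindrical ends, and the Ehresmann connection must be chosen to be the product connection near the ends so that the induced homotopy is genuinely constant there (which is what gives $\Flux_0=0$ at the base point). Beyond these, both directions are routine once the identity $\iota_\xi(\omega_X+\omega_\CC)|_{TK}=0$ is unpacked.
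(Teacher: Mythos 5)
Your argument is correct in both directions, up to the sign bookkeeping you already flag, but it is worth noting that the paper itself does not prove \cref{prop:suspension}: the statement is cited to \cite{audin1994symplectic}, and the only related material in the text is the explicit suspension parameterization $(q,t)\mapsto(\li_t(q),\,t+\jmath H_t(q))$ recorded immediately after the proposition, together with \cref{claim:suspensiongeneralization}, whose proof checks by a direct evaluation of $\omega_X+\omega_\CC$ on pushed-forward tangent vectors that such a parameterization is Lagrangian. Your second direction (exact homotopy $\Rightarrow$ suspension) is precisely that computation, specialized to $I=\{1\}$, $Y=\RR$, $\rho(y)=t$, plus the routine massaging you describe (monotone reparameterization to make the homotopy eventually constant, and a smooth $t$-dependent choice of primitives $\tilde H_t$ vanishing for $|t|$ large) needed to produce the cylindrical ends required by \cref{def:cobordism}. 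Your first direction (suspension $\Rightarrow$ exact homotopy), via an Ehresmann trivialization of the submersion $\pi_\RR\colon K\to\RR$ and the identity $\li_t^*(\iota_{V_t}\omega_X)=\pm d\mu_t$ obtained by pairing the horizontal lift against slice-tangent vectors, does not appear in the paper at all; it is the standard argument and it is sound. Two small points you leave implicit are easily supplied by the same pairing and the standing conventions: the identity also shows that $\pi_X$ restricted to each slice is an immersion (so $\li_t$ really is a homotopy of immersed Lagrangian submanifolds, as the paper's definition of homotopy requires), and completeness of the lifted flow uses compactness of $L^\pm$ and of $K$ away from its product ends. With those remarks, there is no genuine gap.
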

Given an exact homotopy $\li_t: L\times \RR\to X$ whose primitive $H_t: L\to \RR$ has compact support, there is a  \emph{suspension cobordism of $\li_t$} parameterized by:
\begin{align*}
      L\times \RR\to & X\times \CC\\
            (q, t)\mapsto& (\li_t(q), t+\jmath H_t(q))\in X\times \CC.
\end{align*}
The Hofer norm of $\li_t$ is equal to the shadow of the suspension cobordism.

For the purpose of providing some geometric grounding to our discussions, we give an example of a Lagrangian cobordism which is not an exact homotopy.
We first note that every compact Lagrangian submanifold $K\subset X\times \CC$ gives an example of a Lagrangian cobordism $K:\emptyset\rightsquigarrow\emptyset$. 
While these Lagrangian cobordisms are not very interesting from a Floer-theoretic perspective (as they can be displaced from themselves), they are useful for understanding the kinds of  geometry which can appear in a  Lagrangian cobordism.
\begin{example}[Sheared Product Torus]
      \label{exam:shearedtorus}
      Consider $\CC\times \CC$ with coordinates $(q_1, p_1, q_2, p_2)$.
      The product Lagrangian torus $L_{T^2}$ is the submanifold parameterized by 
      \[(\theta_1, \theta_2)\mapsto (\cos(\theta_1)+\jmath\sin(\theta_2), \cos(\theta_1)+\jmath\sin(\theta_2))\subset \CC\times \CC.\] 
      We apply a linear symplectic transformation 
      \begin{align*}
            \phi: \CC^2\to& \CC^2\\
            (q_1, p_1, q_2, p_2)\mapsto& \left(q_1+\frac{1}{2}q_2+ \jmath \frac{4}{3}p_1-\frac{2}{3}p_2, q_2+\frac{1}{2}q_1+ \jmath  \frac{4}{3}p_2-\frac{2}{3}p_1\right)
      \end{align*}
      so that $K:=\phi(L_{T^2})$ is in general position.
      After taking this shear, $\pi_\RR: K\to \RR$ is a Morse function with four critical values corresponding to the standard maximum, saddles, and minimum on the torus.
      Several slices and the shadow of the Lagrangian cobordism are drawn in \cref{fig:lagtorus}.
\end{example}
\begin{figure}
      \centering
      \includegraphics{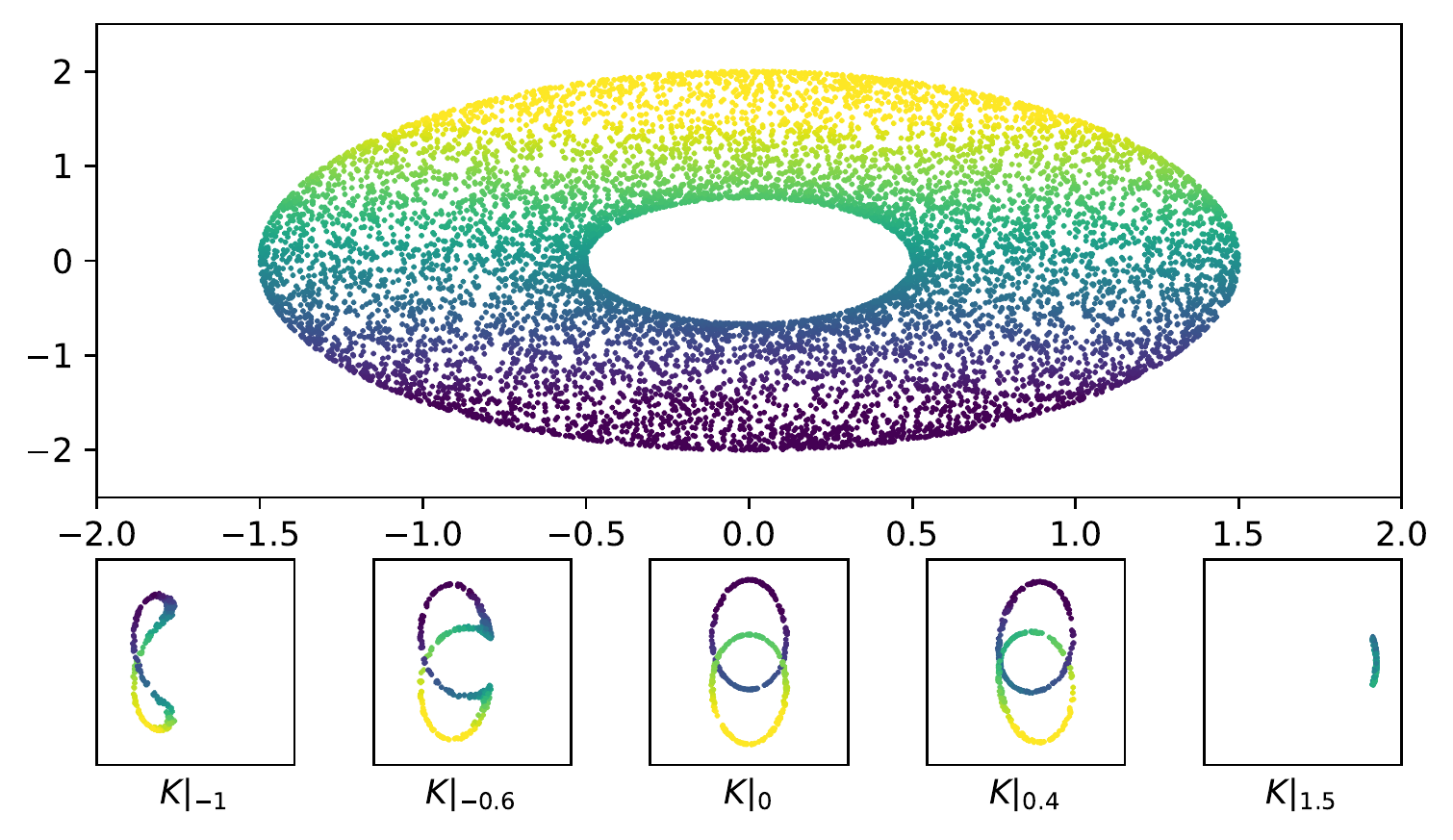}
      \caption{Scatter plots of randomly selected points lying on the sheared product torus $K=\phi(L_{T^2})$. The color corresponds to the value of $\pi_{\jmath \RR}: K\to \RR$. Top: The shadow projection of the Lagrangian submanifold $K$. Bottom: Several slices of $K=\phi(L_{T^2})$ at different real values. The critical values of the cobordism parameter $\pi_\RR: K\to \RR$ are at $\pm 1.5$ and $\pm .5$.
      We use this opportunity to highlight a common misconception: although all slices $K|_t$ are immersed, the Lagrangian $K$  is embedded.
      }
      \label{fig:lagtorus}
\end{figure}

\subsection{Previous Work: Anti-surgery}
Lagrangian anti-surgery, introduced by \cite{haug2015lagrangian}, gives a method for embedding the Lagrangian cobordism handle from  \cite{audin1994symplectic}.
Given a Lagrangian $L\subset X$, an \emph{isotropic anti-surgery disk} for $L$ is an embedded isotropic disk $i: D^{k+1}\to X$ with the following properties:
\begin{itemize}
    \item \emph{Clean Intersection:} The boundary of $D^{k+1}$ is contained in $L$. 
    Additionally, the interior of $D^{k+1}$ is disjoint from $L$, and the outward pointing vector field to $D$ is transverse to $L$. 
    \item \emph{Trivial Normal Bundle:} Over $D^{k+1}$, we can write a splitting $(TD^{k+1})^\omega= TD^{k+1}\oplus E$. 
    Furthermore, we ask that there is a symplectic trivialization $D^{k+1}\times \CC^{n-k-1}\to E$ so that over the boundary, $E|_{\partial D^{k+1}}$ is contained in $TL|_{\partial D^{k+1}}$.
\end{itemize}
Given an isotropic anti-surgery disk $D^{k+1}$ with boundary on $L$, \cite{haug2015lagrangian} produces a Lagrangian $\alpha_{D^{k+1}}(L)$, the anti-surgery of $L$ along $D^{k+1}$, along with a Lagrangian anti-surgery trace cobordism 
\[K_{\alpha_{D^{k+1}}}:L\rightsquigarrow \alpha_{D^{k+1}}(L).\]
As a manifold, $\alpha_{D^{k+1}}(L)$ differs from $L$ by $k$-surgery along $\partial D^{k+1}$, and the cobordism parameter $\pi_\RR:K_{\alpha_{D^{k+1}}}\to \RR$ provides a Morse function with a single critical point of index $k+1$.
A $k$-surgery is a modification of a manifold $L$ which replaces a subset of the form $S^k\times D^{n-k}$ with $D^{k+1}\times S^{n-k-1}$.
When compared to $L$, the anti-surgery $\alpha_{D^{k+1}}(L)$ possesses a single additional self-intersection $q_{D^n}$.
The construction is inspired by an analogous construction for Legendrian submanifolds in \cite{rizell2012legendrian}.
The terminology ``anti-surgery'' is based on the following observation: given a \emph{Lagrangian} anti-surgery disk  ${D^n}$ for $L$, the Polterovich surgery \cite{polterovich1991surgery} of $\alpha_{D^n}(L)$ at the newly created self-intersection point $q_{D^n}$ is Lagrangian isotopic to $L$.
In this sense, anti-surgery and surgery are inverse operations on Lagrangian submanifolds.
Accordingly, if $L$ arises from $L'$ by anti-surgery along a disk $D^{k+1}$, Haug states that $L$ arises from $L'$ by Lagrangian $n-k-1$ surgery. 

These surgeries and anti-surgeries appear in \cref{fig:lagtorus}, which decompose the product torus into slices related by the creation/deletion of Whitney spheres, surgeries, and anti-surgeries.
Some higher-dimensional examples of anti-surgery are in \cref{fig:handle12,fig:handle03}, which draws Lagrangians related by anti-surgery in the cotangent bundle of $\RR^2$.
In these figures, we've highlighted the isotropic anti-surgery disk corresponding modifications in red; the Lagrangians on the right-hand side all exhibit a single self-intersection at the origin.

\subsection{Floer Theoretic Properties of Lagrangian Cobordisms}
Our motivation for studying Lagrangian cobordisms comes from their Floer theoretic properties.
A fundamental result states that cobordant Lagrangians have homotopic Floer theory.
\begin{theorem}[\cite{biran2013lagrangian}]
      Suppose that $K: L^+\rightsquigarrow L^-$ is a monotone embedded Lagrangian submanifold. 
      Let $L'\subset X$ be a monotone test Lagrangian submanifold.
      Then the chain complexes $\CF(L^+, L')$ and $\CF(L^-, L')$ are chain homotopic.
      \label{thm:birancornea}
\end{theorem}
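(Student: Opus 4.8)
The plan is to deduce this from a single auxiliary Floer complex built in $X\times\CC$, in the spirit of counting holomorphic strips that project controlledly to the $\CC$-factor. First I would fix an embedded curve $\gamma\subset\CC$ whose two ends run off to infinity in the positive imaginary direction along the vertical lines $\Re=a^-$ and $\Re=a^+$, with $a^-<t^-<0<t^+<a^+$, joined below the real axis by an arc passing beneath the compact set $D$ of \cref{def:cobordism}. Then $\gamma$ meets the shadow projection $\pi_\CC(K)$ transversally in exactly two points, one on the ray $\RR_{<t^-}$ and one on the ray $\RR_{>t^+}$. Consequently $L'\times\gamma\subset X\times\CC$ is a Lagrangian — monotone, after the usual exact normalization of $\gamma$, since $L'$ is monotone — which, after a small perturbation, meets $K$ in the points $\{(p,a^+):p\in L^+\cap L'\}\sqcup\{(p,a^-):p\in L^-\cap L'\}$, so that as graded groups $\CF(K, L'\times\gamma)\cong \CF(L^+, L')\oplus\CF(L^-, L')$ up to an overall shift.

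Next I would analyze the Floer differential of $\CF(K, L'\times\gamma)$ with an almost complex structure on $X\times\CC$ that is split near the ends of $K$ and near infinity, so that $\pi_\CC$ sends holomorphic strips to holomorphic maps into $\CC$ with boundary on $\pi_\CC(K)$ and $\gamma$. An open mapping/maximum principle argument confines these projections to the bounded region cut out by $\gamma$ and the properly embedded line $\pi_\CC(K)$; a strip with both asymptotics near $a^+$ then has constant $\CC$-projection and so is homotopic to an honest Floer strip for $(L^+, L')$, and similarly near $a^-$. This identifies the two diagonal blocks of the differential with the Floer differentials of $\CF(L^+, L')$ and $\CF(L^-, L')$, while the positioning of $\gamma$ provides an action filtration in which the only surviving off-diagonal strips run from the $a^+$-generators to the $a^-$-generators; the relation $d^2=0$ then forces the corresponding count $\phi$ to be a chain map, and
\[
\CF(K, L'\times\gamma)\ \cong\ \cone\big(\phi\colon\CF(L^+, L')\to\CF(L^-, L')\big)
\]
up to shift.

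Finally I would invoke invariance. Because $K$ is monotone and embedded and $L'\times\gamma$ is monotone, $\CF(K, L'\times\gamma)$ is unobstructed and its homology is invariant under Hamiltonian isotopies respecting the behavior at infinity (monotonicity rules out the disk and sphere bubbling that would otherwise obstruct $d^2=0$ and invariance). Sliding the arch of $\gamma$ upward through $D$ and across the real axis is such an isotopy — it fixes the two vertical ends — and it makes $\gamma$ disjoint from $\pi_\CC(K)$, hence displaces $L'\times\gamma$ from $K$; therefore $\CF(K, L'\times\gamma)\simeq 0$. Combined with the previous step, $\cone(\phi)$ is acyclic, so $\phi$ is a quasi-isomorphism, and being a map of bounded complexes of finite-dimensional vector spaces over the ground field it is a chain homotopy equivalence, which is the assertion.

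I expect the main obstacle to be the geometric input of the middle step: establishing rigorously, via the $\pi_\CC$-projection and the maximum principle, that the auxiliary differential is block-triangular with the expected diagonal terms and that the surviving off-diagonal count is genuinely a chain map, together with setting up the Floer theory of $\CF(K, L'\times\gamma)$ in the noncompact manifold $X\times\CC$ (fibered almost complex structures, compactness, and the verification that monotonicity of $K$ and of $L'\times\gamma$ suffices for unobstructedness and isotopy invariance). The surrounding homological algebra — that an acyclic mapping cone forces the map to be an equivalence — is then purely formal.
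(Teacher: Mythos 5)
This theorem is not proved in the paper at all: it is quoted from \cite{biran2013lagrangian}, so there is no internal argument to compare against. What you have written is essentially a reconstruction of Biran and Cornea's own proof in the two-ended case: couple the cobordism against $L'\times\gamma$ for a curve $\gamma$ crossing both horizontal rays, show the resulting Floer complex is a cone on a map $\phi\colon\CF(L^+,L')\to\CF(L^-,L')$, and then displace $\gamma$ from the shadow to conclude the cone is acyclic. The architecture, the role of monotonicity, and the final homological-algebra step are all correct, and you have correctly identified where the real work lies.

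One caveat on the middle step, since it is the heart of the cited proof and your phrasing of it is slightly too strong. You assert that a strip with both asymptotics over $a^+$ has constant $\CC$-projection because locally the projections of $K$ and $L'\times\gamma$ form a cross at $a^+$. But the boundary of the projected image is only constrained to lie in the \emph{global} union $\pi_\CC(K)\cup\gamma$, and that union does enclose bounded regions (between the arch of $\gamma$, the vertical segments, and the shadow over $D$), so a non-constant projection is not excluded by the local corner picture alone. Biran--Cornea handle this by choosing curves with a specific ``bottleneck'' shape and proving a no-escape lemma via the open mapping theorem and the asymptotic behaviour at the corners; this is exactly what yields the triangularity of the differential (strips may flow from the $a^+$ generators to the $a^-$ generators but not back, and same-end strips are genuine product strips). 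So your plan is sound, but the sentence ``has constant $\CC$-projection'' should be replaced by the bottleneck argument rather than presented as an immediate consequence of the maximum principle; with that repaired, the proposal matches the original proof of the cited result.
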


More generally, \citeauthor{biran2013lagrangian} prove that a Lagrangian cobordism with $k$-inputs $\{L^+_i\}_{i=1}^k$ and output $L^-$ yields a factorization of $L^-$ into an iterated mapping cone of the $L^+_i$. 
In the setting of two-ended monotone Lagrangian cobordisms, applications of \cref{thm:birancornea} are limited by lack of examples.
In fact,  \cite{suarez2017exact} shows that under the stronger condition that $K$ is exact, every embedded exact Lagrangian cobordism $K: L^0\rightsquigarrow L^1$ of $\dim(K)\geq 6$ has  the \emph{topology} of $L^0\times \RR$. 
It is still currently unknown if all such Lagrangian cobordisms are Hamiltonian isotopic to suspensions of Hamiltonian isotopies. 

It is expected that \cref{thm:birancornea} should extend to more general settings than monotone Lagrangian submanifolds. 
One of the broader extensions is to the class of \emph{unobstructed immersed Lagrangian cobordisms}. 
Roughly, unobstructed Lagrangians are those whose counts of holomorphic disks can be made to cancel in cohomology (see the discussion following \cref{def:boundingcochain}).
The Floer theoretic property of unobstructedness is absolutely necessary to obtain a continuation map in the Fukaya category. 
We given an example of a Lagrangian cobordism which cannot give a continuation map in the Fukaya category.
\begin{example}
      \label{exam:obstructedLagrangian}
      Let $S^1_E\subset T^*S^1$ be the section $\frac{E}{2\pi} d\theta$ which bounds an annulus of area $E$ with the zero section. Pick $E_1\neq E_2$ two positive real numbers.
      Consider the Lagrangian submanifold which is the disjoint union of two circles $S^1_{E_1} \sqcup S^1_{-E_1}\subset T^*S^1$, as drawn in \cref{fig:obstructedLagrangian}.
      By applying anti-surgery along the interval $\{0\}\times [-E_1/2\pi, E_1/2\pi]$, we obtain a Lagrangian cobordism to a Lagrangian double section of $T^*S^1$ intersecting the zero section at $(0, 0)$.
      We subsequently apply Lagrangian surgery at this self-intersection to obtain $S^1_{E_2}\sqcup S^1_{-E_2}$.
     Consider the Lagrangian cobordism built from concatenating the anti-surgery and surgery trace cobordism. This is an immersed Lagrangian cobordism which can be perturbed by Hamiltonian isotopy to make the self-intersections transverse.  Let $K\subset T^*S^1\times \CC$ be this Lagrangian cobordism, which has a single self-intersection.

      The statement of \cref{thm:birancornea} cannot be extended to a class of Lagrangian cobordism which contains $K$. 
       In this setting, $E_1\neq E_2$, and so the Lagrangians $S^1_{E_2} \sqcup S^1_{-E_2}$ and $S^1_{E_1} \sqcup S^1_{-E_1}$ are disjoint. 
      Since $S^1_{E_2} \sqcup S^1_{-E_2}$ and $S^1_{E_1} \sqcup S^1_{-E_1}$ are non-isomorphic objects of the Fukaya category, the Lagrangian cobordism $K$ cannot hope to yield a continuation map.

      We propose that the proof of \cref{thm:birancornea} fails because the Lagrangian $K$ is an obstructed Lagrangian submanifold, that is, the Floer differential on $K$ does not square to zero due to the possibility of disk/teardrop bubbling (see discussion around \cref{def:boundingcochain}). The Lagrangian cobordism $K$ bounds two holomorphic teardrops of area $A_1$ and $A_2$, whose projections under $\pi_\CC$ are drawn in \cref{fig:obstructedLagrangian}.
      When these teardrops have differing area, they collectively contribute a non-trivial $m^0$ curvature term to the Floer cohomology $\CF(K)$, which cannot be cancelled by a bounding cochain.
\end{example}
\begin{figure}
      \centering

\usetikzlibrary{matrix, arrows,  decorations.markings,decorations.pathreplacing,  patterns,  plotmarks}

\begin{tikzpicture}

\begin{scope}[shift={(1,-1)}]
\fill[gray!20]  (-3,2.5) rectangle (-1.6,-2.5);
\draw[thick](-3,-0.5) -- (-1.6,-0.5) ;
\draw[thick] (-3,0.5) -- (-1.6,0.5);
\node at (-3,0) {$\uparrow$};
\node at (-1.6,0) {$\uparrow$};
\draw (-3,2.5) -- (-3,-2.5) (-1.6,-2.5) -- (-1.6,2.5);
\draw[dotted] (-3,2.5) -- (-1.6,2.5) (-3,-2.5) -- (-1.6,-2.5);
\end{scope}

\begin{scope}[shift={(6,-1)}]
\fill[gray!20]  (-3,2.5) rectangle (-1.6,-2.5);
\draw[thick, brown] (-1.6,1.3) .. controls (-1.8,1.3) and (-1.8,1.9) .. (-2.2,1.9) .. controls (-2.3,1.9) and (-2.3,-1.9) .. (-2.4,-1.9) .. controls (-2.8,-1.9) and (-2.8,-1.3) .. (-3,-1.3);

\draw (-3,2.5) -- (-3,-2.5) (-1.6,-2.5) -- (-1.6,2.5);
\draw[dotted] (-3,2.5) -- (-1.6,2.5) (-3,-2.5) -- (-1.6,-2.5);
\node at (-3,0) {$\uparrow$};
\node at (-1.6,0) {$\uparrow$};

\draw[thick, brown] (-1.62,-1.3) .. controls (-1.82,-1.3) and (-1.82,-1.9) .. (-2.2,-1.9) .. controls (-2.3,-1.9) and (-2.3,1.9) .. (-2.4,1.9) .. controls (-2.8,1.9) and (-2.82,1.3) .. (-3.02,1.3);

\end{scope}

\begin{scope}[shift={(3,-1)}]
\fill[gray!20]  (-3,2.5) rectangle (-1.6,-2.5);
\node at (-3,0) {$\uparrow$};
\node at (-1.6,0) {$\uparrow$};
\draw (-3,2.5) -- (-3,-2.5) (-1.6,-2.5) -- (-1.6,2.5);
\draw[dotted] (-3,2.5) -- (-1.6,2.5) (-3,-2.5) -- (-1.6,-2.5);
\draw[thick, brown] (-3,0.5) .. controls (-2.8,0.5) and (-2.8,1.3) .. (-2.4,1.3) .. controls (-2.3,1.3) and (-2.3,-1.3) .. (-2.2,-1.3) .. controls (-1.8,-1.3) and (-1.8,-0.5) .. (-1.6,-0.5);
\draw[thick, brown] (-3,-0.5) .. controls (-2.8,-0.5) and (-2.8,-1.3) .. (-2.4,-1.3) .. controls (-2.3,-1.3) and (-2.3,1.3) .. (-2.2,1.3) .. controls (-1.8,1.3) and (-1.8,0.5) .. (-1.6,0.5);
\end{scope}

\begin{scope}[shift={(8.5,-1)}]
\fill[gray!20]  (-3,2.5) rectangle (-1.6,-2.5);
\node at (-3,0) {$\uparrow$};
\draw[thick](-3,1.5) -- (-1.6,1.5);
\draw (-3,2.5) -- (-3,-2.5) (-1.6,-2.5) -- (-1.6,2.5);
\draw[dotted] (-3,2.5) -- (-1.6,2.5) (-3,-2.5) -- (-1.6,-2.5);
\draw[thick] (-3,-1.5) -- (-1.6,-1.5) ;
\node at (-1.6,0) {$\uparrow$};\end{scope}

\fill[gray!20]  (-2,-4) rectangle (7.25,-7);

\draw[dashed, <-] (-1.25,-3.65) -- (-1.25,-7);
\draw[dashed, <-] (0.75,-3.65) -- (0.75,-7);
\draw[dashed, <-] (3.75,-3.65) -- (3.75,-7);
\draw[dashed, <-]  (6.3,-3.75) -- (6.3,-7);

\draw[<-] (-1,1.7) .. controls (-0.75,2.2) and (0.25,2.2) .. (0.5,1.7);
\draw[<-] (1,1.7) .. controls (1.25,2.2) and (2.25,2.2) .. (2.5,1.7);
\draw[<-] (3,1.7) .. controls (3.5,2.2) and (5.5,2.2) .. (6,1.7);
\draw[<-]   ;
\node[above] at (-0.25,2.2) {\tiny Surgery};
\node[above] at (1.75,2.2) {\tiny Exact Homotopy};
\node[above] at (4.5,2.2) {\tiny Anti-Surgery};
\node at (5.25,-3.9) {$\mathbb C$};

\node at (7.5,-2.5) {$-E_1/2\pi$};
\node at (-3,-1.5) {$-E_2/2\pi$};
\node at (-3,-0.5) {$E_2/2\pi$};
\node at (7.5,0.5) {$E_1/2\pi$};

\begin{scope}[shift={(1.7,-2.8)}]

\node at (6,1.75) {$T^*S^1$};

\end{scope}

\node[teal, circle, scale=.2, fill] at (0.7,-1) {};
\node[teal, circle, scale=.2, fill] at (3.7,-1) {};

\fill[brown!20] (3.75,-5.5) .. controls (3.25,-5) and (1,-5) .. (0.5,-5) .. controls (0,-5) and (-0.5,-5.5) .. (-1.25,-5.5) .. controls (-0.75,-5.5) and (0,-6) .. (0.5,-6) .. controls (1,-6) and (3.25,-6) .. (3.75,-5.5);
\fill[brown!20] (3.75,-5.5) .. controls (4.25,-6) and (4.5,-6) .. (4.75,-6) .. controls (5,-6) and (5.25,-5.5) .. (5.75,-5.5) .. controls (5.25,-5.5) and (5,-5) .. (4.75,-5) .. controls (4.5,-5) and (4.25,-5) .. (3.75,-5.5);

\draw (-2,-5.5) .. controls (-1.5,-5.5) and (-1.5,-5.5) .. (-1.25,-5.5) .. controls (-0.75,-5.5) and (0,-6) .. (0.5,-6) .. controls (1,-6) and (3.25,-6) .. (3.75,-5.5) .. controls (3.25,-5) and (1,-5) .. (0.5,-5) .. controls (0,-5) and (-0.75,-5.5) .. (-1.25,-5.5);
\draw (5.75,-5.5) .. controls (5.25,-5.5) and (5,-6) .. (4.75,-6) .. controls (4.5,-6) and (4.25,-6) .. (3.75,-5.5) .. controls (4.25,-5) and (4.5,-5) .. (4.75,-5) .. controls (5,-5) and (5.25,-5.5) .. (5.75,-5.5);
\draw[pattern=north west lines, pattern color=red] (0.5,-5) .. controls (0,-5) and (0,-6) .. (0.5,-6) .. controls (1,-6) and (3.25,-6) .. (3.75,-5.5) .. controls (3.25,-5) and (1,-5) .. (0.5,-5);
\draw[pattern=north west lines, pattern color=teal] (4.75,-6) .. controls (4.5,-6) and (4.25,-6) .. (3.75,-5.5) .. controls (4.25,-5) and (4.5,-5) .. (4.75,-5) .. controls (5,-5) and (5,-6) .. (4.75,-6);
\draw (5.75,-5.5) -- (7.25,-5.5);
\node at (0.75,-5.5) {$A_2$};
\node at (4.5,-5.5) {$A_1$};

\node at (-4.5,-1) {(a) Slices};
\node at (-4.5,-5.5) {(b) Shadow};

\end{tikzpicture}       \caption{An oriented immersed Lagrangian cobordism. 
      If $A_1\neq A_2$, then $K$ is an obstructed Lagrangian submanifold.
      The slashed regions correspond to the images of holomorphic teardrops with boundary on $K$ under the projection $\pi_\CC$.
      The boundary of these teardrops obstruct the solution to the Maurer-Cartan equation for $K$.}
      \label{fig:obstructedLagrangian}
\end{figure}
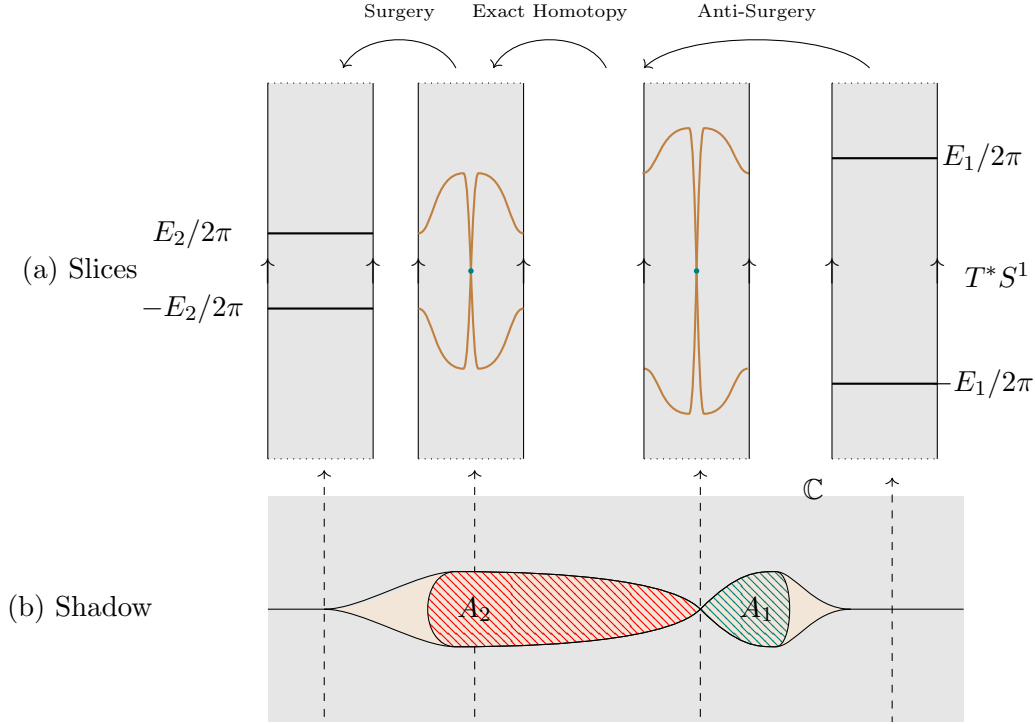
This example demonstrates that understanding when Lagrangians are unobstructed is essential for building meaningful continuation maps from Lagrangian cobordisms. We will return to \cref{exam:obstructedLagrangian} in \cref{subsec:initialcomputation}, where we examine the setting where $A_1=A_2$ and the Lagrangian cobordism $K$ is unobstructed.
In previous work, the author \cite{hicks2019wall} showed that bounding cochains for Lagrangian cobordisms could be used to compute wall-crossing transformations for Lagrangian mutations. \section{Lagrangian cobordisms are Lagrangian surgeries}
\label{par:one}
In this section we prove that every Lagrangian cobordism can be decomposed into a composition of Lagrangian surgery traces and exact homotopy suspensions. 
\Cref{subsec:decompositions} gives some constructions for decomposing Lagrangian cobordisms.
In \cref{subsec:highersurgery} we describe the standard Lagrangian surgery handle.
Finally, in \cref{subsec:cobordismsaresurgery} we show that a Lagrangian cobordism can be exactly homotoped to good position, and subsequently decomposed into surgery traces.
\subsection{Decompositions of Lagrangian Cobordisms}
\label{subsec:decompositions}
We consider two types of decompositions for Lagrangian cobordisms $K\subset X\times \CC$: across the cobordism parameter $\CC$ in \cref{subsection:truncation} and across the $X$-coordinate in \cref{subsubsec:Xdecomposition}.

\subsubsection{Gluing across $\CC$ parameter: concatenation}
Given Lagrangian cobordisms 
\begin{align*}
      K^{+0}: L^+\rightsquigarrow L^0 &&  K^{0-}: L^0\rightsquigarrow L^-
\end{align*}
 there exists a concatenation cobordism $K^{0-}\circ K^{+0}: L^+\rightsquigarrow L^-$. 
The exact homotopy class of the concatenation does not depend on the length of cylindrical component connecting the negative end of $K^{+0}$ to the positive end of $K^{0-}$.
The concatenation operation shows that Lagrangian cobordance is an equivalence relation on the set of Lagrangian submanifolds.
In the setting of differentiable manifolds, concatenation can be used to provide a decomposition of any cobordism into a sequence of standard surgery handles.

\subsubsection{Gluing across $X$ parameter: Lagrangian cobordisms with cylindrical boundary}
When describing surgery and trace cobordisms, it is also important to consider Lagrangian cobordisms  with cylindrical boundary.
We call this decomposition along the $X$-coordinate. 
\begin{definition}
      Let $L^+\subset X$ be a Lagrangian submanifold with boundary $M$. Let $M\times [0, \epsilon)\subset X$ be a collared neighborhood of the boundary. 
      A Lagrangian cobordism with cylindrical boundary $K:L^+\rightsquigarrow L^-$ is a Lagrangian submanifold $K\subset X\times \CC$ whose boundary has a collared neighborhood of the form $(M\times [0,\epsilon))\times \RR\subset X\times \CC$.
\end{definition}
We will use Lagrangians cobordisms with cylindrical boundary to describe local modifications to Lagrangian submanifolds. 
Let  $L^+=L^+_\downarrow \cup_{M} L^+_\uparrow$ be a decomposition of a Lagrangian submanifolds along a surface  $M= \partial L^+_\uparrow$. 
Given $K_{\uparrow}: L^+_\uparrow\to L^-_\uparrow$ a Lagrangian cobordism with cylindrical boundary $M$, we can obtain a Lagrangian cobordism 
\[K_\uparrow\cup_{M\times \RR} ( L^+_\downarrow\times \RR):  L^+\rightsquigarrow( L^-_\uparrow \cup_{M} L^+_\downarrow).\]
In this case, we say that the Lagrangian  $L^-:= L^-_\uparrow \cup_{M} L^+_\downarrow$ arises from modification of $L^+$ at the set $L^+_\uparrow$.
\begin{definition}
We say that $K: L^+\rightsquigarrow L^-$ decomposes across the $X$-coordinate along $M\subset L^+$ if there exist Lagrangian cobordisms with cylindrical boundary  $M$ 
\begin{align*}
      K_\uparrow:L^+_\uparrow\rightsquigarrow  L^-_\uparrow && K_\downarrow :L^+_\downarrow\rightsquigarrow L^-_\downarrow
\end{align*}
  so that $L^\pm= L^\pm_\uparrow \cup_M L^\pm_\downarrow\subset X$ is a Lagrangian submanifold and 
\[ K=K_\uparrow\cup_{M\times \RR} K_\downarrow.\]
\label{def:Xdecomposition}
\end{definition}
\begin{prop}
Suppose that $K$ decomposes across the $X$-coordinate so we may write $K=K_\uparrow\cup_{M\times \RR} K_\downarrow.$ Then $K$ is exactly homotopic to either of the following compositions of Lagrangian cobordisms.
\begin{align*}
      ( K_\uparrow\cup_{M\times \RR} (L^-_\downarrow\times \RR) )&\circ ((L^+_\uparrow \times \RR)\cup_{M\times \RR} K_\downarrow ) \\
       ((L^-_\uparrow\times \RR)\cup_{M\times \RR}K_\downarrow )&\circ (K_\uparrow \cup_{M\times \RR}(L^+_\downarrow\times \RR)).
\end{align*}
\label{prop:exchangeRelation}
\end{prop}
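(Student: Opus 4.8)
The plan is to build the two exact homotopies geometrically by sliding the critical behavior of $K_\uparrow$ and $K_\downarrow$ past each other in the cobordism parameter. The starting point is the decomposition $K = K_\uparrow \cup_{M\times\RR} K_\downarrow$, in which $K_\uparrow$ and $K_\downarrow$ share the cylindrical boundary $M\times\RR \subset X\times\CC$. Outside a compact subset $D\subset\CC$, both pieces are already cylindrical in the $\CC$-direction, so after a compactly supported exact homotopy we may assume that the ``interesting'' part of $K_\uparrow$ — where its projection $\pi_\RR$ differs from a product — is supported over a compact interval $I_\uparrow\subset\RR$, and similarly $K_\downarrow$ is supported over $I_\downarrow\subset\RR$. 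The first exact homotopy translates the support of $K_\downarrow$ (along the cobordism parameter, i.e.\ by the flow generated by $\partial_{\Re}$ truncated to a compact region) so that $I_\downarrow$ lies entirely to the positive side of $I_\uparrow$; the second translates $K_\uparrow$ to lie entirely to the positive side of $I_\downarrow$. Because this translation is along a globally defined symplectic vector field on $X\times\CC$ restricted to a compact region, and because the two pieces agree with products near $M\times\RR$, the motion can be realized by an exact homotopy of $K$ with support in a neighborhood of the cylindrical region — this is exactly the kind of ``reparameterize the $\CC$-coordinate'' move used implicitly in the concatenation discussion of \cref{subsection:truncation}.

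Once the supports are separated in the cobordism parameter, the resulting Lagrangian is, on the nose, a concatenation. Concretely, after pushing $I_\downarrow$ above $I_\uparrow$: for cobordism values below $I_\uparrow$ the slice is $L^+ = L^+_\uparrow\cup_M L^+_\downarrow$; between $I_\uparrow$ and $I_\downarrow$ the $\uparrow$-part has already been modified to $L^-_\uparrow$ while the $\downarrow$-part is still $L^+_\downarrow$, i.e.\ the intermediate Lagrangian is $L^-_\uparrow\cup_M L^+_\downarrow$; above $I_\downarrow$ the slice is $L^-_\uparrow\cup_M L^-_\downarrow = L^-$. This exhibits $K$ (after exact homotopy) as
\[
\bigl( K_\uparrow\cup_{M\times\RR}(L^-_\downarrow\times\RR)\bigr)\circ\bigl((L^+_\uparrow\times\RR)\cup_{M\times\RR}K_\downarrow\bigr),
\]
where each factor is a genuine Lagrangian cobordism with cylindrical boundary glued to a product, hence a Lagrangian cobordism. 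The other decomposition is obtained symmetrically by instead pushing $I_\uparrow$ above $I_\downarrow$. Since the exact-homotopy class of a concatenation is independent of the length of the connecting cylinder (noted just before \cref{def:Xdecomposition}), the two orderings and the original $K$ all lie in the same exact-homotopy class.

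The main obstacle is the verification that the separating motion is genuinely an \emph{exact} homotopy of $K$ — not merely a smooth isotopy of the underlying manifold — and that it does not disturb the cylindrical ends of $K$ itself (the honest ends $L^\pm\times\RR_{\gtrless t^\pm}$, as opposed to the cylindrical \emph{boundary} $M\times\RR$). This requires checking that the flux class of the translating homotopy is exact, which follows because the motion is generated by a Hamiltonian on $X\times\CC$ (a cutoff of the linear function generating translation in $\Re$) restricted to the compact region where $K$ is non-cylindrical; outside this region nothing moves. One must also ensure the gluing along $M\times\RR$ remains a smooth Lagrangian throughout — this is automatic because in the collar $M\times[0,\epsilon)\times\RR$ both pieces are literally products with $\RR$, so translating in the $\RR$-factor preserves the product structure and the two collars continue to match. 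I would present these as short lemmas and spend most of the writeup on the bookkeeping of which slice appears over which interval.
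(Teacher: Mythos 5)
Your overall strategy is the paper's: slide the two halves of $K$ past one another in the cobordism direction, using the fact that near $M\times\RR$ both halves are products over the real axis so the sliding can be cut off there. The paper performs both translations in a single exact homotopy (moving $K_\uparrow$ by $+t$, $K_\downarrow$ by $-t$, interpolated by a profile $\rho$ over the collar), whereas you move one half at a time; this difference is cosmetic.

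The step I would not accept as written is your justification of exactness. You invoke an ambient Hamiltonian obtained by cutting off the linear function generating real translation. Two problems: first, the cut-off Hamiltonian $\rho\cdot\pi_{\jmath\RR}$ does not generate translation where the cutoff varies, since its Hamiltonian vector field acquires the extra term $\pi_{\jmath\RR}\,X_\rho$ away from $\{\pi_{\jmath\RR}=0\}$; second, and more seriously, there is no reason an ambient cutoff can move $K_\downarrow$ while fixing $K_\uparrow$ --- away from the collar the two halves need not be separated in $X\times\CC$ (they may be arbitrarily close or even intersect, since $K$ can be immersed and slices of the two halves can overlap in $X$). The fix is to abandon the ambient picture and define the homotopy directly on the domain of $K$ via the decomposition: translate the points of $K_\downarrow$, fix $K_\uparrow$, and interpolate with a cutoff $\rho$ depending only on the collar coordinate. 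The flux form of this homotopy is $\rho\,d\pi_{\jmath\RR}$ restricted to $K$, which is exact with primitive $\rho\cdot\pi_{\jmath\RR}$ precisely because $\pi_{\jmath\RR}$ vanishes identically on the collar $(M\times[0,\epsilon))\times\RR$, where $d\rho\neq 0$; this vanishing is exactly the observation the paper's proof runs on. Finally, a bookkeeping slip: with the paper's conventions ($L^+$ over the positive real ray, composition read right to left), pushing the non-cylindrical part of $K_\downarrow$ to the positive side of that of $K_\uparrow$ yields intermediate slice $L^+_\uparrow\cup_M L^-_\downarrow$ and the first displayed composition, not the intermediate slice $L^-_\uparrow\cup_M L^+_\downarrow$ you describe; your labels match the other ordering. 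Since both orderings arise symmetrically this costs nothing, but the labels should be made consistent. Also, your preliminary homotopy making the interesting parts compactly supported is unnecessary: this already follows from the definitions of Lagrangian cobordism and cylindrical boundary.
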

\begin{proof}
      Observe that the Hamiltonian $\pi_{\jmath\RR}:X\times \CC\to \RR$ restricts to the constant zero function on the collar boundary $(M\times [0, \epsilon))\times \RR$ of $K_\uparrow$. The flow of the associated Hamiltonian vector field on $X\times \CC$ is leftward translation of the cobordism parameter. Similarly, the Hamiltonian $-\pi_{\jmath\RR}: X\times \CC\to \RR$ restricts to the constant zero function on the collar boundary of $K_\downarrow$; its associated Hamiltonian flow is rightward translation. Let $\rho:(-\epsilon, \epsilon)\to (-1, 1)$ be a smooth increasing function which is constantly $\pm 1$ in a neighborhood  of $\pm \epsilon$. Let $\lj:K\to X\times \CC$ parameterize our Lagrangian cobordism. The Lagrangian isotopy
      \begin{align*}
            \lj_t: K\times \RR\to& X\times \CC\\
            q\mapsto& \left\{ \begin{array}{cc} (\pi_X\circ \lj(q), \pi_\CC \circ \lj(q)+t)& \text{ if $q\in K_\uparrow\setminus (M\times [0, \epsilon))\times \RR$}\\
            (\pi_X\circ \lj(q), \pi_\CC \circ \lj(q)-t)& \text{ if $q\in K_\downarrow\setminus( M\times (\epsilon, 0])\times \RR$}\\
            (\pi_X\circ \lj(q), \pi_\CC \circ \lj(q)+ \rho\cdot t& \text{ if $q\in (M\times (-\epsilon, \epsilon))\times \RR$}
            \end{array}\right.
      \end{align*} 
      is an exact homotopy with primitive $\pi_{\jmath \RR}$ on $K_\uparrow\setminus ((M\times [0, \epsilon))\times \RR)$ and $-\pi_{\jmath \RR}$ on $K_\downarrow\setminus(\times \RR)$. The homotopy fixes the image of  $(M\times \epsilon)\times \RR$.
\end{proof}
\subsubsection{Some general tools for Lagrangian cobordisms}
As we will work with immersed Lagrangian cobordisms, we need the following replacement of Weinstein neighborhoods. 
\begin{definition}
      Let $X$ and $Y$ be symplectic manifolds.
      A \emph{local symplectic embedding} is a map $\phi: Y\to X$ so that around every $y\in Y$ there exists a neighborhood $U$ on which $\phi|_U: U\to X$ is a symplectic embedding.
      Let $\li: L\to X$ be an immersed Lagrangian submanifold. A \emph{local Weinstein neighborhood} is a map from a neighborhood of the zero section in the cotangent bundle of $L$ to $X$
      \[\phi: B^*_\epsilon L\to X\]
      which is a local symplectic embedding, and whose restriction to the zero section makes the diagram
      \[\begin{tikzcd}
            L\arrow{r}{\li} \arrow{d}{0}& X\\
            B^*_\epsilon L \arrow{ur}{\phi}
      \end{tikzcd}
      \] 
      commute.
\end{definition}

For both decomposition along the $X$ coordinate and cobordism parameter, we will use the following generalization of \cref{prop:suspension}.
\begin{lemma}[Generalized Suspension]
      Let $I$ be a finite indexing set. 
      Suppose that we are given for each $\alpha\in I$ an  exact Lagrangian homotopy $\li^\alpha_t : L^\alpha\times \RR\to X^\alpha$, whose flux primitive is $H^\alpha_t: L^\alpha\times \RR\to \RR$.
      Let $Y$ be another manifold, and pick functions   $\rho^\alpha: Y\to \RR$.
      For fixed $q^\alpha\in L^\alpha$, let $(y, d\rho^\alpha(y)):Y\to T^*Y$ be the parameterization of the exact Lagrangian section whose primitive is  $Y\to \RR, y\mapsto H_{\rho^\alpha(y)}^\alpha(q^\alpha)$.
      Then 
      \begin{align*}
            \lj_{\rho^I}:\left(\prod_{\alpha\in I} L^\alpha\right)\times Y\to& \left(\prod_{\alpha\in I}X^\alpha\right)\times T^*Y\\
            (q^\alpha, y) \mapsto& \left(\li_{\rho^\alpha(y)}^\alpha(q^\alpha), y, \sum_{\alpha\in I} H_{\rho^\alpha(y)}^\alpha(q^\alpha)d\rho^\alpha(y)\right)
      \end{align*}
      parameterizes a Lagrangian submanifold.
      \label{claim:suspensiongeneralization}
\end{lemma}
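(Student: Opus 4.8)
The plan is to verify directly that the map $\lj_{\rho^I}$ parameterizes an isotropic submanifold of the correct dimension, hence a Lagrangian one. Write $X = \prod_{\alpha} X^\alpha$ with symplectic form $\omega_X = \sum_\alpha \omega_\alpha$, and recall that the total target is $X \times T^*Y$ with form $\omega_X + \omega_{T^*Y}$, where $\omega_{T^*Y}= -d\lambda_Y$ for the tautological one-form $\lambda_Y$. The dimension count is immediate: $\dim\bigl((\prod_\alpha L^\alpha)\times Y\bigr) = \sum_\alpha \dim L^\alpha + \dim Y = \tfrac12\sum_\alpha \dim X^\alpha + \tfrac12\dim(T^*Y) = \tfrac12 \dim(X\times T^*Y)$. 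So the only content is isotropy, i.e.\ that the pullback of $\omega_X + \omega_{T^*Y}$ along $\lj_{\rho^I}$ vanishes.

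The key computational step is to split the pullback into the $X$-part and the cotangent part and see them cancel. First I would record that, since each $\li^\alpha_t$ is an \emph{exact} homotopy with flux primitive $H^\alpha_t$, the standard suspension identity applies: pulling $\omega_\alpha$ back along $(q^\alpha,s)\mapsto \li^\alpha_s(q^\alpha)$ gives $-d(H^\alpha_s(q^\alpha)\,ds)$ on $L^\alpha\times\RR$ — this is exactly the computation underlying \cref{prop:suspension} and the suspension-cobordism formula recalled just before it. Substituting $s = \rho^\alpha(y)$ (i.e.\ pulling back further along $\mathrm{id}\times\rho^\alpha: (\prod L^\alpha)\times Y \to L^\alpha\times\RR$) turns this into $-d\bigl(H^\alpha_{\rho^\alpha(y)}(q^\alpha)\,d\rho^\alpha(y)\bigr)$. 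Summing over $\alpha$, the $X$-contribution to the pulled-back form is $-d\bigl(\sum_\alpha H^\alpha_{\rho^\alpha(y)}(q^\alpha)\,d\rho^\alpha(y)\bigr)$. Meanwhile the $T^*Y$-factor of $\lj_{\rho^I}$ is precisely the graph of the one-form $\eta := \sum_\alpha H^\alpha_{\rho^\alpha(y)}(q^\alpha)\,d\rho^\alpha(y)$ on the base $(\prod L^\alpha)\times Y$ (regarded as a fiberwise-linear section over $Y$, pulled back), so the pullback of $\omega_{T^*Y} = -d\lambda_Y$ along it is $-d\eta$ with the opposite sign convention — more precisely the pullback of $\lambda_Y$ along a section $\sigma_\eta$ of $T^*Y$ is $\eta$ itself, so the pullback of $-d\lambda_Y$ is $-d\eta$... wait, I must be careful with signs: the graph of a one-form $\eta$ is Lagrangian in $(T^*Y, -d\lambda)$ iff $d\eta = 0$, and in general the pullback of $-d\lambda_Y$ to the graph is $+d\eta$. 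Hence the $T^*Y$-contribution is $+d\eta$, and it cancels the $X$-contribution $-d\eta$ exactly. Therefore $\lj_{\rho^I}^*(\omega_X + \omega_{T^*Y}) = 0$.

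The main obstacle I anticipate is purely bookkeeping: keeping the sign conventions for $\omega_{T^*Y}$, the tautological one-form, and the flux primitive consistent with those fixed earlier in the paper (the flux formula $\Flux_{t_0}(\li_t)(c) = \int_{c\times[0,t_0]}\omega$ and the suspension parameterization $(q,t)\mapsto(\li_t(q), t+\jmath H_t(q))$), and making sure the substitution $s\mapsto\rho^\alpha(y)$ and the reduction from ``section over $Y$'' to ``section over $(\prod L^\alpha)\times Y$'' interact correctly with $d$. Concretely one should check the base-case $I$ a singleton, $Y = \RR$, $\rho = \mathrm{id}$, which must recover the ordinary suspension cobordism of \cref{prop:suspension}; once that sign is pinned down, the general case follows verbatim since $d$ and pullback commute and the sum over $\alpha$ is finite. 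I would also remark that smoothness and the immersion property are automatic from smoothness of each $\li^\alpha_t$, $H^\alpha_t$, and $\rho^\alpha$, so no transversality input is needed.
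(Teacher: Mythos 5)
Your argument is correct and lands on the same cancellation as the paper, but it is packaged differently. The paper proves isotropy by a coordinate computation: it pushes forward the tangent vectors $\partial_{y_i}$ and $\sum_\alpha c_\alpha\partial_{v^\alpha}$ under $\lj_{\rho^I}$ and checks that $\omega$ vanishes on all pairs, using Lagrangian-ness of each $L^\alpha$, the flux identity $\iota_{\frac{d}{dt}\li^\alpha_t}\omega_{X^\alpha}|_{L^\alpha}=dH^\alpha_t$, and the closed-section computation for the $T^*Y$ directions. You instead factor the $X^\alpha$-component through the ordinary one-parameter suspension, invoke the identity $F^*\omega_\alpha=-d\left(H^\alpha_s\,ds\right)$, substitute $s=\rho^\alpha(y)$ using commutation of $d$ with pullback, and cancel against the pullback of the tautological one-form via $G^*\lambda_Y=\eta$ with $\eta=\sum_\alpha H^\alpha_{\rho^\alpha(y)}(q^\alpha)\,d\rho^\alpha$. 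This buys a genuine simplification: the general statement reduces to the ordinary suspension of \cref{prop:suspension} plus two pieces of functoriality, and the cancellation is exhibited as an identity of $2$-forms rather than checked on a spanning set of vector pairs. Two caveats. First, your literal sign conclusion is off: for a graph-type map $G$ one has $G^*(-d\lambda_Y)=-d(G^*\lambda_Y)=-d\eta$, not $+d\eta$; the cancellation therefore requires pairing $\omega_{T^*Y}=+d\lambda_Y$ with the paper's stated flux sign $\iota_V\omega|_{L}=+dH$, or else flipping both. This is a convention issue rather than a gap — note the paper itself is not internally consistent here, since its mixed-term computation concludes with $\iota_V\omega+dH=0$ despite the stated flux identity — and your plan to calibrate against the base case $I=\{1\}$, $Y=\RR$, $\rho=\mathrm{id}$ is exactly the right fix. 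You should also spell out that $G^*\lambda_Y=\eta$ holds even though the covector depends on $q^\alpha$ as well as $y$ (the coordinate check $G^*(\sum_i p_i\,dy_i)=\sum_i\bigl(\sum_\alpha H^\alpha\,\partial_{y_i}\rho^\alpha\bigr)dy_i$ does this in one line). Second, the remark that the immersion property is ``automatic from smoothness'' is too quick — it follows because each $\li^\alpha_t$ is an immersion and the $Y$-coordinate appears as the identity in the $T^*Y$-factor — though the paper's own proof does not address this point either.
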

We note that the usual suspension construction is recovered by taking $I=\{1\}$, $Y=\RR$, and $\rho(y)=t$.
\begin{proof}
      For convenience, write $\lj$ for $\lj_{\rho^I}$.
      Pick local coordinates $y_1 \ldots y_n$ for $Y$, so that we may locally identify $T^*Y$ with $\CC^n$ and write sections of the cotangent bundle as 
      \begin{align*}
            Y\to& T^*Y\\
            y_i \mapsto& \left( y_i+\jmath \sum_{\alpha\in I} H^\alpha_{\rho^\alpha(y)}(q^\alpha) \frac{\partial \rho^\alpha}{\partial y_i} \right).
      \end{align*}
      Let $\frac{d}{dt}\li_t^\alpha\in TX^\alpha$ be the vector field along the image of $\li_t^\alpha$ associated to the isotopy $\li_t^\alpha$.
      Let $(q_j^\alpha)$ denote local coordinate on the $L^\alpha$.
      Since $\li^\alpha_t$ is a exact Lagrangian homotopy with flux primitive given by $H^\alpha_t$, we have that:
      \[\iota_{\frac{d}{dt}\li_t^\alpha}\omega_{X^\alpha}|_{L^\alpha}= \sum_i \partial_{q_j} H^\alpha_t dq_j^\alpha.\]
      Let $\partial_{v^\alpha}\in TL^\alpha$, and $\partial_{y_i}\in TY$ be vectors. 
      We compute   $\lj_*(\partial_{y_j})$ and $\lj_*(\sum_{\alpha\in I} c_\alpha \partial_{v^\alpha})$ :
      \begin{align*}
            \lj_*(\partial_{y_j})=& \left(\frac{\partial \rho^\alpha}{\partial y_j} \frac{d}{dt}\li^\alpha_{\rho^\alpha(y)} , \delta_{ij} +\jmath\sum_{\alpha\in I}\left( \frac{\partial^2\rho^\alpha}{\partial y_i \partial y_j} H_{\rho^\alpha(y)}+  \frac{\partial\rho^\alpha}{\partial y_i}\frac{\partial \rho^\alpha}{\partial y_j}\frac{d}{dt}H^\alpha_{\rho^\alpha(y)}\right)\right)\\
            \lj_*\left(\sum_{\alpha\in I} c_\alpha \partial_{v^\alpha}\right)=& \left(\sum_{\alpha\in I}c_\alpha(\li_{\rho(s)})_* \partial_{v^\alpha},\jmath \sum_{\alpha\in I} c_\alpha \frac{\partial \rho^\alpha}{\partial y_i} \partial_v^\alpha H^\alpha_{\rho^\alpha(y)}\right)
      \end{align*} 
      We compute the vanishing of the symplectic form on pairs of vectors $\sum_{\alpha\in I} b_\alpha \partial_{v^\alpha}, \sum_{\beta\in I} c_\beta \partial_{w^\beta} \in \prod_{\alpha\in I} TL^\alpha$.
      This term vanishes as $TL^\alpha$ is a Lagrangian subspace of $\omega_X^\alpha$.
      \begin{align*}    
            \omega\left( \lj_*\left(\sum_{\alpha\in I} b_\alpha\partial_{v^\alpha}\right), \lj_*\left(\sum_{\beta\in I} c_\beta\partial_{w^\beta}\right)\right)=&\sum_{\alpha=\beta\in I} b_\alpha c_\beta \omega_{X^\alpha} \left( (\li_{\rho(s)})_* \partial_{v^\alpha},( \li_{\rho(s)})_* \partial_{w^\beta}\right)\\
      \end{align*}
The vanishing of the symplectic form on pairs of vectors $\lj_*(\partial_{y_i}), \lj_*\left(\sum_{\alpha\in I} c_\alpha \partial_{v^\alpha}\right)$ comes from the assumption that the Lagrangian homotopies $L^\alpha_t$ have flux primitive given by $H^\alpha_t$.
      \begin{align*}
            \omega\left( \lj_*(\partial_{y_i}), \lj_*\left(\sum_{\alpha\in I} c_\alpha \partial_{v^\alpha}\right)\right)=&\sum_{\alpha\in I} c_\alpha\left(\omega_{X^\alpha}\left( \frac{\partial \rho^\alpha}{\partial y_j} \frac{d}{dt}\li^\alpha_{\rho^\alpha(y)} ,(\li_{\rho(s)})_* \partial_{v^\alpha} \right)\right)\\&+\sum_{\alpha\in I}c_\alpha \omega_{T^*Y}\left(\delta_{ij},\frac{\partial \rho^\alpha}{\partial y_j} \partial_v^\alpha H^\alpha_{\rho^\alpha(y)}\right)\\
            =& \sum_{\alpha\in I} c_\alpha\left(\frac{\partial \rho^\alpha}{\partial y_i} \left( \iota_{ \frac{d}{dt}\li^\alpha_{\rho^\alpha(y)}}\omega_{X_\alpha}+ d H^\alpha_{\rho^\alpha(y)}\right) \partial_{v^\alpha}\right)
            = 0\\
      \end{align*}
      The vanishing of the symplectic form on pairs of vectors $\lj_*(\partial_{y_i}),\lj_*(\partial_{y_j})$ corresponds to the fact closed sections of the cotangent bundle are Lagrangian sections.
      \begin{align*}
            &\omega(\lj_*(\partial_{y_i}),\lj_*(\partial_{y_j}))=\sum_{\alpha\in I} \omega_{X^\alpha}\left(\frac{\partial \rho^\alpha}{\partial y_i} \frac{d}{dt}\li^\alpha_{\rho^\alpha(y)}, \frac{\partial \rho^\alpha}{\partial y_j} \frac{d}{dt}\li^\alpha_{\rho^\alpha(y)}\right) \\
            &+\omega_{Y}\left(\delta_{ik} +{\scriptstyle\jmath\sum_{\alpha\in I}\left( \frac{\partial^2\rho}{\partial y_i \partial y_k} H_{\rho(y)}+  \frac{\partial\rho^\alpha}{\partial y_i}\frac{\partial \rho^\alpha}{\partial y_k}\frac{d}{dt}H^\alpha_{\rho^\alpha(y)}\right)},\delta_{jk} +{\scriptstyle\jmath\sum_{\alpha\in I}\left( \frac{\partial^2\rho}{\partial y_j \partial y_k} H_{\rho(y)}+  \frac{\partial\rho^\alpha}{\partial y_j}\frac{\partial \rho^\alpha}{\partial y_k}\frac{d}{dt}H^\alpha_{\rho^\alpha(y)}\right)}\right)\\
            =&0
      \end{align*}
\end{proof}
\subsubsection{Decomposition across the cobordism parameter}
\label{subsection:truncation}
Cobordisms can be decomposed into smaller cobordisms along any regular level set of Morse function. 
We show an analogous decomposition for Lagrangian cobordisms.
\begin{prop}
Let $K: L^+\rightsquigarrow L^-$ be a Lagrangian cobordism with cylindrical boundary $M$.
Suppose that $\pi_\RR: K\to \RR$ has isolated critical values, and that $0\in \RR$ is a regular value of the projection $\pi_\RR: K\to \CC$, so that  $L^0:= K|_{0}\subset X$ is a Lagrangian submanifold.
Then there exists Lagrangian cobordisms with cylindrical boundary $M$
\begin{align*} 
      K\|_{(-\infty, 0]}: L^0\rightsquigarrow  L^-&&  K\|_{[0, \infty)}: L^+\rightsquigarrow L^0,\end{align*} so that $K$ and $ K\|_{(-\infty, 0]}\circ K\|_{[0, \infty)}$ are exactly homotopic Lagrangian cobordisms. Furthermore the construction can be done in such a way that 
      \begin{itemize}
            \item the Hofer norm of this exact homotopy is as small as desired and,
            \item if $K$ is embedded, and the slice $L^0$ is embedded, then $K\|_{(-\infty, 0]}\circ K\|_{[0, \infty)}$ is embedded as well. 
      \end{itemize}
      \label{prop:tdecomposition}
\end{prop}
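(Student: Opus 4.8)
The plan is to build $K\|_{[0,\infty)}$ and $K\|_{(-\infty,0]}$ by ``bending'' the cobordism near the slice $\{\pi_\RR = 0\}$ so that it becomes cylindrical there, then cut. First I would observe that since $0$ is a regular value of $\pi_\RR : K \to \RR$, the restriction of $K$ to a small slab $\pi_\RR^{-1}([-\delta,\delta])$ is, after applying the appropriate local Weinstein / local symplectic embedding (using the notion of local Weinstein neighborhood defined above, to accommodate the immersed case), symplectomorphic to a piece of $L^0 \times \CC$ together with the cylindrical collar $(M\times[0,\epsilon))\times\RR$. Concretely, $\pi_\RR^{-1}([-\delta,\delta])\cap K$ is a Lagrangian section-like graph over $L^0 \times [-\delta,\delta]$ inside a Weinstein neighborhood of $L^0$, and its shadow over the slab $[-\delta,\delta]\times i\RR$ is contained in a region of small area (controlled by $\delta$ and the geometry of $K$). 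This is where \cref{claim:suspensiongeneralization} enters: the piece of $K$ over the slab is, up to exact homotopy, a generalized suspension of an exact homotopy $\li_t$ of $L^0$ parameterized by $t\in[-\delta,\delta]$, with flux primitive $H_t$.

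Next I would choose a cutoff profile. Let $\beta : \RR \to \RR$ be a smooth nondecreasing function equal to $0$ for $s \leq -\delta$ and equal to $1$ for $s \geq \delta$, and more importantly choose a reparameterization of the cobordism parameter that ``flattens'' the homotopy $\li_t$ to be constant (equal to $\li_0 = $ inclusion of $L^0$) on a subinterval $[-\delta', \delta'] \subset (-\delta,\delta)$. In other words, I replace the homotopy $t \mapsto \li_t$ on $[-\delta,\delta]$ by $t \mapsto \li_{\sigma(t)}$ where $\sigma$ is monotone, equal to the identity near the endpoints $\pm\delta$, and constant $\equiv 0$ on $[-\delta',\delta']$. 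Because $\li_t$ is an exact homotopy, so is the reparameterized family, and the generalized suspension construction (\cref{claim:suspensiongeneralization}) produces a genuine Lagrangian submanifold $K'$ agreeing with $K$ outside the slab and equal to $L^0 \times [-\delta',\delta']$ (with zero imaginary part, matching the collar over $M$) inside. The homotopy from $K$ to $K'$ is supported in the slab and its primitive is bounded by $\sup|H_t|$ over the slab, which we can make as small as desired by shrinking $\delta$ first — this handles the Hofer-norm claim. Now $K'$ is literally a product near $\pi_\RR = 0$, so we set
\[
K\|_{[0,\infty)} := K' \cap \pi_\RR^{-1}([0,\infty)), \qquad K\|_{(-\infty,0]} := K' \cap \pi_\RR^{-1}((-\infty,0]),
\]
each a Lagrangian cobordism with cylindrical boundary $M$ (cylindrical at the new end because of the product structure, cylindrical at the old ends because $K$ already was), and $K' = K\|_{(-\infty,0]}\circ K\|_{[0,\infty)}$ after inserting a cylindrical segment of any length, which does not change the exact homotopy class of the concatenation.

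For the final bullet, embeddedness: if $K$ is embedded and $L^0$ is embedded, then the flattening takes place inside a Weinstein neighborhood of $L^0$ where $K$ is an honest graph, so $K'$ is a graph there too and hence embedded in the slab; outside the slab $K' = K$ is embedded; and the product region $L^0 \times [-\delta',\delta']$ is embedded since $L^0$ is. One must check the pieces glue to an embedded concatenation, but since both half-cobordisms are cylindrical (= product $L^0 \times \RR$) near $\pi_\RR = 0$ and $L^0$ is embedded, the concatenation is embedded. I expect the main obstacle to be the first step: carefully producing the local model identifying $K$ over a slab around a regular slice with a generalized suspension of an exact homotopy of $L^0$ — i.e. verifying that one can straighten $K$ to a graph over $L^0 \times [-\delta,\delta]$ in a (local) Weinstein neighborhood with controlled flux primitive, so that \cref{claim:suspensiongeneralization} applies and the Hofer estimate is genuinely $O(\delta)$. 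The reparameterization and cutting are then formal.
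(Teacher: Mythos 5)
Your proposal follows essentially the same route as the paper's proof: over a small slab around the regular value the cobordism is the suspension of an exact homotopy of $L^0$ (\cref{prop:suspension}), one reparameterizes so the homotopy is constant near $t=0$, realizes the change as an exact homotopy supported in the slab, then cuts at $0$ and extends by cylinders; your graph/section argument for embeddedness (valid once $\delta$ is small enough that the slab really is a Weinstein graph over $L^0\times[-\delta,\delta]$) is fine. Two points need repair, though neither changes the strategy.

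First, the claim that the deformation from $K$ to $K'$ is an exact Lagrangian homotopy with a controlled primitive is asserted rather than proved; this is precisely what the paper's two-variable truncation profile $\rho(t,s)$ together with \cref{claim:suspensiongeneralization} delivers: interpolating $\sigma_s$ between the identity and $\sigma$, the family $K_s$ parameterized by $(q,t)\mapsto\bigl(\li_{\sigma_s(t)}(q),\, t+\jmath\,\partial_t\sigma_s(t)\,H_{\sigma_s(t)}(q)\bigr)$ is Lagrangian for each $s$ and the $s$-family is exact with primitive $\partial_s\sigma_s(t)\,H_{\sigma_s(t)}(q)$. You cite the lemma, but for the already-available fact that the slab is a suspension rather than for this step. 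Second, and more substantively, your justification of the Hofer bound is incorrect as stated: $\sup|H_t|$ over the slab does \emph{not} become small as $\delta\to 0$, since in the suspension parameterization $H_t(q)$ is just the imaginary coordinate $\pi_{\jmath\RR}$ of $K$ over the slab, which limits to its (generally nonzero) values on the slice $L^0$. The smallness comes instead from the reparameterization factor: with, say, $\sigma_s=(1-s)\,\mathrm{id}+s\,\sigma$ one has $|\partial_s\sigma_s(t)|=|\sigma(t)-t|\le 2\delta$, so the Hofer norm is bounded by $2\delta\,\bigl(\sup H_t-\inf H_t\bigr)$, which is the paper's estimate and does give the $O(\delta)$ you anticipate in your closing remark. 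With these two fixes your argument coincides with the paper's proof.
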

A picture of this decomposition is given in \cref{fig:horizontaldecomposition}. 
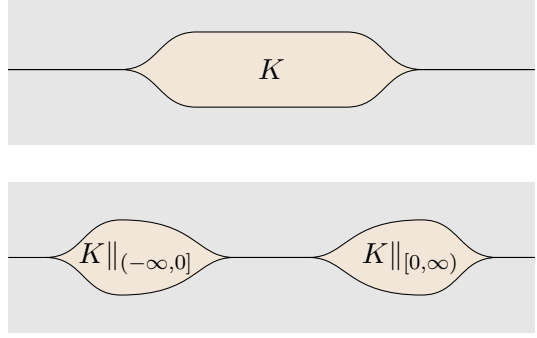
\begin{figure}
      \centering
      \begin{tikzpicture}
\begin{scope}[]

\fill[gray!20]  (-5,3) rectangle (2,1);
\draw[fill=brown!20] (-5,2) .. controls (-4.5,2) and (-4,2) .. (-3.5,2) .. controls (-3,2) and (-3,1.5) .. (-2.5,1.5) .. controls (-2,1.5) and (-2,1.5) .. (-1.5,1.5) .. controls (-1,1.5) and (-1,1.5) .. (-0.5,1.5) .. controls (0,1.5) and (0,2) .. (0.5,2) .. controls (1,2) and (1.5,2) .. (2,2) .. controls (1.5,2) and (1,2) .. (0.5,2) .. controls (0,2) and (0,2.5) .. (-0.5,2.5) .. controls (-1,2.5) and (-1,2.5) .. (-1.5,2.5) .. controls (-2,2.5) and (-2,2.5) .. (-2.5,2.5) .. controls (-3,2.5) and (-3,2) .. (-3.5,2) .. controls (-4,2) and (-4.5,2) .. (-5,2);

\end{scope}

\begin{scope}[shift={(0,3)}]
\fill[gray!20]  (-5,-2.5) rectangle (2,-4.5);

\end{scope}

\node at (-1.5,2) {$K$};
\draw[fill=brown!20] (-5,-0.5) .. controls (-4.5,-0.5) and (-5,-0.5) .. (-4.5,-0.5) .. controls (-4,-0.5) and (-4,-1) .. (-3.5,-1) .. controls (-2.5,-1) and (-2.5,-0.5) .. (-2,-0.5) .. controls (-1.5,-0.5) and (-1.5,-0.5) .. (-1,-0.5) .. controls (-0.5,-0.5) and (-0.5,-1) .. (0.5,-1) .. controls (1,-1) and (1,-0.5) .. (1.5,-0.5) .. controls (2,-0.5) and (1.5,-0.5) .. (2,-0.5) .. controls (1.5,-0.5) and (1.5,-0.5) .. (1.5,-0.5) .. controls (1,-0.5) and (1,0) .. (0.5,0) .. controls (-0.5,0) and (-0.5,-0.5) .. (-1,-0.5) .. controls (-1.5,-0.5) and (-1.5,-0.5) .. (-2,-0.5) .. controls (-2.5,-0.5) and (-2.5,0) .. (-3.5,0) .. controls (-4,0) and (-4,-0.5) .. (-4.5,-0.5);
\node at (-3.3,-0.5) {$K\|_{(-\infty, 0]}$};
\node at (0.35,-0.5) {$K\|_{[0, \infty)}$};
\end{tikzpicture}       \caption{Cobordisms can be decomposed across the cobordism parameter at non-critical values of $\pi_\RR\circ \lj : K\to \RR$.}
      \label{fig:horizontaldecomposition}
\end{figure}

\emph{Proof.} We construct a suspension Lagrangian cobordism $J\subset (X\times \CC)\times \CC$ with ends 
\[J:K\|_{(-\infty, 0]}\circ K\|_{[0, \infty)}\rightsquigarrow K\]
which will be the suspension of our exact homotopy.
Consider the  decomposition as sets $K=K|_{t\leq-\epsilon}\cup_{t=-\epsilon} K|_{[-\epsilon, \epsilon]}\cup_{t=\epsilon} K|_{[\epsilon, \infty) }$, where
\begin{align*}
      K|_{t\leq-\epsilon}=\pi^{-1}_\RR((-\infty, -\epsilon])&&K|_{[-\epsilon, \epsilon]}=\pi^{-1}_\RR([ -\epsilon, \epsilon])&&  K|_{[\epsilon, \infty) }=\pi^{-1}_\RR([\epsilon,\infty))
\end{align*}
The piece $K|_{[-\epsilon, \epsilon]}$ is a suspension so by \cref{prop:suspension} there exists a primitive $H_t:L^0\times[-\epsilon, \epsilon]\to \RR$ so that we can parameterize $K|_{[-\epsilon, \epsilon]}$ as:
\begin{align*}
      L^0\times[-\epsilon, \epsilon]\to& X\times \CC\\
      (q,t) \mapsto& (\li_t(q), t+H_t(q))
\end{align*}

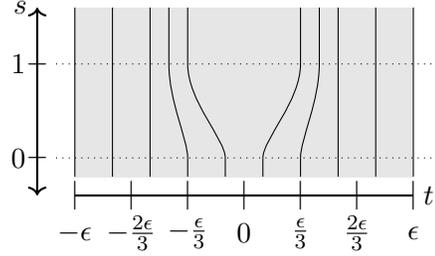
\begin{SCfigure}[50]
      \begin{tikzpicture}

\fill[gray!20]  (-2,-2.5) rectangle (2.5,-4.75);

\draw (-1.5,-2.5) .. controls (-1.5,-2.75) and (-1.5,-3) .. (-1.5,-3.25) .. controls (-1.5,-3.75) and (-1.5,-4.25) .. (-1.5,-4.5);
\draw (2.5,-2.5) .. controls (2.5,-2.75) and (2.5,-3) .. (2.5,-3.25) .. controls (2.5,-3.75) and (2.5,-4.25) .. (2.5,-4.5);
\draw (-1,-2.5) .. controls (-1,-2.75) and (-1,-3) .. (-1,-3.25) .. controls (-1,-3.75) and (-1,-4.25) .. (-1,-4.5);
\draw (-0.75,-2.5) .. controls (-0.75,-2.75) and (-0.75,-3) .. (-0.75,-3.25) .. controls (-0.75,-3.75) and (-0.5,-4.25) .. (-0.5,-4.5);
\draw (-2,-2.5) node (v1) {} .. controls (-2,-2.75) and (-2,-3) .. (-2,-3.25) .. controls (-2,-3.75) and (-2,-4.25) .. (-2,-4.5);
\draw (2,-2.5) .. controls (2,-2.75) and (2,-3) .. (2,-3.25) .. controls (2,-3.75) and (2,-4.25) .. (2,-4.5);

\draw (1,-2.5) .. controls (1,-2.75) and (1,-3) .. (1,-3.25) .. controls (1,-3.75) and (0.5,-4.25) .. (0.5,-4.5);
\draw (-0.5,-2.5) .. controls (-0.5,-2.75) and (-0.5,-3) .. (-0.5,-3.25) .. controls (-0.5,-3.75) and (0,-4.25) .. (0,-4.5);
\draw (1.25,-2.5) .. controls (1.25,-2.75) and (1.25,-3) .. (1.25,-3.25) .. controls (1.25,-3.75) and (1,-4.25) .. (1,-4.5);
\draw (1.5,-2.5) .. controls (1.5,-2.75) and (1.5,-3) .. (1.5,-3.25) .. controls (1.5,-3.75) and (1.5,-4.25) .. (1.5,-4.5);

\node at (-1.25,-5.5) {$-\frac{2\epsilon}{3}$};
\node at (-0.5,-5.5) {$-\frac{\epsilon}{3}$};
\node at (1,-5.5) {$\frac{\epsilon}{3}$};
\node at (1.75,-5.5) {$\frac{2\epsilon}{3}$};
\draw (-2,-4.75) -- (-2,-4.5) (-1.5,-4.75) -- (-1.5,-4.5) (-1,-4.75) -- (-1,-4.5) (-0.5,-4.75) -- (-0.5,-4.5) (0,-4.75) -- (0,-4.5) (0.5,-4.75) -- (0.5,-4.5) (1,-4.75) -- (1,-4.5) (1.5,-4.75) -- (1.5,-4.5) (2,-4.75) -- (2,-4.5) (2.5,-4.75) -- (2.5,-4.5);
\node at (-2,-5.5) {$-\epsilon$};
\node at (2.5,-5.5) {$\epsilon$};
\node at (-2,-5) {$|$};
\node at (-1.25,-5) {$|$};
\node at (-0.5,-5) {$|$};
\node at (0.25,-5) {$|$};
\node at (1,-5) {$|$};
\node at (1.75,-5) {$|$};
\node at (2.5,-5) {$|$};
\node at (0.25,-5.5) {0};
\draw[thick,<->] (-2.5,-5) -- (-2.5,-2.5);
\draw[thick] (-2,-5) -- (2.5,-5);
\node at (-2.75,-4.5) {$0$};
\node at (-2.75,-3.25) {$1$};
\draw[dotted] (-2.25,-3.25) -- (2.75,-3.25);
\draw[dotted] (-2.25,-4.5) -- (2.75,-4.5);
\node at (-2.5,-3.25) {$-$};
\node at (-2.5,-4.5) {$-$};
\node[right] at (2.5,-5) {$t$};
\node[left] at (-2.5,-2.5) {$s$};
\end{tikzpicture}       \caption{Contour plot of the truncation profile  $\rho(t,s)$.}
      \label{fig:truncationprofile}
\end{SCfigure}
We choose a truncation profile  $\rho(t, s): [-\epsilon, \epsilon]\times \RR \to [-\epsilon, \epsilon]$ which satisfies the following conditions  (as indicated in \cref{fig:truncationprofile}) :
\begin{align*}
    \rho(t, s)|_{s<0}=t&&\rho(t,s)|_{|t|>2\epsilon/3}= t\\
    \rho(t,1)|_{|t|<\epsilon/3}=0 && \left.\frac{\partial\rho}{\partial s}\right|_{s>1}=0
\end{align*}
Consider the Lagrangian submanifold $J|_{[-\epsilon, \epsilon]\times \RR}$ given by the generalized suspension from  \cref{claim:suspensiongeneralization}, 
\[\lj_\rho:  L^0\times[-\epsilon, \epsilon]\times \RR\to X\times T^* [-\epsilon, \epsilon]\times T^*\RR\]
This is a Lagrangian cobordism over the $s$ parameter with collared boundaries in both the $t$ and $s$ directions:
\begin{itemize}
      \item In the $s$ direction, $K|_{|t|=\epsilon}$ is a boundary for $K|_{[-\epsilon, \epsilon]}:=J|_{[-\epsilon, \epsilon]\times \{s=0\}}$.
      This extends to a collared boundary $K|_{|t|>2\epsilon/3}$, and $K|_{|t|>2\epsilon/3}\times \RR_{s} $ is a collared boundary for $J|_{[-\epsilon, \epsilon]\times \RR_{s}}$. Therefore, $J|_{[-\epsilon, \epsilon]\times \RR_{s}}$, as a cobordism in the $s$ direction, is a Lagrangian cobordism with cylindrical boundary.
      \item At each value of $s$,  the collar $(M\times [0, \epsilon'))\times [-\epsilon, \epsilon]_{t}\subset K|_{[-\epsilon, \epsilon]}$ is a collared boundary of the slice $J|_{[-\epsilon, \epsilon]\times \{s\}}$.
\end{itemize}
We're interested in the positive end of this Lagrangian cobordism, which we call $K^+|_{[-\epsilon, \epsilon]}$, so that 
\[J|_{[-\epsilon, \epsilon]\times \RR}: K^+|_{[-\epsilon, \epsilon]}\rightsquigarrow K|_{[-\epsilon, \epsilon]}\]
is a suspension Lagrangian cobordism with collared boundary.
 The Lagrangian $K^+|_{[-\epsilon, \epsilon]}$ is a suspension, and parameterized by 
\[(q, t) \mapsto \left(\li_{\rho(t, 1)}(q), t+\jmath  \frac{d\rho(t, 1)}{dt}H_{\rho(t, 1)} \right)\] 
We can therefore form a Lagrangian cobordism $J:K^+ \rightsquigarrow K$, where 
\[K^+= K|_{(-\infty, \epsilon]}\cup_{t=-\epsilon} K^+|_{[-\epsilon, \epsilon]}\cup_{t=\epsilon} K|_{[\epsilon, \infty) }.\]
By construction, $\pi_{\jmath\RR}(K^+|_{[-\epsilon/3, \epsilon/3]})=0$, so this portion looks like $L^0\times[-\epsilon/3, \epsilon/3]$. 
We therefore may assemble Lagrangian cobordisms:
 \begin{align*}
      K\|_{(-\infty, 0]}:=& K^+|_{(\infty, 0]}\cup_{L^0} (L^0\times[0, \infty))\; : L^0\rightsquigarrow L^-\\
      K\|_{[0, \infty)}:=& K^+|_{[0, \infty)} \cup_{L^0} (L^0\times(-\infty,0]) : L^+\rightsquigarrow L^0.
 \end{align*}
 Clearly, $K^+=  K\|_{(-\infty, 0]}\circ  K\|_{[0, \infty)}$. Additionally, since  $K^+$ fixes the boundary $M$, both $ K\|_{(-\infty, 0]}$ and $  K\|_{[0, \infty)}$ are Lagrangian submanifolds which fix the boundary $M$.

 It remains to show that this construction can be completed so that the Hofer norm is as small as desired. The norm is 
 \begin{align*}
      \int_\RR& \left(\sup_{(q, t)\in L\times [-\epsilon, \epsilon]}\left(\frac{\partial\rho}{\partial s}H_{\rho(s, t)}(q)\right)-\inf_{(q, t)\in L\times [-\epsilon, \epsilon]}\left(\frac{\partial\rho}{\partial s}H_{\rho(s, t)}(q)\right)\right)\\
      \leq& \left(\sup_{(q, t)\in L\times [-\epsilon, \epsilon]}H_{t}(q)-\inf_{(q, t)\in L\times [-\epsilon, \epsilon]}H_{t}(q)\right)\int \left(\sup_{t\in[-\epsilon, \epsilon]} \frac{\partial\rho}{\partial s}-\inf_{t\in[-\epsilon, \epsilon]}\frac{\partial\rho}{\partial s}\right)ds\\
      \leq& 2\epsilon \left(\sup_{(q, t)\in L\times [-\epsilon, \epsilon]}H_{t}(q)-\inf_{(q, t)\in L\times [-\epsilon, \epsilon]}H_{t}(q)\right)
 \end{align*}
 which can be made as small as desired.
  \hfill\qed\\

Since this construction occurs away from the critical locus, we additionally have a matching of Morse critical points
\begin{align*}
      \Crit(\pi_\RR:K\to \RR)=& \Crit(\pi_\RR:K\|_{(-\infty, 0]}\circ  K\|_{[0, \infty)}\to \RR)\\
      =& \Crit(\pi_\RR: K\|_{[0, \infty)}\to \RR)\cup \Crit(\pi_\RR:K\|_{(-\infty, 0]}\to \RR)
\end{align*}
Recall that the shadow of a Lagrangian cobordism $\Area(K)$ is the infimum of areas of simply connected regions containing the image of $\pi_\CC:K\to \CC$, which generalized the Hofer norm.
\begin{claim}
      Let $K$ and $ K^+:=K\|_{(-\infty, 0]}\circ K\|_{[0, \infty)}$ be the Lagrangians from \cref{prop:tdecomposition}. Then $\Area(K)=\Area(K^+)$.
\end{claim}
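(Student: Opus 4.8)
The plan is to produce an area-preserving diffeomorphism $\Psi\colon \CC\to\CC$, equal to the identity outside the open strip $\pi_\RR^{-1}((-\epsilon,\epsilon))$, with $\Psi(\pi_\CC(K))=\pi_\CC(K^+)$. Granting this, $\Omega\mapsto\Psi(\Omega)$ is an area-preserving bijection between the open simply connected subsets of $\CC$ containing $\pi_\CC(K)$ and those containing $\pi_\CC(K^+)$, so taking the infimum of the areas over each family gives $\Area(K)=\Area(K^+)$ at once.

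To build $\Psi$, first observe that by construction $K^+=K|_{\pi_\RR\leq-\epsilon}\cup K^+|_{[-\epsilon,\epsilon]}\cup K|_{\pi_\RR\geq\epsilon}$, and since $\rho(\pm\epsilon,1)=\pm\epsilon$ and $\partial_t\rho(\pm\epsilon,1)=1$ the slices of $K$ and $K^+$ at $\pi_\RR=\pm\epsilon$ agree; hence $\pi_\CC(K)$ and $\pi_\CC(K^+)$ coincide over $\{|\Re|\geq\epsilon\}$, and $\Psi$ may be taken to fix a neighborhood of $\{|\Re|=\epsilon\}$. Over the strip both pieces are suspensions: the slice of $\pi_\CC(K)$ at real coordinate $t$ is $\{t\}+\jmath H_t(L^0)$, while that of $\pi_\CC(K^+)$ is $\{t\}+\jmath\,\partial_t\rho(t,1)\,H_{\rho(t,1)}(L^0)$, where $\partial_t\rho(\cdot,1)\geq0$ because $\rho(\cdot,1)$ is monotone. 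The key computation is the change of variables $u=\rho(t,1)$: for any continuous $f$,
\[
\int_{-\epsilon}^{\epsilon}\partial_t\rho(t,1)\,f(\rho(t,1))\,dt=\int_{-\epsilon}^{\epsilon}f(u)\,du .
\]
Applying this to $f(u)=\max_q H_u(q)$ and to $f(u)=\min_q H_u(q)$ (and, if $L^0$ is disconnected, to the boundary functions of the several intervals making up $H_u(L^0)$) shows that the bands $\overline{\pi_\CC(K|_{[-\epsilon,\epsilon]})}$ and $\overline{\pi_\CC(K^+|_{[-\epsilon,\epsilon]})}$, and also the two complementary regions above and below them inside a large rectangle $[-\epsilon,\epsilon]\times[-N,N]$, have equal areas. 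A relative Moser argument on that rectangle, fixing a neighborhood of its boundary, then yields an area-preserving diffeomorphism carrying the first band onto the second; extending by the identity produces $\Psi$.

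The main obstacle is the relative Moser step: besides the equalities of areas, one must know that the two bands (and their complements) are diffeomorphic \emph{rel} a collar of $\{\Re=\pm\epsilon\}$ and of the rectangle's boundary. This is exactly where monotonicity and surjectivity of $\rho(\cdot,1)\colon[-\epsilon,\epsilon]\to[-\epsilon,\epsilon]$ are used — the two bands differ only by an orientation-preserving reparametrization of the $t$-coordinate that pauses on $[-\epsilon/3,\epsilon/3]$, with no rearrangement of slices, so no such subtlety actually arises. A Moser-free alternative is to compare admissible regions directly: cut a near-optimal simply connected $\Omega\supseteq\pi_\CC(K)$ along $\Re=\pm\epsilon$, replace its strip part by the slice-wise convex hull of $\pi_\CC(K^+|_{[-\epsilon,\epsilon]})$ reglued to the windows of $\Omega$ on $\{\Re=\pm\epsilon\}$, and use the displayed identity to see the area does not grow; since that identity is symmetric in $K$ and $K^+$, the reverse inequality follows the same way. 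Either route reduces the claim to the single change-of-variables formula above.
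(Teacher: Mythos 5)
Your central step fails at the outset: the area-preserving diffeomorphism $\Psi$ of $\CC$ with $\Psi(\pi_\CC(K))=\pi_\CC(K^+)$ does not exist in general, because the two shadows are typically not even homeomorphic. You write the correct slice formula yourself — the slice of $\pi_\CC(K^+)$ at real coordinate $t$ is $\{t\}+\jmath\,\partial_t\rho(t,1)\,H_{\rho(t,1)}(L^0)$ — but the later assertion that the two bands ``differ only by an orientation-preserving reparametrization of the $t$-coordinate that pauses on $[-\epsilon/3,\epsilon/3]$, with no rearrangement of slices'' contradicts it: the heights are rescaled by the factor $\partial_t\rho(t,1)$, which vanishes identically on $[-\epsilon/3,\epsilon/3]$, so there the slice of $\pi_\CC(K^+)$ collapses to the single point $t$ on the real axis (this collapse is the whole point of the construction, since it makes $K^+|_{[-\epsilon/3,\epsilon/3]}=L^0\times[-\epsilon/3,\epsilon/3]$ and allows the cut at $t=0$). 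Whenever $H_t$ is nonconstant on $L^0$ for $t$ near $0$, the shadow of $K$ has nonempty interior over the middle interval while the shadow of $K^+$ is a one-dimensional segment there, so no homeomorphism of $\CC$, let alone an area-preserving diffeomorphism, carries one onto the other. The sentence ``no such subtlety actually arises'' is exactly where the argument breaks, and the relative Moser step then has nothing to act on.

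The fallback ``Moser-free'' route does not repair this, and it brushes past the genuine issue that any complete argument must confront: the infimal simply connected region containing $\pi_\CC(K)$ need not contain the slice-wise convex hull over the strip. A gap between components of $H_t(L^0)$ whose corresponding complementary region is connected to the unbounded component of $\CC\setminus\pi_\CC(K)$ (for instance because one component of $K$ caps off to the right of the strip) is not filled by a near-optimal $\Omega$, so replacing the strip part of $\Omega$ by the slice-wise convex hull of $\pi_\CC(K^+|_{[-\epsilon,\epsilon]})$ can strictly increase area rather than leave it unchanged; symmetrically, the pinch of $\pi_\CC(K^+)$ onto the real axis can promote part of such a gap to a bounded complementary component of $\pi_\CC(K^+)$, which every admissible simply connected region must then contain. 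The change-of-variables identity you isolate is indeed the computational heart of the matter — it is the same substitution $u=\rho(t,1)$ the paper uses, there packaged through the identification of the shadow of a suspension with the Hofer norm — but on its own it only matches the slice-wise $\sup$/$\inf$ data of the two strip pieces; it does not control which complementary components of the full shadows are bounded before and after the modification, and your proposal never addresses that bookkeeping.
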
 
\begin{proof}
If $K$ is a suspension arising from exact homotopy $H_t$, then the shadow can be computed via the Hofer norm:
      \[\Area(K)=\int_{\RR} \left(\sup_{q\in L^+}H_t(q)- \inf_{q\in L^+} H_t(q)\right)dt\]
We note that $K|_{[-\epsilon, \epsilon]}$ and $K^+|_{[-\epsilon, \epsilon]}$ are suspensions. Integrating by change of variables yields:
\begin{align*}
      \Area(K^+|_{[-\epsilon, \epsilon]})=&\int_{[-\epsilon, \epsilon]} \left(\sup_{q\in L^+} \left(\frac{d\rho(t, 1)}{dt}H_{\rho(t, 1)}(q)\right)- \inf_{q\in L^+} \left( \frac{d\rho(t, 1)}{dt}H_{\rho(t, 1)}(q)\right)\right)dt\\
      =&\int_{[-\epsilon, \epsilon]} \left(\sup_{q\in L^+}H_t(q)- \inf_{q\in L^+} H_t(q)\right)dt=\Area(K|_{[-\epsilon, \epsilon]}).
\end{align*}
\end{proof}
The same method allows us to construct Lagrangian cobordisms from Lagrangian submanifolds $ K\subset X\times \CC$.
\begin{definition}
      \label{def:lagrangiancutout}
      Let $ K\subset X\times \CC$ be a Lagrangian submanifold (not necessarily a Lagrangian cobordism) with cylindrical boundary. Suppose that $t_-, t_+$ are regular values of the projection $\pi_\RR: K\to \RR$, and that the critical values of $\pi_\RR$ are isolated. 
      Then by applying \cref{prop:tdecomposition} at $t_-, t_+$, we can define the \emph{truncation} of $K$ to $[t_-, t_+]$ which is a Lagrangian cobordism with cylindrical boundary
      \[ K\|_{[t_-, t_+]}: K|_{t_+}\rightsquigarrow K|_{t_-}.
      \]
\end{definition}
\subsubsection[Decomposition across the X coordinate]{Decomposition across the $X$-coordinate}
\label{subsubsec:Xdecomposition}
We now look at how to ``isolate'' a portion of a Lagrangian cobordism across the $X$-coordinate so that it can be decomposed in the sense of \cref{def:Xdecomposition}.
\begin{definition}
      Let $K: L^+\rightsquigarrow L^-$ be an embedded Lagrangian cobordism with $\pi_\RR: K\to \RR$ having isolated critical points.
      A dividing hypersurface for $K$ is an embedded hypersurface $M\subset L^0$ with the following properties:
      \begin{itemize}
            \item $M$ divides $L^0$ in the sense that $L^0= L^0_\downarrow\cup_M L^0_\uparrow$, where $L_{\uparrow/\downarrow}^0$ are the components of $L^0\setminus M$. 
            \item $M\subset L^0\subset K$ contains no critical points of $\pi_\RR: K\to \RR$.
      \end{itemize} 
\end{definition}
A dividing hypersurface allows for the following decomposition of our Lagrangian cobordism.

\begin{prop}
      Let $M\subset L^0$ be a dividing hypersurface for $K: L^+\rightsquigarrow L^-$. 
      Then there exists a decomposition of Lagrangian cobordisms up to exact homotopy:
      \[ K\sim \tilde K^-\circ \uu{K}_M \circ \tilde K^+\]
      so that $ \uu{K}_M$ is a Lagrangian cobordism which decomposes along $M$, and $\uu{K}_M|_0=L^0$.
      Furthermore, this construction can be performed in such a way that
      \begin{itemize}
            \item  the exact homotopy has as small Hofer norm as desired; and
            \item if $K$ is embedded and the slice $L^0$ is embedded, then $\tilde K^-\circ \uu{K}_M \circ \tilde K^+$ is embedded as well.
      \end{itemize}
      \label{prop:splittingX}
\end{prop}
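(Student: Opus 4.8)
The plan is to localize near the slice $L^0=K|_0$, put $K$ into a product normal form near $M$ by a small exact homotopy built from \cref{claim:suspensiongeneralization}, and then peel off $\tilde K^+$ and $\tilde K^-$ using \cref{prop:tdecomposition}.

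First, choose $\epsilon>0$ small enough that $[-\epsilon,\epsilon]$ contains no critical values of $\pi_\RR\colon K\to\RR$ and that a collar neighborhood of $M$ in $L^0$ lies in the regular region. Then $K\|_{[-\epsilon,\epsilon]}$ is a suspension, and after fixing a local Weinstein neighborhood $\phi\colon B^*_\epsilon L^0\to X$ it is parameterized by $(q,t)\mapsto(\phi(q,dH_t(q)),\,t+\jmath H_t(q))$ for a flux primitive $H_t\colon L^0\to\RR$ which (using $\Flux_0=0$) I normalize so that $H_0\equiv 0$. Pick a collar $U\cong M\times(-\delta,\delta)_u$ of $M$ in $L^0$, a bump $\chi\colon L^0\to[0,1]$ equal to $1$ on the half-collar $U_0\cong M\times(-\delta/2,\delta/2)_u$ and supported in $U$, and a bump $\psi\colon[-\epsilon,\epsilon]\to[0,1]$ equal to $1$ on $[-\epsilon/2,\epsilon/2]$ and vanishing near $\pm\epsilon$. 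The modified suspension with primitive $\tilde H_t:=(1-\psi(t)\chi(q))H_t$ agrees with $K\|_{[-\epsilon,\epsilon]}$ near $t=\pm\epsilon$ and near $\partial L^0$, so it glues to a Lagrangian cobordism $\hat K$; and over $U_0\times[-\epsilon/2,\epsilon/2]$ one has $\tilde H_t\equiv 0$, so there $\hat K$ is the honest product $M\times(-\delta/2,\delta/2)_u\times[-\epsilon/2,\epsilon/2]_t\subset X\times\RR\subset X\times\CC$.

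Next I would realize the exact homotopy $\hat K\rightsquigarrow K$ with small Hofer norm, exactly as in the proof of \cref{prop:tdecomposition}: apply \cref{claim:suspensiongeneralization} over an extra parameter $s$ with a profile interpolating from $H_t$ to $\tilde H_t$ that is the identity outside a thin slab, so that its shadow is bounded by (slab width) $\times\sup_{|t|\le\epsilon}\sup_{L^0}|H_t|$ and hence can be made arbitrarily small; since $\hat K$ is then $C^1$-close to $K$, it is embedded whenever $K$ is, and its slices near $t=0$ stay close to $L^0$. Note $\hat K|_0=K|_0=L^0$ since $\tilde H_0\equiv 0$.

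Finally, apply \cref{prop:tdecomposition} to $\hat K$ at the regular values $\pm\epsilon':=\pm\epsilon/2$, writing $\hat K\sim\tilde K^-\circ\uu{K}_M\circ\tilde K^+$ with $\tilde K^+:=\hat K\|_{[\epsilon',\infty)}$, $\tilde K^-:=\hat K\|_{(-\infty,-\epsilon']}$, and $\uu{K}_M:=\hat K\|_{[-\epsilon',\epsilon']}$ — with small Hofer norm and preserving embeddedness, using that $L^0$ and $\hat K|_{\pm\epsilon'}$ are embedded. Then $\uu{K}_M|_0=L^0$, and over its whole range $[-\epsilon',\epsilon']$ the cobordism $\uu{K}_M$ is the product $M\times(-\delta/2,\delta/2)_u\times[-\epsilon',\epsilon']_t$ near $M$; hence the hypersurface $\{u=0\}$ cuts $\uu{K}_M$ into $K_\uparrow:=\uu{K}_M\cap\{u\ge0\}$ and $K_\downarrow:=\uu{K}_M\cap\{u\le0\}$, each a Lagrangian cobordism with cylindrical boundary $M$ (with collar $M\times[0,\delta/2)_u\times\RR$) meeting along $M\times\RR$, which is precisely a decomposition of $\uu{K}_M$ along $M$ in the sense of \cref{def:Xdecomposition}. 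Composing the two exact homotopies (their Hofer norms add) gives the statement. The one delicate point is engineering the cutoff $(t,q)\mapsto\psi(t)\chi(q)$ so that it simultaneously flattens $\hat K$ near $M\times\{0\}$, leaves $\hat K$ fixed near $t=\pm\epsilon$ and $\partial L^0$ so the pieces glue, and keeps the Hofer norm small; everything else is bookkeeping on top of \cref{prop:tdecomposition} and \cref{claim:suspensiongeneralization}.
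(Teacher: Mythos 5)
Your proposal is essentially the paper's own proof: put the cobordism near $L^0$ into a Weinstein chart where the slices are graphs of exact one-forms, kill the primitive near $M$ with a cutoff in both the collar direction and the $t$-direction so that the cobordism becomes the product $(M\times\text{collar})\times\RR\subset X\times\RR$ there, realize this modification as an exact homotopy of arbitrarily small Hofer norm, and then split off $\tilde K^{\pm}$ by truncating at regular values via \cref{prop:tdecomposition}. The only slip is in your suspension formula $(q,t)\mapsto(\phi(q,dH_t(q)),\,t+\jmath H_t(q))$, where one function is made to serve both as the fiberwise primitive of the slice and as the flux primitive: if the slices are the graphs of $dH_t$, the imaginary coordinate must be $\frac{d}{dt}H_t$ (as in the paper's parameterization $(q,s,t)\mapsto(dH_t,\,t+\jmath\frac{d}{dt}H_t)$), since otherwise the map is not Lagrangian; with that correction --- cut off the section primitive itself and suspend with the $t$-derivative of the cut-off primitive --- your argument matches the paper's.
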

\begin{proof}
      Let $\li: K\to X\times \CC$ be the parameterization of our Lagrangian cobordism.
      Consider a small collared neighborhood $M\times I_s\subset L^0$. 
      We take $B^*_\epsilon (M\times I_s)$ a small neighborhood of the zero section inside the cotangent bundle of $(M\times I_s)$. There exists a map  $\phi:B^*_\epsilon (M\times I_s)\to X$, which is locally a symplectic embedding, and sends the zero section to $\li(M\times I_s)$.
      For $H: M\times I_s\to X$ with $|dH|<\epsilon$ and the support of $H$ contained on an interior subset of $M\times I$, denote by $dH\subset X$ the (possibly immersed) submanifold parameterized by 
      \[ M\times I\xrightarrow{df} B^*_\epsilon (M\times I_s)\xrightarrow{\phi} X.\]
      There exists an open neighborhood $U\subset K$ containing $M\subset L^0\subset K$ with the property that $(U|_t)\cap B^*_\epsilon(M\times  I_s)$ is a section of the cotangent ball for each $t$. 
      Let $H_t(q, s): M\times I_s\to \RR$ be the primitive of this section for each $t$ so we can parameterize $\li:U\to X\times \CC$ by 
      \begin{align*}
            M\times I_s\times I_t\to& X\times \CC\\
            (q, s, t)\mapsto& \left(dH_t,t+\jmath \frac{d}{dt}H_t\right)
      \end{align*}
      We now consider a function $\rho(s, t): I_s\times I_t\to \RR$ which is constantly $1$ in a neighborhood of $\partial(I_s\times I_t)$, and constantly $0$ on an interior set $(-\epsilon', \epsilon')_s\times (-\epsilon'', \epsilon'')_t\subset I_s\times I_t$. 
      Consider the Lagrangian suspension cobordism 
      \begin{align*}
            \tilde \li: M\times I_s\times I_t\to & X\times \CC\\
            (q, s, t)\mapsto & \left(d(\rho(s, t)H_t), t+\jmath \frac{d}{dt}\rho(s, t)H_t\right)
      \end{align*}
      and by abuse of notation, let $\tilde \li : K\to X\times \CC$ be the Lagrangian cobordism where we have replaced $K|_U$ with the chart parameterized above. The parameterizations $\tilde \li$ and $\li$ are exactly isotopic.
      For $t_0\in (-\epsilon'', \epsilon'')$ and $(q, s)\in M\times (-\epsilon', \epsilon')$, we have that $\frac{d}{dt}\tilde \li(q, s, t)=0$. 
      Therefore, $\tilde K\|_{(-\epsilon'', \epsilon'')}$ admits a decomposition in the $X$ factor along $M$.
      We define 
      \begin{align*}
            \uu{K}_M:= \tilde K\|_{(-\epsilon'', \epsilon'')}&& \tilde K^-:= \tilde K\|_{<-\epsilon''} && \tilde K^+:=\tilde K\|_{<\epsilon''}.
      \end{align*}
      Bounding the Hofer norm is similar to the computation for \cref{prop:tdecomposition}, and is bounded by the $2(\epsilon'+\epsilon'')\left(\sup_{(q, t, s)}H_t(q, s)-\inf_{(q, t, s)}H_t(q, s)\right)$, which can be made as small as desired.
\end{proof}
We write this decomposition as 
\begin{equation}
       \uu{K}_M =  \uu{K}_{\downarrow }\cup_{M\times \RR}  \uu{K}_{\uparrow}.
       \label{eqn:decompositionNotation}
\end{equation}
By applying \cref{prop:exchangeRelation} to our decomposition along $M$ we  further split the Lagrangian cobordism as a composition. There is a sequence of exact homotopies (\cref{fig:furtherdecomp})
\[K\rightsquigarrow \tilde K^-\circ \uu{K}_M \circ \tilde K^+\rightsquigarrow \tilde K^-\circ \left( \uu{K}_{\downarrow}\cup_{M\times \RR}\times L^-_{\uparrow}\times \RR\right)\circ\left( \uu{K}_{\uparrow}\cup_{M\times \RR} (L^+_{\downarrow}\times \RR) \right)\circ \tilde K^+.\]
This second exact homotopy has Hofer norm bounded by $2\epsilon''\Area(K)$.
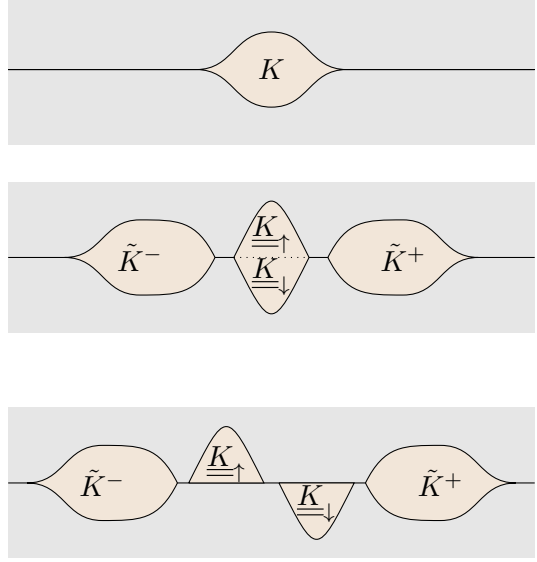
\begin{figure}
      \centering
      \begin{tikzpicture}
\begin{scope}[]

\fill[gray!20]  (-5,3) rectangle (2,1);
\draw[fill=brown!20] (-5,2) .. controls (-4.5,2) and (-3,2) .. (-2.5,2) .. controls (-2,2) and (-2,2.5) .. (-1.5,2.5) .. controls (-1,2.5) and (-1,2) .. (-0.5,2) .. controls (0.5,2) and (1.5,2) .. (2,2) .. controls (1.5,2) and (0.5,2) .. (-0.5,2) .. controls (-1,2) and (-1,1.5) .. (-1.5,1.5) .. controls (-2,1.5) and (-2,2) .. (-2.5,2) .. controls (-3,2) and (-4.5,2) .. (-5,2);

\end{scope}

\begin{scope}[]
\fill[gray!20]  (-5,-2.5) rectangle (2,-4.5);
\draw (-5,-3.5) -- (2,-3.5);
\begin{scope}[shift={(-0.25,0)}]
\draw[fill=brown!20] (-4.5,-3.5) .. controls (-4,-3.5) and (-4,-3) .. (-3.5,-3) .. controls (-3,-3) and (-2.75,-3) .. (-2.5,-3.5) .. controls (-2.75,-4) and (-3,-4) .. (-3.5,-4) .. controls (-4,-4) and (-4,-3.5) .. (-4.5,-3.5);
\node at (-3.5,-3.5) {$\tilde K^-$};

\end{scope}

\begin{scope}[shift={(-0.1,0)}]
\draw[fill=brown!20](-2.5,-3.5) .. controls (-2,-2.5) and (-2,-2.5) .. (-1.5,-3.5) .. controls (-1.5,-3.5) and (-1.5,-3.5) .. (-2.5,-3.5);

\node at (-2,-3.25) {$\uu{K}_\uparrow$};

\end{scope}
\begin{scope}[shift={(0.1,0)}]

\draw[fill=brown!20 ](-1.5,-3.5) .. controls (-1,-4.5) and (-1,-4.5) .. (-0.5,-3.5) .. controls (-0.5,-3.5) and (-0.5,-3.5) .. (-1.5,-3.5);
\node at (-1,-3.775) {$\uu{K}_\downarrow$};

\end{scope}

\begin{scope}[shift={(0.25,0)}]
\draw[fill=brown!20](-0.5,-3.5) .. controls (-0.25,-3) and (0,-3) .. (0.5,-3) .. controls (1,-3) and (1,-3.5) .. (1.5,-3.5) .. controls (1,-3.5) and (1,-4) .. (0.5,-4) .. controls (0,-4) and (-0.25,-4) .. (-0.5,-3.5);

\node at (0.5,-3.5) {$\tilde K^+$};

\end{scope}

\end{scope}

\begin{scope}[shift={(0,3)}]
\fill[gray!20]  (-5,-2.5) rectangle (2,-4.5);
\draw (-5,-3.5) -- (2,-3.5);
\begin{scope}[shift={(0.25,0)}]
\draw[fill=brown!20] (-4.5,-3.5) .. controls (-4,-3.5) and (-4,-3) .. (-3.5,-3) .. controls (-3,-3) and (-2.75,-3) .. (-2.5,-3.5) .. controls (-2.75,-4) and (-3,-4) .. (-3.5,-4) .. controls (-4,-4) and (-4,-3.5) .. (-4.5,-3.5);
\node at (-3.5,-3.5) {$\tilde K^-$};

\end{scope}

\draw[fill=brown!20] (-2,-3.5) .. controls (-1.5,-4.5) and (-1.5,-4.5) .. (-1,-3.5) .. controls (-1.5,-2.5) and (-1.5,-2.5) .. (-2,-3.5);

\node at (-1.5,-3.2) {$\uu{K}_\uparrow$};

\node at (-1.5,-3.75) {$\uu{K}_\downarrow$};

\begin{scope}[shift={(-0.25,0)}]
\draw[fill=brown!20](-0.5,-3.5) .. controls (-0.25,-3) and (0,-3) .. (0.5,-3) .. controls (1,-3) and (1,-3.5) .. (1.5,-3.5) .. controls (1,-3.5) and (1,-4) .. (0.5,-4) .. controls (0,-4) and (-0.25,-4) .. (-0.5,-3.5);

\node at (0.5,-3.5) {$\tilde K^+$};

\end{scope}

\draw[dotted] (-2,-3.5) -- (-1,-3.5);
\end{scope}

\node at (-1.5,2) {$K$};
\end{tikzpicture}       \caption{Given a dividing hypersurface $M$ in a Lagrangian cobordism $K$, one can exactly homotope $K$ to a composition where the middle Lagrangian cobordism decomposes across the $X$ coordinate. This middle component can be further decomposed by applying \cref{prop:exchangeRelation}.}
      \label{fig:furtherdecomp}
\end{figure}
We note that no part of this construction modifies the height function, so 
\[\Crit(\pi_\RR:K\to \RR)= \Crit(\pi_\RR:\tilde K^-\circ \uu{K}_M \circ \tilde K^+\to \RR).\]

As in the setting of decomposition along the $\CC$ coordinate, we can show that in good cases this decomposition does not modify the Lagrangian shadow. 
Suppose that $M\subset L^0$ is a dividing hypersurface for $K: L^+\rightsquigarrow L^-$.
      Furthermore, suppose that over the chart $M\times I_s\times I_t\subset K$ considered in the proof of \cref{prop:splittingX}, we have 
      \begin{align*}
            \sup_{q\in K\;|\; \pi_\RR(q)=t} \pi_{\jmath \RR}\li(q) > \sup_{q\in M\times I_s\times \{t\} }\pi_{\jmath \RR}\li(q)\\
            \inf_{q\in K\;|\; \pi_\RR(q)=t} \pi_{\jmath \RR}\li(q) < \inf _{q\in M\times I_s\times \{t\}} \pi_{\jmath \RR}\li(q).
      \end{align*}
      Then 
\[\Area(K)= \Area (\tilde K^-\circ \uu{K}_M \circ \tilde K^+).\]
 
\subsection{Standard Lagrangian surgery handle}
\label{subsec:highersurgery}
In this section we give a description of a standard Lagrangian surgery handle. 
We include many figures in the hope of making the geometry of Lagrangian surgery apparent and start with the simple example of Lagrangian null-cobordism for Whitney spheres. 
\subsubsection{Null-Cobordism and the Whitney Sphere}
\label{subsubsec:whitneysphere}
We first give a definition of the Whitney sphere in higher dimensions, and show that this is null-cobordant. 
\begin{definition}
    The Whitney sphere of area $A$ is the Lagrangian submanifold $L^{n,0,+}_A\subset \CC^n$ which is parameterized by 
    \begin{align*}
        \li^{n,0,+}_A: S^{n}_r\to& \CC^{n}\\
        (x_0, \ldots, x_n)\mapsto& (x_1+\jmath 2x_0x_1, x_2+\jmath 2x_0x_2, \ldots, x_n+\jmath 2x_0x_n).
    \end{align*}
    where $S^n_r=\{(x_0, \ldots x_n)\;|\; \sum_{i=0}^n x_i^2=r^2\}$, and $r=\sqrt[3]{\frac{4}{3}A}$.
\end{definition}

This Lagrangian has a single transverse self-intersection at the pair of points $(\pm r, 0, \ldots, 0)\in S^n_r$. We call these points $q^\pm \in L^{n, 0,+}_A$.
The quantity $A$ describes the area of the projection of $L^{n, 0,+}_A$ to the first complex coordinate, 
\[\frac{\Area(\pi_{\CC}(L^{n, 0,+}_A))}{2} = 2\cdot \left(\int_{0}^r 2 t \sqrt{r^2-t^2} dt\right) = A.\]
The Lagrangian submanifold $L^{1, 0,+}_A$ is the figure eight curve. An example of the Whitney 1-sphere is drawn in the \cref{fig:handle11} as the slice $K^{1, 1}|_1=L^{1,0}_{4/3}$. 
In \cref{fig:whitneysphere} we give a plot of the Whitney 2-sphere, $L^{1,0,+}_{4/3}\subset \CC^2$, presented as a set of covectors in the cotangent bundle $T^*\RR^2$.
\begin{figure}
    \centering
    \includegraphics{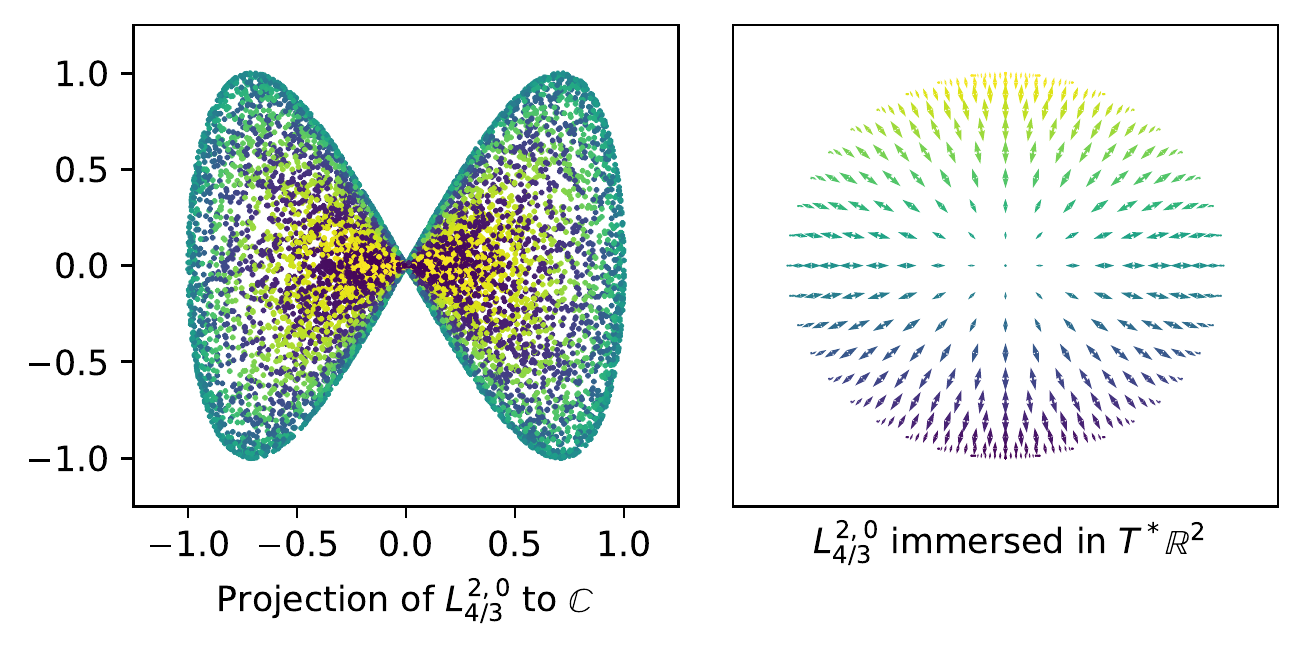}
    \caption{The Whitney Sphere  $L^{2,0,+}_{4/3}\subset \CC^2$. On the left: a projection of randomly sampled points on $L^{2,0,+}_{4/3}$ to the first complex coordinate. The area of the projection is $2\cdot 4/3$. On the right: $L^{2, 0,+}_{4/3}$ drawn as a subset of $T^*\RR^2$.}
    \label{fig:whitneysphere}
\end{figure}

The Whitney sphere can be extended in one dimension higher to a Lagrangian submanifold parameterized by the disk. 
Let $r(x_0, \ldots, x_n)=\sum_{i=0}^n x_i^2$. 
The parameterization
\begin{align*} 
    \lj^{n,1}: \RR^{n+1}\to& \CC^{n}\times \CC\\
    (x_0, \ldots, x_{n})\mapsto& (\li^{n,0}_r(x_0, \ldots, x_{n}), r^2-\jmath x_0).
\end{align*}
gives an embedded Lagrangian disk $K^{n,1}\subset \CC^{n}\times \CC$ which has the following properties:
\begin{itemize}
    \item When $r<0$, the slice $K^{n,1}|_r$ is empty;
    \item The slice $K^{n, 1}|_0$ is not regular;
    \item When $r>0$ the slice $K^{n,1}|_r$  is a Whitney sphere of area $A=\frac{4r^3}{3}$.
\end{itemize}
We will prove that this is a Lagrangian submanifold in \cref{subsubsec:handles}. In \cref{fig:handle11} we draw this Lagrangian null-cobordism and its slices, which are Whitney spheres of decreasing radius.
\begin{figure}
    \centering
    \includegraphics{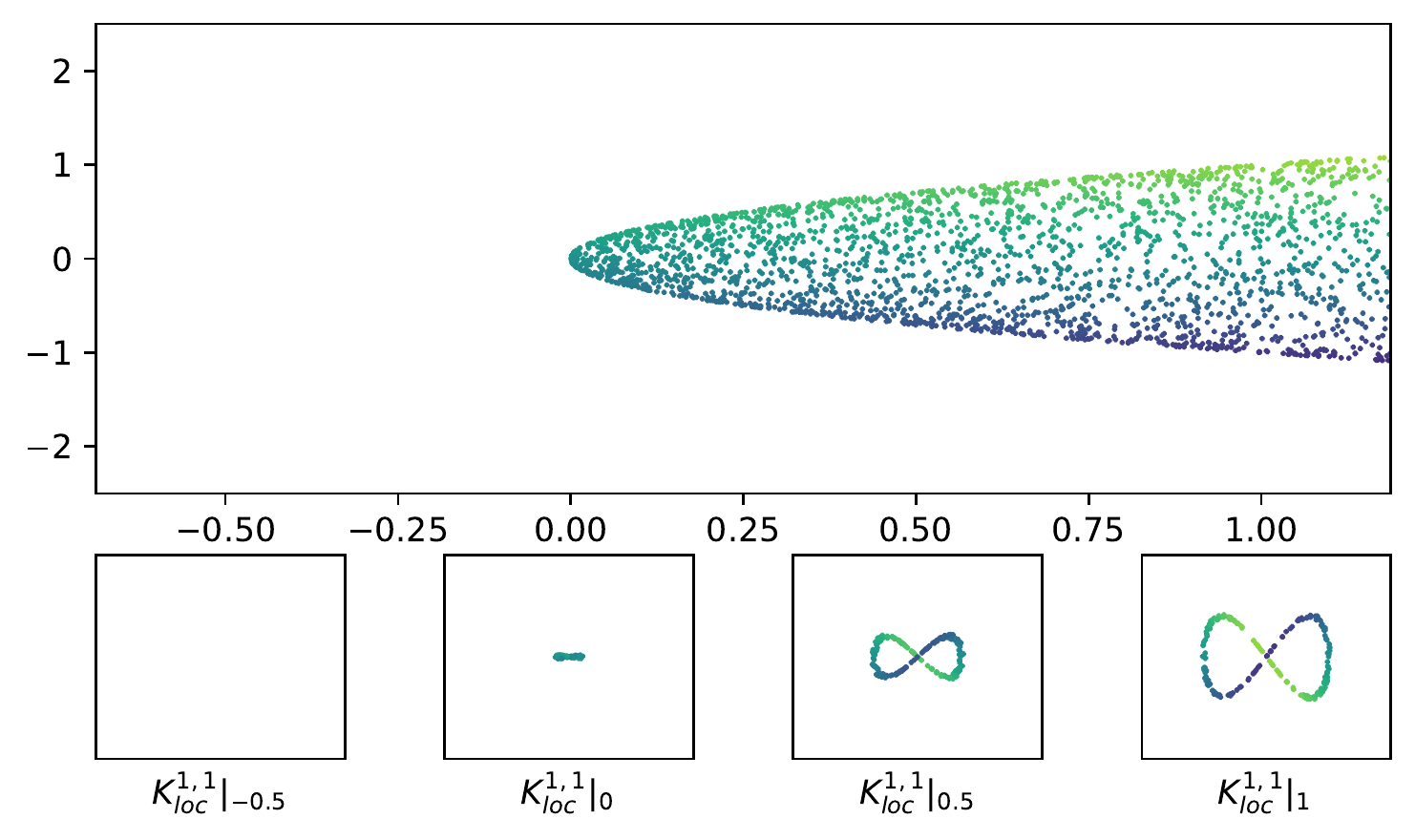}
    \caption{A plot of points on the null-cobordism $K^{1,1}\subset \CC^2$ of the Whitney sphere $L^{1,0,+}_{4/3}\subset \CC^1$. Points are consistently colored between figures. Top: The shadow projection of this null-cobordism. Bottom: Slices of the null-cobordism at different values of the cobordism parameter. }
    \label{fig:handle11}
\end{figure}
While $K^{n,1}$ is not a Lagrangian cobordism (as it does not fiber over the real line outside of a compact set,) it should be thought of as a model for the null-cobordism of the Whitney sphere.
If we desire a Lagrangian cobordism, we may apply \cref{def:lagrangiancutout} to truncate this Lagrangian submanifold and obtain a Lagrangian cobordism.
\subsubsection{Standard Surgery and Anti-surgery Handle}
\label{subsubsec:handles}
The standard Lagrangian surgery handle \cite{audin1994symplectic} is a Lagrangian $\RR^n\times \RR^1$ inside $T^*\RR^n\times T^*\RR^1$, where $T^*\RR^1$ is identified with $\CC$ by 
\[(q, p)\mapsto p+\jmath q\]

Let $\sigma_{i,k}$ be $+1$ if $i\leq k$ and $-1$ otherwise.
Consider the function 
\begin{align*}
    G^{k, n-k+1}: \RR^{n+1}\to & \RR\\
    (x_0,x_1, \ldots, x_n)\mapsto & x_0\left(\sum_{i=1}^n\sigma_{i,k}x_i^2\right)+ \frac{1}{3}x_0^3
\end{align*}
The graph of $dG^{k, n-k+1}$ parameterizes a Lagrangian submanifold inside the cotangent bundle,
\begin{align*}
    \RR^{n+1}\to& T^*\RR^n\times \CC\\
    (x_0,x_1, \ldots, x_n) \mapsto& \left(x_1+\jmath \sigma_{1,k}2x_1x_0,\ldots, x_n+\jmath \sigma_{n,k}2x_nx_0 ,x_0+\jmath\left(x_0^2+\sum_{i=1}^n\sigma_{i,k}x_i^2\right)\right),
\end{align*}
whose projection to the $\pi_{\jmath \RR}$ coordinate is a Morse function with a single critical point of index $k+1$. Our convention is that the Morse index of a critical point is dimension of the upward flow space of the point.
By multiplying the last coordinate by $-\jmath$, we interchange the real and imaginary parts of the shadow projection.
\begin{definition}
    For $k\geq 0$, the local Lagrangian $(k, n-k+1)$ surgery trace is the Lagrangian submanifold $K^{k, n-k+1}_{loc}\subset (\CC)^n\times \CC$ parameterized by 
\begin{align*}
    \lj^{k, n-k+1}: \RR^{n+1}\to& T^*\RR^n\times \CC\\
    (x_0,x_1, \ldots, x_n) \mapsto& \left(x_1+\jmath \sigma_{1,k}2x_1x_0,\ldots, x_n+\jmath \sigma_{n,k}2x_nx_0 ,x_0^2+\sum_{i=1}^n\sigma_{i,k}x_i^2-\jmath x_0\right).
\end{align*}
    The positive and negative slices of this Lagrangian submanifold will be denoted  
    \begin{align*}
        L^{k, n-k,+}_{loc}:=K^{k, n-k+1}_{loc}|_1&&  L^{k+1, n-k-1, -}_{loc}:= K^{k, n-k+1}|_{-1}.
    \end{align*}
    For $k=-1$, we define $K^{-1, n+2}_{loc}:= (K^{n, 1}_{loc})^{-1}$.
    \label{def:localsurgerytrace}
\end{definition}
This will be the local model for Lagrangian surgery trace, which we will construct in \cref{subsubsec:surgerytrace}. Before we proceed with the construction, we state some properties of the surgery trace, and give some examples.
\begin{theorem}[Properties of the standard Lagrangian Surgery Trace]
    The Lagrangian surgery trace $K^{k, n-k+1}_{A}:L^{k, n-k,+}_{A}\rightsquigarrow L^{k+1, n-k-1,-}_{A}$ has the following properties:
    \begin{itemize}
        \item $L^{k, n-k,+}_{A}$ is a Lagrangian $S^k\times D^{n-k}$ with a single self-intersection. Its intersection with the first $k$-coordinates is a Whitney isotropic $L^{k, 0, +}_A$;
        \item $L^{k+1, n-k-1,-}_{A}$  an embedded Lagrangian $D^{k+1}\times S^{n-k-1}$; and
        \item $\pi_\RR: K^{k, n-k+1}_{A}\to \RR$ is Morse, with a single critical point of index $k+1$.
    \end{itemize}
    \label{thm:standardTraceProperties}
\end{theorem}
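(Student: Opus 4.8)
The plan is to reduce every assertion to the explicit local model $K^{k,n-k+1}_{loc}$ of \cref{def:localsurgerytrace}. The cobordism $K^{k,n-k+1}_A$ is obtained from $K^{k,n-k+1}_{loc}$ by first post-composing with a conformal symplectic rescaling of $\CC^n\times\CC$ (scaling the base and fibre directions of each cotangent factor by appropriate powers of $r=\sqrt[3]{4A/3}$), which multiplies all symplectic areas — in particular that of the Whitney isotropic sitting in the first $k$ coordinates — by the factor making it equal $A$, and then truncating via \cref{def:lagrangiancutout} at regular values $t_\pm$ with $t_-<-1<1<t_+$. A conformal rescaling is a diffeomorphism scaling $\omega$ by a constant, and the truncation does not touch the region $|\pi_\RR|\le 1$ where the only critical point lies; so it suffices to establish the three bullets for $K^{k,n-k+1}_{loc}$ and its slices at $\pm 1$.

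I would first check that $\lj^{k,n-k+1}$ is a Lagrangian immersion. It is an immersion since the real parts of the first $n$ coordinates and the imaginary part of the last coordinate already recover $(x_1,\dots,x_n,x_0)$. For the Lagrangian condition one computes $(\lj^{k,n-k+1})^*(\omega_{\CC^n}+\omega_\CC)$ directly: the first $n$ factors contribute $\sum_{j=1}^n 2\sigma_{j,k}x_j\,dx_j\wedge dx_0=d\bigl(\sum_{i=1}^n\sigma_{i,k}x_i^2\bigr)\wedge dx_0$, the last factor contributes $d\bigl(x_0^2+\sum_{i=1}^n\sigma_{i,k}x_i^2\bigr)\wedge d(-x_0)=-d\bigl(\sum_{i=1}^n\sigma_{i,k}x_i^2\bigr)\wedge dx_0$ (the $x_0^2$ piece dies because $dx_0\wedge dx_0=0$), and these cancel; the cubic term of $G^{k,n-k+1}$ is irrelevant to this cancellation, but it is exactly what makes $\pi_\RR$ nondegenerate in the $x_0$ direction below.

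Next comes the self-intersection and topology analysis. If two points of $\RR^{n+1}$ have the same image, comparing real parts of the first $n$ coordinates forces $x_i=x_i'$ for $1\le i\le n$, and then the imaginary parts force $\sigma_{i,k}x_i(x_0-x_0')=0$; hence the points coincide unless $x_1=\dots=x_n=0$, and on that locus the last coordinate equals $x_0^2-\jmath x_0$, injective in $x_0$. So $\lj^{k,n-k+1}$ is injective and $K^{k,n-k+1}_{loc}$ is embedded — self-intersections of the slices can only appear after forgetting the last $\CC$-factor. Writing $u=(x_0,\dots,x_k)$ and $v=(x_{k+1},\dots,x_n)$, we have $\pi_\RR\circ\lj^{k,n-k+1}=|u|^2-|v|^2$; this is a nondegenerate quadratic form on $\RR^{n+1}$ with a unique critical point at $0$ whose Hessian has $(k+1)$-dimensional positive part, giving the Morse claim (index $k+1$ in the stated convention), and its level sets $\{=1\}=\{|u|^2=1+|v|^2\}\cong S^k\times\RR^{n-k}\cong S^k\times D^{n-k}$ and $\{=-1\}=\{|v|^2=1+|u|^2\}\cong\RR^{k+1}\times S^{n-k-1}\cong D^{k+1}\times S^{n-k-1}$ give the diffeomorphism types of $L^{k,n-k,+}_{loc}$ and $L^{k+1,n-k-1,-}_{loc}$. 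On $\{=1\}$, projecting to $\CC^n$, the computation above pins the self-intersection locus to $v=0,\ x_0=\pm1$, a single double point at the origin; its transversality follows from writing the two branch tangent spaces there — $d\lj^{k,n-k+1}(\partial_{x_j})$ has $j$-th $\CC$-coordinate $1\pm2\jmath\sigma_{j,k}$ and the rest zero, and these two real lines meet transversally inside each $\CC$-factor. On $\{=-1\}$ the locus $x_1=\dots=x_n=0$ would need $x_0^2=-1$, so it is empty; in fact $|v|^2\ge1$ forces some $x_j\ne0$, whence the projection to $\CC^n$ is injective and $L^{k+1,n-k-1,-}_{loc}$ is embedded.

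Finally, for the intersection of $L^{k,n-k,+}_{loc}$ with the first $k$ complex coordinates: the $j$-th coordinate for $k<j\le n$ equals $x_j(1-2\jmath x_0)$, which vanishes only when $x_j=0$, so the intersection is $\{x_{k+1}=\dots=x_n=0\}\cap\{\textstyle\sum_{i=0}^k x_i^2=1\}$ mapped by $\lj^{k,n-k+1}$, which restricts there to precisely the Whitney parameterization $\li^{k,0,+}$ into $\CC^k$ — so the intersection is the Whitney isotropic $L^{k,0,+}$ (of area $A$ once we pass back to $K^{k,n-k+1}_A$). I expect the only mildly delicate points to be the transversality of the single self-intersection of $L^{k,n-k,+}_{loc}$ and keeping straight the bookkeeping of the three ambient spaces $\RR^{n+1}$, $\CC^n$, and $\CC^n\times\CC$ — especially the at-first-surprising fact that $K^{k,n-k+1}_{loc}$ is embedded while its positive slice is not; the rest is direct computation with the quadratic form $|u|^2-|v|^2$.
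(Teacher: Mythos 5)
Your local-model computations are correct, and they are essentially the computations the paper relies on: the pullback of $\omega_{\CC^n}+\omega_\CC$ cancels as you say, $\lj^{k,n-k+1}$ embeds the trace while the positive slice acquires exactly one transverse double point at the image of $(\pm 1,0,\dots,0)$, $\pi_\RR=|u|^2-|v|^2$ is Morse with a single critical point whose Hessian has $(k+1)$-dimensional positive part, the slices at $\pm1$ have the stated homotopy types, the negative slice is embedded, and the intersection of the positive slice with the first $k$ coordinates is precisely the Whitney parameterization $\li^{k,0,+}$.

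The gap is in your very first reduction: $K^{k,n-k+1}_{A}$ is \emph{not} defined in the paper as a rescaled $t$-truncation of $K^{k,n-k+1}_{loc}$ (the paper remarks this is only true in the degenerate cases $k=-1,n$). It is constructed by choosing the dividing hypersurface $M=\{x_0^2+\cdots+x_n^2=1\}$ in the zero slice, applying \cref{prop:splittingX} to cylindricalize the boundary and splitting off the component $\uu{K}_{\downarrow}$ containing the origin, then truncating to $[-1/2,1/2]$ via \cref{def:lagrangiancutout} and rescaling. This cut along $M$ is what makes the ends compact manifolds $S^k\times D^{n-k}$ and $D^{k+1}\times S^{n-k-1}$ with collared boundary (so the handle can be implanted into an ambient Lagrangian); your identification of $S^k\times\RR^{n-k}$ with $S^k\times D^{n-k}$ silently replaces this step. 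Moreover, as the paper stresses in \cref{exam:fluxDirection}, the cylindricalization modifies the ends away from a neighborhood of the origin by a nontrivial flux, so the ends of $K^{k,n-k+1}_{A}$ are not the naive slices of the local model. To complete your argument you therefore need to add: (i) the modification from \cref{prop:splittingX} is an exact homotopy supported away from the region $V$ around the origin, so $\uu{K}_{\downarrow}\cap V=K^{k,n-k+1}_{loc}\cap V$ and the critical point of $\pi_\RR$, the double point, and the Whitney isotropic are untouched; (ii) outside $V$ the modification is a graph-type perturbation of an embedded, critical-point-free region in a local Weinstein chart, so it creates no new self-intersections of the slices and does not change $\Crit(\pi_\RR)$; and (iii) the area normalization is the paper's $K^{k,n-k+1}_{A}:=6A\cdot K^{k,n-k+1}_{\frac16}$, not the Whitney-sphere radius relation $r=\sqrt[3]{4A/3}$ you use, so the bookkeeping identifying the teardrop/isotropic area with $A$ should be redone against that definition. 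With those points supplied, your computation does the rest.
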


A particularly relevant example is $K^{0,n+1}_{loc}\subset \CC^{n+1}$, which gives a local model for the Polterovich surgery trace (see \cref{fig:handle02} for the example $K^{0,2}_{loc}$).
\begin{figure}
    \centering
    \includegraphics{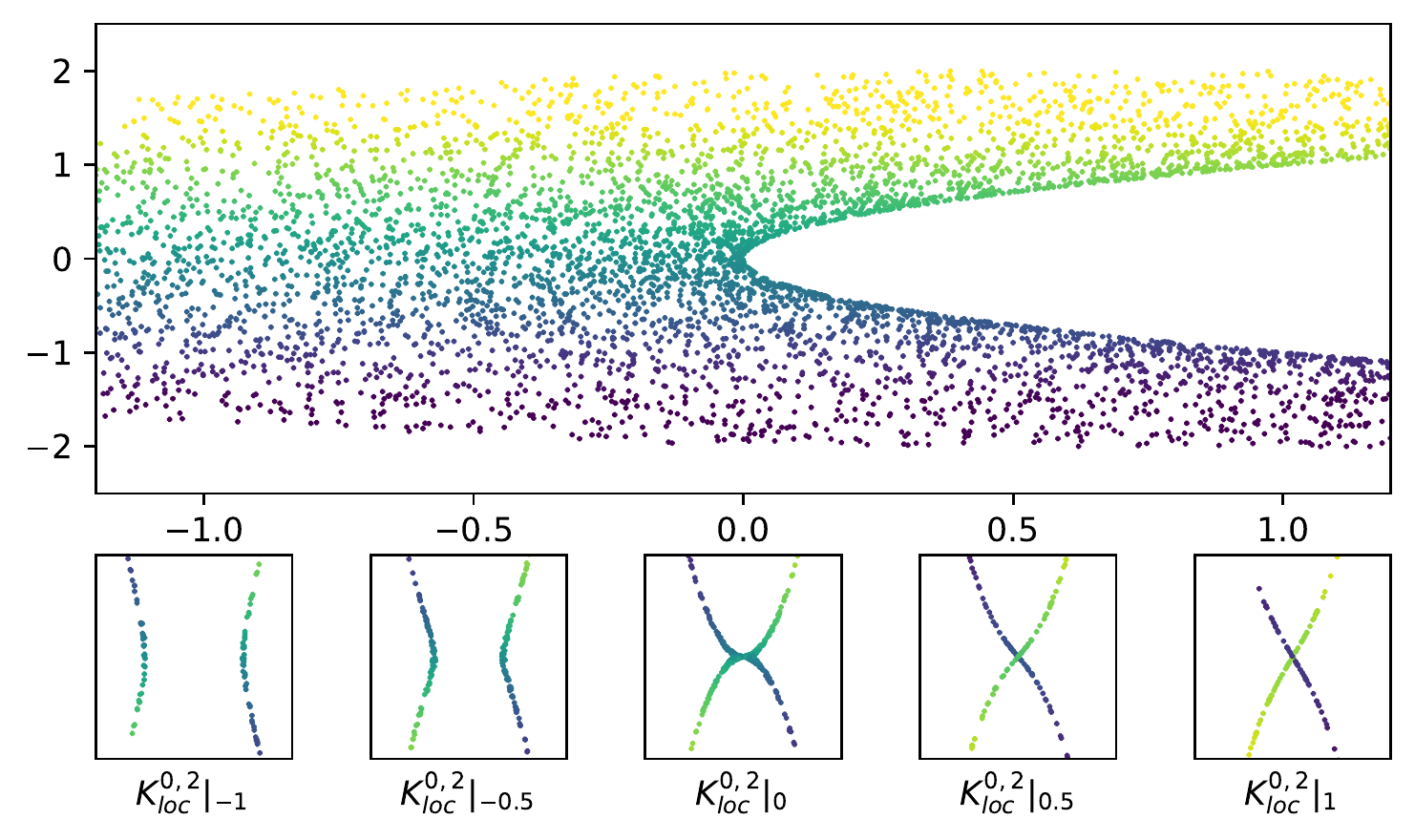}
    \caption{A Lagrangian Surgery cobordism $K^{0,2}_{loc}\subset \CC^2$. Top: The shadow projection of the surgery 1-handle $K^{0,2}_{loc}$. Bottom: Slices of the surgery. The left-most and right-most pictures are models of the surgery and anti-surgery neck.}
    \label{fig:handle02}
\end{figure}

The slice $L^{k, n-k,+}_{loc}$ is an immersed Lagrangian submanifold with a single double point,
\[\pi_X\circ \lj^{k, n-k+1}(\pm 1, 0, \ldots, 0)=(0, \ldots, 0, 0).\]
We denote these points $q_\pm \in L^{k, n-k, +}_{loc}$. 
\footnote{The following mnemonics may be useful to the reader: the positive end of the surgery cobordism is immersed and locally looks like the character ``$+$''.}
A useful observation is that when the positive end of the surgery trace is restricted to the first $k$-coordinates, 
\[L^{k, n-k,+}_{loc}|_{\CC^k}= L^{k, 0,+}_{loc}\subset \CC^k\]
we see an isotropic Whitney sphere. 
The other end of the Lagrangian surgery trace, $ L^{k+1, n-k-1, -}_{loc}$ is embedded. Furthermore,
\[L^{k+1, n-k-1,-}_{loc}|_{\CC^k}= \emptyset.\]
This allows us to interpret $K^{k, n-k+1}_{loc}$ as a null-cobordism of a Whitney isotropic in the first $k$-coordinates. 
This is a slightly deceptive characterization, as not all Whitney isotropic spheres are null-cobordant. See \cref{rem:surgeryconfiguration}.

According to our convention (which is that Lagrangian cobordisms go from the positive end to the negative end), the Lagrangian cobordism $K^{k, n-k+1}_{loc}$ resolves a self-intersection of the input end.
For this reason, we say that  $K^{k, n-k+1}_{loc}$ provides a local model of Lagrangian surgery.
Given $\lj: K\to X\times \CC$ a parameterization for a Lagrangian cobordism $K$, the inverse Lagrangian cobordism (denoted by $K^{-1}\subset X\times \CC$) is the Lagrangian submanifold parameterized by $(\pi_X\circ \lj, -\pi_\RR\circ \lj+\jmath \pi_{\jmath \RR}\circ \lj)$ (i.e. by reflecting the real parameter of the cobordism).
We call the inverse Lagrangian submanifold, $(K^{k, n-k+1}_{loc})^{-1}$ the local model for Lagrangian anti-surgery.

\begin{example}[Lagrangian Surgery Handle $K^{0,3}_{loc}$]
    In \cref{fig:handle03} we draw slices of the Lagrangian cobordism $K^{0,3}_{loc}$. 
    In the surgery interpretation, the Lagrangian self-intersection point is an isotropic Whitney sphere $L^{0,0}\subset \CC^0$, highlighted in blue.

    In the anti-surgery interpretation, the isotropic Lagrangian disk highlighted in red is contracted, collapsing the $S^1$ boundary to a transverse self-intersection.
\end{example}

\begin{figure}
    \centering
    \includegraphics{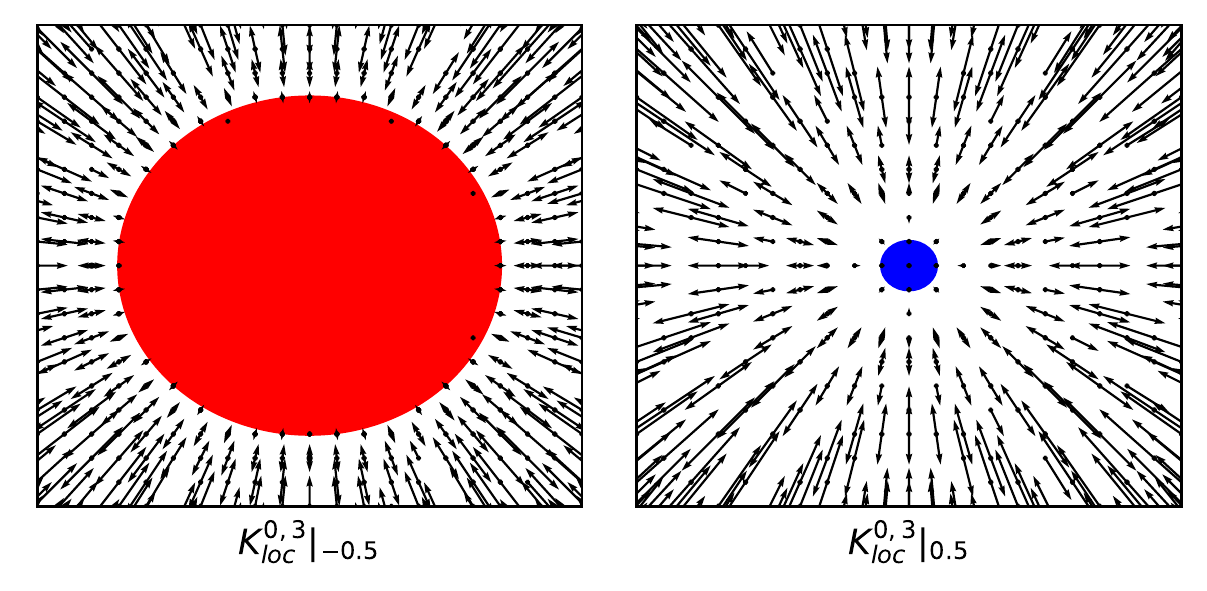}
    \caption{Slices of the surgery 2-handle $K^{0,3}_{loc}\subset \CC^3$ from before and after the critical point. These correspond to models of the surgery and anti-surgery neck. The positive end (right subfigure) is an immersed $D^2\times S^0$, with double point is indicated in blue. The negative end(left subfigure) is an embedded $S^1\times D^1$, whose Lagrangian antisurgery $D^2$ from \cite{haug2015lagrangian} is drawn in red.}
    \label{fig:handle03}
\end{figure}
\begin{example}[Lagrangian Surgery Handle $K^{1,2}_{loc}$]
    In \cref{fig:handle12} we draw slices of the Lagrangian cobordism $K^{1,2}_{loc}$. 
    In the surgery interpretation, we resolve the isotropic Whitney $S^1$ sphere highlighted in blue by replacing it with two copies (an $S^0$ family) of the null-cobordism $D^{2,0}$. 

    In the anti-surgery interpretation, the isotropic Lagrangian disk highlighted in red is contracted, collapsing the immersed $S^0$ boundary and yielding a Lagrangian with a self-intersection.
\end{example}
\begin{figure}
    \centering
    \includegraphics{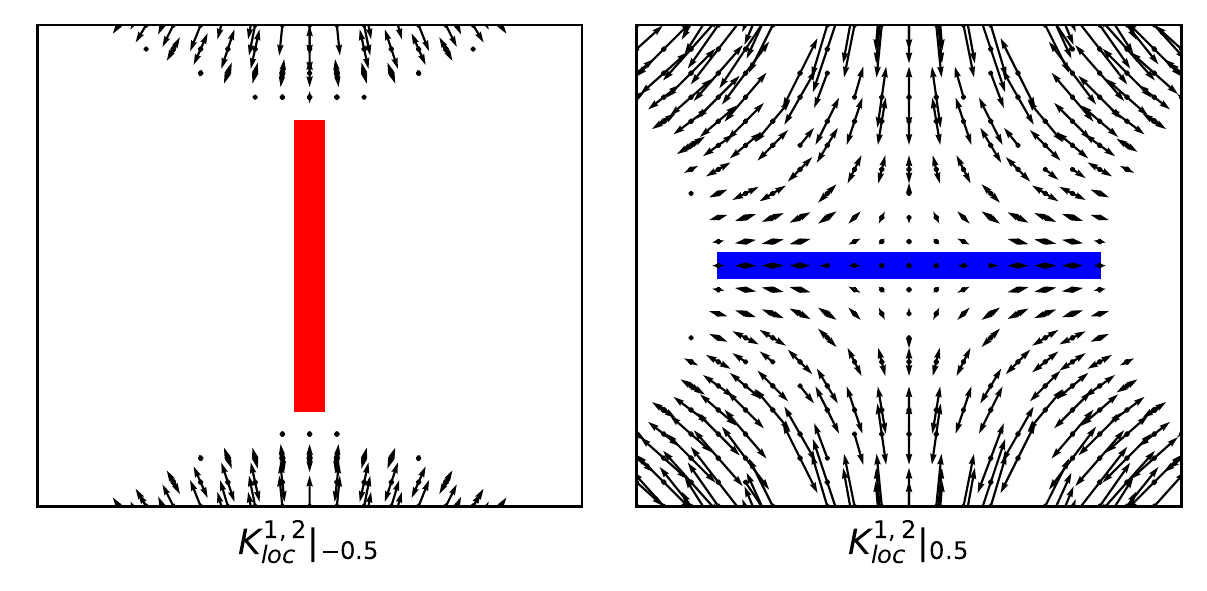}
    \caption{Slices of the surgery 2-handle $K^{1,2}_{loc}\subset \CC^2$ from before and after the critical point. These correspond to models of the surgery and anti-surgery neck. The positive end (right subfigure) is an immersed $D^1\times S^1$, whose isotropic Whitney $S^1$ is highlighted in blue. The negative end(left subfigure) is an embedded $S^0\times D^2$, whose isotropic antisurgery $D^1$ from \cite{haug2015lagrangian} is drawn in red. }
    \label{fig:handle12}
\end{figure}
An important observation is that while the cobordisms $K^{k, n-k+1}_{loc}$ and $K^{n-k, k+1}_{loc}$ are topologically inverses, they \emph{are not inverses of each other as Lagrangian cobordisms}.
 The first is a cobordism between an immersed $S^k\times D^{n-k}$ and an embedded $D^{k+1}\times S^{n-k-1}$, while the second is between an immersed $S^{n-k-1}\times D^{k+1}$ and an embedded $D^{n-k}\times S^k$ (see \cref{thm:standardTraceProperties}).
 This can be seen in examples by comparing \cref{fig:handle03,fig:handle12}.
\subsubsection{Lagrangian surgery trace}
\label{subsubsec:surgerytrace}
In this section we prove \cref{thm:standardTraceProperties}. We now apply \cref{subsubsec:Xdecomposition} to build from $K^{k, n-k+1}_{loc}$ a Lagrangian cobordism with fixed boundary.
For this construction, we write $K:=K^{k, n-k+1}_{loc}$.
Pick a radius $A\in \RR_{>0}$. 
Let $L^{k, n-k+1, 0}= K|_0$; see for instance \cref{fig:handle11}.
As a subset of the domain  $\RR^n\times \RR^1$ parameterizing $K$, the domain parameterizing the Lagrangian $L^{k, n-k+1, 0}$ is given by the locus 
\[\left\{(x_0,x_1, \ldots, x_n )\:|\; x_0^2+\sum_{i=1}^n\sigma_{i, k}x_i^2=0\right\}.\]
We then take hypersurface  $M_{k, n-k+1}\subset L^{k, n-k+1, 0}$ cut out by  $x_0^2+x_1^2+\cdots x_n^2=1$.
As in the proof of \cref{prop:splittingX}, take an extension $M\times I_s \times I_t\subset K$ which is disjoint from the subset $V= x_0^2+x_1^2+\cdots x_n^2=1/4$.
By using \cref{prop:splittingX} to perform a decomposition across the $X$-coordinate along $M$, we obtain a Lagrangian cobordism  with fixed boundary $\uu{K}_{\downarrow}$. We use the notation from \cref{eqn:decompositionNotation}, and we designate the $\downarrow$ component to be the one which contains the origin in $D^n\times D^1$.

Since $\left(\uu{K}_{\downarrow}\right)\cap V=\left(K\right)\cap V$, we obtain 
\[ \left.\left(\uu{K}_{\downarrow} \cap V\right)\right|_{1/2}= \left(\left.\left(K\right)\right|_{1/2}\right)\cap V.\]
We define the \emph{standard Lagrangian trace} of area $\frac{1}{6}$ to be 
\[K^{k, n-k+1}_{\frac{1}{6}}:=\left.\uu{K}_{\downarrow}\right\|_{[-1/2, 1/2]},\]
where the double vertical bar refers to the truncation from \cref{def:lagrangiancutout}.
The standard Lagrangian surgery trace of area $A$ is then defined to be the rescaling (under the map $z\mapsto c\cdot z$ on $\CC^n$) of the previously constructed Lagrangian submanifold, 
\[K^{k, n-k+1}_{A}:=6A\cdot K^{k, n-k+1}_{\frac{1}{6}}.\]
The ends of the standard Lagrangian surgery trace of area $A$ will be denoted:
\[K^{k, n-k+1}_{A}:L^{k, n-k,+}_{A}\rightsquigarrow L^{k+1, n-k-1,-}_{A}.\]
\begin{remark}
    Note that in the case of $k=-1, n$, this simply corresponds to truncation 
    \[K^{k, n-k+1}_{A}=K^{k, n-k+1}_{loc}\|_{<r}\]
    where $A=\frac{4r^3}{3}$.
\end{remark}
\begin{remark}
    When $k=-1$, we have a Lagrangian cobordism $K^{-1, n+2}_{A}= (K^{n, 1})^{-1}$. This case differs slightly from the standard Lagrangian Surgery trace in that the positive end $L^{-1, n+1, +}_{A}$ is empty, and the negative end $L^{0, n, -}_{A}$ is a Whitney sphere. 
\end{remark}
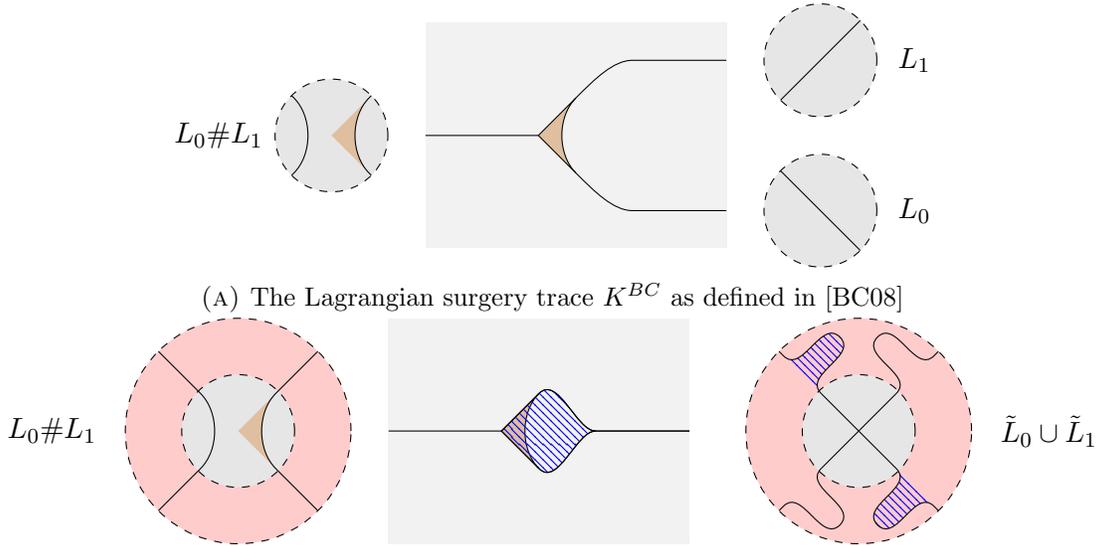
\begin{figure}
    \centering
    \begin{subfigure}{\linewidth}
        \centering
        \begin{tikzpicture}[scale=.5]

    \fill[gray!10]  (-3.5,2) rectangle (4.5,-4);
    
    \begin{scope}[shift={(1.5,-1)}]
    
    \draw[fill=gray!20, dashed]  (5.5,-2) ellipse (1.5 and 1.5);
    \clip  (5.5,-2) ellipse (1.5 and 1.5);
    \draw (4,-0.5) -- (7,-3.5);

    \end{scope}
    \begin{scope}[shift={(1.5,3)}]
    
    \draw[fill=gray!20, dashed]  (5.5,-2) ellipse (1.5 and 1.5);
    \clip  (5.5,-2) ellipse (1.5 and 1.5);
    \draw (4,-3.5) -- (7,-0.5);
    \end{scope}

    \begin{scope}[]
    \draw[fill=brown!50] (0.5,0) .. controls (0,-0.5) and (0,-0.5) .. (-0.5,-1) .. controls (0,-1.5) and (0,-1.5) .. (0.5,-2) .. controls (0,-1.5) and (0,-0.5) .. (0.5,0);
    \end{scope}

    \begin{scope}[shift={(-5.5,0)}]
    
    \draw[fill=gray!20, dashed]  (-0.5,-1) ellipse (1.5 and 1.5);
    \clip  (-0.5,-1) ellipse (1.5 and 1.5);
    \fill[fill=brown!50] (0.5,0) .. controls (0,-0.5) and (0,-0.5) .. (-0.5,-1) .. controls (0,-1.5) and (0,-1.5) .. (0.5,-2) .. controls (0,-1.5) and (0,-0.5) .. (0.5,0);
    
    \draw (1,0.5) .. controls (0.5,0) and (0.5,0) .. (0.5,0) .. controls (0,-0.5) and (0,-1.5) .. (0.5,-2) .. controls (0.5,-2) and (1,-2.5) .. (1,-2.5);
    \draw (-2,0.5) .. controls (-2,0.5) and (-2,0.5) .. (-1.5,0) .. controls (-1,-0.5) and (-1,-1.5) .. (-1.5,-2) .. controls (-2,-2.5) and (-2,-2.5) .. (-2,-2.5);
    
    \end{scope}\draw (-3.5,-1) -- (-0.5,-1);
    \draw (0.5,-2) .. controls (1,-2.5) and (1.5,-3) .. (2,-3) .. controls (2.5,-3) and (4,-3) .. (4.5,-3);
    \draw (0.5,0) .. controls (1,0.5) and (1.5,1) .. (2,1) .. controls (2.5,1) and (4,1) .. (4.5,1);
    \node at (9.5,-3) {$L_0$};
    \node at (9.5,1) {$L_1$};
    \node at (-9,-1) {$L_0\# L_1$};
    \end{tikzpicture}         \caption{The Lagrangian surgery trace $K^{BC}$ as defined in \cite{biran2008lagrangian}}
        \label{fig:bcSurgeryTrace}
    \end{subfigure}
    \begin{subfigure}{\linewidth}
        \begin{tikzpicture}[scale=.5]
    \fill[gray!10]  (-3.5,2) rectangle (4.5,-4);

    \begin{scope}[shift={(3.5,1)}]
    
    \draw[fill=red!20, dashed]  (5.5,-2) ellipse (3 and 3);
    \draw[fill=gray!20, dashed]  (5.5,-2) ellipse (1.5 and 1.5);
    \clip  (5.5,-2) ellipse (3 and 3);
    \draw (4.5,-3) -- (6.5,-1);
    \begin{scope}[shift={(-3,-1)}]
    \begin{scope}[]
    \draw  (6,-3.5) .. controls (6,-3.5) and (6,-3.5) .. (6.5,-3) .. controls (7,-2.5) and (7.5,-4) .. (8,-3.5) .. controls (8.5,-3) and (7,-2.5) .. (7.5,-2);
    
    \end{scope}
    \begin{scope}[xscale=-1, shift={(-17,0)}]
    \draw[pattern=north west lines, pattern color=blue]   (6,-3.5) .. controls (6,-3.5) and (6,-3.5) .. (6.5,-3) .. controls (7,-2.5) and (7.5,-4) .. (8,-3.5) .. controls (8.5,-3) and (7,-2.5) .. (7.5,-2);
    
    \end{scope}
    \begin{scope}[yscale=-1, shift={(0,2)}]
    \draw[pattern=north west lines, pattern color=blue] (6,-3.5) .. controls (6,-3.5) and (6,-3.5) .. (6.5,-3) .. controls (7,-2.5) and (7.5,-4) .. (8,-3.5) .. controls (8.5,-3) and (7,-2.5) .. (7.5,-2);
    
    \end{scope}
    \begin{scope}[scale=-1, shift={(-17,2)}]
    \draw  (6,-3.5) .. controls (6,-3.5) and (6,-3.5) .. (6.5,-3) .. controls (7,-2.5) and (7.5,-4) .. (8,-3.5) .. controls (8.5,-3) and (7,-2.5) .. (7.5,-2);
    
    \end{scope}\draw (7.5,0) -- (9.5,-2);
    \end{scope}
    \end{scope}

    \begin{scope}[]
    \draw[fill=brown!50] (0.5,0) .. controls (0,-0.5) and (0,-0.5) .. (-0.5,-1) .. controls (0,-1.5) and (0,-1.5) .. (0.5,-2) .. controls (0,-1.5) and (0,-0.5) .. (0.5,0);
    \end{scope}

    \begin{scope}[shift={(-7,0)}]
    
    \draw[fill=red!20, dashed]  (-0.5,-1) ellipse (3 and 3);
    \draw[fill=gray!20, dashed]  (-0.5,-1) ellipse (1.5 and 1.5);
    \clip  (-0.5,-1) ellipse (3 and 3);
    \fill[fill=brown!50] (0.5,0) .. controls (0,-0.5) and (0,-0.5) .. (-0.5,-1) .. controls (0,-1.5) and (0,-1.5) .. (0.5,-2) .. controls (0,-1.5) and (0,-0.5) .. (0.5,0);
    
    \draw (2,1.5) .. controls (0.5,0) and (0.5,0) .. (0.5,0) .. controls (0,-0.5) and (0,-1.5) .. (0.5,-2) .. controls (0.5,-2) and (2,-3.5) .. (2,-3.5);
    \draw (-3,1.5) .. controls (-3,1.5) and (-3,1.5) .. (-1.5,0) .. controls (-1,-0.5) and (-1,-1.5) .. (-1.5,-2) .. controls (-3,-3.5) and (-3,-3.5) .. (-3,-3.5);
    
    \end{scope}\draw (-3.5,-1) -- (-0.5,-1);
    \draw (0.5,-2) .. controls (1,-2.5) and (1.5,-1) .. (2,-1) .. controls (2.5,-1) and (4,-1) .. (4.5,-1);
    \draw (0.5,0) .. controls (1,0.5) and (1.5,-1) .. (2,-1) .. controls (2.5,-1) and (4,-1) .. (4.5,-1);
    
    \draw[pattern=north west lines, pattern color=blue] (2,-1) .. controls (1.5,-1) and (1,0.5) .. (0.5,0) .. controls (0,-0.5) and (0,-0.5) .. (-0.5,-1) .. controls (0,-1.5) and (0,-1.5) .. (0.5,-2) .. controls (1,-2.5) and (1.5,-1) .. (2,-1);
    \node[right] at (12.5,-1) {$\tilde L_0 \cup \tilde L_1$};
    \node[left] at (-11,-1) {$L_0\# L_1$};
    \end{tikzpicture}         \centering
        \caption{The standard Lagrangian surgery trace $K^{0,2}_A$. The application of \cref{prop:splittingX}  to $K^{BC}$ modifies the Lagrangian cobordism over the red neighborhood. The flux $A$ is the difference between the blue and brown areas.}
        \label{fig:ourSurgeryTrace}
    \end{subfigure}
    \caption{Comparison between Polterovich surgery and the standard Lagrangian surgery trace. Notice that the fluxes swept over the surgeries differ (both in magnitude and sign).}
\end{figure}
\begin{example}
    The ends of the Lagrangian surgery trace $K^{0,n}_A:L^{0,n,+}_A\rightsquigarrow L^{1,n-1, -}_A$ which resolves a single transverse intersection of a $n$-dimensional Lagrangian do not quite agree with the standard pictures drawn for the Polterovich surgery. In particular, the flux of the surgery (which determines the map $\omega: H_2(X, L^{1,n-1,-}_A)\to \RR$ in terms of $\omega: H_2(X, L^{0,n,+}_A)\to \RR$) is surprisingly counterintuitive. We now describe the flux swept out by the local model for the standard Lagrangian surgery trace when $\dim(X)=2$. This example is based on the computation of flux which appears in \cite[Section 4.1]{hicks2019wall} and the discussion surrounding \cite[Figure 8]{haug2015lagrangian}.

    In \cite{polterovich1991surgery} the local model for Polterovich surgery of two Lagrangian submanifolds intersecting transversely at a point replaces the Lagrangians $L_0, L_1\subset \RR^2$ (as drawn on the right-hand side of \cref{fig:bcSurgeryTrace}) with the Lagrangian $L_0\#L_1$ (as drawn on the left-hand side of \cref{fig:bcSurgeryTrace}). \Cref{fig:bcSurgeryTrace} also depicts the Lagrangian surgery cobordism $K^{BC}: (L_0, L_1)\rightsquigarrow (L_0\# L_1)$ as defined in \cite{biran2008lagrangian}. The flux of the surgery --- the area highlighted in brown on the left-hand side --- is equal to the shadow of $K^{BC}$.  This 3-ended Lagrangian cobordism is not a Lagrangian cobordism with cylindrical boundary (as it has three ends), so it is \emph{not} a local model for the standard Lagrangian trace (as defined in \cref{subsubsec:surgerytrace}).
        
    To obtain a 2-ended Lagrangian cobordism with cylindrical boundary from $K^{BC}$, one must apply a Lagrangian isotopy which cylindricalizes the boundary (\cref{prop:splittingX}). The resulting Lagrangian cobordism $K^{0,2}_A:(\tilde L_0\cup \tilde L_1)\rightsquigarrow L_0\# L_1$ is drawn in \cref{fig:ourSurgeryTrace}. A subtle point is that the positive end of this Lagrangian cobordism is no longer $L_0\cup L_1$.   The construction from \cref{prop:splittingX} covers $K^{0,2}_A$ with two charts. The first chart agrees with the Lagrangian cobordism $K^{BC}$ from before. The second chart, contained in the region highlighted in red, is a suspension of a Hamiltonian isotopy of $L_i$ restricted to the red region. The flux of this suspension is the blue hatched region in \cref{fig:ourSurgeryTrace}, and equal to $\Area(K^{0, 2}_A)$. Observe that $\Area(K^{0, 2}_A)>\Area(K^{BC})$.  
    As a consequence, the area bounded by $\tilde L_0\cup \tilde L_1$ and $L_0\# L_1$ has the \emph{opposite sign} of the area between $L_0\cup L_1$ and $L_0\# L_1$!  The quantity $A= \Area(K^{0, 2}_A)-\Area(K^{BC})$ describes the symplectic area bounded by $\tilde L_0\cup \tilde L_1$ and $L_0 \# L_1$.
    \label{exam:fluxDirection}
\end{example}
The construction of a standard Lagrangian surgery handle allows us to define the standard Lagrangian surgery trace.
\begin{definition}
    We say that $K: L^+\rightsquigarrow L^-$ is a standard Lagrangian surgery trace if it admits a decomposition across the $X$ coordinate as $K=K_-\cup_{S^{n-k}\times S^n} K^{k, n-k+1}_{A}$, where $K_-$ is a suspension Lagrangian cobordism with collared boundary. 
\end{definition}
While the standard Lagrangian surgery trace is a useful cobordism to have, a geometric setup for performing Lagrangian surgery on a given Lagrangian $L^+$ is desirable.
Such a criterion is given in  \cite{haug2015lagrangian} by the anti-surgery disk. 
In that paper, it was noted that the presence of a Whitney isotropic $k$ sphere was a necessary but not sufficient condition for implanting a Lagrangian surgery handle.
We give a sufficient characterization in \cref{rem:surgeryconfiguration}.
 
\subsection{Cobordisms are iterated surgeries}
\label{subsec:cobordismsaresurgery}
Having described the Lagrangian surgery operation and trace cobordism, we show that all Lagrangian cobordisms decompose into a concatenation of surgery traces and exact homotopies.
This characterization is analogous to the handle body decomposition of cobordisms from the data of a Morse function. 

\begin{theorem}
      Let $K: L^+\rightsquigarrow L^-$ be a Lagrangian cobordism. 
      Then there is a sequence of Lagrangian cobordisms  
      \begin{align*} 
            K_{H^i_t}: &L_{i+1}^-\rightsquigarrow L_{i}^+ \text{ for $i\in \{0, \ldots, j\}$}\\
            K_{i}^{k_i, n-k_i+1}:& L_i^+\rightsquigarrow L_i^- \text{ for $i\in \{1, \ldots j\}$}
      \end{align*}
      which satisfy the following properties:
      \begin{itemize}
            \item $L_{j+1}^-=L^+$ and $L_0^+=L^-$
            \item Each $K^{k_i, n-k_i+1}$ is a Lagrangian surgery trace;
            \item Each $K_{H^i_t}$ is the suspension of an exact homotopy and;
            \item There is an exact homotopy between  
            \[K\sim K_{H^j_t}\circ K_j^{k_j, n-k_j+1}\circ K_{H^{j-1}_t}\circ \cdots \circ K_{H^1_t}\circ K_1^{k_1, n-k_1+1}\circ K_{H^0_t}.\]
            Furthermore, this construction can be performed in such a way that the exact homotopy has as small Hofer norm as desired.
      \end{itemize}
      \label{thm:cobordismsaresurgery}
\end{theorem}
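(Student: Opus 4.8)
\emph{Proof plan.} The plan is to reproduce, in the symplectic category, the handle-body decomposition of a cobordism attached to a Morse function: the Morse function will be $\pi_\RR\colon K\to\RR$, and the two classical moves ``cut along a regular level set'' and ``isolate a handle'' will be played, respectively, by \cref{prop:tdecomposition} and \cref{prop:splittingX}. First I would put $\pi_\RR|_K$ in Morse position: a generic compactly supported Hamiltonian perturbation of $K$ inside $X\times\CC$ makes $\pi_\RR\colon K\to\RR$ a Morse function with distinct critical values $c_1<\dots<c_m$ lying in the noncylindrical part of $K$, and taking the generating Hamiltonian $C^2$-small realizes this as an exact isotopy of arbitrarily small Hofer norm fixing the cylindrical ends and the Lagrangians $L^\pm$. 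Record the Morse index of the $i$-th critical point $p_i$ (in the paper's convention, the dimension of the positive subspace of the Hessian) as $k_i+1$, so that passing $p_i$ effects a $k_i$-surgery on the slice, with $k_i\in\{-1,0,\dots,n\}$ (the extreme values being the birth/death handles $K^{-1,n+2}_{loc}=(K^{n,1}_{loc})^{-1}$ and $K^{n,1}_{loc}$).

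The heart of the argument is a local normal form near each $p_i$: after a further exact isotopy of $K$ supported in a small ball $\mathcal U_i\ni p_i$, of arbitrarily small Hofer norm, there is a symplectomorphism $\Phi_i\colon\mathcal U_i\xrightarrow{\sim}\mathcal V_i\subset\CC^n\times\CC$ with $\pi_\RR\circ\Phi_i^{-1}=\pi_\RR+\mathrm{const}$ and $\Phi_i(K\cap\mathcal U_i)=K^{k_i,n-k_i+1}_{loc}\cap\mathcal V_i$. To establish this I would: (i) take a Darboux chart at $p_i$ that does not touch the $\CC$-factor, so that $\pi_\RR$ becomes the standard real part of the last coordinate and $p_i=0$; (ii) note that $\pi_{\jmath\RR}|_K$ is a submersion at $p_i$ (it is one for the model, where $\pi_{\jmath\RR}=-x_0$), so $K$ is transverse to the coisotropic slice $\{\pi_{\jmath\RR}=0\}$, and its symplectic reduction there is a Lagrangian germ in $X$ carrying, by the Morse lemma, the standard quadratic form $\sum_{i=1}^n\sigma_{i,k_i}x_i^2$; (iii) run a relative Moser/Weinstein-neighborhood argument, parametrized in the $\pi_\RR$-direction and matched to the cubic generating function $G^{k_i,n-k_i+1}$, to interpolate between $K$ and the graph of $dG^{k_i,n-k_i+1}$, realizing the interpolation by an exact isotopy whose flux primitive is concentrated near $p_i$ and hence has small Hofer norm. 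I expect this step to be the main obstacle: one must simultaneously standardize the (fold-type) germ of $K$ relative to $\pi_\CC$, preserve the $\pi_\RR$-fibration on the nose, and keep the straightening exact with controlled norm.

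Granting the normal form, I would finish as follows. Choose regular values $t_0<c_1<t_1<\dots<c_m<t_m$ with $t_0,t_m$ in the cylindrical region; iterating \cref{prop:tdecomposition} at $t_0,\dots,t_m$ writes $K$, up to an exact homotopy of arbitrarily small Hofer norm, as the concatenation
\[
K\ \sim\ K\|_{(-\infty,t_0]}\circ K\|_{[t_0,t_1]}\circ\cdots\circ K\|_{[t_{m-1},t_m]}\circ K\|_{[t_m,\infty)},
\]
in which the two outermost pieces carry no critical point and are therefore suspensions of exact homotopies by \cref{prop:suspension}, while $K\|_{[t_{i-1},t_i]}$ carries the single critical point $p_i$. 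For each middle piece the normal form identifies it, inside a ball, with a truncation of the embedded handle $K^{k_i,n-k_i+1}_{loc}$, which is cylindrical outside a compact core; taking the cylindrical boundary sphere of this handle as a dividing hypersurface $M_i$ and applying \cref{prop:splittingX} yields $K\|_{[t_{i-1},t_i]}\sim\tilde K^-\circ\uu{K}_{M_i}\circ\tilde K^+$ with $\uu{K}_{M_i}=\uu{K}_\downarrow\cup_{M_i\times\RR}\uu{K}_\uparrow$, where $\uu{K}_\uparrow$ contains $p_i$ and $\uu{K}_\downarrow$, being free of critical points, is a suspension with collared boundary. Then \cref{prop:exchangeRelation} splits $\uu{K}_{M_i}$ as a standard Lagrangian surgery trace $K_i^{k_i,n-k_i+1}$ — namely $\uu{K}_\downarrow\cup_{M_i\times\RR}\uu{K}_\uparrow$ in the sense of the definition $K=K_-\cup K^{k_i,n-k_i+1}_A$ — pre- and post-composed with suspensions, and likewise $\tilde K^\pm$ are suspensions. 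Assembling all the pieces and merging each maximal run of consecutive suspensions into a single suspension of an exact homotopy (the concatenation of exact homotopies is exact, with additive flux) gives the asserted decomposition
\[
K\ \sim\ K_{H^j_t}\circ K_j^{k_j,n-k_j+1}\circ K_{H^{j-1}_t}\circ\cdots\circ K_{H^1_t}\circ K_1^{k_1,n-k_1+1}\circ K_{H^0_t}
\]
with $L_{j+1}^-=L^+$ and $L_0^+=L^-$. Finally, each of the finitely many exact homotopies invoked — the perturbation to Morse position, the $m$ local standardizations, and the applications of \cref{prop:tdecomposition,prop:splittingX,prop:exchangeRelation} — can be taken of arbitrarily small Hofer norm, so the same holds for the total exact homotopy.
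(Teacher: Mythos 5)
Your outline follows the paper's architecture almost step for step: put $\pi_\RR$ in Morse position by a small exact perturbation, standardize the germ of $K$ at each critical point to the local handle $K^{k,n-k+1}_{loc}$, then cut with \cref{prop:tdecomposition} and \cref{prop:splittingX}, split off the handle with \cref{prop:exchangeRelation}, and merge the remaining suspensions while tracking Hofer norm. But the step you yourself flag as ``the main obstacle'' — the local normal form — is exactly where the content of the theorem lives, and your sketch of it does not go through as stated. The issue is not bookkeeping: as the paper stresses (\cref{exam:zipper}, \cref{fig:degenerate}), the Morse index of a critical point of $\pi_\RR$ does \emph{not} determine whether passing it is a surgery, an anti-surgery, or a degenerate move; that is governed by the interplay of $\pi_\RR|_K$ and $\pi_{\jmath\RR}|_K$. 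In particular your steps (ii)--(iii) — Morse lemma for the reduced germ plus a relative Moser argument ``matched to'' $G^{k_i,n-k_i+1}$ — cannot simply identify the germ of $K$ with the graph of $dG^{k_i,n-k_i+1}$: a symplectomorphism respecting the splitting and $\pi_\RR$ carries slices to slices and hence preserves the surgery/anti-surgery type, so no fibered normal form exists in general; the germ must genuinely be moved by an exact homotopy, and your assertion that ``passing $p_i$ effects a $k_i$-surgery on the slice'' is only true after that standardization has been constructed.

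What the paper does at this point (\cref{prop:goodposition}) is write $K$ near the critical point as $dF$ over the tangent Lagrangian plane $\RR^n\times\jmath\RR$, observe $\pi_\RR|_K=\partial_s F$, interpolate the Morse function $\partial_s F$ to $\partial_s G^{k,n-k+1}$ via \cref{claim:connectingmorse}, and then — the part your Moser sketch has no counterpart for — add a bump-function correction to the primitive so that the new generating function agrees with $F$ near the boundary of the chart \emph{without creating new critical points of} $\pi_\RR$ (the $\alpha,\beta$ estimates), so the modified piece glues back into $K$ with the critical points in bijection. Without an argument of this kind, your interpolation need not match $K$ at the edge of the chart, may introduce spurious critical values that break the subsequent cutting via \cref{prop:tdecomposition}, and gives no Hofer-norm control. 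If you replace your step (iii) by such a generating-function interpolation (shrinking the chart to make the norm small), the rest of your assembly — cutting at regular levels, splitting along the spheres $M_i$, applying \cref{prop:exchangeRelation}, and concatenating consecutive suspensions into single suspensions — agrees with the paper's proof.
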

The decomposition comes from using the function $\pi_\RR: K\to \RR$ to provide a handle body decomposition of $K$. We note that unless $K$ is the suspension of an exact isotopy, the decomposition will necessarily be immersed (as the Lagrangian surgery traces are all immersed).
\subsubsection{Morse Lagrangian Cobordisms}
We first must show that $K$ can be placed into general position by exact homotopy so that $\pi_\RR$ is a Morse function (as in \cref{exam:shearedtorus}).
\begin{claim}[Morse Lemma for Lagrangian Cobordisms]
      Let $K\subset X\times \CC$ be a Lagrangian cobordism.
      There exists $K'$, a Lagrangian cobordism exactly homotopic to $K$, with $\pi_\RR: K'\to \RR$ a Morse function.
      Furthermore, the construction can be conducted so that 
      \begin{itemize}
            \item the Hofer norm of the exact homotopy is as small as desired; and 
            \item if $K$ is embedded, then $K'$ is embedded as well.
      \end{itemize}
      \label{claim:morsecobordism}
\end{claim}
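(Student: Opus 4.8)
The plan is to realize $K'$ as a $C^2$-small, compactly supported exact perturbation of $K$, chosen generically so that $\pi_\RR$ restricts to a Morse function. First I would observe that over $X\times\CC\setminus\pi_\CC^{-1}(D)$ the function $\pi_\RR|_K$ is literally the projection of the cylindrical ends $L^\pm\times\RR\to\RR$, hence a submersion with no critical points. Consequently $\Crit(\pi_\RR|_K)$ is a \emph{compact} set contained in $\pi_\CC^{-1}(D')$ for a slightly larger compact $D'\subset\CC$, and it suffices to remove the degeneracies there by an exact homotopy supported in $\pi_\CC^{-1}(D')$; such a homotopy automatically preserves the cylindrical ends, so $K'$ is again a Lagrangian cobordism with the same $L^\pm$.

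Next I would fix a local Weinstein neighborhood $\phi\colon B^*_\epsilon K\to X\times\CC$ (a genuine tubular-neighborhood embedding when $K$ is embedded). For $f\in C^\infty_c(K)$ supported in the compact region with $\|df\|_{C^0}$ small, set $K_f:=\phi(\Gamma_{df})$, the image of the graph of $df$. Each $K_f$ is a (possibly immersed) Lagrangian, and $t\mapsto\phi(\Gamma_{t\,df})$ is an exact homotopy from $K$ to $K_f$ whose flux primitive is $H_t=tf$; by the discussion following \cref{prop:suspension} its Hofer norm is $\tfrac12\bigl(\sup f-\inf f\bigr)\le\|f\|_{C^0}$, as small as desired. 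If $K$ is embedded then $\phi$ is an embedding and $\Gamma_{df}$ is embedded, so $K_f$ is embedded; and since $f$ is compactly supported $K_f$ agrees with $K$ off $\pi_\CC^{-1}(D')$.

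It remains to choose $f$ so that $\pi_\RR|_{K_f}$ is Morse. Parametrizing $K_f$ by $K$ via $q\mapsto\phi(q,df(q))$, this is equivalent to the function $\Phi_f:=\pi_\RR\circ\phi(\,\cdot\,,df(\,\cdot\,))\colon K\to\RR$ being Morse. The crucial geometric input is that the Hamiltonian vector field $X_{\pi_\RR}$ on $X\times\CC$ is the nowhere-vanishing constant translation field in the imaginary $\CC$-direction, and that at a critical point of $g:=\pi_\RR|_K$ one has $X_{\pi_\RR}\in TK$ (since $dg(w)=\omega(X_{\pi_\RR},w)$ vanishes on $TK$ iff $X_{\pi_\RR}\in(TK)^\omega=TK$). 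Translating this through $\phi$, the linearization of $f\mapsto\Phi_f$ at $f=0$ in a direction $u$ is $q\mapsto\langle\alpha(q),du(q)\rangle$, where $\alpha$ is a vector field on $K$ that is \emph{nonvanishing} on a neighborhood $W$ of $\Crit(g)$ (by continuity and compactness of $\Crit(g)$, $\alpha$ and the analogous field $\partial_\xi(\pi_\RR\circ\phi)(q,df(q))$ remain nonzero near $\Crit(g)$ for $\|df\|$ small). Choosing $f$ supported in a smaller neighborhood $W'\Subset W$ of $\Crit(g)$ and $C^2$-small guarantees that every critical point of $\Phi_f$ lies in $W$, and there the nonvanishing of $\alpha$ lets the $1$- and $2$-jet of $u$ move the $1$-jet of $\Phi_f$ at a prospective critical point in all directions. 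A Sard--Smale jet-transversality argument — transversality of $q\mapsto j^1\Phi_f(q)$ to the codimension-$\dim K$ stratum of $J^1(K,\RR)$ of vanishing covectors — then produces a residual set of such $f$ for which $\Phi_f$ is Morse on $W$, hence on all of $K$. Any such $f$ with $\|f\|_{C^2}$ small yields $K':=K_f$ with all three requirements.

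I expect the main obstacle to be this last genericity step: setting up the Banach manifold of admissible perturbations and, above all, checking that the family $f\mapsto\Phi_f$ really does surject onto the relevant jet directions at every candidate critical point. This is exactly where the nonvanishing of $X_{\pi_\RR}$ along $\Crit(\pi_\RR|_K)$ is indispensable, and it is the only place the specific geometry of the cobordism parameter $\pi_\RR$ enters the argument; everything else is the standard ``perturb the Lagrangian to make a projection Morse'' package.
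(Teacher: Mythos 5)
Your proposal is correct, but it establishes the claim by a genuinely different mechanism than the paper. The paper also works in a local Weinstein neighborhood $B^*_\epsilon K$, but instead of moving $K$ through graphs of exact one-forms it perturbs by the time-one flow of a compactly supported Hamiltonian $H$ defined on the whole ball bundle, and shows that the induced map $\mathcal P\colon H\mapsto \pi_\RR\circ\lj^1_H$ is a submersion at $H=0$: given any compactly supported variation $f$ of the height function, one takes $H_f$ whose derivative in the fiber coordinate conjugate to $\pi_\RR$ equals an extension of $f$, so the image of $\mathcal P$ is open in the space of functions agreeing with $\pi_\RR$ at infinity, and density of Morse functions finishes the argument --- no transversality machinery appears, because the fiber-dependent Hamiltonians realize an arbitrary nearby height function exactly. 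Your family of perturbations $\Gamma_{df}$ with $f\in C^\infty_c(K)$ is smaller and cannot realize an arbitrary nearby height function, so you must recover Morse-ness generically via jet transversality and Sard--Smale; the geometric input that makes this work --- at a critical point of $\pi_\RR|_K$ the Hamiltonian vector field $X_{\pi_\RR}$ lies in $TK$, so your vector field $\alpha$ is nonvanishing there --- is correct, and is essentially the same linear-algebra observation the paper records separately (a point of a Lagrangian cobordism cannot be critical for both $\pi_\RR$ and $\pi_{\jmath\RR}$). What each route buys: the paper's is shorter and avoids the Banach-manifold genericity package entirely (its only delicate point is ``submersion implies open image'' in the function-space topology), while yours perturbs the Lagrangian in the most intrinsic way and localizes the perturbation near $\Crit(\pi_\RR|_K)$; the residual work on your side is exactly the bookkeeping you flag --- choosing nested neighborhoods so that for $\|f\|_{C^2}$ small every critical point of $\Phi_f$ lies in the \emph{interior} of the region where you may prescribe jets of $u$ (a critical point on the boundary of the support region would be invisible to your perturbations, since jets of a $C^2$ function vanish at boundary points of its support), running Sard--Smale in a $C^k$ space, and passing back to smooth $f$ via the $C^2$-openness of the Morse condition. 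Your Hofer-norm bound and the embeddedness statement are consistent with the paper's conventions, so with those standard details filled in your argument is a complete alternative proof.
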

\begin{proof}
      By abuse of notation, we will use $K$ to denote the smooth manifold parameterizing the (possibly immersed) Lagrangian cobordism $K$. 
      Let $\phi:B^*_\epsilon K\to X$ be a local Weinstein neighborhood. At each point $x\in B^*_\epsilon K$ there is a Darboux neighborhood of $\phi(x)$, which can be chosen to be the product of Darboux neighborhoods of $\pi_X\circ \phi(x)$ and $\pi_\CC\circ \phi(x)$. Therefore, there exists around $x\in B^*_\epsilon K$ Darboux coordinates $(q_0, p_0, \ldots, q_n, p_n)$ so that 
      \begin{align*}
            q_0:= \pi_\RR\circ \phi && p_0:= \pi_{\jmath \RR} \circ \phi
      \end{align*} 
      are the pullbacks of the real and imaginary coordinates to the local Weinstein neighborhood. 
      Let $C^\infty_\epsilon (B^*_\epsilon K)$ be the smooth functions $B^*_\epsilon K \to \RR$ with compact support disjoint from the boundary.  
      Let $C^\infty_{cob}(K;\RR)$ be the functions which agree with $\pi_\RR: K\to \RR$ outside of a compact set.
      Given $H\in C^\infty_{\epsilon}(B^*_\epsilon K)$, let $\psi^t_H$ be the time $t$ Hamiltonian flow of $H$, and let $\lj^t_H=\phi\circ \psi^t_H$ be the corresponding exactly homotopic immersion of $K$. 
      We obtain a map 
      \begin{align*}
            \mathcal P: C^\infty_\epsilon (B^*_\epsilon K) \to& C^\infty_{cob}(K;\RR)\\
                  H\mapsto& \pi_\RR\circ \lj^1_H.
      \end{align*}
      so that  $\mathcal P(H)$ is the real coordinate of the immersion $\lj^t_H$. 
      We will show that this map is a submersion, and in particular open.
      Let $f:K\to \RR$ be a function with compact support, representing a tangent direction of $C^\infty_{cob}(K;\RR)$. 
      As $K\subset B^*_\epsilon K$ is embedded, $f$ can be extended to a compactly supported function $F:B^*_\epsilon K \to \RR$ so that $F|_K=f$, and $F\in C^\infty_\epsilon (B^*_\epsilon K)$.
      The flow of $H$ in the $q_0$ coordinate is 
      \[\frac{d  {q}_0}{dt}=\frac{dH}{d  {p}_0}.\]
      We define our Hamiltonian $H_f:B^*_\epsilon K\to \RR$  by the integral
      \[H_{f}(x):=\int_{\pi_X(x)\times q_0(x)\times (-\infty,  {p}_0(x))} F d {p}_0.\]
      With this choice of Hamiltonian, the Hamiltonian flow at time zero of the real coordinate at a point $x\in K\subset B^*_\epsilon K$ is given by
      \[\frac{d}{dt} (\pi_\RR\circ \lj^t_{H_f})|_{t=0} (x)=\left.\frac{d q_0}{dt}\right|_{t=0}(x)=f(x).\]
      This shows that $\mathcal P$ is a submersion at $0$.
      Since every open set of $C^\infty_{cob}(K;\RR)$ contains a Morse function, and the image of $\mathcal P$ is open, there is a choice of Hamiltonian $H$ near $0$ so that $\mathcal P(H)=\pi_\RR\circ \lj^1_{H_f}$ is a Morse function on $K$. 

      Because the Hamiltonian can be chosen near zero, we can choose it so that $\sup_{x\in K} H(x)-\inf_{x\in K} H(x)$ is bounded by a constant as small as desired. This shows that the exact homotopy associate to the time 1 flow of $H$ has as small Hofer norm as desired.
\end{proof}
A similar argument shows that every $K$ is exactly homotopic to $K'$ with the property that $\Crit(\pi_\RR: K'\to \RR)$ is disjoint from $\mathcal I^{si}(K')$, the set of self-intersections of $K'$.
If $\pi_\RR: K\to \RR$ is a Morse function whose critical points are disjoint from its self-intersections, we say that the Lagrangian cobordism is a \emph{Morse-Lagrangian cobordism}. 
\subsubsection{Placing Cobordisms in good position}
The Lagrangian condition forces a certain amount of independence between the $\pi_\RR\circ \lj$ and $\pi_{\jmath \RR}\circ \lj$ projections of the Lagrangian cobordism.
\begin{claim}
      Let $\lj: K\to X\times \CC$ be a Morse-Lagrangian cobordism.
      Then $x\in K$ cannot be a critical point of both $\pi_\RR\circ \lj$ and $\pi_{\jmath\RR}\circ \lj$. 
\end{claim}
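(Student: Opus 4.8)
The plan is to argue by contradiction and to extract the conclusion purely from the Lagrangian condition; in fact the Morse hypothesis is not needed, and I would point this out. Suppose $x\in K$ is a critical point of both $a:=\pi_\RR\circ\lj$ and $b:=\pi_{\jmath\RR}\circ\lj$. Since $da_x=d(\pi_\RR)_{\lj(x)}\circ d\lj_x$ and similarly for $b$, the vanishing of $da_x$ and $db_x$ says exactly that $d\lj_x(T_xK)\subseteq \ker d(\pi_\RR)_{\lj(x)}\cap\ker d(\pi_{\jmath\RR})_{\lj(x)}$. Writing $\pi_\CC=\pi_\RR+\jmath\,\pi_{\jmath\RR}$, this intersection equals $\ker d(\pi_\CC)_{\lj(x)}$, and since $d(\pi_\CC)$ is the projection of $X\times\CC$ onto its $\CC$-factor, $\ker d(\pi_\CC)_{\lj(x)}=T_{\pi_X(\lj(x))}X\times\{0\}$. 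So the first step is this reduction: a point critical for both coordinate projections has its tangent plane mapped entirely into the $X$-factor.

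The second step is a dimension count. Because $\lj$ is a Lagrangian immersion, $d\lj_x$ is injective and $V:=d\lj_x(T_xK)$ is an isotropic (indeed Lagrangian) subspace of $(T_{\lj(x)}(X\times\CC),\,\omega_X\oplus\omega_\CC)$ of dimension $\dim K=n+1$. But $\omega_X\oplus\omega_\CC$ restricts to $T_{\pi_X(\lj(x))}X\times\{0\}$ as $\omega_X$, which is nondegenerate; hence $V$, being isotropic and contained in this $2n$-dimensional symplectic subspace, has $\dim V\le n$. This contradicts $\dim V=n+1$, completing the argument.

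I do not expect any real obstacle: the only things to verify are the elementary identity $\ker d\pi_\RR\cap\ker d\pi_{\jmath\RR}=\ker d\pi_\CC$ and the nondegeneracy of $\omega_X\oplus\omega_\CC$ restricted to the $X$-factor, both immediate. An equivalent formulation I might present instead phrases the contradiction via Hamiltonian vector fields: if $x$ is critical for $\pi_\RR\circ\lj$ then $X_{\pi_\RR}$ is $\omega$-orthogonal to the Lagrangian plane $V$, hence lies in $V$, and likewise $X_{\pi_{\jmath\RR}}\in V$; but $\omega(X_{\pi_\RR},X_{\pi_{\jmath\RR}})=\{\pi_\RR,\pi_{\jmath\RR}\}$ is the nonzero constant inherited from $\CC$, contradicting isotropy of $V$. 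Either way the proof is a two-line consequence of the Lagrangian condition.
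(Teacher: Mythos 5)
Your argument is correct and is essentially the paper's own proof: the paper likewise observes that vanishing of both differentials forces $\lj_*(T_xK)\subset TX\times\{0\}$ and then invokes the fact that a Lagrangian subspace cannot sit inside a proper symplectic subspace, which is exactly your dimension count ($n+1\le n$). Your remarks that the Morse hypothesis is unnecessary and the alternative Poisson-bracket phrasing are fine but do not change the substance.
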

\begin{proof}
      If so, then $\lj_*(T_pK)\subset TX\subset TX\times T\CC$.
      Since $\lj_*(T_pK)$ is a Lagrangian subspace, it cannot be contained in any proper symplectic subspace of $T(X\times \CC)$. 
\end{proof}
Even when $K$ is a Morse-Lagrangian cobordism, it need not be the case that at a critical point $q\in \Crit(\pi_\RR: K\to \RR)$ that $K|_{q-\epsilon}$ is obtained from $K|_{q+\epsilon}$  by surgery.
In the simplest counterexample, $K|_{q-\epsilon}$ could be obtained from $K|_{q+\epsilon}$ by anti-surgery; simply knowing the index of a $\pi_\RR\circ \lj$ critical point does not determine if it arises from surgery or antisurgery! 

\begin{example}
      \label{exam:zipper}
      We provide some intuition for what additional information is needed to determine if a critical point gives a surgery or an antisurgery.
      Suppose that $q$ is an index-1 point of a 2-dimensional Lagrangian cobordism $K$. Then there exist local coordinates around $q$ so that $\pi_\RR\circ \lj$ can be written as $q_0^2-q_1^2$. Since the critical points of $\pi_{\jmath \RR}\circ \lj$ are disjoint from those of $\pi_\RR\circ \lj$, the differential of $\pi_{\jmath \RR} \circ \lj$ is non-vanishing at $q$. We now assume that  $\pi_{\jmath \RR} \circ \lj$ is linear in $(q_0, q_1)$ -coordinates (note that this will generally not be the case). We then can write $(\pi_{\jmath \RR} \circ \lj )=aq_0+bq_1$. The slices of this Lagrangian cobordism are then the level sets of $q_0^2-q_1^2=t$, and the primitive describing the exact homotopy between those slices is $aq_0+bq_1$. We look at three cases (summarized in \cref{fig:3cases})
      \begin{enumerate}
            \item If $|a|>|b|$, then $aq_0+bq_1$ restricted to $q_0^2-q_1^2=t$ will have two critical points when $t>0$. Let $q_t, q_t'$ be these two  critical points. Let us make another assumption (which in general does not hold), which is that these critical points are fixed points of the homotopy $K|_t$ (i.e. $\pi_X\circ \lj(q_{t_0})=\pi_X\circ \lj(q_{t_1})$  and $\pi_X\circ \lj(q'_{t_0})=\pi_X\circ \lj(q'_{t_1})$ for all $t_0, t_1>0$). Since $\lim_{t\searrow 0} q_t= \lim_{t\searrow 0}q_t' =(0,0)$ we obtain that $\lj(q_{t})=\lj(q'_{t})$ whenever $t>0$. We conclude that the positive slices are immersed (making $K$ a surgery).
            \item If instead $|b|<|a|$, the same argument holds except that $aq_0+bq_1$ has critical points on the negative slices of $K$. $K$ then gives an antisurgery.
            \item The last case is degenerate: when $|a|=|b|$, both the positive and negative slices are embedded, but the critical slice is immersed along a set of codimension 0 (see \cref{fig:degenerate})! 
      \end{enumerate}

\end{example}

\begin{figure}
      \label{fig:3cases}
      \centering
      \begin{tikzpicture}[scale=.7]
\begin{scope}[]\begin{scope}[]

\draw[dotted]  (0,0) ellipse (1.5 and 1.5);

\clip  (0,0) ellipse (1.5 and 1.5);
\draw (-1.5,1.5) -- (1.5,-1.5) (1.5,1.5) -- (-1.5,-1.5);
\begin{scope}[black!60!green]

\draw (-1.3,1.5) .. controls (0,0.15) and (0,0.15) .. (1.3,1.5);
\draw (-1,1.5) .. controls (0,0.7) and (0,0.7) .. (1,1.5);
\draw (-0.5,1.5) .. controls (0,1.2) and (0,1.2) .. (0.5,1.5);
\end{scope}
\begin{scope}[rotate=90,black!60!blue]

\draw (-1.3,1.5) .. controls (0,0.15) and (0,0.15) .. (1.3,1.5);
\draw (-1,1.5) .. controls (0,0.7) and (0,0.7) .. (1,1.5);
\draw (-0.5,1.5) .. controls (0,1.2) and (0,1.2) .. (0.5,1.5);
\end{scope}
\begin{scope}[rotate=180,black!60!green]

\draw (-1.3,1.5) .. controls (0,0.15) and (0,0.15) .. (1.3,1.5);
\draw (-1,1.5) .. controls (0,0.7) and (0,0.7) .. (1,1.5);
\draw (-0.5,1.5) .. controls (0,1.2) and (0,1.2) .. (0.5,1.5);
\end{scope}
\begin{scope}[rotate=270,black!60!blue]

\draw (-1.3,1.5) .. controls (0,0.15) and (0,0.15) .. (1.3,1.5);
\draw (-1,1.5) .. controls (0,0.7) and (0,0.7) .. (1,1.5);
\draw (-0.5,1.5) .. controls (0,1.2) and (0,1.2) .. (0.5,1.5);
\end{scope}

\begin{scope}[red]

\draw (-1.5,1.5) -- (1.5,1.5) (-1.5,1) -- (1.5,1) (-1.5,0.5) -- (1.5,0.5) (-1.5,0) -- (1.5,0) (-1.5,-0.5) -- (1.5,-0.5) (-1.5,-1) -- (1.5,-1) (-1.5,-1.5) -- (1.5,-1.5);

\draw (-1.5,1.25) -- (1.5,1.25) (-1.5,0.75) -- (1.5,0.75) (-1.5,0.25) -- (1.5,0.25) (-1.5,-0.25) -- (1.5,-0.25) (-1.5,-0.75) -- (1.5,-0.75) (-1.5,-1.25) -- (1.5,-1.25)   ;

\end{scope}
\end{scope}
\node[black!60!blue] at (-3,0) {$\pi^{-1}_{\mathbb R}(t>0)$};
\node[black!60!green] at (0,-2) {$\pi^{-1}_{\mathbb R}(t<0)$};

\node[red] at (-2.5,-1) {$\pi_{\jmath\RR}^{-1}(s)$};
\end{scope}

\begin{scope}[shift={(5, 0)}]\begin{scope}[]

\draw[dotted]  (0,0) ellipse (1.5 and 1.5);

\clip  (0,0) ellipse (1.5 and 1.5);
\draw (-1.5,1.5) -- (1.5,-1.5) (1.5,1.5) -- (-1.5,-1.5);
\begin{scope}[black!60!green]

\draw (-1.3,1.5) .. controls (0,0.15) and (0,0.15) .. (1.3,1.5);
\draw (-1,1.5) .. controls (0,0.7) and (0,0.7) .. (1,1.5);
\draw (-0.5,1.5) .. controls (0,1.2) and (0,1.2) .. (0.5,1.5);
\end{scope}
\begin{scope}[rotate=90,black!60!blue]

\draw (-1.3,1.5) .. controls (0,0.15) and (0,0.15) .. (1.3,1.5);
\draw (-1,1.5) .. controls (0,0.7) and (0,0.7) .. (1,1.5);
\draw (-0.5,1.5) .. controls (0,1.2) and (0,1.2) .. (0.5,1.5);
\end{scope}
\begin{scope}[rotate=180,black!60!green]

\draw (-1.3,1.5) .. controls (0,0.15) and (0,0.15) .. (1.3,1.5);
\draw (-1,1.5) .. controls (0,0.7) and (0,0.7) .. (1,1.5);
\draw (-0.5,1.5) .. controls (0,1.2) and (0,1.2) .. (0.5,1.5);
\end{scope}
\begin{scope}[rotate=270,black!60!blue]

\draw (-1.3,1.5) .. controls (0,0.15) and (0,0.15) .. (1.3,1.5);
\draw (-1,1.5) .. controls (0,0.7) and (0,0.7) .. (1,1.5);
\draw (-0.5,1.5) .. controls (0,1.2) and (0,1.2) .. (0.5,1.5);
\end{scope}

\begin{scope}[red, rotate=90]

\draw (-1.5,1.5) -- (1.5,1.5) (-1.5,1) -- (1.5,1) (-1.5,0.5) -- (1.5,0.5) (-1.5,0) -- (1.5,0) (-1.5,-0.5) -- (1.5,-0.5) (-1.5,-1) -- (1.5,-1) (-1.5,-1.5) -- (1.5,-1.5);

\draw (-1.5,1.25) -- (1.5,1.25) (-1.5,0.75) -- (1.5,0.75) (-1.5,0.25) -- (1.5,0.25) (-1.5,-0.25) -- (1.5,-0.25) (-1.5,-0.75) -- (1.5,-0.75) (-1.5,-1.25) -- (1.5,-1.25)   ;

\end{scope}
\end{scope}
\end{scope}

\begin{scope}[shift={(10, 0)}]\begin{scope}[]

\draw[dotted]  (0,0) ellipse (1.5 and 1.5);

\clip  (0,0) ellipse (1.5 and 1.5);
\draw (-1.5,1.5) -- (1.5,-1.5) (1.5,1.5) -- (-1.5,-1.5);
\begin{scope}[black!60!green]

\draw (-1.3,1.5) .. controls (0,0.15) and (0,0.15) .. (1.3,1.5);
\draw (-1,1.5) .. controls (0,0.7) and (0,0.7) .. (1,1.5);
\draw (-0.5,1.5) .. controls (0,1.2) and (0,1.2) .. (0.5,1.5);
\end{scope}
\begin{scope}[rotate=90,black!60!blue]

\draw (-1.3,1.5) .. controls (0,0.15) and (0,0.15) .. (1.3,1.5);
\draw (-1,1.5) .. controls (0,0.7) and (0,0.7) .. (1,1.5);
\draw (-0.5,1.5) .. controls (0,1.2) and (0,1.2) .. (0.5,1.5);
\end{scope}
\begin{scope}[rotate=180,black!60!green]

\draw (-1.3,1.5) .. controls (0,0.15) and (0,0.15) .. (1.3,1.5);
\draw (-1,1.5) .. controls (0,0.7) and (0,0.7) .. (1,1.5);
\draw (-0.5,1.5) .. controls (0,1.2) and (0,1.2) .. (0.5,1.5);
\end{scope}
\begin{scope}[rotate=270,black!60!blue]

\draw (-1.3,1.5) .. controls (0,0.15) and (0,0.15) .. (1.3,1.5);
\draw (-1,1.5) .. controls (0,0.7) and (0,0.7) .. (1,1.5);
\draw (-0.5,1.5) .. controls (0,1.2) and (0,1.2) .. (0.5,1.5);
\end{scope}

\begin{scope}[red, rotate=45]

\draw (-1.5,1.5) -- (1.5,1.5) (-1.5,1) -- (1.5,1) (-1.5,0.5) -- (1.5,0.5) (-1.5,0) -- (1.5,0) (-1.5,-0.5) -- (1.5,-0.5) (-1.5,-1) -- (1.5,-1) (-1.5,-1.5) -- (1.5,-1.5);

\draw (-1.5,1.25) -- (1.5,1.25) (-1.5,0.75) -- (1.5,0.75) (-1.5,0.25) -- (1.5,0.25) (-1.5,-0.25) -- (1.5,-0.25) (-1.5,-0.75) -- (1.5,-0.75) (-1.5,-1.25) -- (1.5,-1.25)   ;

\end{scope}
\end{scope}
\end{scope}

\end{tikzpicture}       \caption{Three cases giving surgery, anti-surgery, and a degenerate Lagrangian cobordism}
\end{figure}
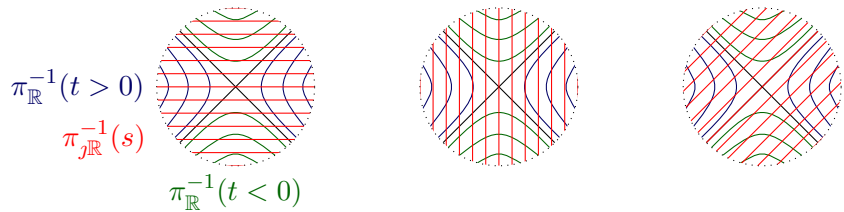
\begin{figure}
      \centering
      \includegraphics{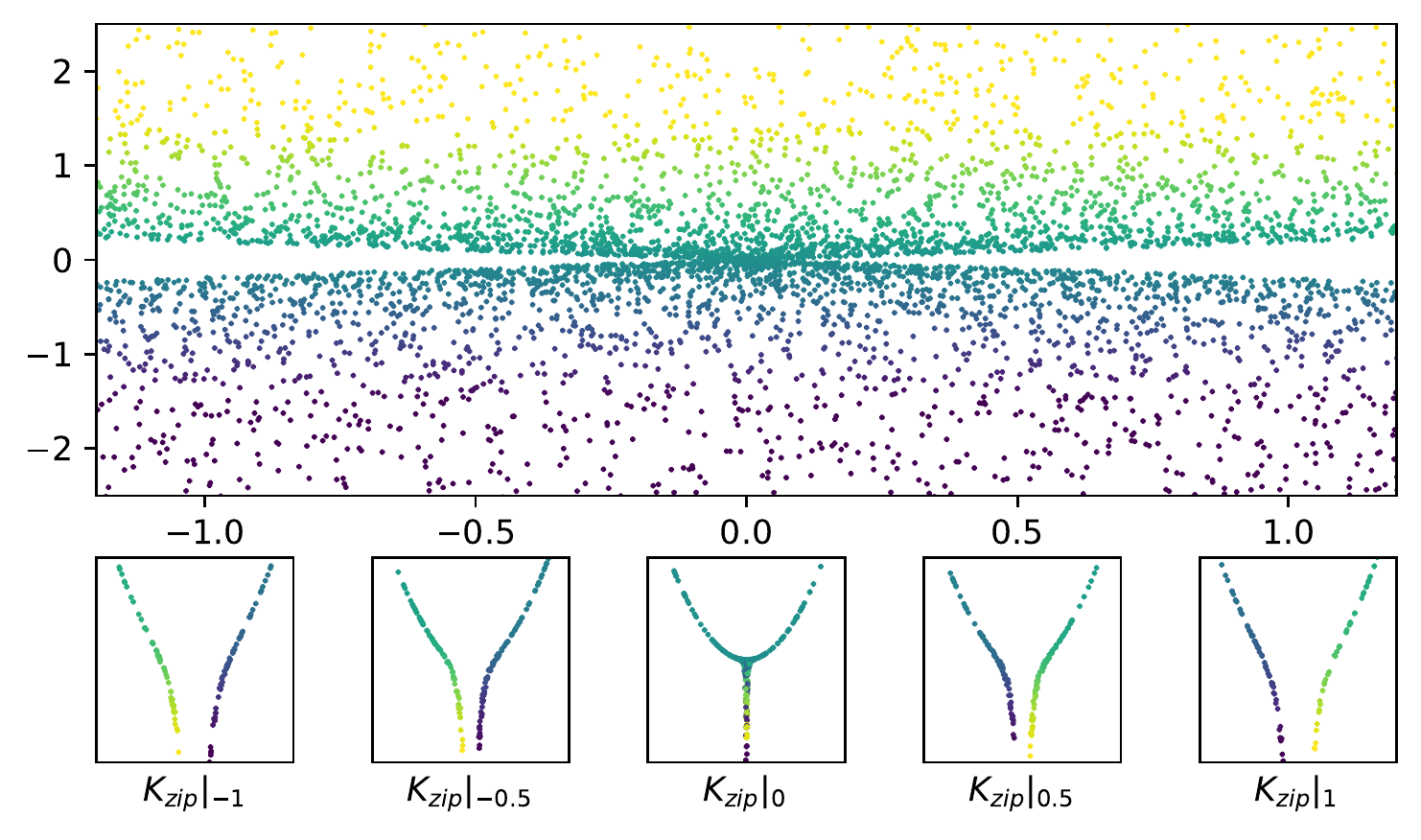}
      \caption{A Morse-Lagrangian cobordism where the critical slice has a codimension-0 set of self-intersection. This Lagrangian cobordism is parameterized by $(x_0, x_1)\mapsto(x_0+x_1-2\jmath x_0x_1, 2(x_0^2-x_1^2)+\jmath(x_0+x_1))$}
      \label{fig:degenerate}
\end{figure}
In order for our critical points of $\pi_\RR$ to correspond to surgeries (case 1 of \cref{exam:zipper}), we need to apply another exact homotopy based on an interpolation between Morse functions.
\begin{claim}[Interpolation of Morse Functions]
      Let $f,g: \RR^n\to \RR$ be Morse functions, each with a single critical point of index $k$ at the origin and $f(0)=g(0)=0$.
      Take $V$ any neighborhood which contains the origin.
      There exists a smooth family of functions $h_c: \RR^n\times[0,1]\to \RR$ which satisfies the following properties. 
      \begin{itemize}
            \item In the complement of $V$,  $h_c|_{\RR^n\setminus V}= f|_{\RR^n\setminus V}$;
            \item There exists a small neighborhood $U$ of the origin so that $ h_1|_U= g|_U$  and;
            \item $h_1$ is Morse with a single critical point;
            \item At time $0$,  $ h_0(x)= f(x)$.
      \end{itemize}
      \label{claim:connectingmorse}
\end{claim}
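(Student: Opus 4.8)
The plan is to reduce the statement to an isotopy of local diffeomorphisms supplied by the Morse lemma, and then to realize that isotopy by a compactly supported flow. First I would shrink the problem: fix a small closed ball $B$ with $B\subset V$; it suffices to build $h_c$ agreeing with $f$ on $\RR^n\setminus B$, since then in particular $h_c=f$ on $\RR^n\setminus V$.

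Applying the Morse lemma to $f$ and to $g$ at the origin produces charts $\varphi_f,\varphi_g$ fixing $0$ with $f=Q_k\circ\varphi_f$ and $g=Q_k\circ\varphi_g$ near $0$, where $Q_k(x)=-x_1^2-\cdots-x_k^2+x_{k+1}^2+\cdots+x_n^2$. Setting $\gamma:=\varphi_f^{-1}\circ\varphi_g$, one obtains a germ of a diffeomorphism fixing $0$ with $g=f\circ\gamma$ near $0$. The Morse charts are unique only up to precomposition by germs preserving $Q_k$; since $O(n-k,k)$ contains an orientation-reversing linear map (a suitable coordinate reflection preserves $Q_k$), I can replace $\varphi_f$ by $R\circ\varphi_f$ for such an $R$ if needed and thereby arrange $\det D\gamma(0)>0$. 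This is a genuine choice rather than something automatic: the chain rule only gives $\operatorname{Hess}_0 g=D\gamma(0)^\top(\operatorname{Hess}_0 f)\,D\gamma(0)$, hence $(\det D\gamma(0))^2>0$, which is consistent with either sign.

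Next I would contract $\gamma$ to the identity through germs of diffeomorphisms fixing $0$. The germ $\gamma$ is joined to its linearization by the path $\gamma_t(x):=\gamma(tx)/t$, $t\in(0,1]$, which extends smoothly to $t=0$ with value the linear map $D\gamma(0)$; and since $\det D\gamma(0)>0$, the matrix $D\gamma(0)$ is joined to the identity inside the identity component $GL^+(n,\RR)$. Concatenating and reparametrizing, I obtain a smooth family $\gamma_c$, $c\in[0,1]$, of germs of diffeomorphisms fixing $0$ with $\gamma_0=\mathrm{id}$ and $\gamma_1=\gamma$, all defined on a fixed neighborhood of $0$. This family is the flow of a time-dependent germ of vector field $X_c=\bigl(\tfrac{d}{dc}\gamma_c\bigr)\circ\gamma_c^{-1}$, and differentiating $\gamma_c(0)=0$ shows $X_c(0)=0$. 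Choosing a bump function $\chi$ equal to $1$ near $0$ and supported in $B$ (and inside the common domain), the product $\chi X_c$ extends by zero to a compactly supported time-dependent vector field on $\RR^n$; let $\Gamma_c$ be its time-$c$ flow. Then $\Gamma_c$ is a diffeomorphism of $\RR^n$ equal to the identity outside $B$ (a compactly supported vector field fixes points off its support), with $\Gamma_0=\mathrm{id}$, and — after shrinking to a neighborhood $U$ of $0$ whose $\gamma_c$-images stay inside $\{\chi\equiv1\}$ for all $c\in[0,1]$, and inside the region where $g=f\circ\gamma$ — we have $\Gamma_c=\gamma_c$ on $U$.

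Finally set $h_c:=f\circ\Gamma_c$. Then $h_0=f\circ\Gamma_0=f$; on $\RR^n\setminus B\supset\RR^n\setminus V$ we have $\Gamma_c=\mathrm{id}$, so $h_c=f$ there; and on $U$ we have $h_1=f\circ\gamma=g$. Since $f$ is Morse with its only critical point at $0$ and $\Gamma_1$ is a global diffeomorphism with $\Gamma_1(0)=0$, the function $h_1=f\circ\Gamma_1$ is Morse with a single critical point, namely $0$. I expect the only real subtleties to be the orientation normalization of the Morse chart in the second step — without it $\gamma$ need not be contractible to the identity — and the bookkeeping in the cutoff step ensuring that the flow of the truncated vector field still coincides with $\gamma_c$ on a fixed neighborhood of $0$ uniformly in $c\in[0,1]$.
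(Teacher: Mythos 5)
Your argument is correct, but it takes a genuinely different route from the paper. The paper's proof also starts from Morse charts, but it only conjugates $f$ by an ambient isotopy realizing the \emph{linear} identification of the two charts (a path of linear maps cut off to be the identity outside $V$), and then interpolates \emph{convexly} between $h_{1/2}=f\circ\tilde\phi_{1/2}$ and $g$ via a bump function, using a quantitative estimate $|d(f\circ\phi)-dg|\leq\tfrac12|df|$ on a small enough neighborhood to rule out new critical points of $h_1$ in the transition region. You instead absorb the entire nonlinear chart comparison $\gamma=\varphi_f^{-1}\circ\varphi_g$ into an ambient compactly supported isotopy: the Alexander-type contraction $\gamma(tx)/t$ plus a path in $GL^+(n,\RR)$ gives a family of germs fixing $0$, you cut off its generating vector field, and set $h_c=f\circ\Gamma_c$. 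This buys you Morse-ness of $h_1$ (indeed of every $h_c$) for free, with no derivative estimates, at the cost of the flow/cutoff bookkeeping, which you handle correctly (uniqueness of ODE solutions gives $\Gamma_c=\gamma_c$ on a small $U$ chosen by compactness of $[0,1]$). Your explicit orientation normalization $\det D\gamma(0)>0$, achieved by composing a Morse chart with a reflection preserving the standard quadratic form, is a real and necessary point: the chain rule only pins down $(\det D\gamma(0))^2$, and without the normalization $\gamma$ cannot be joined to the identity. Note that the paper's own step of ``an isotopy of linear maps interpolating between $\id$ and $\phi$'' implicitly requires the same normalization of the Morse coordinates, so your treatment is if anything more careful on this point.
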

\begin{proof}
      Pick coordinates $x_1, \ldots, x_n$, and $y_1, \ldots, y_n$ so that in a neighborhood of the origin,
      \begin{align*}
            f= \sum_{i=1}^n \sigma_{i, k} x_i^2 && g=\sum_{i=1}^n \sigma_{i, k} y_i^2.
      \end{align*}
      Let $\phi: \RR^n\to \RR^n$ be a linear map so that $\phi_*(\partial_{y_i})=\partial_{x_i}$.
      Pick $\phi_c:\RR^n\times [0, 1/2]\to \RR^n$ an isotopy of linear maps smoothly interpolating between $\phi_0=\id$ and $\phi_{\frac{1}{2}}=\phi$.
      Take $U_2\subset V$ a small ball around the origin with the property that for all $c$,  $\phi_c(U_2)\subset V$.
      Now consider an path of diffeomorphisms  $\tilde \phi_c: \RR^n\to \RR^n$ satisfying the constraints
      \begin{align*}
            \tilde \phi_c|_{\RR^n\setminus V}= \id && \tilde \phi_0=\id && \tilde \phi_{\frac{1}{2}}|_{U_2}=\phi|_{U_2}
      \end{align*}
      For $c\in [0, 1/2]$ we define $h_c:= f\circ \tilde \phi_c$. 
      
      We now define $h_c$ for $c\in [1/2, 1]$.
      Pick $U_3\subset U_2$ a neighborhood of the origin with the property that for every $q\in U_3$ and $\partial_v\in T_q \RR^n$:
      \[|(d(f\circ \phi)-dg)(\partial_v)|\leq  \frac{1}{2}|df(\partial v)|.\]
      Take an interior subset  $U\subset U_3$ which is a neighborhood of the origin.
      Let $\rho$ be a bump function, which is constantly 1 on $U$, and $0$ outside $U_3$. Let $\tau:[1/2, 1]\to [0, 1]$ be an increasing function smoothly interpolating between $\tau(1/2)=0$ and $\tau(1)=1$. 
      For $c\in [1/2,1]$, let 
      \[h_c:= (1- \tau(c)\rho)\cdot  h_{1/2}+ \tau(c)\rho g.\]

      It remains to show that $h_1$ is Morse, with a unique critical point at the origin.
      For any $q\in \RR^n\setminus U_3$, we have that $dh_1= (\tilde \phi_{\frac{1}{2}})^* df$, which is nonvanishing.
      For any $q\in U$, we have $dh_1= dg$, which vanishes if and only if $q=0$. 
      For $q\in U_3\setminus U$, take $\partial_v\in T_q \RR^n$ with the property that $d\rho(\partial_v)=0$.
      Then 
      \[|dh_1(\partial_v)| =| (1-\rho) df(\partial_v)+\rho df(\partial_v) |>\frac{1}{2} |df(\partial v)|>0.\]
      This proves that $q$ is not a critical point of $h_1$.
\end{proof}
\begin{prop}
      Let $K$ be a Morse Lagrangian cobordism. 
      Let $q\in K$ be a critical point of the projection $\pi_\RR: K\to \RR$ of index $k+1$.
      There exists 
      \begin{itemize}
            \item A neighborhood of the origin $U\subset T^*D^n\times \CC$, and a symplectic embedding $\phi: U\to X\times \CC$ which respects the splitting so that $\phi(0)=q$ and
            \item  $K'$ a Morse Lagrangian cobordism exactly homotopic to $K$. 
      \end{itemize}
      so that $K'\cap U=(K^{k, n-k+1}_{loc})\cap U$ under the identification given by $\phi$. Furthermore
      \begin{itemize}
            \item the critical points of $\pi_\RR|_{K'}$ are in bijection with the critical points of $\pi_\RR|_{K}$;
            \item the Hofer norm of the exact homotopy is as small as desired; and
            \item if $K$ is embedded, then $K'$ is embedded as well.
      \end{itemize}
      \label{prop:goodposition}
\end{prop}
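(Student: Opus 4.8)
The plan is to place $K$ into a local normal form near $q$ and then read off the standardization from the interpolation of Morse functions provided by \cref{claim:connectingmorse}. First I would apply \cref{claim:morsecobordism} and the remark following it: after an exact homotopy of arbitrarily small Hofer norm, which preserves embeddedness, one may assume $K$ is a Morse--Lagrangian cobordism, so that $q$ is an isolated critical point of $\pi_\RR|_K$ of index $k+1$, disjoint from the self-intersections of $K$, and --- by the claim just after \cref{claim:morsecobordism} --- not a critical point of $\pi_{\jmath\RR}|_K$.

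Next I would set up the normal form. Choose a local Weinstein neighbourhood of $K$ and, near $q$, a Darboux chart of $X\times\CC$ that splits as a product with $\CC$-coordinate $\pi_\RR+\jmath\pi_{\jmath\RR}$ and $q\mapsto 0$. Since $\pi_{\jmath\RR}|_K$ is a submersion at $q$, after applying $w\mapsto -\jmath w$ to the $\CC$-factor (a map respecting the splitting) the reasoning behind \cref{prop:suspension}, as used in the proof of \cref{prop:tdecomposition}, presents $K$ near $q$ as the suspension of an exact Lagrangian homotopy $\li_s\colon L\to X$ with flux primitive $H_s\colon L\to\RR$, the parameter $s$ ranging near $0$; the index hypothesis on $q$ becomes the statement that $(y,s)\mapsto H_s(y)$ is Morse of index $k+1$ at $(\bar q,0)$. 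In the same coordinates, by \cref{def:localsurgerytrace}, the model $K^{k,n-k+1}_{loc}$ is the suspension of $\li^{std}_s=\mathrm{graph}\,d\bigl(s\textstyle\sum_i\sigma_{i,k}y_i^2\bigr)$ with primitive $H^{std}_s(y)=s^2+\sum_i\sigma_{i,k}y_i^2$, again Morse of index $k+1$, so that matching $K$ to $K^{k,n-k+1}_{loc}$ near $q$ amounts to homotoping the pair $(\li_s,H_s)$ to $(\li^{std}_s,H^{std}_s)$ near $s=0$.

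I would do this in two moves. First, standardize the homotopy of Lagrangians: after using a Weinstein chart to make $\li_0$ the zero-section germ, $\li_s$ and $\li^{std}_s$ are short paths in the space of germs of Lagrangians through a point based at that germ, hence lie in a contractible neighbourhood of it; so there is a small $s$-family of Hamiltonian isotopies of $X$, supported near $q$ and trivial for $s$ near $\pm\epsilon$, carrying $\li_s$ to $\li^{std}_s$ near $s=0$, and applying it to the slices of the suspension is an exact homotopy of $K$ of small Hofer norm. We may now take $\li_s=\li^{std}_s$ near $s=0$; then $H_s$ and $H^{std}_s$ are two primitives of the flux of $\li^{std}_s$, so (as $L$ is a ball) $H_s=H^{std}_s+c(s)$ for a function $c$ of $s$ alone, with $c(0)=0$, $c'(0)=0$ (arranging the homotopy to keep $q$ the critical point) and $2+c''(0)>0$ since the index $k+1$ is preserved. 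It remains to homotope the one-variable index-$1$ Morse function $s\mapsto s^2+c(s)$ to $s\mapsto s^2$ through index-$1$ Morse functions fixed near $s=\pm\epsilon$, which is exactly the one-dimensional case of \cref{claim:connectingmorse}; adding the interpolating family (as a function of $s$) to $H^{std}_s$ keeps it a valid primitive of the flux of $\li^{std}_s$, so this is again an exact homotopy, after which $K$ equals $K^{k,n-k+1}_{loc}$ on a neighbourhood $U$ of $q$ in the chart, giving the splitting-respecting embedding $\phi$ with $\phi(0)=q$.

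The remaining assertions are then routine: all the homotopies above are supported on a ball of radius $\delta$ about $q$ on which $H_s$ and $H^{std}_s$ vanish to second order, so the total Hofer norm is $O(\delta^2)$ and can be made arbitrarily small; \cref{claim:connectingmorse} guarantees that no spurious critical points of $\pi_\RR$ are created, so the critical points of $\pi_\RR|_{K'}$ biject with those of $\pi_\RR|_K$; and for $\delta$ small the modification is $C^0$-small, hence creates no new intersections and preserves embeddedness. The step I expect to be the main obstacle is the first half of the standardization --- establishing the suspension normal form near $q$ and the soft homotopy $\li_s\rightsquigarrow\li^{std}_s$ --- since the delicate point, that no new critical points of $\pi_\RR$ appear, is precisely what \cref{claim:connectingmorse} is designed to handle once the local Morse data has been reduced to a single one-variable Morse function.
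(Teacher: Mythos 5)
Your reduction to the suspension normal form is fine, and so is the identification of the local model as the suspension of $\li^{std}_s=\mathrm{graph}\,d\bigl(s\sum_i\sigma_{i,k}y_i^2\bigr)$ with primitive $H^{std}_s=s^2+\sum_i\sigma_{i,k}y_i^2$. The gap is in your two-step execution: the ``soft'' standardization of the slice family $\li_s\rightsquigarrow\li^{std}_s$ by contractibility of the space of exact graph germs gives no control whatsoever over the critical points of $\pi_\RR|_K$ (exact homotopies can create, destroy, or change the index of such critical points), and the residual freedom you allow afterwards --- changing the primitive by a function $c(s)$ of $s$ alone --- is too small to repair this. The point is that the index-$(k{+}1)$ Hessian of the height at $q$ need not split as (a form of signature $(k,n-k)$ on the slice directions) $\oplus$ (a positive direction along the suspension parameter). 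Concretely, take $n=1$, $k+1=1$ and the germ with generating function $F=x^2s-\tfrac{1}{3}s^3$, so the height is $x^2-s^2$ (index $1$) but the slices are graphs of $+2sx\,dx$, whereas the standard $k=0$ model $K^{0,2}_{loc}$ has height $s^2-x^2$ and slices graphs of $-2sx\,dx$. If you force the slices to the standard family by an $s$-family of isotopies trivial near $s=\pm\epsilon$, the new slices are graphs of $a(s)\,d(x^2)$ with $a(s)=-s$ near $0$ and $a(s)=s$ near $\pm\epsilon$, so the new height is $a'(s)x^2+\tilde c(s)$ with $a'$ changing sign: at the origin the index becomes $0$, and extra critical points appear where $a'$ vanishes. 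In particular your claimed normal form $H_s=H^{std}_s+c(s)$ with $2+c''(0)>0$ and no new critical points is simply not reachable by your step one, and asserting ``the index $k+1$ is preserved'' begs exactly the question the proposition is about.

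This is why the paper's proof does not separate the slice directions from the suspension direction: it writes $K$ near $q$ as $dF$ over $\underline K=D^n\times[-s_0,s_0]$ and applies \cref{claim:connectingmorse} to the full $(n{+}1)$-variable Morse germs $f=\partial_sF$ and $g=\partial_sG^{k,n-k+1}$; the proof of that claim uses a path of linear maps of $\RR^{n+1}$ that mixes the slice and suspension directions, which is precisely the move your scheme forbids, and it is not ``the one-dimensional case'' of the claim that is needed. Moreover, after interpolating the derivative $\partial_sF$ one must re-integrate in $s$ and correct the primitive by the localized term $A_c(x)\rho(x,s)$, with the $\alpha,\beta$ estimates, so that the new generating function agrees with $F$ near the boundary of $\underline K$ without introducing spurious critical points; in your write-up this boundary-matching problem is hidden inside the unproven step one rather than addressed. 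To repair the argument you would essentially have to reinstate the $(n{+}1)$-dimensional interpolation and the primitive-correction estimates, i.e.\ the paper's proof.
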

\emph{Proof.} At $q$ take the Lagrangian tangent space $T_qK\subset T_{\pi_X(q)} X\oplus T_{\pi_\CC(x)} \CC$.
      Since $q$ is a critical point of $\pi_\RR$, we have that $T_qK\subset  T_{\pi_X(q)} X\oplus \jmath \RR$. 
      By dimension counting, any set of vectors $\partial v_0, \partial_{v_1}, \ldots, \partial_{v_{n}} \in T_qK$ with the property that $(\pi_X)_*(\partial_{v_1})\neq 0$ for all $v_i$ cannot form a basis of a Lagrangian subspace for $\pi_X(T_qK)$; therefore, there exists a vector $\partial_s\in T_qK$ so that $\pi_X(\partial_s)=0$, and we may split $T_qK=(\pi_X)_*(T_qK)\oplus \jmath \RR$. 
      Choose a Darboux chart  $U\subset T^*\RR^n\times \CC$, $\phi:U\to X\times \CC$ which respects the product decomposition, and has $\phi_*(\RR^n\times \jmath \RR)=T_q K$. Write $\underline K$ for $\RR^n\times \jmath \RR|_U$. 
      Because $K$ and $\underline K$ have the same tangent space, we can further restrict to a Weinstein neighborhood of $\underline K$ so that $K$ is an exact section of the tangent bundle $B^*_\epsilon\underline K$.
      By taking a possibly smaller neighborhood, we will identify $\underline K = D^{n}_{r_0}\times [-s_0, s_0]$, where the $s$-coordinate denotes the $\jmath \RR$ direction.
      Let $F$ be the primitive of this section, so that $K|_{B^*\underline K}= dF$.
     
      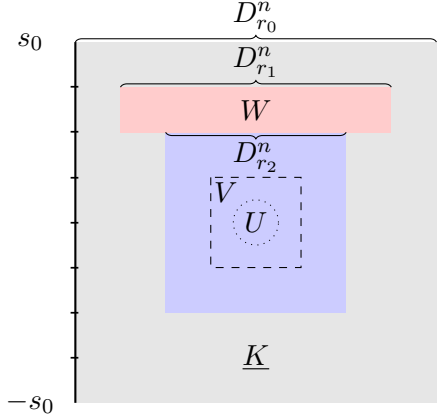
\begin{SCfigure}[50]
            \centering
            
\begin{tikzpicture}[scale=1.2]

\fill[gray!20]  (-3,2.5) rectangle (1,-1.5);
\node at (-3.5,2.5) {$s_0$};

\node at (-3.5,-1.5) {$-s_0$};
\fill[blue!20]  (-2,1.5) rectangle (0,-0.5);
\fill[red!20]  (-2.5,1.5) rectangle (0.5,2);
\draw[dashed]  (-1.5,1) rectangle (-0.5,0);
\node at (-2,0.5) {};
\node at (-1.3336,0.8336) {$V$};

\draw[decoration={brace, mirror},decorate](0.5,2) -- node[above] {$D^{n}_{r_1}$}(-2.5,2);

\draw[decoration={brace},decorate](0,1.5) -- node[below] {$D^{n}_{r_2}$}(-2,1.5);
\draw[decoration={brace, mirror},decorate](1,2.5) -- node[above] {$D^{n}_{r_0}$}(-3,2.5);
    \node at (-1,-1) {$\underline K$};
\draw[ thick] (-3,2.5) -- (-3,-1.5);
\node at (-3,2) {-};
\node at (-3,1.5) {-};
\node at (-3,1) {-};
\node at (-3,0.5) {-};
\node at (-3,0) {-};
\node at (-3,-0.5) {-};
\node at (-3,-1) {-};
\node at (-1,1.75) {$W$};
\draw[dotted]  (-1,0.5) ellipse (0.25 and 0.25);
\node at (-1,0.5) {$U$};
\end{tikzpicture}             \caption{Nested neighborhoods for interpolating primitives. At the end of interpolation: our Lagrangian handle matches the standard surgery handle over $U$; the region $V$ is used to interpolate between $g= \partial_s G$ and $f= \partial_s F$; the region $W$ is subsequently used to correct $h_c$ so that its integral over the $s$-coordinate matches $F$ at the boundary of $K$. }
            \label{fig:nested}
      \end{SCfigure}
      If we let $s$ be the coordinate on $\underline K$ which travels in the $\jmath \RR$ direction, then we can compute $\pi_\RR: K\to \RR$ at $q\in K$ by
      \[f(q):=\pi_\RR(q)= \partial_sF(q),\]
      which is a Morse function on $\underline K$ with a single critical point.

      We now implement the handle in this neighborhood. Let $G^{k, n-k+1}:\underline K\to \RR$ be the primitive for the handle from \cref{def:localsurgerytrace}, so that $g:=\partial_s G^{k, n-k}$ is a Morse function on $\underline K$.
      We will use \cref{claim:connectingmorse} to obtain a function $h_c$ interpolating between $f$ and $g$, and define a preliminary primitive $H_c^{pre}:=\int h_c ds$.
      The section associated to $H_c^{pre}$ will satisfy all the Morse properties we desire; however it does not agree with $F$ outside near the boundary of $\underline K$.
      This is because while $f$ and $h_c$ agree at the boundary of $\underline K$, there is no reason for  $\int_{s_0}^s h_c(x, s)ds$ match $\int_{s_0}^s f(x, s)  ds= F(x, s)$ near the boundary of $\underline K$. 
      Therefore, we need to add a correcting term to $h_c$ in order to make these integrals agree near the boundary of $\underline K$.
      
      We set up this correction using a neighborhood as drawn in \cref{fig:nested}.
      Take a set $W = D^n_{r_1}\times [s_0/2, 3s_0/4] \subset \underline K$. 
      Let $\rho: W\to \RR_\geq 0$ be a bump function, vanishing on the boundary, with the property that there exists an $r_2<r_1$ so that for all $x,y\in D^n_{r_2}$, $\rho(x,s)=\rho(y, s)$. 
      Furthermore, assume that  $\int_{\{x\}\times [s_0/2, 3s_0/4]} \rho(x, s) ds=1$.
      Let $\alpha = \sup_{(x, s)\in \underline K} |d\rho|$.
      Let $\beta = \inf_{(x, s)\in W} |df|$. Since $f$ has no critical points in $W$, this is greater than $0$.
      To each choice of $V\subset D^n_{r_2}\times [-s_0/2, s_0/2]$ and associated interpolation $h_c:\underline V\to \RR$, we can define a function
       \[     A_c(x):= \int_{\{x\}\times [-s_0/2, s_0/2]} \left(h_c(x, s) -f(x, s) \right)ds.
     \]
      We may choose $V$ small enough so that our interpolation satisfies 
      \begin{align*}
             \sup_{x, c}|A_c(x)| < \frac{\beta}{2\alpha}&& \sup_{x}|dA_1(x)|<\frac{\beta}{2}
      \end{align*}
      Now consider the function 
      \[H_c(x, s)=\int_{\{x\}\times [-s_0, s]} h_c(x,s) - A_c(x) \rho(x, s) ds.\]
      Then $\partial_s H_1(x, s) = h_c(x, s)- A_c(x)\rho(x, s)$. 
      We have that $d(\partial_sH_1)= dh_c- d(A_c(x)\rho)$. By construction $|d(A_c(x)\rho)|< |dh_c|$ inside the region $W$, and $A_c(x)\rho$ vanishes outside of $W$. It follows that $\partial_s H_1(x, s)$ has no critical points outside of $V$.
      The derivative $\partial_sH_1(x, s)$ is Morse, agrees with $g$ in a neighborhood of the origin, where it has a single critical point. 
      Furthermore, near the boundary of $\underline K$, we have 
      \[H_c(x, s) = F(x, s) \text{ for all } (x, s) \in \underline K\setminus \left(D^n_{r_2}\times [-s_0/2, 3s_0/4]\right).\]

      Consider the Lagrangian section of $T^*\underline K$ given by $dH_1$. 
      This Lagrangian section is exactly isotopic to  $K|_{T^*\underline K}$, with exact primitive vanishing at the boundary. 
      We therefore have an exactly homotopic family of Lagrangian cobordisms
      \[K_c:= K\setminus (K|_{T^*\underline K}) \cup dH_c,\]
      where $\Crit(\pi_\RR: K_0 \to \RR)= \Crit(\pi_\RR : K_1\to \RR)$ and $K_1|_{T^*\underline K} = K^{k, n-k+1}_{loc}|_{T^*\underline K}$.

      Finally, for the bound on the Hofer norm: this is given by 
      \[\int_{0}^1 \sup_{x,s}(H_c(x,s)-F(x, s))-\inf_{x, s}(H_c(x, s)-F(x,s)) dc.\] By choosing our initial neighborhood $D^n_{r_0}$ sufficiently small (so both $F(x, s)$ and $G^{k, n-k}$ are nearly zero over the neighborhood) we make $\sup_{c,x, s}|(H_c(x,s)-F(x, s)|$ as small as desired.
\hfill{\qed}

\subsubsection{Cobordisms are concatenations of surgeries}
We now prove that every Lagrangian cobordism is exactly homotopic to the concatenation of standard surgery handles.
\begin{proof}[Proof of \cref{thm:cobordismsaresurgery}]
      Let $K: L^+\rightsquigarrow L^-$ be a Lagrangian cobordism.
      After application of \cref{claim:morsecobordism}, we obtain $K'$, a Lagrangian cobordism exactly homotopic to  $K$ with the property that $\pi_\RR: K'\to \RR$ is Morse with distinct critical values.
      Enumerate the critical points $\{q_i\}_{i=1}^l=\Crit(\pi_\RR: K'\to \RR)$.  
      By \cref{prop:goodposition}, we may furthermore assume that $K'$ is constructed so that there exists symplectic neighborhoods $U_i\subset T^*D^n\times T^*D^1$ so that $K'\cap U_i = K^{k_i,n-k_i+1}_{loc}\cap U_i$.

      For each $q_i$ take $\epsilon_i$ small enough so that the $(q_i-\epsilon_i, q_i+\epsilon_i)$ are disjoint. Take $r_i$ small enough so that the ball of radius $r_i$ centered at the critical point $q_i$ is contained within the charts $U_i$.
      $K$ is exactly homotopic to the composition 
      \[K\|_{(-\infty, q_l-\epsilon]}\circ K\|_{[q_l-\epsilon,q_l+\epsilon ]}\circ K\|_{[q_l+\epsilon, q_{l-1}+\epsilon]}\circ \cdots \circ K\|_{[q_2+\epsilon, q_1-\epsilon]} \circ K\|_{[q_1-\epsilon, q_1+\epsilon]} \circ K\|_{[q_1+\epsilon, \infty)}.\]
      By applying \cref{prop:splittingX} on $K'\|_{(q_i-\epsilon_i, q_i+\epsilon_i)}$ at the dividing hypersurface  
      \[M_i:=\left\{(x_0, \ldots, x_n)\in U_i \;\middle|\; \sum_{j=0}^n x_j^2= r_i^2\right\}\] 
      we obtain an exact homotopy 
      \[K'\|_{(q_i-\epsilon_i, q_i+\epsilon_i)}\sim K^-_i\circ \uu{K_i}_{M_i} \circ K^+_i\]
      where $K^\pm_i$ are suspensions, and $K_i^{k_i, n-k_i+1}:= \uu{K_i}_{M_i}$  are Lagrangian surgery traces.

      For each $i$, let $K_{H^i_t}= K^+_{i+1} \circ K^-_{i}$ be the suspension of an exact homotopy. 
      Then 
      \[K\sim K_{H^l_t}\circ K_l^{k_l, n-k_l+1}\circ K_{H^{l-1}_t}\circ \cdots \circ K_{H^1_t}\circ K_1^{k_1, n-k_1+1}\circ K_{H^0_t}.\]

      We finally check the Hofer norm of the exact homotopy above. At each step where we employ an exact homotopy, the operations from \cref{claim:morsecobordism,prop:goodposition,prop:tdecomposition,prop:splittingX} could be conducted in such a way to make the Hofer norm of their associated exact homotopies as small as desired. Since there are a finite number of operations being conducted, the Hofer norm of the exact homotopy between $K$ and a decomposition can be made as small as desired.
\end{proof}
\begin{conjecture}
      Suppose that $L$ is an unobstructed Lagrangian, whose bounding cochain has valuation $c$. Let $L'$ be exactly homotopic to $L$, with the Hofer norm of the exact homotopy less than $c$. Then $L'$ is unobstructed.
      \label{con:hofunobstructed}
\end{conjecture}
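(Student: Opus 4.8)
The plan is to reduce the conjecture to a statement about the Floer theory of the suspension cobordism attached to the exact homotopy, and then to transport the bounding cochain across that cobordism. First, by \cref{prop:suspension} the exact homotopy from $L$ to $L'$ produces a suspension Lagrangian cobordism $K: L\rightsquigarrow L'$ whose shadow $\Area(K)$ equals the Hofer norm of the homotopy, so $\Area(K)<c$; since $\pi_\RR: K\to\RR$ has no critical points, $K$ is diffeomorphic to $L\times\RR$.

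Next I would record two a priori energy estimates. Comparing powers of the Novikov parameter $T$ in the Maurer--Cartan equation $\sum_{k\ge 0}m^k(b^{\otimes k})=0$ for $L$ shows that the hypothesis $\val(b)=c$ forces $\val(m^0_L)\ge c$: if $\val(m^0_L)$ were less than $c$, the lowest-order term of the equation would be the corresponding smallest-area piece of $m^0_L$ alone (every term of $m^k(b^{\otimes k})$ for $k\ge 1$ has order at least $c$), forcing that piece to vanish --- a contradiction. Hence $L$ bounds no holomorphic disk or teardrop of symplectic area less than $c$. Second, for any holomorphic disk $u$ with boundary on $K$ the projection $\pi_\CC\circ u$ is holomorphic with boundary contained in the shadow region, so by the standard maximum-principle argument for Lagrangian cobordisms (see \cite{cornea2019lagrangian}) one has $0\le \int u^*\omega_\CC\le \Area(K)<c$, with equality on the left exactly when $u$ is fibered over a point of $\CC$; in that case $u$ is a disk with boundary on a slice $K|_t$, and the slices of $K$ are precisely the terms of the given exact homotopy.

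The core step is to produce a bounding cochain $b_K\in CF^1(K)$ that restricts to $b$ on the positive end. I would work in a Biran--Cornea model of $\CF(K)$ with a cylindrical Hamiltonian profile over the ends, filter by the Novikov energy, take $b$ as initial data, and solve the relative Maurer--Cartan equation $\sum_{k\ge 0}m^k_K(b_K^{\otimes k})=0$ by induction on the filtration. At each energy level the obstruction to extending the partial solution is a class in a relative Floer cohomology group of the pair $(K,L)$; the second estimate confines every disk contributing below energy $c$ to be fibered over a point of $\CC$, hence a disk on some slice $K|_t$, whose contribution should be cancelled by the correction already fixed on $L$ together with parallel transport along the critical-point-free cobordism. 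Restricting the resulting $b_K$ to the negative end yields $b':=b_K|_{L'}\in CF^1(L')$, which solves the Maurer--Cartan equation for $L'$; since every contributing disk has strictly positive symplectic area --- quantitatively, a disk class that persists across the homotopy loses at most $\Area(K)$ in area, hence stays of area at least $c-\Area(K)>0$ --- this gives $\val(b')>0$, so $L'$ is unobstructed. (A time-variable packaging --- propagating a bounding cochain along the family $L_t$ by continuation maps over short subintervals, with valuation loss bounded by the swept Hofer norm --- runs into the same issue at the endpoint.)

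The main obstacle is the inductive step above: one must control the obstruction classes, i.e.\ rule out that an internal disk of $K$ --- for instance a small holomorphic teardrop born on a middle slice when the homotopy degenerates it --- contributes a Floer class that cannot be killed by a correction of valuation $<c$. This is exactly where the bound $\Area(K)<c$ must be used essentially, and it is why the statement is posed as a conjecture: the estimates confine the troublesome disks to energies below $c$ but do not by themselves make their obstructions exact. A secondary, more bookkeeping, difficulty is to set up $\CF(K)$, the relative Maurer--Cartan theory, and the end-restriction maps with enough functoriality for a possibly obstructed immersed cobordism --- the frameworks of \cite{biran2013lagrangian} and \cite{hicks2019wall} being the natural starting point --- so that restricting $b_K$ to $L'$ is a legitimate operation in the first place.
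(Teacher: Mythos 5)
The statement you are addressing is posed in the paper as a conjecture, and the paper offers no proof --- only a heuristic following it: for a Hamiltonian isotopy the suspension $K$ makes $\CF(K)$ an $A_\infty$ mapping cocylinder with filtered projections $\pi^\pm$ and a valuation-zero quasi-inverse $i^+$, while for a general exact homotopy the analogous $i^+$ is expected to be only weakly filtered, with valuation loss bounded by the shadow of the suspension, so that bounding cochains of valuation larger than the Hofer norm can be transported. Your proposal is the same strategy in different packaging: rather than building the weakly filtered homotopy equivalence and pushing $\mathfrak b$ forward, you try to extend $\mathfrak b$ to a relative Maurer--Cartan solution $b_K$ on the suspension by induction on energy and then restrict via $\pi^-$. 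These are essentially equivalent in content, and you correctly identify that both founder on the same missing input (controlling the obstruction classes below energy $c$, and setting up $\CF(K)$ rigorously for a possibly obstructed immersed cobordism). So as a status report your proposal is accurate; it is not, and does not claim to be, a proof, which matches the paper.

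Two of your supporting estimates are stated too strongly, and you should fix them even in a sketch. First, from $\val(\mathfrak b)=c$ and the Maurer--Cartan equation you may conclude only $\val(m^0_L)\geq c$; you cannot conclude that $L$ bounds no disk or teardrop of area less than $c$, since contributions of small-area disks may cancel in $m^0$. Second, the open-mapping argument bounds only the $\CC$-component of the energy of a disk on $K$ by $\Area(K)$; the $X$-component is unconstrained, so the inequality does not ``confine every disk contributing below energy $c$'' nor force low-energy disks to be fibered over a point (and even a disk with constant $\CC$-projection has boundary on a fiber $K\cap\pi_\CC^{-1}(z_0)$, which for a suspension is a level set of $H_{t_0}$ inside the slice, not the whole slice $K|_{t_0}$). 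The correct role of the shadow bound, as in the paper's discussion, is to control the valuation loss of the comparison map between the ends, not to bound absolute disk energies on $K$; your inductive step should be reformulated accordingly, and that reformulated step is precisely the open part of the conjecture.
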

The conjecture is based on the following observation: for Hamiltonian isotopies, the suspension cobordism $K$ has the property that $\CF(K)$ is an $A_\infty$ mapping cocylinder between $\CF(L^+)$ and $\CF^(L^-)$, meaning that there are $A_\infty$ projection maps $\pi^\pm: \CF(K)\to \CF(L^\pm)$ and a map (defined on chains, but not a $A_\infty$-homomorphism) $\underline i^+:\CF( L^+)\to \CF(K)$ which can be extended to an $A_\infty$ homomorphism $i^+: \CF(L^+)\to \CF(K)$. The homomorphism $i^+$  is an $A_\infty$ homotopy inverse to $\pi^+$. A key point is that the lowest order portion of $i^+$ comes from the Morse continuation map, so $i^+$ has valuation zero. 

For exact homotopy, we expect that there still exist projection maps $\pi^\pm$. There are several difficulties in the construction of these map (principally, it requires a rigorous definition of the Floer theory in this setting).
In contrast to the isotopy case, the map $i^+$ will be given by at lowest order by counts flow lines (between Morse generators) and holomorphic strips (between generators associated to self-intersections). Thus, while we may be able to define a map (not an $A_\infty$ homomorphism) $i^+:\CF( L^+)\to \CF(K)$, the map may \emph{decrease} valuation. We conjecture that the decrease in valuation is bounded by the shadow of the Lagrangian cobordism. Under these circumstances, there is a version of the $A_\infty$-homotopy transfer theorem \cite{hicks2019wall} which allows $i^+$ to be extended to an $A_\infty$ homotopy inverse to $\pi^+$.

As the exact homotopies we consider for our decomposition of Lagrangian cobordisms have as small Hofer norm as desired, a corollary of the conjecture is that decomposition of Lagrangian cobordism preserves unobstructedness.

Finally, we make a remark about anti-surgery versus surgery. 
We've shown that every Lagrangian cobordism can be decomposed as a sequence of exact homotopies and Lagrangian surgery traces; in particular, the Lagrangian anti-surgery trace $(K^{k, n-k+1})^{-1}$ can be rewritten as a Lagrangian surgery and exact homotopy. 
An anti-surgery takes an embedded Lagrangian $L$ and adds a self-intersection; one can equivalently think of this as starting with an embedded Lagrangian, applying an exact homotopy to obtain a pair of self-intersection points, and then surgering away one of the self-intersections.  This is drawn in \cref{fig:antisurgerytosurgery}
\begin{figure}
      \centering
      
\begin{tikzpicture}[scale=.5, xscale=-1]

\begin{scope}[shift={(0,-5)}]
\fill[fill=gray!20]  (-5.5,4.5) rectangle (-0.5,0.5);
\draw[thick] (-5.5,3) -- (-0.5,3) (-5.5,2) -- (-0.5,2);
\end{scope}
\begin{scope}[shift={(12,-5)}]

\fill[fill=gray!20]  (0.5,4.5) rectangle (5.5,0.5);
\draw[thick] (0.5,2) .. controls (1,2) and (1,1.5) .. (1.5,1.5) .. controls (2,1.5) and (2,3.5) .. (2.5,3.5) .. controls (3,3.5) and (3,3) .. (3.5,3) .. controls (5,3) and (5,3) .. (5.5,3);
\draw[thick] (5.5,2) .. controls (5,2) and (5,2) .. (3.5,2) .. controls (3,2) and (3,1.5) .. (2.5,1.5) .. controls (2,1.5) and (2,3.5) .. (1.5,3.5) .. controls (1,3.5) and (1,3) .. (0.5,3);

\end{scope}
\begin{scope}[]
\fill[fill=gray!20]  (0.5,-0.5) rectangle (5.5,-4.5);
\draw[thick] (0.5,-2) .. controls (1,-2) and (1,-1.5) .. (1.5,-1.5) .. controls (2,-1.5) and (2,-3) .. (2.5,-3) .. controls (3,-3) and (3,-1.5) .. (3.5,-1.5) .. controls (4,-1.5) and (4,-2) .. (4.5,-2) node (v1) {};
\draw[thick] (4.5,-3) .. controls (4,-3) and (4,-3.5) .. (3.5,-3.5) .. controls (3,-3.5) and (3,-2) .. (2.5,-2) .. controls (2,-2) and (2,-3.5) .. (1.5,-3.5) .. controls (1,-3.5) and (1,-3) .. (0.5,-3);
\draw[thick] (4.5,-2) -- (5.5,-2) (4.5,-3) -- (5.5,-3);

\end{scope}

\begin{scope}[shift={(6,0)}]
\fill[fill=gray!20]  (0.5,-0.5) rectangle (5.5,-4.5);

\draw[thick] (0.5,-2) .. controls (1,-2) and (1,-1.5) .. (1.5,-1.5) .. controls (2,-1.5) and (2,-3) .. (2.5,-3) .. controls (2.75,-3) and (2.75,-2.75) .. (3,-2.75) .. controls (3.25,-2.75) and (3.25,-3.5) .. (3.5,-3.5) .. controls (4,-3.5) and (4,-3) .. (4.5,-3) .. controls (5,-3) and (5,-3) .. (5.5,-3);
\draw[thick] (0.5,-2) .. controls (1,-2) and (1,-1.5) .. (1.5,-1.5);

\draw[thick] (0.5,-3) .. controls (1,-3) and (1,-3.5) .. (1.5,-3.5) .. controls (2,-3.5) and (2,-2) .. (2.5,-2) .. controls (2.8,-2) and (2.75,-2.5) .. (3,-2.5) .. controls (3.25,-2.5) and (3.2,-1.5) .. (3.5,-1.5) .. controls (4,-1.5) and (4,-2) .. (4.5,-2) .. controls (5,-2) and (5,-2) .. (5.5,-2);

\end{scope}
\draw[->] (-2,1) .. controls (-1,2) and (1,2) .. (2,1);
\draw[->] (4,1) .. controls (5,2) and (7,2) .. (8,1);
\draw[->] (10,1) .. controls (11,2) and (13,2) .. (14,1);
\node at (0,2) {Exact Homotopy};
\node at (6,2) {Surgery};
\node at (12,2) {Exact Homotopy};
\end{tikzpicture}       \caption{Rewriting an anti-surgery as exact homotopies and surgeries.}
      \label{fig:antisurgerytosurgery}
\end{figure}
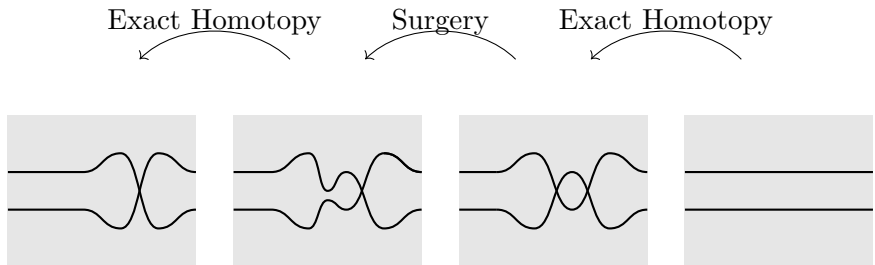
 
\section{Teardrops on Lagrangian Cobordisms}
\label{sec:teardrops}
One of the main observations about the decomposition described in \cref{thm:cobordismsaresurgery} is that the slices near a critical point of $\pi_\RR: K\to \RR$ differ:
\begin{itemize}
      \item topologically by a surgery and
      \item as immersed submanifolds of $X$ by the creation or deletion of a self-intersection.
\end{itemize}
An interpretation of the difference is that surgery trades topological chains of $L^-$ for self-intersections of $L^+$. When $K$ is an embedded oriented Lagrangian submanifold, one can show that this exchange preserves the $\ZZ/2\ZZ$ grading of the topological chains. As a consequence,
 $\chi(L^+)=\chi(K)=\chi(L^-)$. 
 
In this section we extend this statement to the immersed setting, and provide evidence that this equality can be upgraded to an isomorphism of Floer cohomology groups. In \cref{subsubsec:observations} we provide a definition for the Euler characteristic of an immersed Lagrangian submanifold with transverse self-intersections (\cref{eq:immersedEulerCharacteristic}). Using \cref{thm:cobordismsaresurgery} we then prove in \cref{prop:eulerchar} that for this definition of Euler-characteristic $\chi^{si}(L^-)=\chi^{si}(L^+)$.

The computation does not immediately extended to the Euler characteristic of a Lagrangian cobordism $K:L^+\rightsquigarrow L^-$ with immersed ends as $K$ will not have transverse self-intersections. 
We therefore require a standard form for Lagrangian cobordism with transverse self-intersections. The standard form we choose (Lagrangian cobordisms with double bottlenecks) is adopted from  \cite{mak2018dehn} who also studied the Floer cohomology of immersed Lagrangian cobordisms.
We subsequently show in \cref{prop:selfIntersectionSummary} that for Lagrangian cobordisms with double bottlenecks $(K,t^-, t^+): (L^-, H^-)\rightsquigarrow (L^+, H^+)$,  
\begin{equation}
       \chi^{bot}(K, t^-, t^+)=\chi^{si}(L^-)=\chi^{si}(L^+)
       \label{eq:selfIntersectionSummary}
\end{equation}
where $\chi^{si},\chi^{bot}$ are the Euler characteristic of an appropriate set of Floer cochains for immersed Lagrangians and Lagrangian cobordisms with double bottlenecks.

In \cref{subsec:basicimmersed} we review the construction of immersed Lagrangian Floer cohomology, and in \cref{subsec:teardrops} we provide evidence that \cref{eq:selfIntersectionSummary} can be extended to chain homotopy equivalences between the immersed Lagrangian Floer cohomologies of $K, L^-$ and $L^+$ when $K$ is a standard surgery trace. 

\subsection{Grading of Self-intersections, and an observed pairing on cochains}
\label{subsubsec:observations}
We recover this equality of Euler characteristic on the chain level for Lagrangian cobordisms with self-intersections (\cref{prop:eulerchar}). 
Suppose that $X$ has a nowhere vanishing section $\Omega$ of $\Lambda^n_\CC(T^*M, J_M)$.
We say that $\li:L\to X$ is graded if there exists a function $\theta: L\to \RR$ so that the determinant map
\begin{align*}
      \det: L\to &S^1\\
      q\mapsto &\Omega(T_qL)^{\tensor 2}/|\Omega(T_q L)|^2
\end{align*}
can be expressed as $\det(q)=e^{2\jmath\pi \theta}.$
In this setting, we can define a self-intersection corrected Euler characteristic for immersed Lagrangian submanifolds. 
Let 
\[\mathcal I^{si}(L)=\{(p\to q)\;|\;   p, q\in L , \li(p)=\li(q)\}\]
be the set of ordered self-intersections\footnote{The notation reflects our interpretation of each element $(p\to q)$ as being a short Hamiltonian chord starting at $p$ and ending at $q$.}. Note that each self-intersection of $L$ gives rise to two elements of $\mathcal I^{si}$.
The \emph{index} of a self-intersection $(p\to q)$ is defined as
\[\ind(p\to q):=n+\theta(q)-\theta(p)-2\measuredangle(\li_* T_pL, \li_* T_qL),\]
where $\measuredangle(V, W)$ is the K\"ahler angle between two Lagrangian subspaces. We particularly suggest reading the exposition in \cite{alston2019immersed} on computation of this index.
\begin{claim}[Index of Handle Self Intersections]
      \label{claim:indexOfHandle}
      Consider the parameterization of one boundary of the standard Lagrangian surgery handle obtained from restricting \cref{def:localsurgerytrace} to the positive slice:
      \[\lj^{k, n-k+1}|_{t=1}:L^{k, n-k,+}\to \CC^n.\]
      Equip $\CC^n$ with the standard holomorphic volume form.
      The index of the self-intersections  are 
      \begin{align*}
            \ind(q_-\to q_+)=n-k-1. && \ind(q_+\to q_-)= k+1.
      \end{align*}
\end{claim}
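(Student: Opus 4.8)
The plan is to compute the index $\ind(q_-\to q_+)$ directly from its definition
\[\ind(p\to q)=n+\theta(q)-\theta(p)-2\measuredangle(\lj_* T_pL, \lj_* T_qL)\]
applied to the two preimages $q_\pm=(\pm 1,0,\dots,0)$ of the double point of $L^{k,n-k,+}$. First I would record the parameterization $\lj^{k,n-k+1}|_{t=1}$ from \cref{def:localsurgerytrace}: on the locus $x_0^2+\sum\sigma_{i,k}x_i^2=1$ we have
\[(x_0,\dots,x_n)\mapsto (x_1+\jmath\sigma_{1,k}2x_1x_0,\dots,x_n+\jmath\sigma_{n,k}2x_nx_0).\]
At $q_\pm$ we have $x_0=\pm1$ and $x_1=\dots=x_n=0$, so I would compute the differential there: the tangent vector $\partial_{x_i}$ ($i\ge 1$) maps to $\partial_{q_i}+\jmath\,\sigma_{i,k}2x_0\,\partial_{p_i}=\partial_{q_i}\pm 2\sigma_{i,k}\partial_{p_i}$, while $\partial_{x_0}$ is not tangent to the sphere at $q_\pm$ (the constraint surface has $x_0$ as a free normal-ish direction there, so the tangent space to $L$ at $q_\pm$ is spanned precisely by the images of $\partial_{x_1},\dots,\partial_{x_n}$ together with the appropriate sphere-tangent combination; I'll need to be slightly careful, since $S^n_r$ near $(\pm r,0,\dots,0)$ has tangent space spanned by $\partial_{x_1},\dots,\partial_{x_n}$). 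So $\lj_*T_{q_\pm}L$ is the Lagrangian plane spanned by $\{\partial_{q_i}\pm 2\sigma_{i,k}\partial_{p_i}\}_{i=1}^n$ in $\CC^n$ with coordinates $(q_1+\jmath p_1,\dots,q_n+\jmath p_n)$.

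Next I would compute the three ingredients. The phase function: since $L^{k,n-k,+}$ restricted to the first $k$ coordinates is the Whitney isotropic $L^{k,0,+}$ and the construction is a product-type extension, the grading $\theta$ can be taken so that $\theta(q_+)-\theta(q_-)$ is the accumulated phase difference of $\det$ along the curve in $L$ connecting the two preimages; for the figure-eight/Whitney sphere the standard computation gives a half-integer contribution depending on $n$ and $k$. Concretely, $\Omega(T_{q_\pm}L)^{\otimes 2}/|\cdot|^2=\prod_{i=1}^n \frac{(1\pm 2\jmath\sigma_{i,k}2)^2}{|1\pm 2\jmath\sigma_{i,k}\cdot 2|^2}$ — wait, I should use the actual slope, which is $\pm 2\sigma_{i,k}$, so each factor is $\frac{(1\pm 2\jmath\sigma_{i,k})^2}{1+4}$; comparing at $q_+$ versus $q_-$ gives a total phase difference, and I would check this matches the known grading of the Whitney sphere self-intersection (which is why the statement is cited as "see \cite{alston2019immersed}"). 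The Kähler angle: $\measuredangle(\lj_*T_{q_+}L,\lj_*T_{q_-}L)$ decomposes as a sum over the $n$ coordinate lines of the angle between the real lines of slopes $+2\sigma_{i,k}$ and $-2\sigma_{i,k}$ in $\CC$; this angle is the same for all $i$ up to sign of $\sigma_{i,k}$, but since the angle between a line of slope $m$ and slope $-m$ depends only on $|m|$, it is literally the same real number $\alpha$ for all $n$ coordinates, so $\measuredangle=n\alpha$. The point is then that the $n\alpha$ term and the phase-difference term, together with the leading $n$, collapse: for $i>k$ the sign $\sigma_{i,k}=-1$ flips the orientation of the pair which effectively contributes a full half-turn of index, so the $n-k$ directions with $\sigma=-1$ each contribute $+1$ net to $\ind(q_-\to q_+)$ and the $k$ directions with $\sigma=+1$ contribute $0$, after the $n$, the $-2\measuredangle$, and the $\theta$-difference are all accounted. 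This yields $\ind(q_-\to q_+)=n-k-1$, with the $-1$ coming from the genuine Whitney-sphere self-intersection correction present already in the $k=0$ case (where $n-k-1=n-1$, matching $L^{0,n,+}$ the $n$-dimensional Whitney sphere), and $\ind(q_+\to q_-)=n-\ind(q_-\to q_+)=k+1$ by the general relation $\ind(p\to q)+\ind(q\to p)=n$ for transverse self-intersections of a graded Lagrangian.

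I expect the main obstacle to be pinning down the grading function $\theta$ and the sign conventions precisely enough that the half-integer Kähler-angle contributions and the $\theta$-jumps combine to give exactly $n-k-1$ rather than, say, $n-k$ or $n-k-2$ — in particular correctly reproducing the $-1$ shift which is the whole content of the claim (a naive count would give $n-k$). The cleanest way around this is probably to reduce to the model case: observe that the tangent data $\{\partial_{q_i}\pm 2\sigma_{i,k}\partial_{p_i}\}$ splits as an orthogonal product of $n$ planar problems, of which $k$ are "Whitney-sphere-like with $\sigma=+1$" and $n-k$ are "Whitney-sphere-like with $\sigma=-1$"; for the $1$-dimensional Whitney sphere (figure eight) the index of its self-intersection is a known normalization ($\ind(q_-\to q_+)=0$, $\ind(q_+\to q_-)=1$ for the figure eight $L^{1,0,+}$, consistent with $n-k-1=0$ at $n=1,k=0$), and the product formula for indices of self-intersections of a product immersion then assembles the general answer. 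Once I verify the planar building block and cite the product/concatenation behavior of the index (additivity of phase, additivity of Kähler angle), the computation is short. I would also double-check the boundary cases $k=n$ (empty Whitney statement, $\ind=-1$, degenerate) and $k=0$ against \cref{fig:basic} and \cref{fig:handle02} as a sanity check.
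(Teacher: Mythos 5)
Your overall strategy is essentially the paper's: split the tangent data at the two branches into $n$ coordinate lines, compute the phase of the squared determinant along a path in $L$ joining the two preimages, correct by the K\"ahler angle (this is exactly what the paper's ``short-path completion'' of each loop $z_i^2/|z_i^2|$ encodes), and get $\ind(q_+\to q_-)=k+1$ from $\ind(p\to q)+\ind(q\to p)=n$. Your identification of the branch tangent planes as the spans of $(1\pm 2\jmath\sigma_{i,k})e_i$ is correct and matches the paper's computation.

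The genuine gap is exactly at the step you flag: the $-1$. Your per-coordinate accounting --- each of the $n-k$ directions with $\sigma_{i,k}=-1$ contributes $+1$, each of the $k$ directions with $\sigma_{i,k}=+1$ contributes $0$ --- sums to $n-k$, and the extra $-1$ is then asserted as a ``Whitney-sphere correction'' rather than derived. In the paper's proof the $-1$ has nothing to do with the sign pattern: it comes from the single distinguished coordinate plane containing the connecting path $\gamma$ inside $L$ (for $k\geq 1$ this direction has $\sigma_{1,k}=+1$), along which the tangent vector rotates through the real axis, so its squared phase sweeps a full extra $-2\pi$ (the paper's $z_1$ sweeps $-2\pi-4\arctan 2$, while the other $\sigma=+1$ directions sweep only $-4\arctan 2$ and contribute $0$). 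Your proposed repair --- normalize with the figure eight and assemble by additivity --- can be made to work, but only if the figure-eight factor is taken to be the plane of the connecting path, with the remaining $k-1$ positive and $n-k$ negative directions treated as suspension-like factors; splitting symmetrically by the sign of $\sigma_{i,k}$, as written, never produces the $-1$. Relatedly, comparing the values of $\Omega(T_{q_\pm}L)^{\otimes 2}/|\Omega(T_{q_\pm}L)|^2$ at the two points cannot determine $\theta(q_+)-\theta(q_-)$: those values see the phase only modulo $2\pi$, and the integer part --- which is the entire content of the claim --- requires the winding along the path, which your proposal describes but never actually computes.
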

\begin{proof}
      To reduce clutter, we write $\lj$ for $\lj^{k, n-k+1}$.
      The hypersurface $x_0^2+\sum_{i=1}^n \sigma_{i, k} x_i^2=1$ in $\RR^{n+1}$ describes the slice $L^{k, n-k, +}$ as a hypersurface of the local surgery trace.
      Its tangent space at a point $\gamma(\theta)=(\sin(\theta), 0, \ldots, 0, \cos(\theta))$ is spanned by the basis
      $\{\cos(\theta)\partial_1 - \sin(\theta) \partial_0, \partial_2, \partial_3, \ldots, \partial_n\}$. 
      Let $e_1, \ldots, e_n$ be the standard basis of $T_\CC \CC^n$. 
      Then 
      \begin{align*}
            \li_*(\partial_0)=&\sum_i 2\jmath \sigma_{i,k}x_i e_i\\
            \lj_*(\partial_i)=&(1+2\jmath\sigma_{i,k} x_0)e_i \;\;\; \text{ for $i=1, \ldots, n$}
      \end{align*}
       so that the tangent space at $T_{\lj(\gamma(\theta))}L^{k, n-k, +}_{loc}$ is spanned by vectors $\vec v_i(\theta)$, where
      \begin{align*}
            \vec v_1(\theta)=&\left(\cos(\theta)+2\jmath \left(\sum_{i=1}^n\sigma_{i,k}(\cos^2(\theta)- \sin^2(\theta)\right)\right)e_1 \\
             \vec v_i(\theta)=& (1+2\jmath\sigma_{i,k} \cos(\theta))e_i \;\;\; \text{For $i=2, \ldots, n$.}
      \end{align*}
      Let $z_i(\theta)$ be coefficients so that $\vec v_i(\theta) =z_i(\theta) e_i$. 
      Then $\arg(z_i(\theta))$ is decreasing for $i\leq k$ and increasing for $i>k$. The endpoints of the $z_i$ are given by  
      \begin{align*}
            z_1(0)= 1+2\jmath && z_1(\pi)= -1+2\jmath\\
            z_i(0)= 1+2\jmath \sigma_{i, k}  && z_i(\pi)= 1-2\jmath  \sigma_{i, k}.
      \end{align*}
      To compute the index of $(q_-\to q_+)$, we complete the path $z_i^2(\theta)/|z_i^2(\theta)|$ to a loop by taking the short path, and sum the total argument swept out by each of the $z_i$. 
      For ease of computation, let $\alpha=\arctan(2)$.
      \begin{itemize}
      \item When $i=1$, the loop $z_1^2(\theta)/|z_1^2(\theta)|$ sweeps out $-2\pi - 4\alpha$ radians; the short path completion yields a contribution of $-2\pi$ to the total index of this loop.
      \item When $1< i \leq k$, the loop $z_i^2(\theta)/|z_i^2(\theta)|$ sweeps out $-4\alpha$ radians; the short path completion yields a total contribution of $0$ from this loop. 
      \item When $k< i\leq n$, the loop $z_i^2(\theta)/|z_i^2(\theta)|$ sweeps out $4\alpha$ radians; the short path completion yields a total contribution of $2\pi$ from this loop.
      \end{itemize}
      The total argument swept out is $(n-k-1)\cdot 2\pi$.
      The index of the self-intersection is 
            \[\ind(q_-\to q_+)=n-k-1.\]
      By similar computation (or using duality) we see that 
      \[\ind(q_+\to q_-)= k+1.\]
      \label{example:indexcomputation}
\end{proof}
Let $L$ be a compact graded Lagrangian submanifold, and $f:L\to \RR$ be a Morse function.
The set of Floer generators is 
\begin{equation}
      \mathcal I(L)=\Crit(f)\cup \mathcal I^{si}(L).
      \label{eq:immersedEulerCharacteristic}
\end{equation}
For each $x\in \Crit(f)$, let $\ind(x)$ be the Morse index. 
Define the self-intersection Euler characteristic to be $\chi^{si}(L):=\sum_{x\in \mathcal I(L)} (-1)^{\ind(x)}$. 
\begin{prop}
      Let $K: L^+\rightsquigarrow L^-$ be a Lagrangian cobordism.
      Then $\chi^{si}(L^-)=\chi^{si}(L^+)$.
      \label{prop:eulerchar}
\end{prop}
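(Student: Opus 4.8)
The plan is to deduce this from the decomposition of $K$ furnished by \cref{thm:cobordismsaresurgery} and to check that $\chi^{si}$ is unchanged across each elementary piece of that decomposition. As a preliminary, observe that for any Morse function $f$ one has $\sum_{x\in\Crit(f)}(-1)^{\ind(x)}=\chi(L)$, so that
\[
\chi^{si}(L)=\chi(L)+\sum_{(p\to q)\in\mathcal I^{si}(L)}(-1)^{\ind(p\to q)},
\]
and the first summand depends only on the diffeomorphism type of $L$. Moreover each geometric self-intersection point contributes a pair of ordered generators $(p\to q),(q\to p)$ with $\ind(p\to q)+\ind(q\to p)=n$ (a consequence of the grading conventions, exhibited by \cref{claim:indexOfHandle}), so a single double point contributes $(-1)^a+(-1)^{n-a}$ to the sum, where $a=\ind(p\to q)$. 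In particular, when $n$ is odd this contribution vanishes and $\chi^{si}(L)=\chi(L)$ is purely topological, so the whole proposition reduces in that case to the Euler-characteristic computation for surgery traces below.

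By \cref{thm:cobordismsaresurgery}, after an exact homotopy $K$ is the concatenation $K_{H^j_t}\circ K_j^{k_j,n-k_j+1}\circ\cdots\circ K_1^{k_1,n-k_1+1}\circ K_{H^0_t}$, in which the $K_{H^i_t}$ are suspensions of exact homotopies and the $K_i^{k_i,n-k_i+1}$ are standard Lagrangian surgery traces; by the constructions of \cref{claim:morsecobordism} and \cref{prop:goodposition} the intermediate slices may be assumed to have transverse self-intersections. Since $\chi^{si}$ is manifestly unchanged by the initial exact homotopy, it then suffices to show that $\chi^{si}$ agrees on the two ends of (i) a suspension cobordism and (ii) a standard surgery trace; telescoping along the concatenation then gives $\chi^{si}(L^+)=\chi^{si}(L^-)$.

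For (i), the two ends are diffeomorphic, so the $\chi(L)$-summand is unchanged; for the self-intersection summand I would perturb the exact homotopy, keeping its cylindrical ends fixed, to a generic one along which non-transverse slices occur only at finitely many times, each of which is a standard birth or death of a pair of geometric double points $\{x,x'\}$ forming a Floer-theoretic cancelling pair — so the generator indices at $x'$ differ by $1$ from those at $x$ — and such a pair contributes $(-1)^a+(-1)^{n-a}+(-1)^{a+1}+(-1)^{n-a-1}=0$. For (ii), note the surgery trace $K^{k,n-k+1}$ takes $L^+$ to $L^-$ by a topological $k$-surgery, which changes the Euler characteristic of the underlying manifold by $\chi(D^{k+1}\times S^{n-k-1})-\chi(S^{k}\times D^{n-k})=(-1)^{n-k-1}-(-1)^{k}$, while for $0\le k\le n$ the end $L^+$ carries exactly one additional geometric double point whose ordered generators have indices $n-k-1$ and $k+1$ by \cref{claim:indexOfHandle}. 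Writing $L^{\mathrm{imm}}=L^+$, $L^{\mathrm{emb}}=L^-$ in that range,
\[
\chi^{si}(L^{\mathrm{imm}})-\chi^{si}(L^{\mathrm{emb}})=-\bigl[(-1)^{n-k-1}-(-1)^{k}\bigr]+\bigl[(-1)^{n-k-1}+(-1)^{k+1}\bigr]=(-1)^{k}+(-1)^{k+1}=0,
\]
and the degenerate endpoints $k=-1,n$ (birth or death of a whole Whitney sphere, where one end is empty) are checked directly: $\chi^{si}$ of a Whitney $S^n$ equals $\chi^{si}(\emptyset)=0$ since it is null-cobordant, which is also what the same cancellation gives with the conventions $S^{-1}=\emptyset$, $(-1)^{-1}=-1$.

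The surgery Euler-characteristic identity and the index bookkeeping are routine. The step that needs genuine care is the genericity input in (i): making precise that an exact homotopy of immersed Lagrangians with transverse self-intersections can be perturbed so as to undergo only standard double-point births and deaths, and that a born pair is always a cancelling pair of consecutive indices. This is the only place where the $n$ even case — where the self-intersection summand of $\chi^{si}$ is genuinely present — enters; for $n$ odd the argument collapses, as noted, to $\chi(L^+)=\chi(L^-)$, recovering the Euler-characteristic statement already observed for embedded oriented cobordisms.
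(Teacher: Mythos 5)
Your argument is correct and follows essentially the same route as the paper: decompose $K$ via \cref{thm:cobordismsaresurgery}, use invariance of $\chi^{si}$ under exact homotopy (which the paper simply asserts, while you sketch a birth/death justification), and reduce to the standard surgery trace, where your surgery Euler-characteristic bookkeeping combined with \cref{claim:indexOfHandle} is the same index matching that the paper performs by choosing explicit Morse functions $f^\pm$ on the local model (its \cref{tab:indexdifferences}). The degenerate cases $k=-1,n$ are disposed of in both treatments by the same direct computation for the Whitney sphere.
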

As was pointed out to me by Ivan Smith, this also follows in the case that $K$ is embedded by the much simpler argument that the Euler characteristic is the signed self-intersection, and noting that we can choose a Hamiltonian push-off so the intersections of $K\cap \phi(K)$ are in index preserving bijections with intersections $L^\pm \cap \phi(L^\pm)$. 
Nevertheless, we give proof using decomposition as this will motivate \cref{subsec:teardrops}. 
\begin{proof}
      As each exact homotopy preserves $\chi^{si}(L^-)$, we need only check the case that  $K^{k, n-k+1}: L^+\rightsquigarrow L^-$ is a surgery trace. 
      Choose Morse functions $f^\pm: L^\pm\to \RR$ which in the local model of the surgery neck given by \cref{def:localsurgerytrace} agree with the coordinate $x_n$.
      The critical points of $f^\pm$ agree outside of the surgery region. Inside the surgery region, we have
      \begin{align*}
            \Crit(f^+)\cap L^{k, n-k,+}_{loc}     =& \Crit(x_n:L^{k, n-k,+}_{loc}\to \RR)&\\
            \Crit(f^-)\cap  L^{k+1, n-k-1, -}_{loc} =& \Crit(x_n: L^{k+1, n-k-1, -}_{loc}\to \RR).
      \end{align*}
      Recall that in the coordinates from \cref{def:localsurgerytrace} these Lagrangian submanifolds are parameterized by domains in $\RR^{n+1}$  cut out by the equations
      \begin{align*}
            L^{k, n-k,+}_{loc}=&\left\{(x_0, \ldots, x_n) \;|\; x_0^2+\sum_{i=1}^n \sigma_{i,k} x_i^2=1\right\}\\
             L^{k+1, n-k-1, -}_{loc}=&\left\{(x_0, \ldots, x_n) \;|\; x_0^2+\sum_{i=1}^n \sigma_{i,k} x_i^2=-1\right\}
      \end{align*}
      We compute the critical points of $f$ restricted to the level sets of $\pi_{\RR}\circ \lj$ using the method of Lagrange multipliers. 
      \begin{align*}
      \nabla f  =&  \langle 2x_0, 2\sigma_{1,k}x_1, \ldots, 2\sigma_{n, k} x_n\rangle \\
      \grad \pi_\RR\circ \lj =& \langle 0, 0, \cdots 0, 1\rangle
      \end{align*}
      So the only critical points in the surgery region occur when $x_0, \ldots, x_{n-1}=0$. From this we obtain the following cases:
      \begin{itemize}
            \item If $k=-1$, then $f^+$ restricted to $L^{-1, n+1, +}_{loc}$ has no critical points (as it is empty) and $L^{0, n, -}$ has critical points of index $0$ and $n$, which we call $e, x$.
            \item If $-1<k<n$, then $f^+$ restricted to $L^{k, n-k,+}_{loc}$ has no critical points, and $f^-$ restricted to $L^{k+1, n-k-1, -}_{loc}$ has critical points of index $k+1$ and $n-k-1$. We call these critical points $x_\pm$.
            \item If $k=n$, then $f^+$ restricted to $L^{n, 0,+}_{loc}$ has critical points of index $0, n$ which we call $e, x$. The function $f^-$ restricted to $L^{k+1, n-k-1, -}_{loc}$ has no critical points (as it is empty).
      \end{itemize}
      The differences between  $\mathcal I(L^+), \mathcal I(L^-)$ are listed in \cref{tab:indexdifferences}.
\begin{table}
            \begin{subtable}[t]{\linewidth}
                  \centering
      \begin{tabular}[t]{c|ccc}
            Index & $\mathcal I^{si}(L^+) $ & $\Crit(f^+)$ & $\Crit(f^-)$\\\hline
            $k+1$ & $(q_+\to q_-)$ & & $x_+$\\
            $n-k-1$ & $(q_-\to q_+)$ & & $x_-$ \\
      \end{tabular} 
      \caption{Floer cochains of  $\mathcal I(L^+)$ and $\mathcal I(L^-)$ for a Lagrangian surgery trace $K^{k, n-k+1}: L^+ \rightsquigarrow L^-$, when $-1<k<n$}
\end{subtable}\vspace{.5cm}
\begin{subtable}[t]{.45\linewidth}
      \centering
      \begin{tabular}[t]{c|cc}
            Index & $\mathcal I^{si}(L^+) $ & $\Crit(f^+)$  \\ \hline
            $-1$ & $(q_-\to q_+)$\\
            $0$ &  & $e$\\
            $n$ && $ x$ \\
            $n+1$ & $(q_+\to q_-)$\\
      \end{tabular}
      \caption{In the degenerate case $K^{n, 1}$, $\mathcal I(L^-)$ is empty.  }
\end{subtable}
\begin{subtable}[t]{.45\linewidth}
      \centering
      \begin{tabular}[t]{c|cc}
            Index & $\mathcal I^{si}(L^-) $ & $\Crit(f^-)$  \\ \hline
            $-1$ & $(q_-\to q_+)$\\
            $0$ &  & $e$\\
            $n$ && $ x$ \\
            $n+1$ & $(q_+\to q_-)$\\
      \end{tabular}
      \caption{In the degenerate case $K^{-1, n+2}$, $\mathcal I(L^+)$ is empty. }
\end{subtable}
\caption{Chain level differences in Floer generators before and after surgery.}
\label{tab:indexdifferences}
\end{table}
      From the values listed in \cref{tab:indexdifferences} it follows that $\chi^{si}(L^-)=\chi^{si}(L^+)$.
\end{proof}
This computation leads to the following question:  can we extend \cref{prop:eulerchar} to an equivalence of Floer theory. 
 \subsection{Doubled Bottlenecks}
\label{subsec:doublebottleneck}
Problematically, the decomposition given by \cref{thm:cobordismsaresurgery} is not very useful for understanding Floer cohomology for Lagrangian cobordisms with immersed ends, as such Lagrangian cobordisms will not have transverse self-intersections.
This is because the standard definition of Lagrangian cobordisms (\cref{def:cobordism}) does not allow us to easily work with immersed Lagrangian ends. 
Rather, \cite{mak2018dehn} gives a definition for a \emph{bottlenecked Lagrangian cobordism} which gives a method for concatenating Lagrangian cobordisms with immersed ends in a way that preserves transversality of self-intersections.
\subsubsection{Bottlenecked Lagrangian Cobordisms}
\label{subsubsec:bottleneckedLagrangian}
\begin{definition}
    Let $\li_0:L_0\to X$ be an immersed Lagrangian with transverse self-intersections. A bottleneck datum for $L$ is an extension of the immersion to an exact homotopy $\li_t:L_0\times I\to X$ with primitive $H_t: L_0\times I\to \RR$ satisfying the following conditions:
    \begin{itemize} 
        \item\emph{Bottleneck:}  $H_0=0$ and  there exists a bound $C\in \RR$ so that  $|\frac{dH_t}{dt}|\leq C$ everywhere.
        \item\emph{Embedded away from $0$:} If $\li_t(q_0)=\li_t(q_1)$, then either $q_0=q_1$ or $t=0$.
    \end{itemize}
    For simplicity of notation\footnote{The primitive of an exact homotopy doesn't determine the exact homotopy $\lj$; however, many properties of the bottleneck are determined by $H_t$.}, we will denote the datum of a bottleneck by  $(L, H_t)$. 
\end{definition}
We say that a Lagrangian cobordism $K\subset X\times\CC$ is bottlenecked at time $t_0$ if $K|_{[t_0-\epsilon, t_0+\epsilon]}$ is the suspension of a bottleneck datum.
The image of the Lagrangian cobordism under $\pi_\CC$ has a pinched profile (see \cref{fig:cobordismsWithBottlenecks}).
By application of the open mapping principle, the pinch point prevents pseudoholomorphic disks with boundary on the Lagrangian cobordism from passing from one side of the Lagrangian cobordism to the other (hence the name bottleneck).
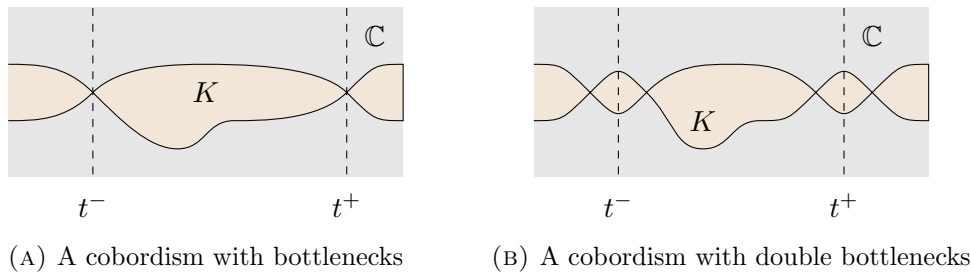
\begin{figure}
    \centering
    \begin{subfigure}{.45\linewidth}
        \centering
        \begin{tikzpicture}[scale=.75]
    \fill[gray!20]  (-3,1) rectangle (4,-2);
    \draw[fill=brown!20] (-3,-1) .. controls (-2.5,-1) and (-2,-1) .. (-1.5,-0.5) .. controls (-1,-1) and (-0.5,-1.5) .. (0,-1.5) .. controls (0.5,-1.5) and (0.5,-1) .. (1,-1) .. controls (1.5,-1) and (2.5,-1) .. (3,-0.5) .. controls (3.5,-1) and (3.5,-1) .. (4,-1) .. controls (4,-0.5) and (4,0) .. (4,0) .. controls (3.5,0) and (3.5,0) .. (3,-0.5) .. controls (2.5,0) and (1,0) .. (0.5,0) .. controls (0,0) and (-1,0) .. (-1.5,-0.5) .. controls (-2,0) and (-2.5,0) .. (-3,0);
    \draw[dashed] (-1.5,1) -- (-1.5,-2) (3,1) -- (3,-2);
    \node at (-1.5,-2.5) {$t^-$};
    \node at (3,-2.5) {$t^+$};
    \node at (0.5,-0.5) {$K$};
    \node at (3.5,0.5) {$\mathbb C$};
    \end{tikzpicture}         \caption{A cobordism with bottlenecks}
    \end{subfigure}
    \begin{subfigure}{.45\linewidth}
        \centering
        \begin{tikzpicture}[scale=.75]
\fill[gray!20]  (-3,1) rectangle (4,-2);
\draw[fill=brown!20] (-3,-1) .. controls (-2.5,-1) and (-2.5,-1) .. (-2,-0.5) .. controls (-1.5,-1) and (-1.5,-1) .. (-1,-0.5) .. controls (-0.5,-1) and (-0.5,-1.5) .. (0,-1.5) .. controls (0.5,-1.5) and (0.5,-1) .. (1,-1) .. controls (1.5,-1) and (1.5,-1) .. (2,-0.5) .. controls (2.5,-1) and (2.5,-1) .. (3,-0.5) .. controls (3.5,-1) and (3.5,-1) .. (4,-1) .. controls (4,-0.5) and (4,-0.5) .. (4,0) .. controls (3.5,0) and (3.5,0) .. (3,-0.5) .. controls (2.5,0) and (2.5,0) .. (2,-0.5) .. controls (1.5,0) and (1,0) .. (0.5,0) .. controls (0,0) and (-0.5,0) .. (-1,-0.5) .. controls (-1.5,0) and (-1.5,0) .. (-2,-0.5) .. controls (-2.5,0) and (-2.5,0) .. (-3,0);

\draw[dashed] (-1.5,1) -- (-1.5,-2) (2.5,1) -- (2.5,-2);
\node at (-1.5,-2.5) {$t^-$};
\node at (2.5,-2.5) {$t^+$};
\node at (0,-1) {$K$};
\node at (3,0.5) {$\mathbb C$};\end{tikzpicture}         \caption{A cobordism with double bottlenecks}
    \end{subfigure}
    \caption{Lagrangian cobordisms with bottlenecks or double bottlenecks allow us to discuss Lagrangian cobordisms between immersed Lagrangian submanifolds. }
    \label{fig:cobordismsWithBottlenecks}
\end{figure}
\begin{example}[Whitney Sphere]
The first interesting example of a bottleneck comes from the Whitney $n$-sphere $L^{n, 0}_A\subset \CC^n$.
We treat $L^{n, 0}_A$ as a Lagrangian cobordism inside of $\CC^{n-1}\times \CC$. 
The shadow projection $\pi_\CC: L^{n, 0}_A\to \CC$ is drawn in \cref{fig:whitneysphere}.
The bottleneck on the Lagrangian cobordism occurs at $t=0$, with bottleneck datum corresponding to an exact homotopy of the Whitney $(n-1)$-sphere $L^{n-1, 0}_A\subset \CC^{n-1}$. 
This exact homotopy is parameterized by
\[
    \li^{n-1,0}_{A(t)}:S^{n-1}_{r(t)}\to \CC^{n-1}
\] 
where $A(t)=\frac{3}{4}(1-t^2)^\frac{3}{2}$, and $r(t)=\sqrt{1-t^2}$.
The primitive for this exact homotopy is $H_t=2x_0t$.
Each $L^{n, 0}_{A(t)}$ contain a pair of distinguished points, $q_\pm=(\pm r(t), 0, \ldots, 0)$, which correspond to the self-intersection of the Whitney $(n-1)$ sphere. 
Note that $q_+$ is the maximum of $H_t$ on each slice, and $q_-$ is the minimum of $H_t$ on each slice. 
\label{exam:whitneybottle}
\end{example}
Given an immersed Lagrangian $\li: L\to  X$ with transverse self-intersections, there exists a standard way to produce a bottleneck (which we call a \emph{standard bottleneck}).  
Pick a local Weinstein neighborhood $\phi: B^*_r L\to X$. 
Let $h: L\to X$ be a function such that $|dh|<r$, and that $h(p)\neq h(q)$ whenever $(p\to q)\in \mathcal I^{si}(L)$.
Let $H_t=t\cdot h$.
Then the Lagrangian cobordism submanifold parameterized by 
\begin{align*}
    L\times (-\epsilon, \epsilon)\to &X\times T^*(-\epsilon, \epsilon)\\
    (q, t) \mapsto& \left(\phi\left(d\left(\frac{t^2}{2}\cdot h\right)_q\right), t+\jmath H_t(q)\right)
\end{align*}
is an example of a Lagrangian cobordism with a bottleneck.
At each self-intersection $(q_0\to q_1)\in \mathcal I^{si}(L)$, it will either be the case that $h(q_0)< h(q_1)$ or $h(q_1)< h(q_0)$.

\begin{definition}
Let $\li_t:L_0\times I\to X$ with primitive $H_t: L_0\times I\to \RR$ be a bottleneck datum. We say that $(q_0\to q_1)\in \mathcal I^{si}(L_0)$ has a maximum grading in the base from the bottleneck datum if $\frac{dH_t}{dt}(q_0, 0)> \frac{dH_t}{dt}(q_1, 0)$ ; otherwise, we say that this generator receives an minimum grading in the base from the bottleneck.
\end{definition}

\begin{remark}
    Our convention for maximal/minimal grading from the base is likely related to the convention of positive/negative perturbations chosen in \cite[Remark 3.2.1]{biran2020lagrangian}.
\end{remark}

\begin{example}[Whitney Sphere, revisited]
    For the bottleneck $\li^{n-1, 0}_{A(t)}: S^{n-1}_{r(t)}\to \CC^{n-1}$ constructed in \cref{exam:whitneybottle} , 
    \[\left.\frac{dH_t}{dt}(q_\pm)\right|_{t=0}=\pm 1\]
    so that $(q_-\to q_+)$ has a maximum grading in the base.

    We can construct another bottleneck datum so that $(q_-\to q_+)$ has a minimum grading in the base.
    As $H^1(S^{n-1})=0$, all Lagrangian homotopies are exact homotopies. 
    Consider the homotopy $\li^{n-1,0}_{s(t)}:L^{n-1,0}_{B(t)}\to \CC^{n-1}$, where $B(t)=1+t^2$, and $t\in [-1, 1]$.
    This bottleneck is the exact homotopy of which first decreases the radius of the Whitney sphere, then increases the radius of the Whitney sphere.
    For this bottleneck, $(q_-\to q_+)$ inherits a minimum grading from the base.

    While both $\li^{n-1,0}_{A(t)}, \li^{n-1,0}_{B(t)}$ provide bottlenecks for the Whitney sphere, they are really quite different as Lagrangians in $\CC^{n-1}\times \CC$. The first  bottleneck can be completed to a null-cobordism by simply adding in two caps (yielding the Whitney $n$-sphere in $\CC^n$); the second bottleneck cannot be closed off without adding in either a handle or another self-intersection.
    \label{exam:whitneyrevisited}
\end{example}
\begin{remark}
    Although not relevant to the discussion of bottlenecks, \cref{exam:whitneyrevisited} gives us an opportunity to address the discussion at the end of \cite[Section 3.4]{haug2015lagrangian} related to Whitney degenerations and Lagrangian surgery.
    The question Haug asks is: Does containing a Whitney isotropic sphere suffice for implanting a surgery model?
    Haug shows that this is not sufficient condition. 
    The specific example considered is the Whitney sphere $L^{2, 0,+}\subset \CC^2$, which contains a $1$-Whitney isotropic $L^{1, 0,+}\subset L^{2, 0, +}\subset \CC^2$.
    If there was a Lagrangian surgery trace $K^{1, 2}: L^{2, 0, +}\rightsquigarrow L^-$ which collapsed the 1-Whitney isotropic, then $L^-$ would have the topology of an embedded pair of spheres. Since no such Lagrangian submanifold exists in $\CC^2$, we conclude that possessing a $1$-Whitney isotropic does not suffice for implanting a surgery handle.

    Upon a closer examination, we see that the Whitney 1-isotropic has a small normal neighborhood $L^{1, 0,+}\times I\subset L^{2, 0, +}\subset \CC\times \CC$ which gives it the structure of a Lagrangian bottleneck. 
    This is the bottleneck $\li^{n-1, 0}_{A(t)}$ described above.

    Consider instead a Lagrangian submanifold $L^+$ which contains a Whitney $k$-isotropic $L^{k, 0, +}\times I\subset L$ with a neighborhood giving it the structure of the $\li^{n-1, 0}_{B(t)}$ bottleneck. 
    Then there exists a Lagrangian surgery trace $K^{k, n-k}: L^+\rightsquigarrow L^-$.
    This can be immediately observed for instance in \cref{fig:handle12} --- the right hand side is exactly isotopic to $\li^{1, 0}_{B(t)}$ (with the cobordism parameter in the vertical direction).
    \label{rem:surgeryconfiguration}
\end{remark} 
As \cref{exam:whitneyrevisited} shows, when one turns an immersed Lagrangian submanifold $L_0$ into a Lagrangian cobordism with a bottleneck $K$ at $t=0$, the self-intersections of $K$ with cobordism parameter $0$ are in bijection with the self-intersections of $L_0$. The gradings of the self-intersections of $K$ differ from the gradings of the self-intersections of $L_0$. When $(q_0\to q_1)\in \mathcal I^{si}(L^0)$ has a maximum grading in the base, the gradings of self-intersections on $K$ are:
\begin{align*}
    \ind((q_0,0)\to (q_1, 0))=\ind(q_0\to q_1)&& \ind((q_1,0)\to (q, 0))=\ind(q_1\to q_0)+1.
\end{align*}
From a Floer-theoretic viewpoint this is problematic, as the underlying philosophy of \cref{thm:birancornea} is that the Floer cohomology of a Lagrangian cobordism should agree with the Floer cohomology of the slice of the cobordism, and this mismatch in index shows that these two groups cannot be the same by Euler characteristic considerations. 
While bottlenecks give us a way to relate the immersed points of $K$ with the immersed points of the slice at the bottleneck, the self-intersections of $K$ with minimum grading from the base will receive the wrong grading. An additional problem with self-intersections of $K$ with minimum grading from the base is that one cannot necessarily obtain compactness for moduli spaces of holomorphic teardrops with output on a self-intersection with minimum grading in the base. 
We therefore use \emph{double bottlenecks} instead.
\begin{definition}
    Let $\li: L\to X$ be an immersion with transverse self-intersections, and $H: L\to \RR$, $\phi:B^*_r L\to X$ as in the construction of a standard bottleneck. 
    Furthermore, assume that $dH=0$ at each self-intersection point.
    Let $\rho(t)= t(t+\epsilon)(t-\epsilon)$, and $\tilde H_t(q):=\rho'(t) H(q)$.
    A standard double bottleneck datum is the exact homotopy:
    \begin{align*}
        \li_t:    L\times (-\epsilon, \epsilon)\to &X\\
        (q, t) \mapsto& \phi(d(\tilde H_t)_q).
    \end{align*}
    with the property that $\pi_X \li_t(q_0)=\pi_X \li_t(q_1)$ if and only if $\li(q_0)=\li(q_1)$ and $t=\pm \epsilon /\sqrt{3}$ (these are the critical points of $\rho$).

    We say that a Lagrangian cobordism $K\subset X\times\CC$ has a double bottleneck at time $t_0$ if $K|_{[t_0-\epsilon, t_0+\epsilon]}$ is the suspension of a standard double bottleneck datum.

    If $K$ is a Lagrangian cobordism with standard double bottleneck datum at times $t^-, t^+$ given by the data $H^\pm: L^\pm\to \RR, \phi^\pm: B^*_rL^\pm\to X$, then we will write 
    \[(K, t^+, t^-):(L^+, H^+)\rightsquigarrow (L^-, H^-)\]
    and say that $(K, t^+, t^-)$ is a Lagrangian cobordism between $L^+, L^-$ with double bottlenecks determined by $H^+, H^-$. 
    \label{def:doublebottleneck}
\end{definition}
Observe that in the setting where $K:L^+\rightsquigarrow L^-$ is a Lagrangian cobordism, $L^\pm$ are embedded, and $t^\pm$ are chosen as in \cref{def:cobordism}, then  $(K, t^+, t^-):(L^+, 0)\rightsquigarrow (L^-,0)$ is a Lagrangian cobordism with double bottlenecks.

Each self-intersection $(q_0\to q_1)\in \mathcal I^{si}(L)$ corresponds to two self-intersections $((q_0, \pm \epsilon/\sqrt 3)\to (q_1, \pm\epsilon /\sqrt 3))$ in $\mathcal I^{si}(L, H_t)$; if $((q_0,  \epsilon/\sqrt 3)\to (q_1, \epsilon /\sqrt 3))$ has a maximal grading from the base, then $((q_0, - \epsilon/\sqrt 3)\to (q_1, -\epsilon /\sqrt 3))$ has a minimal grading from the base (and vice versa).
To each immersed point $(q_0\to q_1)\in \mathcal I^{si}(L)$, we can associate a value:
\[A_{(q_0\to q_1)}:= \int_{\epsilon/\sqrt{3}}^{\epsilon/\sqrt{3}} \left(\tilde H_t(q_1)-\tilde H_t(q_0)\right) dt,\]
Finally, we observe that for each $(q_0\to q_1)$, the curves $t+\jmath \tilde H_t(q_0)$ and $t+\jmath \tilde H_t(q_1)$ bound a strip in $u:\RR\times I\to \CC$ whose area is $A_{(q_0\to q_1)}$.
Since $q_0, q_1$ are critical points of $h$, they are fixed by the homotopy and  we obtain a holomorphic strip $\{\li_0(q_0)\}\times u: \RR\times I\to X\times \CC$ with boundary on the double bottleneck.

Given Lagrangian cobordisms with double bottlenecks 
\begin{align*}
    (K^{+0}, t^+, t^0):(L^+, H^+)\rightsquigarrow (L^0, H^0)&&
    (K^{0-}, t^0, t^-):(L^0, H^0)\rightsquigarrow (L^-, H^-)
\end{align*}
there exists a composition $(K^{0-}\circ K^{+0},t^+, t^-):(L^+, H^+)\rightsquigarrow (L^-, H^-)$. The composition is covered by the charts $K^{+0}|_{>t^0-\epsilon}, K^{0-}|_{<t^0+\epsilon}$ which overlap over the double bottleneck defined by $L^0, H^0$ and $\phi^0$. In the setting where $H^+, H^0, H^-=0$ (so that $L^+, L^0, L^-$ are embedded) this agrees with the usual definition of composition of Lagrangian cobordisms. 
\subsubsection{$\chi^{si}$ for Lagrangian cobordisms with double bottlenecks}
\label{subsubsec:eulerCharForDoubledBottlenecks}
For immersed compact Lagrangian submanifolds $L$ with transverse self-intersections, the Floer cochains are defined in \cref{eq:immersedEulerCharacteristic} to be critical points of an auxiliary Morse function $f: L\to \RR$ or ordered pairs of points in $L$ whose image in $X$ agree. From this data we defined a self-intersection Euler characteristic. When defining the self-intersection Euler characteristic for Lagrangian cobordisms, we must state which auxiliary Morse functions are admissible, and how to count the self-intersection points at the double bottlenecks.
At a minimum, the self-intersection Euler characteristic that we define for Lagrangian cobordisms with double bottlenecks should satisfy the relation:
\begin{equation}
    \chi^{bot}((K^{0-}\circ K^{+0},t^+, t^-))=\chi^{bot}(K^{+0}, t^+, t^0)+ \chi^{bot}(K^{0-}, t^0, t^-) - \chi^{si}(L^0)
    \label{eq:compositionAndEulerChar}
\end{equation}
We first handle the issue of the auxiliary Morse function. Pick $f^\pm : L^\pm \to \RR$ Morse functions. 
For the Lagrangian cobordism $(K, t^+, t^-)$  we take an admissible Morse function (adapted from \cite[Definition 2.1.3]{hicks2019wall}) which is a Morse function $f:K\to \RR$ satisfying:
\begin{itemize}
\item The Morse flow restricted to the fibers above real coordinates $t^-$ and $t^+$ are determined by $f^\pm$,
    \begin{align*}
        (\pi_\RR)_*\grad f|_{\pi_\RR^{-1}(t^-) }=0  && (\pi_\RR)_*\grad f|_{\pi_\RR^{-1}(t^-+)}=0 \\
        (\pi_X)_*\grad f|_{\pi_\RR^{-1}(t^-) }=\grad f^-  && (\pi_\RR)_*\grad f|_{\pi_\RR^{-1}(t^+)}=\grad f^+
    \end{align*}
        \item The gradient points outward in the sense that 
        \begin{align*}
             dt(-\grad f)|_{\pi_\RR(q)<t^-}<0& &dt(-\grad f)|_{t^- <\pi_\RR(q) < t^-+\epsilon}>0\\
             dt(-\grad f)|_{\pi_\RR(q)>t^+}>0& &dt(-\grad f)|_{t^+-\epsilon <\pi_\RR(q) < t^-+}<0.
        \end{align*}
\end{itemize}
For such a choice of Morse function, $\Crit(f^+), \Crit(f^-)$ naturally are subsets of $\Crit(f)$, and all critical points of $f$ have cobordism parameter between $t^-$ and $t^+$ (inclusive). For embedded Lagrangian cobordisms, the Euler characteristic defined using the cochains of an admissible Morse functions satisfies \cref{eq:compositionAndEulerChar}.

This leaves us with handling the self-intersections of the Lagrangian cobordism with double bottlenecks $(K, t^+, t^-)$. 
From the design of the double bottlenecks, we see that there are inclusions $\mathcal I^{si}(L^\pm)\into \mathcal I^{si}(K)$ sending each intersection to the corresponding intersection in the double bottleneck with maximum grading from the base. This inclusion preserves degree.
Unfortunately, the double bottlenecks contain additional intersections not corresponding to elements of $\mathcal I^{si}(L^\pm)$ coming from those intersections with minimal degree in the base. We must judiciously throw out some of these intersections. 
\begin{definition}
    Let $(K, t^+, t^-):(L^+, H^+)\rightsquigarrow (L^-, H^-)$ be a Lagrangian cobordism with double bottlenecks, parameterized by $\lj:K\to X\times \CC$. The bottlenecked Floer generators of $(K, t^+, t^-)$ are 
    \begin{align*}
        \mathcal I^{bot}&(K, t^+, t^-):=\Crit(f)\\
        \sqcup&\{(q^0\to q^1)\in \mathcal I^{si}\;|\; t^-\leq\pi_\RR\circ \lj(q^0)\leq t^+\}\\
        \sqcup&\{(q^0\to q^1)\in \mathcal I^{si}\;|\; \pi_\RR\circ \lj(q^0)<t^-, \text{ and $(q^0\to q^1)$ has max grad. from base}\}\\
        \sqcup&\{(q^0\to q^1)\in \mathcal I^{si}\;|\; t^+<\pi_\RR\circ \lj(q^0), \text{ and $(q^0\to q^1)$ has max grad. from base}\}.
    \end{align*}
\end{definition}
For $(K, t^+, t^-):(L^+, H^+)\rightsquigarrow (L^-, H^-)$ a Lagrangian cobordism with double bottlenecks, we define 
\[\chi^{bot}(K, t^+, t^-):= \sum_{x\in \mathcal I^{bot}(K, t^+, t^-)} (-1)^{\ind(x)}.\]
\begin{claim}
    $\chi^{bot}(K, t^+, t^-)$ satisfies \cref{eq:compositionAndEulerChar}.
\end{claim}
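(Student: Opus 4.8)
\emph{Proof plan.} The strategy is to split $\chi^{bot}$ into its Morse part $\chi^{bot}_M(K,t^+,t^-):=\sum_{x\in\Crit(f)}(-1)^{\ind x}$ and its self-intersection part $\chi^{bot}_{si}(K,t^+,t^-):=\sum_{x\in\mathcal I^{bot}(K,t^+,t^-)\setminus\Crit(f)}(-1)^{\ind x}$, prove the analogue of \cref{eq:compositionAndEulerChar} for each separately, and then add, using that $\chi^{si}(L^0)=\sum_{x\in\Crit(f^0)}(-1)^{\ind x}+\sum_{x\in\mathcal I^{si}(L^0)}(-1)^{\ind x}$ splits the correction term in the same way. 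Throughout I will use that $\chi^{bot}$ is independent of the choice of admissible Morse function (the Morse part is an Euler characteristic relative to the ends, the self-intersection part does not involve $f$); this permits choosing $f$ adapted to the decomposition.

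For the Morse part, pick Morse functions $f^{+0}$ on $K^{+0}$ and $f^{0-}$ on $K^{0-}$ that are admissible for the respective bottleneck times and whose restrictions to the slice $L^0$ agree with a common $f^0$. Gluing them along $L^0$ produces an admissible Morse function $f$ on $K^{0-}\circ K^{+0}$ with $\Crit(f)=\Crit(f^{+0})\cup\Crit(f^{0-})$, the two subsets meeting exactly in $\Crit(f^0)$ (the critical points on the $t^0$-slice, which by admissibility of $f^{\pm 0}$ at $t^0$ are genuine critical points of $f$, and carry the same index whether read inside $K^{+0}$, inside $K^{0-}$, or inside the composition). Inclusion-exclusion for the signed count then gives $\sum_{\Crit(f)}(-1)^{\ind}=\sum_{\Crit(f^{+0})}(-1)^{\ind}+\sum_{\Crit(f^{0-})}(-1)^{\ind}-\sum_{\Crit(f^0)}(-1)^{\ind}$, which is the Morse-part version of \cref{eq:compositionAndEulerChar}; this is just additivity of Euler characteristic for the gluing $K^{0-}\circ K^{+0}=K^{+0}\cup_{L^0}K^{0-}$ along a slice.

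For the self-intersection part, recall that the composition is covered by the charts $K^{+0}|_{>t^0-\epsilon}$ and $K^{0-}|_{<t^0+\epsilon}$, which share the double bottleneck of $L^0$; hence $\mathcal I^{si}(K^{0-}\circ K^{+0})=\mathcal I^{si}(K^{+0})\cup\mathcal I^{si}(K^{0-})$, overlapping exactly in the set $S^0$ of self-intersections on the $t^0$-double bottleneck, where $S^0$ consists, for each $(q_0\to q_1)\in\mathcal I^{si}(L^0)$, of the two points $((q_0,\pm\epsilon/\sqrt{3})\to(q_1,\pm\epsilon/\sqrt{3}))$. A self-intersection $x$ lying strictly above the $t^0$-bottleneck has $\pi_\RR(x)>t^0\geq t^-$, so the composition's membership test "$\pi_\RR(x)\in[t^-,t^+]$" and $K^{+0}$'s test "$\pi_\RR(x)\in[t^0,t^+]$" coincide, and the max-grading selection rule for $\pi_\RR(x)>t^+$ is literally the same; thus $x$ contributes equally to $\chi^{bot}_{si}$ of the composition and of $K^{+0}$, and nothing to $\chi^{bot}_{si}(K^{0-})$ or to $\chi^{si}(L^0)$. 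Symmetrically below the bottleneck. It remains to balance the $S^0$-terms, which reduces to a computation for a single pair $(q_0\to q_1),(q_1\to q_0)\in\mathcal I^{si}(L^0)$ with base indices $a:=\ind(q_0\to q_1)$, $b:=\ind(q_1\to q_0)$. Over the standard double bottleneck the four self-intersections above this pair carry $K$-indices $\{a,b+1\}$ over one critical value of $\rho$ and $\{b,a+1\}$ over the other (this is exactly the grading shift $\ind_K(\mathrm{max})=\ind_{L^0}$, $\ind_K(\mathrm{min})=\ind_{L^0}+1$ recorded in the text). All four lie in $[t^-,t^+]$, so the composition contributes
\[
(-1)^a+(-1)^{b+1}+(-1)^b+(-1)^{a+1}=0;
\]
$K^{+0}$ (testing against $[t^0,t^+]$) keeps both points over the critical value lying above $t^0$ and only the max-grading point over the one below $t^0$, contributing $(-1)^a$ or $(-1)^b$; $K^{0-}$ contributes the other of $(-1)^a,(-1)^b$; and $\mathcal I^{si}(L^0)$ contributes $(-1)^a+(-1)^b$. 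Since $0=(-1)^a+(-1)^b-\big((-1)^a+(-1)^b\big)$, the $S^0$-identity holds pairwise, hence in total. Summing the Morse and self-intersection parts yields \cref{eq:compositionAndEulerChar}.

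I expect the only real difficulty to be bookkeeping of conventions rather than mathematics: verifying (i) that $\chi^{bot}$ is independent of the admissible Morse function, so the adapted $f$ above is legitimate; (ii) that the index of a $t^0$-slice critical point is the same computed inside each piece and inside the composition; and (iii) that $t^0\pm\epsilon/\sqrt{3}$ fall on the expected sides of $t^-,t^0,t^+$ so that the $\mathcal I^{bot}$-selection rules sort the points of $S^0$ as claimed. Each of these is shallow but must be matched against the precise definitions of admissible Morse function, of $\mathcal I^{bot}$, and of the standard double bottleneck datum.
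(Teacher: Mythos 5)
Your proof is correct and takes essentially the same route as the paper's: you compare the bottlenecked generator sets of the composition with those of the two pieces, cancel the doubled $\Crit(f^0)$ by inclusion--exclusion, and account for the inner-bottleneck self-intersections via the max/min-grading index shift by one, which produces exactly the $-\chi^{si}(L^0)$ correction. Your explicit case analysis over the two critical times of the inner double bottleneck is simply a more detailed version of the paper's one-line identification of the discrepancy with $\mathcal I^{si}(L^0)$ shifted in degree by one.
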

\begin{proof}
    From the definition $\chi^{bot}(K, t^+, t^-)$ we see that 
    \begin{align*}
        \mathcal I^{bot}&(K^{+-}, t^+, t^-)=\left(\mathcal I^{bot}(K^{0-}, t^0, t^-)\sqcup \left(\mathcal I^{bot}(K^{+0}, t^+, t^0)\setminus \Crit(f^0)\right)\right)\\        
        \sqcup&\{(q^0\to q^1)\in \mathcal I^{si}(K^{0-})\;|\; \pi_\RR\circ \lj(q^0)<t^-, \text{ and $(q^0\to q^1)$ has \emph{min} grad. from base}\}\\
        \sqcup&\{(q^0\to q^1)\in \mathcal I^{si}(K^{+0})\;|\; t^+<\pi_\RR\circ \lj(q^0), \text{ and $(q^0\to q^1)$ has \emph{min} grad. from base}\}.
    \end{align*}
    There is a bijection between $\mathcal I^{si}(L^0)$ and the union of the latter 2 terms, however this bijection increases the index of the critical points by 1. \Cref{eq:compositionAndEulerChar} immediately follows.
\end{proof}

\subsubsection{Decomposition of Lagrangian cobordism via double bottlenecks}
The methods used in \cref{prop:tdecomposition} can similarly be used to give a decomposition of a Lagrangian submanifold $K\subset X$ into bottlenecked Lagrangian submanifolds. 
\begin{prop}
    Let $K\subset X\times \CC$ be a Lagrangian submanifold, and $t^-, t^+$ regular values of the projection $\pi_\RR: K\to \RR$. Suppose $H_t: L^\pm\times[t^\pm-\epsilon, t^\pm+\epsilon]\to \RR$ generates an exact homotopy whose suspension is $K|_{[t^\pm-\epsilon, t^\pm+\epsilon]}$.
    Furthermore, suppose that over this region, $H_t(q_1)\neq H_t(q_2)$ for each double point $(q_1\to q_2)$ of the slice $K|_t$.
    Then there exist choices of Lagrangian bottleneck data $(L^\pm, H^\pm)$ and a Lagrangian cobordism with double bottleneck:
    \[(K, t^+, t^-):(L^+, H^+)\rightsquigarrow (L^-, H^-).\]
    \label{prop:doubleBottleneck}
\end{prop}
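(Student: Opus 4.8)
The plan is to treat the two ends $t^-$ and $t^+$ separately, and near each of them to replace $K$ by an exactly homotopic Lagrangian submanifold --- through a homotopy supported in $\pi_\RR^{-1}([t^\pm-\epsilon,t^\pm+\epsilon])$, which we suppress from the notation as is done throughout the paper --- whose restriction to a small slab around $t^\pm$ is literally the suspension of a standard double bottleneck datum. The mechanism is the graphical suspension technique already used in the proofs of \cref{prop:tdecomposition,prop:splittingX}: one builds a generating function depending on an auxiliary parameter $s$, interpolating between the data defining $K$ near $t^\pm$ and the target, and reads off both the exact homotopy (by truncating in the $s$-direction) and the desired Lagrangian cobordism with double bottlenecks (by truncating in the cobordism parameter, \cref{def:lagrangiancutout}, to $[t^-,t^+]$).

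First I would fix $t^+$, the case of $t^-$ being symmetric after reflecting the cobordism parameter, and translate so that $t^+=0$. Since $0$ is a regular value of $\pi_\RR$, the slice $L^+:=K|_0$ is an immersed Lagrangian with transverse self-intersections, so we may choose a local Weinstein neighborhood $\phi: B^*_rL^+\to X$; after shrinking $\epsilon$ we may assume each slice $K|_t$ with $|t|\le\epsilon$ is graphical over $L^+$ in this neighborhood, so that $K|_{[-\epsilon,\epsilon]}$ is the graph of $d_{(q,t)}F$ for a generating function $F(q,t)$ with $\partial_tF(q,t)=H_t(q)$ and $F(q,0)=0$. The hypothesis $H_t(q_1)\neq H_t(q_2)$ at double points says exactly that $K$ is embedded over this slab --- the self-intersections of the individual slices being separated in the $\pi_{\jmath\RR}$-coordinate --- and this is an open condition. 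Now choose a function $H^+:L^+\to\RR$ of the type required by \cref{def:doublebottleneck} to build a standard double bottleneck datum for $L^+$, taken small and with the strip areas $A_{(q_0\to q_1)}$ attached to the self-intersections nonzero; over a sub-slab $|t|<\epsilon/3$ the suspension of that datum is the graph of $d_{(q,t)}(\rho'(t)H^+(q))$, where $\rho(t)=t(t+\epsilon/3)(t-\epsilon/3)$.

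Next, in the spirit of \cref{prop:splittingX}, I would fix a cut-off $\beta(t,s)$ on $[-\epsilon,\epsilon]\times\RR$ which is $0$ for $s\le 0$ and for $|t|>2\epsilon/3$, is independent of $s$ for $s\ge 1$, and equals $1$ when $s=1$ and $|t|<\epsilon/3$, and set $G(q,t,s):=(1-\beta(t,s))F(q,t)+\beta(t,s)\,\rho'(t)H^+(q)$. The graph of $d_{(q,t,s)}G$ in $T^*(L^+\times\RR_t\times\RR_s)$, pushed into $X\times\CC\times\CC$ by $\phi$ and the identifications $T^*\RR\cong\CC$, is a Lagrangian submanifold $J$ that has collared boundary at $|t|=\epsilon$ (where $G=F$ for all $s$) and is cylindrical for $s$ large; it is the suspension of an exact homotopy in the $s$-direction from its slice at $s=1$, call it $K'$, to $K|_{[-\epsilon,\epsilon]}$. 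Here $K'$ agrees with $K$ for $|t|\ge\epsilon$ and restricts over $|t|<\epsilon/3$ to the standard double bottleneck datum attached to $(\phi,H^+)$, so $K'$ glues back onto the untouched part of $K$. Truncating $J$ in the $s$-direction gives the exact homotopy, whose Hofer norm, as in \cref{prop:tdecomposition}, is bounded by a constant multiple of $\epsilon$ times the oscillation of the generating functions and so can be made arbitrarily small. Running the identical construction at $t^-$ and composing produces the asserted $(K,t^+,t^-):(L^+,H^+)\rightsquigarrow(L^-,H^-)$.

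The step I expect to be the main obstacle is checking that $K'$ genuinely has the self-intersection pattern of a cobordism with a double bottleneck at $t^+$: that the only double points of $K'$ over $|t|<\epsilon$ occur at the two critical points of $\rho$, that they are in degree-preserving bijection with $\mathcal I^{si}(L^+)$ carrying the gradings and strip areas prescribed by \cref{def:doublebottleneck}, and that the intermediate slices $0<s<1$ introduce nothing surviving to $s=1$. This is exactly where the separation hypothesis does its work --- it is the open condition letting a sufficiently small interpolation hold the two sheets of $K$ apart except where the cubic $\rho$ forces them together --- and verifying it is a Lagrange-multiplier and transversality computation of the same flavour as the index computation in \cref{claim:indexOfHandle} and the slice analysis in the proof of \cref{prop:eulerchar}.
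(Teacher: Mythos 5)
Your proposal is essentially the paper's own (largely implicit) argument: the paper justifies this proposition only by the remark that the methods of \cref{prop:tdecomposition} apply, and your graphical interpolation of generating functions through an auxiliary suspension parameter, followed by truncation, is exactly that method, with the separation hypothesis $H_t(q_1)\neq H_t(q_2)$ playing precisely the role you assign to it in the final verification. One correction, inherited from a slip in the formula of \cref{def:doublebottleneck}: the slab near $t^+$ should be the graph of $d_{(q,t)}\bigl(\rho(t)H^+(q)\bigr)$, so that the flux primitive is $\tilde H_t=\rho'(t)H^+$ and the two self-intersections occur at the critical points of $\rho$ as the strip-area formula requires, rather than the graph of $d_{(q,t)}\bigl(\rho'(t)H^+(q)\bigr)$, which would produce a single bottleneck at $t=t^+$ instead of a double one.
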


While the standard surgery handle does not have transverse self-intersections, there is a geometrically pleasing construction of a bottlenecked Lagrangian surgery trace.
\begin{figure}
    \centering
    \includegraphics{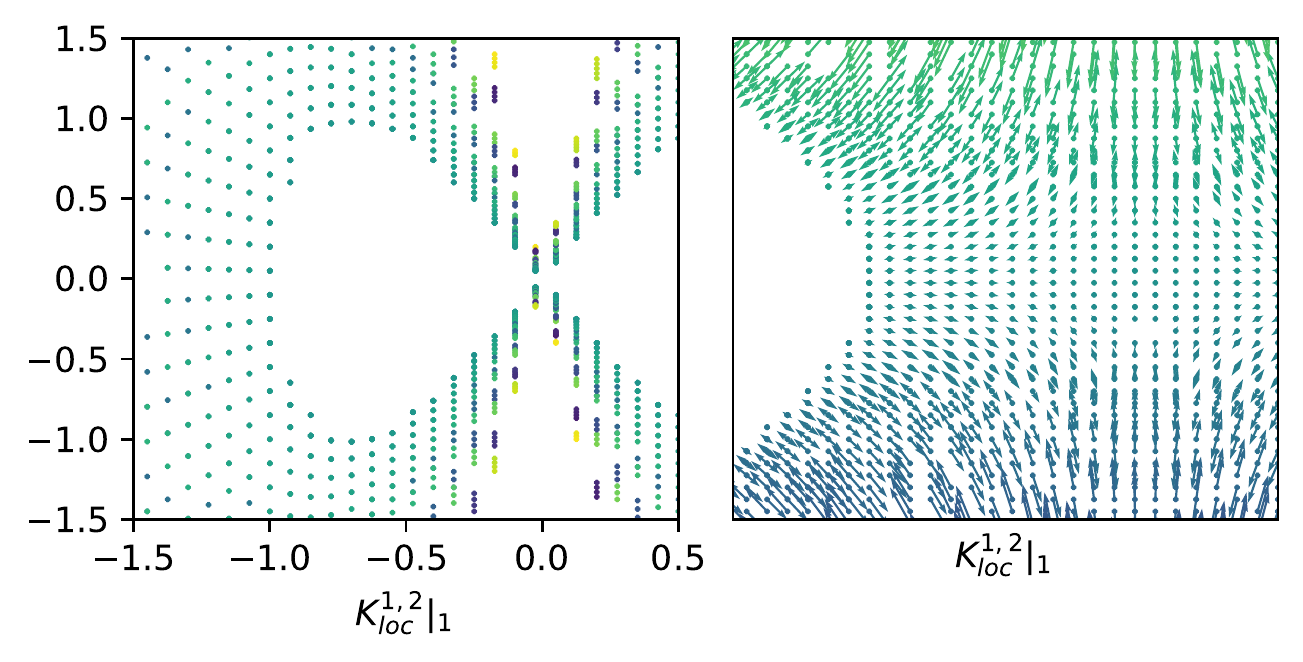}
    \caption{Left: The shadow of $K_{1,2}|_1$ as projected to the first complex coordinate, with bottleneck at the origin. Right: The Lagrangian $K_{1,2}|_1$ drawn as a section of $T^*\RR^2$.}
    \label{fig:bottleneckedhandle}
\end{figure}
Consider $L^{k+1, n-k,+}\subset \CC^{n+1}$.We now will now treat $L^{k+1, n-k,+}$ as if it were a Lagrangian cobordism, with first coordinate of $\CC^{n+1}$ serving as the cobordism coordinate. We will therefore write $L^{k+1, n-k,+}\subset \CC^n\times \CC_1$.
From this viewpoint, $L^{k+1, n-k,+}\subset \CC^{n}\times \CC_1$ can also be considered as a Lagrangian surgery trace, as 
$\pi_{\RR_1}:K^{k+1, n-k+1,+}|_{\pi_{\RR_1}<1/2}\to \RR$ has a single critical point.
Furthermore, this Lagrangian has a single self-intersection, forming a bottleneck under the $\CC_1$ projection: see the shadow projection drawn in \cref{fig:bottleneckedhandle}.
By  applying \cref{prop:doubleBottleneck} to $K^{k+1, n-k+1,+}|_{\pi_{\RR_1}<1/2}$,  we obtain a double bottlenecked Lagrangian surgery trace. 
\begin{definition}
    We call the resulting Lagrangian cobordism with double bottlenecks $K^{k, n-k+1}_{A, B}\subset \CC^{n+1}$, where the quantity $A$ measures the symplectic area of the class of the teardrop on $K^{k, n-k+1}_{A, B}$, while $B$ is the area of holomorphic strip associated with a double bottleneck. 
    \label{def:doubleBottleneckSurgeryTrace}
\end{definition} 
$K^{k, n-k+1}_{A, B}$ has 2 self-intersections and a single critical point of the height function.
By following the method of proof in \cref{thm:cobordismsaresurgery}, we obtain a doubled-bottleneck decomposition of Lagrangian cobordisms.
\begin{corollary}
    Let $(K,t^-, t^+): (L^+, H_t^+)\rightsquigarrow (L^-, H_t^-)$ be a Lagrangian cobordism with double bottlenecks.
    Then there a sequence of Lagrangian cobordisms with double bottlenecks 
    \begin{align*} 
          (K_{H^i_t}, t_{i+1}^-, t_i^+): &(L_{i+1}^-,h_{i+1,t}^-)\rightsquigarrow( L_{i}^+,h_{i,t}^+) \text{For $i\in \{0, \ldots, j\}$}\\
          (K_{ A_i, B_i}^{k_i, n-k_i+1},t_i^+,t_i^-):&( L_i^+,h_{i,t}^+)\rightsquigarrow (L_i^- ,h_{i,t}^-)\text{ For $i\in \{1, \ldots j\}$}
    \end{align*}
    which satisfy the following properties:
    \begin{itemize}
          \item $L_{j+1}^-=L^+$ and $L_0^+=L^-$
          \item Each $K_{A_i, B_i}^{k_i, n-k_i+1}$ is a Lagrangian surgery trace with double bottlenecks;
          \item Each $K_{H^i_t}$ is the suspension of an exact homotopy and;
          \item There is an exact homotopy between  
          \[K\sim K_{H^j_t}\circ K_{A_j, B_j}^{k_j, n-k_j+1}\circ K_{H^{j-1}_t}\circ \cdots \circ K_{H^1_t}\circ K_{A_1, B_1}^{k_1, n-k_1+1}\circ K_{H^0_t}\]
          whose Hofer norm can be made as small as desired.
    \end{itemize}
    \label{thm:cobordismsarebottleneckedsurgery}
\end{corollary}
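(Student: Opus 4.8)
The plan is to run the proof of \cref{thm:cobordismsaresurgery} while leaving the bottleneck regions untouched, and then to upgrade every surgery trace produced by that argument to its double-bottlenecked refinement from \cref{def:doubleBottleneckSurgeryTrace}. First I would pick $\delta>0$ small and generic enough that $t^\pm\pm\delta$ are regular values of $\pi_\RR:K\to\RR$ lying inside the bottleneck regions of $(K,t^-,t^+)$ and at which the slices are embedded (such $\delta$ exist since, away from the critical times $t^\pm\pm\epsilon/\sqrt3$ of the profile $\rho$, a standard double bottleneck suspension is a suspension of a section). Truncating along these four slices via \cref{def:lagrangiancutout} and reconcatenating exhibits $K$, up to exact homotopy of zero Hofer norm, as a concatenation of the two bottleneck suspensions $K|_{[t^--\delta,\,t^-+\delta]}$ and $K|_{[t^+-\delta,\,t^++\delta]}$ with an honest Lagrangian cobordism $K^{\mathrm{int}}:=K\|_{[t^-+\delta,\,t^+-\delta]}$ between embedded Lagrangians in the middle. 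Since $\pi_\RR$ has no critical points over the bottleneck regions or the cylindrical ends, every critical point of $\pi_\RR:K\to\RR$ lies in the interior of $K^{\mathrm{int}}$.

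Next I would apply the argument of \cref{thm:cobordismsaresurgery} verbatim to $K^{\mathrm{int}}$: \cref{claim:morsecobordism} puts $\pi_\RR|_{K^{\mathrm{int}}}$ in Morse position with distinct critical values (and, after a further small exact homotopy, with critical points disjoint from the self-intersection locus), \cref{prop:goodposition} matches $K^{\mathrm{int}}$ near each critical point with a local surgery model $K^{k_i,n-k_i+1}_{loc}$, and \cref{prop:tdecomposition,prop:splittingX} isolate the handles, producing an exact homotopy of arbitrarily small Hofer norm from $K^{\mathrm{int}}$ to a concatenation of suspensions of exact homotopies and ordinary Lagrangian surgery traces. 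Each of these exact homotopies is generated by a Hamiltonian with compact support in (a local Weinstein neighborhood of) $K^{\mathrm{int}}$, hence extends by the identity across the two bottleneck suspensions; so they assemble into an exact homotopy of $K$, of arbitrarily small Hofer norm, that fixes the bottleneck pieces.

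It remains to install double bottlenecks on each factor. For each surgery trace $K_i^{k_i,n-k_i+1}$ appearing in the decomposition, I would precompose and postcompose with short suspensions of exact homotopies (Hofer norm as small as desired) that pull apart the self-intersections of its two ends, so that near each end the enlarged cobordism is the suspension of a homotopy $H_t$ with $H_t(q_1)\neq H_t(q_2)$ at every double point of the slice; then \cref{prop:doubleBottleneck} installs standard double bottleneck data at regular slices just inside the two ends, and by the construction underlying \cref{def:doubleBottleneckSurgeryTrace} the result is precisely a double-bottlenecked surgery trace $K_i^{k_i,n-k_i+1}_{A_i,B_i}$, with $A_i$ the teardrop area of the handle and $B_i$ the strip area determined by the chosen bottleneck primitive. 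The suspension factors are treated the same way, producing double-bottlenecked suspensions, and the two outermost bottleneck suspensions from the first step are absorbed into the outermost suspension factors. Finally one reconcatenates using the composition of Lagrangian cobordisms with double bottlenecks described in \cref{subsec:doublebottleneck}, choosing the bottleneck primitives at each internal junction consistently, so that the negative-end datum of one factor equals the positive-end datum of the next and the composition is defined. Only finitely many exact homotopies are used, each of arbitrarily small Hofer norm, so their total is arbitrarily small.

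The main obstacle is bookkeeping rather than geometry. One must check that the small perturbations separating the double points at the ends of each factor, together with the bottleneck data chosen at the junctions, can be made globally consistent with the composition law for double-bottlenecked cobordisms---in particular that each perturbed factor genuinely coincides with, rather than is merely isotopic to, the corresponding standard piece $K_i^{k_i,n-k_i+1}_{A_i,B_i}$ or double-bottlenecked suspension, which is exactly what routing the construction through \cref{prop:doubleBottleneck} and \cref{def:doubleBottleneckSurgeryTrace} guarantees. One should also confirm that the Hofer estimates in \cref{claim:morsecobordism,prop:goodposition,prop:tdecomposition,prop:splittingX} and \cref{prop:doubleBottleneck} survive being applied on the Weinstein neighborhood of the truncated interior $K^{\mathrm{int}}$; this is immediate, since those statements only require a compactly supported Hamiltonian and the bottleneck and cylindrical regions provide the room for one.
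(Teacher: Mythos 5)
Your proposal is correct and follows essentially the same route as the paper, whose entire argument for \cref{thm:cobordismsarebottleneckedsurgery} is to rerun the proof of \cref{thm:cobordismsaresurgery} with the double-bottlenecked handles of \cref{def:doubleBottleneckSurgeryTrace}. Your added bookkeeping---leaving the given bottleneck regions untouched, separating double points by small exact homotopies at internal junctions so that the hypothesis of \cref{prop:doubleBottleneck} holds, and matching bottleneck data so the composition of double-bottlenecked cobordisms is defined---is exactly the fleshing-out the paper leaves implicit, with the understanding that each factor is a surgery trace with double bottlenecks in the same decomposition-level sense used for the pieces in \cref{thm:cobordismsaresurgery}.
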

We now upgrade \cref{prop:eulerchar} to compare the Euler characteristic of a Lagrangian cobordism with immersed ends to the Euler characteristic of the ends.
\begin{prop}
    Let  $(K, t^-, t^+):(L^+, H^+)\to (L^-, H^-)$ be a Lagrangian cobordism with double bottlenecks. Then  we have a matching of Euler characteristics 
\[          \chi^{bot}(K)=\chi^{si}(L^-)=\chi^{si}(L^+)\]
\label{prop:selfIntersectionSummary}
\end{prop}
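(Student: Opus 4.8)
The plan is to reduce the statement, via the double-bottleneck decomposition of \cref{thm:cobordismsarebottleneckedsurgery}, to two model computations and then reassemble them with the additivity relation \cref{eq:compositionAndEulerChar}. First observe that the equality $\chi^{si}(L^+)=\chi^{si}(L^-)$ is already contained in \cref{prop:eulerchar} (a Lagrangian cobordism with double bottlenecks is in particular a Lagrangian cobordism in the sense of \cref{def:cobordism}), so the only new content is $\chi^{bot}(K)=\chi^{si}(L^+)$. Apply \cref{thm:cobordismsarebottleneckedsurgery} to produce an exact homotopy fixing the ends,
\[
K\;\sim\;K_{H^j_t}\circ K_{A_j,B_j}^{k_j,n-k_j+1}\circ\cdots\circ K_{H^1_t}\circ K_{A_1,B_1}^{k_1,n-k_1+1}\circ K_{H^0_t}.
\]
Granting that $\chi^{bot}$ is unchanged by such a homotopy (I return to this below) and applying \cref{eq:compositionAndEulerChar} repeatedly, $\chi^{bot}$ of the right-hand side equals the sum of $\chi^{bot}$ over the pieces minus one copy of $\chi^{si}$ of each intermediate slice $L_i^{\pm}$. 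Each intermediate slice occurs exactly once as the positive end of one piece and once as the negative end of the adjacent piece, so the alternating sum telescopes to $\chi^{si}(L^+)$ provided every piece $P$ satisfies $\chi^{bot}(P)=\chi^{si}(\text{both ends of }P)$. It therefore suffices to verify this identity for (a) the suspension $K_{H_t}$ of an exact homotopy and (b) a standard double-bottlenecked surgery trace $K^{k,n-k+1}_{A,B}$.

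For case (a), $K_{H_t}$ is embedded away from its two double bottlenecks, so its self-intersections persist through the embedded middle, giving an index-preserving bijection $\mathcal I^{si}(L^+)\cong\mathcal I^{si}(L^-)$. Choose the admissible Morse function $f$ to be the pullback of $f^+$ on each slice plus a monotone function of the cobordism parameter with outward-pointing gradient, so that $\Crit(f)$ is canonically identified with $\Crit(f^+)\cong\Crit(f^-)$ with all Morse indices preserved. Then $\mathcal I^{bot}(K_{H_t})$ consists of $\Crit(f)$ together with the ordered self-intersections on the four bottleneck slices $t^\pm\pm\epsilon/\sqrt{3}$ that survive the filter in the definition of $\mathcal I^{bot}$. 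The substance of this case is that the prescription — retain everything in $[t^-,t^+]$, retain only the maximally-graded directions outside it, and shift the index of minimally-graded generators by $1$ — is arranged so that, summed over the two double bottlenecks, each geometric self-intersection of $L^\pm$ contributes to $\chi^{bot}$ the same quantity $(-1)^{\ind(q_0\to q_1)}+(-1)^{\ind(q_1\to q_0)}$ that it contributes to $\chi^{si}(L^\pm)$; carrying this out is a finite case check using $\ind(q_0\to q_1)+\ind(q_1\to q_0)=n$ and the grading conventions attached to $\rho(t)=t(t+\epsilon)(t-\epsilon)$ in \cref{def:doublebottleneck}.

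For case (b), by \cref{def:doubleBottleneckSurgeryTrace} the trace $K^{k,n-k+1}_{A,B}$ has a single $\pi_\RR$-critical point and a single double bottleneck, located at its immersed end (the other end being embedded), so its self-intersections lie on the two slices $t^+\pm\epsilon/\sqrt{3}$, and the two ordered directions have indices $k+1$ and $n-k-1$ by \cref{claim:indexOfHandle}. The proof of \cref{prop:eulerchar} already tabulates $\mathcal I(L^{k,n-k,+})$ and $\mathcal I(L^{k+1,n-k-1,-})$ and, via the Lagrange-multiplier computation there, identifies the Morse critical points of the two ends inside and outside the surgery neck. Feeding this into the definition of $\mathcal I^{bot}$, the Morse part of $\chi^{bot}(K^{k,n-k+1}_{A,B})$ matches that of $\chi^{si}(L^{k+1,n-k-1,-})$ while the surviving bottleneck generators match $\mathcal I^{si}(L^{k,n-k,+})$, so (b) reduces to the bookkeeping of (a) carried out for a single double bottleneck.

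The main obstacle is exactly this bookkeeping at the double bottlenecks: one must carefully track, for each ordered self-intersection $((q_0,\pm\epsilon/\sqrt{3})\to(q_1,\pm\epsilon/\sqrt{3}))$, whether its cobordism parameter lies in $[t^-,t^+]$ or outside, whether it carries maximal or minimal grading from the base, what its resulting index is, and hence whether it is retained in $\mathcal I^{bot}$ — and then verify that the signed count of the retained generators together with the Morse critical points reproduces $\chi^{si}$ of the two ends. The secondary point is the invariance of $\chi^{bot}$ under the exact homotopies supplied by \cref{thm:cobordismsarebottleneckedsurgery}: since $\chi^{bot}$ is defined from auxiliary data (an admissible Morse function together with the bottleneck data at $t^\pm$), one should either check directly that the value is unchanged under a homotopy fixing the ends and that data, or identify $\chi^{bot}$ with the Euler characteristic of the bottlenecked Floer complex reviewed in \cref{subsec:basicimmersed}, for which invariance is standard.
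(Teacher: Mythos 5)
Your proposal is correct and takes essentially the same route as the paper: reduce via the double-bottleneck decomposition to the standard double-bottlenecked surgery trace (plus suspensions), then verify the model case by enumerating $\mathcal I^{bot}$ with an admissible Morse function built from the handle's height function together with the index data of \cref{claim:indexOfHandle} and \cref{tab:indexdifferences} --- the bookkeeping you defer is exactly what the paper records in the table of \cref{fig:morseprofile}, yielding $\chi^{bot}(K^{k,n-k+1}_{A,B})=(-1)^{k+1}+(-1)^{n-k+1}$. The only difference is presentational: you spell out the telescoping via \cref{eq:compositionAndEulerChar}, the suspension case, and invariance under the small exact homotopy, all of which the paper compresses into ``as in \cref{prop:eulerchar}, it suffices to check the double bottleneck surgery traces.''
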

\begin{proof}
    As in \cref{prop:eulerchar}, it suffices to check for the double bottleneck surgery traces.
    \[K^{k, n-k+1}_{A, B}: L^{k, n-k,+}_{loc}   \rightsquigarrow    L^{k+1, n-k-1, -}_{loc}\]
    We discuss the cases $-1<k<n$. Equip $K^{k, n-k+1}_{A, B}$ with an admissible Morse function $f$ which is given by perturbing the Morse-Bott function $\rho\circ \pi_\RR$ in \cref{fig:morseprofile} by the functions $f^\pm$ from \cref{prop:eulerchar} at the maxima of $\rho\circ \pi_\RR$. $\rho\circ \pi_\RR$ has a single critical point (the critical point of the Morse surgery handle) with real coordinate between $t^+, t^-$ which lives in degree $n-k$. The remaining critical points of $f$ come from the perturbations $f^\pm$, which we enumerated in \cref{tab:indexdifferences}. The self-intersections of $K^{k, n-k+1}_{A, B}$ correspond to two copies of the self-intersections of $L^{k, n-k,+}_{loc} $ whose indexes are shifted depending on whether they receive maximum or minimum grading from the base.
    The bottleneck cochains $\mathcal I^{bot}(K^{k, n-k+1}_{A, B})$ are listed in \cref{fig:morseprofile}. From reading the table, we see that $\chi^{bot}(K^{k, n-k+1}_{A, B})=(-1)^{k+1}+(-1)^{n-k+1}$, which agrees with $\chi^{si}( L^{k+1, n-k-1, -}_{loc})$
\end{proof}
\begin{figure}
    \centering
    
\usetikzlibrary{patterns}

\begin{tikzpicture}
\usetikzlibrary{patterns, decorations.markings}
\begin{scope}[shift={(-9,-12.5)}]
\begin{scope}

\draw[fill=gray!20, dashed]  (-3,-1.5) ellipse (1 and 1.5);

\clip (-3,-1.5) ellipse (1 and 1.5);

\node[circle, fill=black, scale=.25] at (-3,-1.25) {};

\node[circle, fill=black, scale=.25] at (-3,-1.75) {};

\node[above] at (-3,-1.25) {$x_+$};

\node[below] at (-3,-1.75) {$x_-$};
\begin{scope}[thick,decoration={    markings  , mark=at position 0.5 with {\arrow{>}}}]

\draw[postaction={decorate}] (-3,-1.25) -- (-4,-1.25);
\draw[postaction={decorate}] (-3,-1.25) -- (-2,-1.25);
\draw[postaction={decorate}] (-4,-1.75) -- (-3,-1.75);
\draw[postaction={decorate}] (-2,-1.75) -- (-3,-1.75);
\end{scope}
\end{scope}

\fill[gray!10]  (-4,-4) rectangle (4,-7);

\draw[fill=brown!20,thick] (-3,-5.5) .. controls (-2.5,-5.5) and (-2,-6) .. (-1.5,-6) .. controls (-1,-6) and (1,-6) .. (1.5,-5.5) .. controls (1,-5) and (-1,-5) .. (-1.5,-5) .. controls (-2,-5) and (-2.5,-5.5) .. (-3,-5.5);
\draw[fill=brown!20,thick] (4,-5.5) .. controls (3.5,-5) and (2,-5) .. (1.5,-5.5) .. controls (2,-6) and (3.5,-6) .. (4,-5.5);
\draw[pattern=north west lines, pattern color=orange] (4,-5.5) .. controls (3.5,-5) and (2,-5) .. (1.5,-5.5) .. controls (2,-6) and (3.5,-6) .. (4,-5.5);
\draw (-4,-5.5) -- (-3,-5.5);
\draw[pattern=north west lines, pattern color=orange] (1.5,-5.5) .. controls (1,-6) and (-1.5,-6) .. (-1.5,-5.5) .. controls (-1.5,-5) and (1,-5) .. (1.5,-5.5);

\node at (-4.5,-2) {$S^1_{r_1}$};
\node at (-4.5,-1) {$S^1_{r_2}$};

\node[fill, scale=.5, circle] at (-1.5,-5.5) {};

\node[left] at (-1.5,-5.5) {$y$};
\node at (0,-5.5) {$A$};
\node at (3.25,-5.5) {$B$};

\begin{scope}[shift={(-0.5,0)}]

\clip  (1.75,-5) rectangle (4.75,-6.25);
\draw[red, thick] (-1,-6) .. controls (-0.5,-6) and (1.5,-6) .. (2,-5.5) .. controls (2.5,-5) and (4,-5) .. (4.5,-5.5);
\draw[blue, thick] (4.5,-5.5) .. controls (4,-6) and (2.5,-6) .. (2,-5.5) .. controls (1.5,-5) and (-0.5,-5) .. (-1,-5);

\end{scope}

\begin{scope}[shift={(-2.25,-0.5)}]

\draw[fill=gray!20, dashed]  (2,-1) ellipse (1 and 1.5);

\clip (2,-1) ellipse (1 and 1.5);
\draw[thick, postaction={decorate},decoration={    markings  , mark=at position 0.25 with {\arrow{>}},mark=at position 0.75 with {\arrow{>}}}] (1,-1.25) .. controls (1.5,-1.25) and (1.5,-2.25) .. (1.75,-2.25) .. controls (2,-2.25) and (2,0.25) .. (2.25,0.25) .. controls (2.5,0.25) and (2.5,-0.75) .. (3,-0.75);
\draw[thick, postaction={decorate},decoration={    markings  , mark=at position 0.25 with {\arrow{<}},mark=at position 0.75 with {\arrow{<}}}] (1,-0.75) .. controls (1.5,-0.75) and (1.5,0.25) .. (1.75,0.25) .. controls (2,0.25) and (2,-2.25) .. (2.25,-2.25) .. controls (2.5,-2.25) and (2.5,-1.25) .. (3,-1.25);

\clip  (1.8,-0.7) rectangle (2.2,-1.3);
\draw[red, thick] (1.75,-2.25) .. controls (2,-2.25) and (2,0.25) .. (2.25,0.25);
\draw[blue, thick] (1.75,0.25) .. controls (2,0.25) and (2,-2.25) .. (2.25,-2.25);
\end{scope}
\begin{scope}[shift={(0.75,-0.5)}]

\draw[fill=gray!20, dashed]  (2,-1) ellipse (1 and 1.5);

\clip (2,-1) ellipse (1 and 1.5);
\draw[thick, postaction={decorate},decoration={    markings  , mark=at position 0.25 with {\arrow{>}},mark=at position 0.75 with {\arrow{>}}}] (1,-1.25) .. controls (1.5,-1.25) and (1.5,-1.85) .. (1.75,-1.85) .. controls (2,-1.85) and (2,-0.15) .. (2.25,-0.15) .. controls (2.5,-0.15) and (2.5,-0.75) .. (3,-0.75);
\draw[thick, postaction={decorate},decoration={    markings  , mark=at position 0.25 with {\arrow{<}},mark=at position 0.75 with {\arrow{<}}}] (1,-0.75) .. controls (1.5,-0.75) and (1.5,-0.15) .. (1.75,-0.15) .. controls (2,-0.15) and (2,-1.85) .. (2.25,-1.85) .. controls (2.5,-1.85) and (2.5,-1.25) .. (3,-1.25);

\clip  (1.8,-0.7) rectangle (2.2,-1.3);
\draw[red, thick] (1.75,-1.7) .. controls (2,-1.7) and (2,-0.3) .. (2.25,-0.3);
\draw[blue, thick] (1.75,-0.3) .. controls (2,-0.3) and (2,-1.7) .. (2.25,-1.7);
\end{scope}

\end{scope}

\draw[dashed, ->] (-12.5,-19.5) -- (-12.5,-16);
\draw[dashed, ->] (-7.5,-19.5) .. controls (-7.5,-19) and (-7.5,-17) .. (-7.5,-16.5) .. controls (-7.5,-16.25) and (-7.5,-16.25) .. (-7.5,-16.25) .. controls (-7.5,-16) and (-7.5,-16) .. (-7.75,-16) .. controls (-8,-16) and (-8.75,-16) .. (-9,-16) .. controls (-9.25,-16) and (-9.25,-16) .. (-9.25,-15.75);
\draw[dashed, ->] (-5,-19.5) .. controls (-5,-19) and (-5,-17) .. (-5,-16.75) .. controls (-5,-16.5) and (-5,-16) .. (-5.5,-16) .. controls (-5,-16) and (-5.75,-16) .. (-6,-16) .. controls (-6.25,-16) and (-6.25,-16) .. (-6.25,-16) .. controls (-6.5,-16) and (-6.5,-16) .. (-6.5,-15.75);

\begin{scope}[]
\draw[<->] (-13,-20) -- (-13,-22) -- (-4.5,-22);
\draw (-13,-21.25) .. controls (-12.75,-20.5) and (-12.75,-20.5) .. (-12.5,-20.5) .. controls (-12.25,-20.5) and (-7.75,-21.75) .. (-7.5,-21.75) .. controls (-7.25,-21.75) and (-5.25,-20.25) .. (-5,-20.25) .. controls (-4.75,-20.25) and (-4.75,-20.25) .. (-4.5,-21.75);
\node[above] at (-13,-20) {$\rho$};
\node[right] at (-4.5,-22) {$\pi_{\mathbb R}$};

\end{scope}

\node[left] at (-15,-15) {(a) Slices};
\node[left] at (-14.6793,-19) {(b) Shadow};
\node[left] at (-13.8293,-21) {(c) Morse Profile};
\node at (-16.3,-24) {(d)};

\begin{scope}[yscale=0.5, shift={(0,-22.5)}]
\node at (-12.5,-24.5) {$x_+$};
\node at (-12.5,-26.5) {$x_-$};
\node at (-10.5,-27.5) {$y$};
\node at (-7.5,-25.5) {$(q_+,0)\to (q_-, 0)$};
\node at (-4.5,-24.5) {$(q_+,1)\to (q_-, 1)$};
\node at (-7.5,-26.5) {$(q_-,0)\to (q_+,0)$};
\node at (-14,-25.5) {$k+2$};
\node at (-14,-24.5) {$k+1$};
\node at (-14,-26.5) {$n-k-1$};
\node at (-14,-27.5) {$n-k$};
\node at (-14,-23.5) {Index};
\draw (-15,-24) -- (-3,-24);
\draw (-13,-23.5) -- (-13,-29);
\draw[thin, gray] (-15,-25) -- (-3,-25) (-15,-26) -- (-3,-26) (-15,-27) -- (-3,-27) (-15,-28) -- (-3,-28);
\node at (-7.5,-23.5) {Elements of $\mathcal I (K^{k, n-k+1}_{A, B})$};

\end{scope}

\end{tikzpicture}     \caption{The slices, shadow, and choice of profile $\rho$ used in constructing the Morse function. The drawn example is for $k=0, n=2$. (a) Critical points of the function $f^\pm$. The gradient flow of $f^\pm$ on the slice is indicated by the arrows. (b) The shadow of our Lagrangian cobordism. (c) The profile of a Morse-Bott function on $K^{k, n-k+1}_{A,B}$ which is subsequently perturbed by $f^\pm$ at the Morse-Bott maxima and minimum. (d) A tabulation of the indices of elements in $\mathcal I^{bot}(K^{k, n-k+1}_{A, b})$. Observe that cochains listed in rows $k+1, n-k-1$ agree with those from \cref{tab:indexdifferences}.}
    \label{fig:morseprofile}
\end{figure}
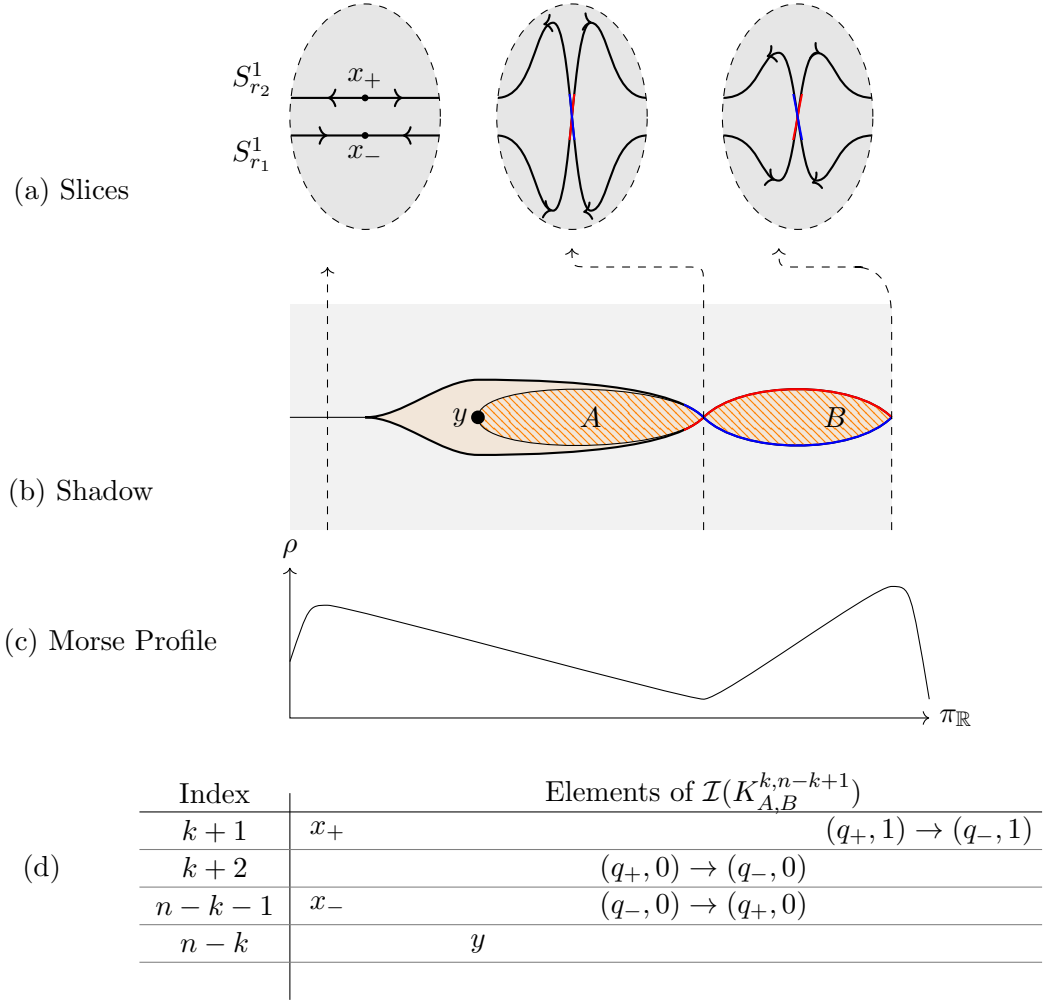
 \subsection{Overview of Immersed Floer cohomology}
\label{subsec:immersedoverview}
\label{subsec:basicimmersed}
\subsubsection{Immersed Lagrangian Floer Cohomology}
There are several models based on the work of \cite{akaho2008immersed} which produce a filtered $A_\infty$ algebra associated to an immersed Lagrangian $\li: L\to X$. 
We follow notation from \cite{palmer2019invariance} adapted to the Morse cochain setting, although much of our intuition for this filtered $A_\infty$ algebra come from \cite{fukaya1997zero,biran2008lagrangian,lipreliminary,charest2015floer}.

Let $f: L \to \RR$ be a Morse function, $\li: L\to X$ be a graded Lagrangian immersion with transverse self-intersections,  and  $\mathcal I^{si}(\li)=\{(p\to q)\;|\; p, q\in L, \li(p)=\li(q)\}$ be the set of ordered preimages of transverse self-intersections. Assume that $\Crit(f)\cap \mathcal I(L)=\emptyset$. 

\begin{definition}[\cite{fukaya2010lagrangian}]
    Let $R$ be a commutative ring with unit. The \emph{universal Novikov ring} over $R$ is the set of formal sums
    \[
        \Lambda_{\geq 0}:=\left\{\left.\sum_{i=0}^\infty a_i T^{\lambda_i} \;\right|\;a_i\in R, \lambda_i\in \RR_{\geq 0},  \lim_{i\to\infty} \lambda_i=\infty.\right\}
    \]

    Let $k$ be a field. The \emph{Novikov Field} is the set of formal sums
    \[
        \Lambda := \left\{\left.\sum_{i=0}^\infty  a_i T^{\lambda_i}  \right|a_i\in k, \lambda_i \in \RR,  \lim_{i\to\infty} \lambda_i = \infty\right \}.
    \]
    An \emph{energy filtration} on a graded $\Lambda$-module $A^\bullet$  is a filtration $F^{\lambda_i}A^k$ so that
    \begin{itemize}
        \item
              Each $A^k$ is complete with respect to the filtration \footnote{In the sense that the topology induced by taking a basis of opens to be $F^{\lambda_i}A^k$ is complete} and has a basis with valuation zero over $\Lambda$.
        \item Multiplication by $T^\lambda$ increases the filtration by $\lambda$.
    \end{itemize}
    \label{def:novikov}
\end{definition}
Associated to an immersed Lagrangian submanifold with transverse self-intersections is a filtered graded $\Lambda$ module generated on critical points of $f$ and the $(q_0\to q_1)$.
\[CF^k(L)= \bigoplus_{\substack{x\in \Crit(f)\\\ind(x)=k}} \Lambda_x \oplus \bigoplus_{\substack{(q_0\to q_1)\in \mathcal I^{si}(L)\\ \ind(q_0\to q_1)=k}} \Lambda_{(q_0\to q_1)}.\]
We will write $\mathcal I(L)= \Crit(f)\cup \mathcal I^{si}(L)$ for the set of generators of $\CF(L)$.
The Floer cohomology of $L$ comes with filtered product operations deforming the Morse structure by counts of holomorphic polygons with boundary on $L$,
\[\langle m^k(x_1, \ldots, x_k), x_0\rangle =\sum_{\beta\in H_2(X, L)} T^{\omega(\beta)} \#\mathcal M_{\mathcal P}(L,\beta, \underline x_i) \]
where $\underline x= x_0, x_1, \ldots, x_k \in \Crit(f)\cup \mathcal I^{si}(L)$, $\mathcal P$ is perturbation datum, and $\mathcal M_{\mathcal P}(L, \beta, \underline x_i)$ is the moduli space of $\mathcal P$-perturbed pseudoholomorphic treed polygons.
We provide a short description of what this data entails.
In \cite[Section 4.2]{charest2015floer}, a treed disk $C$ is a tree whose vertices $v$ are labeled with disks with internal marked points and $\deg(v)$ boundary marked points and whose edges are labelled with (possibly semi-infinite) intervals.
From this data we obtain a space $C$ which is glued together from the disks and intervals labelling the vertices and edges. We write $C=S\cup T$, where $S$ is the \emph{surface} portion of the treed disk, and $T$ are the edges.
For a pseudoholomorphic treed polygon, this surface portion is allowed to have strip-like ends.
Given a Lagrangian $L$ with Morse function $f$ we examine maps $u: C\to X$ which sends $u|_{\partial S}, u|_T\subset L$. Without considering perturbation datum, a pseudoholomorphic treed disk is such a map where $u|_S$ is pseudoholomorphic, and $u|_T$ are flow lines of $\nabla f$. In order to achieve regularity, we need to perturb the $J$-holomorphic curve and Morse flow line equations by picking perturbations of the almost complex structure and Morse function. A perturbation datum $\mathcal P$ is a choice of perturbations for all domains $C$. These choices must be made coherently, meaning that the perturbations for treed disks which differ by disk bubbling and flow-line breaking have related perturbation datum \cite[Definition 4.12]{charest2015floer}.

We will reserve $\mathcal P^0$ to denote the trivial perturbation datum (so that $\mathcal M_{\mathcal P^0}(L, \beta, \underline x_i)$ consists of configurations of $J$-holomorphic polygons attached to Morse flow-lines).
It is expected that the $m^k$ form a filtered $A_\infty$ algebra (in the sense of \cite{fukaya2007lagrangian}). As we only use these structure coefficients to make some suggestive computations, we do not claim that here.

In good examples, we know that $\CF(L)$ is an $A_\infty$-algebra. We give such an example below.

\subsubsection{Example Computation: Whitney Sphere}
We review a computation from \cite{alston2019immersed} computing the Floer complex for the Whitney sphere $L^{n, 0}_A\subset \CC^n$. We assume that $n\geq 2$ so that $L^{n, 0}_A$ is a graded Lagrangian submanifold. 
We take a Morse function for the $L^{n, 0}_A$ given by the $x_1$ coordinate. 
Then $\Crit(f)=\{e, x\}$, where $e$ is the maximum of $f$ in degree $0$, and $x$ is a generator in degree $n$.
We take the points $q_\pm = (\pm r(A),0, \ldots, 0 )\in S^n_{r(A)}$ to be the preimages of the self-intersections of this Whitney sphere.
The computation from \cref{example:indexcomputation} shows that $(q_+\to q_-)$ has degree $n+1$, and $(q_-\to q_+)$ has degree $-1$.
\begin{SCfigure}[50]
    \begin{tikzpicture}
    \node at (-0.5,1.5) {$0$};
    \node (v2) at (1.5,1.5) {$e$};
    \node (v3) at (1.5,0) {$x$};
    \node (v4) at (1.5,-1) {$(q_+\to q_-)$};
    \node (v1) at (1.5,2.5) {$(q_-\to q_+)$};
    \node at (-0.5,0) {$n$};
    \node at (-0.5,-1) {$n+1$};
    \node at (-0.5,2.5) {$-1$};
    \node at (-0.5,1) {$\vdots$};
    \draw[->,red]  (v1) edge (v2);
    \draw[->,red]  (v3) edge (v4);
    \end{tikzpicture}     \caption{$m^1$ structure on $\CF(L^{n,0}_A)$ with $n\geq 2$. There are no Morse flow lines contributing the differential on the Floer cohomology. However, the presence of holomorphic teardrops contributes to differential (drawn in red) cancelling all homology classes.}
    \label{fig:homcomputation}
\end{SCfigure}
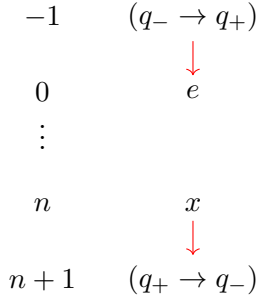
We now give a description of the moduli space of holomorphic teardrops with boundary on the Whitney sphere.
Let $p=( x_1, \ldots, x_n,0)\in S^n\subset \RR^n\times \RR$ be a point on the equatorial sphere. Then we can construct a holomorphic teardrop with boundary on the Whitney sphere $L^{n, 0}_A \subset \CC^n$ which is parameterized by 
\begin{align*}
    u_p: D_\alpha\to& \CC^n\\
    z\mapsto& (x_1z, x_2z, \ldots, x_n z)
\end{align*}
where $D_\alpha=\{a+\jmath b \;|\; a\in [0, 1], |b|\leq 2a\sqrt{1-a^2}\}$.
Let $\beta\in H_2(X, L)$ be the homology class of this teardrop. The parameter $A$ of the Whitney sphere is constructed so that 
\[\omega(\beta)=A.\]
\begin{claim}[Lemma 5.1.2 of \cite{alston2019immersed}]
    The moduli space of holomorphic teardrops with boundary on $S^n_1$ is regular for the standard almost complex structure and has one component,
    \[\mathcal M_J(L^{n, 0}_A, \beta, (q_-\to q_+)) = S^{n-1}\]
    where $\beta$ is the class of the teardrop $[u_p]$. The evaluation map 
    \[\ev: \mathcal M_J(L^{n, 0}_A, \beta, (q_-\to q_+))\times (D^2\setminus\{1\})\to L^{n, 0}_A\]
    is a homeomorphism onto $L^{n, 0}_A\setminus\{q_+, q_-\}.$
    \label{claim:whitney}
\end{claim}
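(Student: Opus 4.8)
# Proof Proposal for Claim (Lemma 5.1.2 of [alston2019immersed])

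\emph{Plan.} The strategy is to exploit the near-totally-explicit nature of the Whitney sphere parameterization. First I would set up the moduli problem carefully: a holomorphic teardrop with output at $(q_-\to q_+)$ is a map $u:(D^2,\partial D^2)\to(\CC^n, L^{n,0}_1)$ with one boundary puncture at $1\in\partial D^2$ asymptotic to the self-intersection, in the class $\beta=[u_p]$. The first step is to show every such $u$ has the form $z\mapsto(x_1 z,\ldots,x_n z)$ for some $(x_1,\ldots,x_n)$ lying on the equatorial sphere $S^{n-1}\subset S^n_1$. For this I would use that the boundary of the Whitney sphere parameterization, restricted to the equator, lies in the totally real locus $\{(x_1+\jmath\cdot 0,\ldots,x_n+\jmath\cdot 0)\}=\RR^n$, and that the asymptotic condition at $q_\pm=(\pm 1,0,\ldots,0)$ forces the disk to pass through the origin $\li(q_+)=\li(q_-)=0\in\CC^n$. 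An open-mapping / positivity-of-intersection argument (intersecting with the complex hyperplanes $\{z_i=0\}$) pins down the degree of each coordinate function $u_i$ to be at most one; combined with the area constraint $\omega(\beta)=A$ this forces each $u_i$ to be linear, $u_i(z)=x_i z$ up to a common Möbius reparameterization fixing the puncture.

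\emph{Key steps, in order.} (1) Identify the class $\beta$ and verify $\omega(\beta)=A$ from the area computation already given in the excerpt ($\frac{1}{2}\Area(\pi_\CC(L^{n,0}_A))=A$ with the appropriate factor for the teardrop domain $D_\alpha$). (2) Show that any holomorphic teardrop in class $\beta$ lifts through the immersion $\li^{n,0}_1$ away from the puncture and, after a conformal change of coordinates on the domain carrying the puncture to $1$, is given by $u_p$ for a unique $p$ on the equator; this is the ``classification'' step. (3) Compute the linearized operator $D_u\bar\partial$ at each $u_p$ and show it is surjective with kernel of the expected dimension — since $u_p$ is explicit and the boundary condition is (a rotation of) the standard real boundary condition $\RR^n\subset\CC^n$, the linearized operator decouples into $n$ copies of a standard Riemann–Hilbert problem on the disk whose indices I can read off; regularity then follows from an automatic-transversality-type argument or a direct Fourier computation, and the Maslov index bookkeeping matches $\ind(q_-\to q_+)=-1$ from Claim 3.3. (4) Quotient by the residual automorphisms of the domain (the teardrop with one puncture has a one-dimensional automorphism group acting on $D^2\setminus\{1\}$), leaving the moduli space $\cong S^{n-1}$, the space of choices of equatorial point $p$. (5) Identify the evaluation map $\ev$ and check it is a homeomorphism onto $L^{n,0}_1\setminus\{q_+,q_-\}$: each $u_p$ sweeps out, via $z\mapsto(x_1 z,\ldots,x_n z)$ on $D^2\setminus\{1\}$, exactly the points of the Whitney sphere with first coordinate not equal to $\pm 1$; as $p$ ranges over $S^{n-1}$ and $z$ over $D^2\setminus\{1\}$ this covers $L^{n,0}_1\setminus\{q_\pm\}$ bijectively.

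\emph{Main obstacle.} The hard part will be step (2), the classification of all holomorphic teardrops in the class $\beta$ — ruling out exotic solutions and sphere/disk bubbling, and showing the only solutions are the explicit linear ones. This requires an honest argument: monotonicity/energy bounds to control bubbling (the small area $A$ of the minimal teardrop class should make this clean, since any bubble would consume positive area and drop below the area of $\beta$, or force a worse Maslov index), together with the intersection-theoretic argument with the coordinate hyperplanes to bound the degrees of the coordinate functions. Since Alston–Rizell establish exactly this, I would lean on their analysis, but present the explicit family $u_p$ and the index/regularity computation (step (3)) in full, as these are short given the concrete formulas. Steps (4) and (5) are then essentially bookkeeping with the explicit parameterization.
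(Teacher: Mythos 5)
Your proposal follows essentially the same route as the paper: the classification of teardrops as the explicit linear maps $u_p$ is cited to the Alston--Bao/Alston--Rizell preprint, and regularity is established by splitting the linearized operator over the coordinate line bundles $\mathcal L_i$ (with boundary Maslov indices $0$ for $i<n$ and $2$ for $i=n$) and invoking automatic regularity for rank-one Riemann--Hilbert problems, exactly as in the paper's proof. One small bookkeeping slip: the automorphism group of a disk with a single boundary puncture is $2$-dimensional, not $1$-dimensional, which is what reconciles the total Fredholm index $n+1$ with the $(n-1)$-dimensional moduli space $S^{n-1}$; this does not affect your conclusion, since the identification with $S^{n-1}$ already follows from the classification and the explicit evaluation-map check.
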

\begin{proof}
    In the version 2 preprint of \cite{alston2019immersed}, it is proven that all teardrops with boundary on $L^{n, 0}_A$ for the standard almost complex structure are of the form $u_{p}$.
    
    It remains to show that these teardrops are regular. In this proof, we will write $L:=L^{n, 0}_A$.     
    We need only show that the linearized Dolbeault operator
    \[D_{u_{p}}: W^{1, k;\epsilon}(u_{p}^*T\CC^n, \lambda)\to L^{k;\epsilon}(\Omega^{0,1}\tensor u_{p}^*T\CC^n)\]
    surjects. 
    For expositional purposes we will now restrict to $p = (0, 0, \ldots, 1)$. 
    Since $T\CC^n$ trivializes across the coordinates of $\CC^n$ as $\bigoplus_{i=1}^n \mathcal L_i$, 
    the pullback $u_{p}^*T\CC^n$ has a trivialization $\bigoplus_{i=1}^n(u_{p}^*\mathcal L_i)$ as well. 
    The proof of \cref{claim:indexOfHandle} shows that the Lagrangian sub-bundle $(u_p|_{\partial D^2})^*TL$ can also be trivialized. 
    In that proof, the curve $\gamma(\theta)= (\sin(\theta), 0, \ldots, 0, \cos(\theta))\in S^n\subset \RR^{n+1}$ has the property that 
    \[\li^{n, 0}\circ \gamma(\theta)=(0, \ldots, \cos(\theta)+\jmath 2 \sin(\theta)\cdot \cos(\theta))\]
    parameterizes the boundary of $u_p$. 
    The proof of \cref{claim:indexOfHandle} therefore shows that $(u_p|_{\partial D^2})^*TL$ splits as a sum of real line bundles $\bigoplus_{i=1}^n \lambda_i$. Furthermore, this splitting respects the splitting $T\CC^n=\mathcal L_1\oplus\cdots \oplus\mathcal L_n$, in the sense that 
    \[ \lambda_i \subset (u_p|_{\partial D^2})^*\mathcal L_i.\]
    The map $D_{u_p}$ decomposes across the trivialization 
    \[\bigoplus_{i=1}^n W^{1, k;\epsilon}(u_{p}^*\mathcal L_i, \lambda_i)\to \bigoplus_{i=1}^n L^{k;\epsilon}(\Omega^{0,1}\tensor u^*_{p}\mathcal L_i)\]
    Let $D_{u_{p}}|_{\mathcal L_i}:  W^{1, k;\epsilon}(u_{p}^*\mathcal L_i, \lambda_i)\to L^{k;\epsilon}(\Omega^{0,1}\tensor u^*_{p}\mathcal L_i)$ be the restriction of $D_{u_{p}}$ to each component of the trivialization. We show that each of the $D_{u_{p}}|_{\mathcal L_i}$ surjects.
\begin{itemize}
    \item    For $1 \leq i < n$, the argument from \cref{claim:indexOfHandle} shows that the Maslov index is of the loop determined by $\lambda_i$ is 0; the Fredholm index of $D_{u_{p}}|_{\mathcal L_i}$ is 1. By automatic regularity in dimension one, this linearized operator surjects.
    \item For $i= n$, the argument from \cref{claim:indexOfHandle} shows that the Maslov index is 2; the Fredholm index of $D_{u_{p}}|_{\mathcal L_1}$ is 3. By automatic regularity in dimension one, this linearized operator surjects.
\end{itemize}
    It follows that $D_{u_{p}}$ is surjective, with Fredholm index $\ind(D_{u_{p}})=(n-1)+2=n+1$. This gives us an $\ind(D_{u_{p}})-2=n-1$-dimensional moduli space of teardrops, as expected.
\end{proof}
\Cref{fig:homcomputation} contains a summary of generators and differentials of $\CF(L^{n, 0}_A)$ which shows that the Floer cohomology of the Whitney sphere vanishes.
We now place this computation in context with the table at the end of \cref{fig:morseprofile}.
 In that table, a matching is visible on the chain level between critical points of the surgery handles and the self-intersections of the handles. 
In the Whitney sphere, the holomorphic teardrops pair the Floer cochains from self-intersections with the Floer cochains from the auxiliary Morse function in the sense that the Floer differential is exact.  
The goal of \cref{subsec:teardrops} is to show that there are holomorphic teardrops on the Lagrangian surgery trace which similarly pairs critical point of $\rho\circ \pi_\RR: K^{k, n-k+1}_{A, B}$ with a self-intersection in the double bottleneck.

These holomorphic teardrops can be used in examples of immersed Lagrangian cobordism with transverse self-intersection to construct continuation maps; a toy computation is given in \cref{subsec:initialcomputation}.

\subsubsection{Deformations and Lagrangian intersection Floer Cohomology}

Lagrangian surgery can be interpreted as a geometric deformation of a Lagrangian submanifold.
To understand the conjectured equivalent deformation on Floer cohomology, we need to discuss deformations of filtered $A_\infty$ algebras.
\begin{definition}
    Let $A$ be a filtered $A_\infty$ algebra.
    Let $\mathfrak d\in A$ be an element with $\val(\mathfrak d)>0$. 
    The $\mathfrak d$-deformed algebra $(A, \mathfrak d)$ is the filtered $A_\infty$ algebra whose chains match $A$, and whose product is given by
    \[m^k_{\mathfrak d}(a_1\tensor \cdots \tensor a_k) = \sum_{i=0}^\infty \sum_{i_0+\ldots + i_k= i} m^{k+i}(\mathfrak d^{\tensor i_0}\tensor a_1 \tensor \mathfrak d^{\tensor i_1}\tensor \cdots \tensor \mathfrak d^{\tensor i_{k-1}}\tensor a_k\tensor \mathfrak d^{\tensor i_{k}} ).\]
    We say that $\mathfrak b$ is a \emph{bounding cochain} or \emph{is a solution to the Maurer-Cartan equation} if $m^0_{\mathfrak b}=0$.
    The space of Maurer-Cartan solutions is denoted by $\mathcal MC(A)$.
    If there exists a bounding cochain for $A$, we say that $A$ is \emph{unobstructed}.
    Given $N$ an $A-B$ filtered $A_\infty$-bimodule, and deforming chains $\mathfrak d_A\in A, \mathfrak d_B\in B$, we obtain $(N, \mathfrak d_A, \mathfrak d_B)$ which is an $(A, \mathfrak d_A)-(B, \mathfrak d_B)$ bimodule. The product structure is given by
    \begin{align*}
        m^{k_1|1|k_2}_{(A, \mathfrak d_A)|N|(B, \mathfrak d_B)}&(a_1\tensor\cdots \tensor a_{k_1}\tensor n \tensor b_1\tensor \cdots b_{k_2})\\
         =& \sum_{i, j\in \NN} \sum_{\substack{i_0+\ldots + i_{k_1}= i\\j_0+\ldots + j_{k_2}= j}} m^{k_1+i|1|k_2+j}_{A|N|B}\left( 
            \begin{array}{l}
                \mathfrak d_A^{\tensor i_0}\tensor a_1 \tensor \mathfrak d^{\tensor i_1}\tensor \cdots\\
                \cdots  \tensor  a_k\tensor \mathfrak d_A^{\tensor i_{k}} \tensor n \tensor  \mathfrak d_B^{\tensor j_0}\tensor b_1 \tensor \mathfrak d^{\tensor j_1}\tensor \cdots \\
                \cdots \tensor  b_k\tensor \mathfrak d_B^{\tensor j_{k}}
            \end{array}\right).
    \end{align*}
    \label{def:boundingcochain}
\end{definition}
Observe that when $\mathfrak b$ is a bounding cochain for $A$ then the differential on $(A, \mathfrak b)$ squares to zero.
If $\mathfrak b_A, \mathfrak b_B$ are bounding cochains for $A, B$, and $N$ is an $A-B$ filtered $A_\infty$ bimodule, then $(N,\mathfrak b_A, \mathfrak b_B)$ is an $(A, \mathfrak b_A)-(B, \mathfrak b_B)$ \emph{uncurved} $A_\infty$ bimodule. In particular, the differential on $(N, \mathfrak b_A, \mathfrak b_B)$ squares to zero. 

When discussing the Lagrangian Floer cohomology, we will write $\CF(L, \mathfrak d)$ for the Lagrangian Floer cohomology deformed by the element $\mathfrak d$.
If $\CF(L)$ is unobstructed, then we say that the Lagrangian $L$ is unobstructed. 
At lowest order, the Maurer-Cartan equation for a bounding cochain for $\CF(L)$ states that 
\[m^1_{morse}(\mathfrak b)=m^0+\text{higher order terms},\]
where $m^1_{morse}$ is the Morse differential on $L$. One interpretation of this is: to first approximation, an unobstructed Lagrangian submanifold is a Lagrangian submanifold where the boundary classes of holomorphic disks with boundary on $L$ cancel out in homology.

Given two Lagrangian submanifolds $L_0, L_1$ which intersect transversely we will write 
\[\CF(L_0, L_1):=\bigoplus_{\substack{x\in L_0\cap L_1}} \Lambda_x\]
for the Lagrangian intersection Floer cochains of $L_0$ and $L_1$. It is expected that is an 
$\CF(L_0)- \CF(L_1)$ filtered $A_\infty$ bimodule, whose bimodule structure comes from counting treed-pseudoholomorphic polygons. 
Given $\mathfrak d_i\in \CF(L_i)$ deforming cochains, we write  $\CF((L_0, \mathfrak d_0), (L_1, \mathfrak d_1))$ for the $\CF(L_0, \mathfrak d_0)- \CF(L_1, \mathfrak d_1)$ filtered $A_\infty$ bimodule.
Again, as we are primarily interested in making some motivating computations, we will not use the algebraic structures of this complex and simply use the language of $A_\infty$ algebras to make some remarks about the areas of various polygons which naturally occur in Lagrangian submanifolds associated to surgeries. 
\subsubsection[Running Example: Multisection of Cotangent of Circle]{Running example: Multisection of $T^*S^1$.}
\label{subsubsec:runningexample}
We return to the Lagrangian submanifold $L_E\subset T^*S^1$ first defined in  \cref{exam:obstructedLagrangian} , parameterized by 
\begin{align*}
    S^1\to& T^*S^1\\
    \theta\mapsto& \left(2\theta, \frac{E}{8}\sin(\theta)\right)
\end{align*}
This is a Lagrangian submanifold with 1-self-intersection where $\theta=0$ and $\theta=\pi$. With respect to the standard holomorphic form on $T^*S^1=\CC^*$, the generator $(\pi \to 0)$ has degree 0 and $(0\to \pi)$ has degree $1$. There are two holomorphic strips of area $E$ from the $(\pi \to 0)$ generator to the $(0\to \pi)$ generator. 
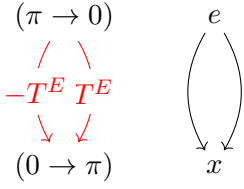
\begin{SCfigure}[50]
    \begin{tikzpicture}
    \node (v1) at (-2.5,1.5) {$(\pi \to 0)$};
    \node (v3) at (-0.5,1.5) {$e$};
    \node (v2) at (-2.5,-0.5) {$(0\to \pi)$};
    \node (v4) at (-0.5,-0.5) {$x$};
    \draw[red, ->]  (v1) edge[bend left] node[fill=white]{$T^E$}(v2);
    \draw[red, ->]  (v1) edge[bend right] node[fill=white]{$- T^E$} (v2);
    \draw  (v3) edge[->, bend left] (v4);
    \draw  (v3) edge[->, bend right] (v4);
\end{tikzpicture}
\caption{The chain complex $\CF(L_E)$. The black lines represent the Morse Flow lines, while the red edges represent contributions to the differential coming from holomorphic strips. }
\end{SCfigure}
In this case there are no holomorphic disks or teardrops with boundary on $L_E$ so we may compute $HF^\bullet(L_E)$,
which is isomorphic as a vector space to  $H^\bullet(S^1)\oplus H^\bullet(S^1)$.
We note that we do \emph{not} expect $\CF(L_E)$ to be homotopic as a differential graded algebra or filtered $A_\infty$ algebra to  $C^\bullet(S^1\cup S^1)$.

A computation which will be useful later is the Lagrangian intersection Floer cohomology between $L_E$ and a section $S^1_{E'}$ of the cotangent bundle. Here, $S^1_{E'}$ is the Lagrangian section parameterized by 
\[\theta\mapsto \left(\theta, \frac{E'}{2\pi}\right),\]
as drawn in \cref{fig:figureEight}.
Let $\mathfrak b := \sum_{i} a_iT^{D_i}(0\to \pi)\in \CF(L_E)$ be a deforming cochain for $L_E$, whose lowest order term is $a_0T^{D_0}$. 
We compute $\CF((L_E, \mathfrak b), S^1_{E'})$. 
The intersection between these two Lagrangian submanifolds consists of two points,  
\[L_E\cap S^1_{E'}=\{p, q\}.\]
There are two holomorphic strips with boundary on $L_E$ and $S^1_{E'}$ which have area $A$ and $B$. Additionally, there exists a holomorphic triangle with ends limiting to $q, p$ and $(0\to \pi)$ of area $C$. 
The constants $A, B, C, E, E'$ satisfy the relation: 
\begin{align*}
    B+E'=\frac{E}{2}+C
\end{align*}
The differential on $\CF((L_E, \mathfrak b), S^1_{E'})$ is given by 
\[m^1(p)=T^A-T^B+a_0T^{D_0}\cdot T^C+ \mathcal O(\min(A, D_0+C)).\]
For this differential to vanish we obtain the constraint at lowest order that:  
\[D_0=\frac{E}{2}-E'.\]
Furthermore, if this condition is met, there exists some extension of $T^{D_0}(0\to \pi)$ to a bounding cochain $\mathfrak b$ so that $\HF((L_E, \mathfrak b), S^1_{E'})= H^\bullet(S^1)$.
\begin{figure}
    \centering
    \begin{tikzpicture}
\begin{scope}[shift={(-10,0)}]

\fill[gray!20]  (-3,1.5) rectangle (0,-2.5);
\draw[dotted] (-3,-0.5) -- (0,-0.5);
\draw (-3,-1.3) -- (0,-1.3);
\draw (-3,-0.5);
\draw (-3,1) .. controls (-1.5,1) and (-1.5,-2) .. (0,-2);
\draw (0,1) .. controls (-1.5,1) and (-1.5,-2) .. (-3,-2);
\draw [decoration={brace},decorate] (-3,-1.3) -- (-3,-0.5) node [black,midway,xshift=-0.6cm] {\footnotesize $\frac{E'}{2\pi}$};
\node[fill=black, circle, scale=.3] at (-1.5,-0.5) {};
\node[right] at (-1.4,-0.4) {$(0\to \pi)$};
\node[fill=black, circle, scale=.3] at (-1.05,-1.3) {};
\node[below left] at (-1.05,-1.3) {$p$};

\node[fill=black, circle, scale=.3] at (-1.95,-1.3) {};
\node[below right] at (-1.95,-1.3) {$q$};

\begin{scope}[]

\clip  (-3,1.5) rectangle (0,-1.3);
\fill[pattern=north west lines, pattern color=red] (-3,-2) .. controls (-1.5,-2) and (-1.5,1) .. (0,1) .. controls (0,-0.5) and (0,-0.5) .. (0,-2);
\fill[pattern=north east lines, pattern color=red] (-3,1) .. controls (-1.5,1) and (-1.5,-2) .. (0,-2) .. controls (-1,-2) and (-1.5,-2) .. (-3,-2);

\end{scope}

\node[red] at (0.2,0) {$A$};

\end{scope}

\begin{scope}[shift={(-1,0)}]

\fill[gray!20]  (-3,1.5) rectangle (0,-2.5);
\begin{scope}[]

\clip  (-3,1.5) rectangle (0,-1.3);
\clip (-3,-2) .. controls (-1.5,-2) and (-1.5,1) .. (0,1) .. controls (0,-0.5) and (0,-0.5) .. (0,-2);
\fill[orange] (-3,1) .. controls (-1.5,1) and (-1.5,-2) .. (0,-2) .. controls (-1,-2) and (-1.5,-2) .. (-3,-2);

\end{scope}
\draw[dotted] (-3,-0.5) -- (0,-0.5);
\draw (-3,-1.3) -- (0,-1.3);
\draw (-3,-0.5);
\draw (-3,1) .. controls (-1.5,1) and (-1.5,-2) .. (0,-2);
\draw (0,1) .. controls (-1.5,1) and (-1.5,-2) .. (-3,-2);\node[fill=black, circle, scale=.3] at (-1.5,-0.5) {};
\node[right] at (-1.4,-0.4) {$(0\to \pi)$};
\node[fill=black, circle, scale=.3] at (-1.05,-1.3) {};
\node[below left] at (-1.05,-1.3) {$p$};

\node[fill=black, circle, scale=.3] at (-1.95,-1.3) {};
\node[below right] at (-1.95,-1.3) {$q$};

\node[orange] at (-0.95,-0.85) {$C$};
\end{scope}

\begin{scope}[shift={(-5.5,0)}]

\fill[gray!20]  (-3,1.5) rectangle (0,-2.5);
\draw[dotted] (-3,-0.5) -- (0,-0.5);
\draw (-3,-1.3) -- (0,-1.3);
\draw (-3,-0.5);
\draw (-3,1) .. controls (-1.5,1) and (-1.5,-2) .. (0,-2);
\draw (0,1) .. controls (-1.5,1) and (-1.5,-2) .. (-3,-2);\node[fill=black, circle, scale=.3] at (-1.5,-0.5) {};
\node[right] at (-1.4,-0.4) {$(0\to \pi)$};
\node[fill=black, circle, scale=.3] at (-1.05,-1.3) {};
\node[below left] at (-1.05,-1.3) {$p$};

\node[fill=black, circle, scale=.3] at (-1.95,-1.3) {};
\node[below right] at (-1.95,-1.3) {$q$};

\begin{scope}[]

\clip  (-3,-1.3) rectangle (0,-2);
\fill[pattern=north west lines, pattern color=blue] (-3,-2) .. controls (-1.5,-2) and (-1.5,1) .. (0,1) .. controls (-1.5,1) and (-1.5,1) .. (-3,1);
\fill[pattern=north west lines, pattern color=blue] (0,-2) .. controls (-1.5,-2) and (-1.5,1) .. (-3,1) .. controls (-1.5,1) and (-1.5,1) .. (0,1);

\end{scope}

\node[blue] at (0.2,-1.7) {$B$};

\end{scope}
\end{tikzpicture}     \caption{Areas of various holomorphic strips and triangles appearing in the computation of$\CF((L_E, \mathfrak b), S^1_{E'})$ }
    \label{fig:figureEight}
\end{figure}
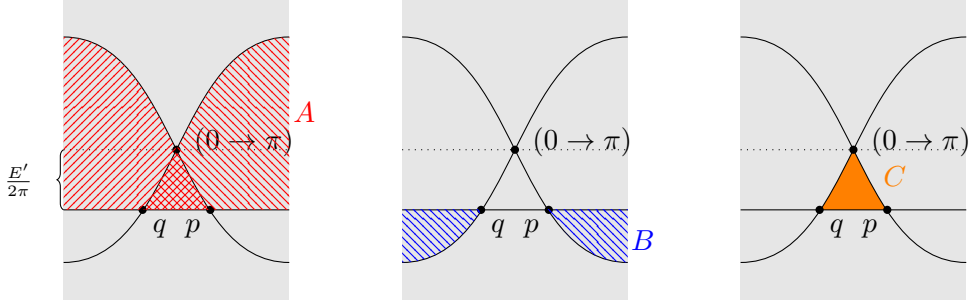
We will recover this bounding cochain from the Lagrangian surgery trace cobordism in \cref{subsec:samplecomputation}. 

 \subsection{Existence of Holomorphic Teardrops}
\label{subsec:teardrops}

\subsubsection{Teardrops on Surgery Traces}

We now examine the Floer cohomology of the bottlenecked surgery handle $K^{k, n-k+1}_{A, B}$. The Floer cochains are given in \cref{fig:morseprofile}. The Floer products, in addition to counting treed pseudoholomorphic disks, counts treed pseudoholomorphic polygons (\cite{alston2019immersed,palmer2019invariance}).
The strip-like ends of these polygons limit to the self-intersections of our Lagrangian submanifold.
When a treed pseudoholomorphic polygon has only 1 strip like end (either input or output), we call it a treed pseudoholomorphic teardrop.
\begin{theorem}
    Let $K^{k, n-k+1}_{A, B}$ be the local model of the double bottleneck Lagrangian surgery trace. Take the standard Morse function $\pi_{\RR}:  K^{k, n-k+1}_{A, B}\to \RR$, and standard choice of almost complex structure on $\CC^{n+1}$.
     Let $\beta_A$ be the class of teardrop with boundary on $K^{k, n-k+1}_{A, B}$ which has boundary on the Whitney isotropic contained in $K^{k, n-k+1}_{A, B}|_0$. The moduli space  $\mathcal M_J(K^{k, n-k+1}_{A, B}, \beta_A,((q_+,0)\to (q_-,0))),x_+)$ is comprised of a single regular treed holomorphic polygon, which is a teardrop. 
     \label{thm:teardropexistence}
\end{theorem}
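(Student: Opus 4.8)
The plan is to reduce the statement to the computation already carried out for the Whitney sphere in \cref{claim:whitney}, by exploiting the fact that the double bottleneck surgery trace $K^{k,n-k+1}_{A,B}$ is built out of a piece of the higher-dimensional Whitney-type Lagrangian $L^{k+1,n-k,+}\subset \CC^{n+1}$ (see \cref{def:doubleBottleneckSurgeryTrace}), where the last complex coordinate plays the role of the cobordism parameter. First I would make precise the geometry: in the local model, the relevant teardrop class $\beta_A$ is represented by a curve whose boundary lies on the isotropic Whitney sphere $L^{k,0,+}\subset \CC^k$ sitting inside the slice $K^{k,n-k+1}_{A,B}|_0$, exactly as in \cref{claim:whitney} with $n$ replaced by $k$. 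I would write down the explicit teardrop, $u_p(z)=(x_1z,\dots,x_kz,0,\dots,0)$ for $p=(x_1,\dots,x_k)\in S^{k-1}$, check that its boundary traces out the equatorial sphere of $L^{k,0,+}$, and verify that the strip-like end limits to the self-intersection $((q_+,0)\to(q_-,0))$ and that the remaining boundary flows (under $\nabla \pi_\RR$) into the critical point $x_+$ of the surgery handle. The area bookkeeping — that this class has symplectic area exactly $A$ — is already built into the normalization in \cref{def:doubleBottleneckSurgeryTrace}.

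The substantive steps are then (i) a classification statement: every $J$-holomorphic teardrop in class $\beta_A$ with boundary on $K^{k,n-k+1}_{A,B}$ and output at $((q_+,0)\to(q_-,0))$ is of the form $u_p$, and (ii) regularity of these teardrops. For (i) I would argue that the bottleneck structure (open mapping principle, as in \cref{subsubsec:bottleneckedLagrangian}) confines the teardrop to the region of $K^{k,n-k+1}_{A,B}$ near the slice at cobordism parameter $0$, and that on the isotropic Whitney locus $L^{k,0,+}\subset\CC^k$ the classification from the version-2 preprint of \cite{alston2019immersed} cited in \cref{claim:whitney} applies verbatim; the extra $\CC^{n-k+1}$ directions must be constant since the boundary lies in a totally real subspace that is the zero section in those coordinates and the energy in class $\beta_A$ is concentrated in the first $k$ coordinates. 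For (ii), regularity, I would mimic the proof of \cref{claim:whitney} exactly: the linearized $\bar\partial$-operator splits across the coordinate line bundles $\mathcal L_1\oplus\cdots\oplus\mathcal L_{n+1}$, the Maslov contributions of the boundary loops are computed as in \cref{claim:indexOfHandle} ($0$ for the $k-1$ ``waist'' directions, $2$ for the direction along which $u_p$ is non-constant, and $0$ or $\pm$ contributions from the remaining embedded directions that are pinned by the bottleneck), and automatic regularity in complex dimension one gives surjectivity in each summand. Finally, I would observe that after attaching the Morse flow line to $x_+$, the treed moduli space $\mathcal M_J(K^{k,n-k+1}_{A,B},\beta_A,((q_+,0)\to(q_-,0)),x_+)$ is rigid — the $(k-1)$-sphere of teardrops from \cref{claim:whitney} is cut down to a point by the constraint that the boundary flows into the single critical point $x_+$, because the evaluation map of \cref{claim:whitney} is a homeomorphism onto the Whitney sphere minus its two self-intersection points and $x_+$ is a single generic point of that sphere.

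The main obstacle I expect is step (i), the classification of teardrops: unlike the clean situation of $\CC^n$ with the globally standard Whitney sphere, here the Lagrangian $K^{k,n-k+1}_{A,B}$ is only a truncated, double-bottlenecked piece of $L^{k+1,n-k,+}$, so one must rule out teardrops that wander into the ``trace'' part of the cobordism or escape through the truncation boundary. The bottleneck is precisely what should handle this (a holomorphic curve cannot cross the pinch point by the open mapping principle), but turning that heuristic into a rigorous confinement argument — controlling the curve near the cylindrical ends and near the truncation, and showing the energy is forced into the isotropic Whitney locus where \cite{alston2019immersed} applies — is the delicate part. A secondary subtlety is checking that the treed-disk compactification introduces no boundary strata (no disk or teardrop bubbling, no flow-line breaking) that could contribute to the same moduli space; this should follow from the index count $\ind(D_{u_p})=k+1$ together with the grading of $x_+$ (degree $n-k$) and of $((q_+,0)\to(q_-,0))$ (degree $k+1$, from \cref{claim:indexOfHandle}), which leaves exactly a zero-dimensional moduli space with no room for bubbling, but I would spell out the dimension/energy reasons that each potential bubble configuration is empty.
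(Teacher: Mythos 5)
Your proposal is correct and follows essentially the same route as the paper: the paper's (much terser) proof simply identifies teardrops with input on the self-intersection with teardrops on the isotropic Whitney $k$-sphere sitting inside $K^{k,n-k+1}_{A,B}$ and invokes the argument of \cref{claim:whitney} for regularity, the evaluation map then sweeping out the homology class of the handle's critical point so that the treed count is a single rigid configuration. Your extra care about bottleneck confinement, the splitting of the linearized operator in the ambient $\CC^{n+1}$, and the exclusion of bubbled configurations is a fleshing-out of details the paper leaves implicit, not a different method.
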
 
\begin{proof}
    The space of holomorphic teardrops with input on $((q_+,0)\to (q_-,0)))$ can be described by the space of holomorphic teardrops with boundary on the isotropic $k$-sphere contained in $K^{k, n-k+1}_{A, B}$.
    By arguments similar to \cref{claim:whitney}, the moduli space of such teardrops is regular and has an evaluation map which sweeps out the homology class of $y$.
\end{proof}
This leads to the appearance of a term in the differential of the Floer complex of $K^{k, n-k+1}_{A, B}$; see \cref{fig:bottleneckfloercomplex}.
\begin{figure}
    \centering
    \usetikzlibrary{matrix, arrows,  decorations.markings,decorations.pathreplacing,  patterns,  plotmarks}
\begin{tikzpicture}

\begin{scope}[shift={(6,0)}]

\fill[gray!20]  (-4.5,4) rectangle (3,2);
\draw[thick] (1,3.5) .. controls (0.5,3.5) and (0,3.5) .. (-0.5,3) .. controls (-1.5,2) and (-3.5,2.5) .. (-3.5,3) .. controls (-3.5,3.5) and (-1.5,4) .. (-0.5,3) .. controls (0,2.5) and (0.5,2.5) .. (1,2.5);

\end{scope}

\draw[pattern=north west lines, pattern color=orange] (5.5,3) .. controls (4.5,2) and (2.5,2.5) .. (2.5,3) .. controls (2.5,3.5) and (4.5,4) .. (5.5,3);
\node[above] at (5.5,3.4) {$(q_-\to q_+)$};
\node[left] at (2.5,3) {$y$};
\node (v1) at (2.5,-2) {$y$};
\node[blue] (v2) at (5.5,0) {$((q_+,0)\to (q_-,0))$};
\node at (2,3.5) {$\mathbb C$};

\draw[thick] (7,2.5) .. controls (7.5,2.5) and (8,2.5) .. (8.5,3) .. controls (9,3.5) and (9,3.5) .. (9,3.5);
\draw[thick] (7,3.5) .. controls (7.5,3.5) and (8,3.5) .. (8.5,3) .. controls (9,2.5) and (9,2.5) .. (9,2.5);
\draw[pattern=north west lines, pattern color=blue] (5.5,3) .. controls (6,2.5) and (6.5,2.5) .. (7,2.5) .. controls (7.5,2.5) and (8,2.5) .. (8.5,3) .. controls (8,3.5) and (7.5,3.5) .. (7,3.5) .. controls (6.5,3.5) and (6,3.5) .. (5.5,3);
\node at (1.5,1.5) {Index};

\node at (1.5,0) {$k+2$};
\node at (1.5,1) {$k+1$};
\node at (1.5,-1) {$n-k-1$};

\node[red] (v4) at (5.5,-1) {$ ((q_-,0)\to (q_+,0))$};
\node[red] (v3) at (8.5,1) {$((q_+,1)\to (q_-,1))$};
\draw  (v3) edge[->] node[fill=white] {$T^B$} (v2);
\node  at (1.5,-2) {$n-k$};
\draw  (v4) edge[->] node[fill=white]{$T^A$} (v1);
\node[rounded corners, fill=orange!20] (v5) at (5.5,1.5) {$m^0$ if $k=0$};
\draw[->]  (v5) edge node[fill=white]{$T^A$} (v2);
\end{tikzpicture}     \caption{Floer cohomology of the bottlenecked surgery trace $K^{k, n-k+1}_{A, B}$.
    The holomorphic teardrop is marked in orange and has area $A$, the holomorphic strip from the bottleneck is marked in blue and has area $B$.}
    \label{fig:bottleneckfloercomplex}
\end{figure}
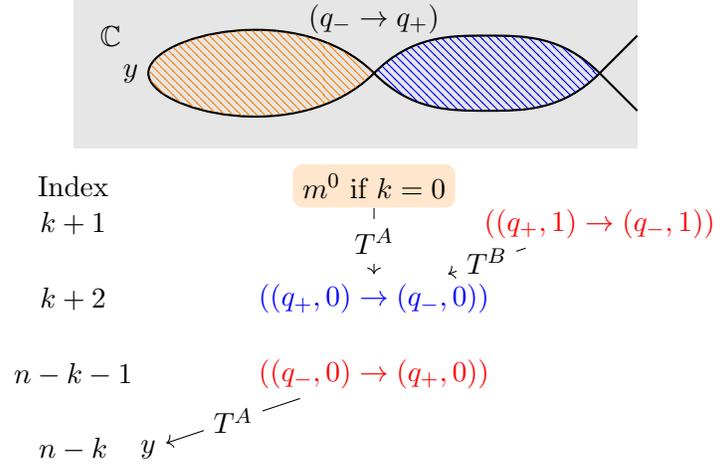
The most interesting example of this occurs when we take the trace of the standard Polterovich surgery, when $k=0$. 
In this setting, the Whitney isotropic is a 1-dimensional sphere, and so the space of holomorphic teardrops is zero-dimensional. This gives us an additional contribution which we can count. 
\begin{corollary}
    For the standard choice of $J$,  $\mathcal M_J(K^{0, n+1}_{A, B}, \beta_A,((q_-,0)\to (q_+,0)))$ consists of a single holomorphic teardrop.
    \label{claim:teardropm0}
\end{corollary}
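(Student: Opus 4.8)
\textbf{Proof proposal for \Cref{claim:teardropm0}.}

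The plan is to specialize the argument of \Cref{thm:teardropexistence} to the case $k=0$ and observe that the relevant moduli space of teardrops is now zero-dimensional, hence (after checking regularity and compactness) a finite set whose cardinality we can compute. First I would recall the description of the Whitney isotropic contained in $K^{0,n+1}_{A,B}|_0$: when $k=0$ this isotropic is the $1$-dimensional Whitney sphere $L^{0,0,+}$, which lives in $\CC^0\times \CC_1\subset \CC^{n+1}$ and is the figure-eight curve in a single complex coordinate. The teardrops with input on $((q_-,0)\to(q_+,0))$ in class $\beta_A$ are, exactly as in the proof of \Cref{thm:teardropexistence}, in bijection with the teardrops with boundary on this isotropic sphere, via the same fibered/product structure of the standard surgery handle over its first $k$ coordinates; since $k=0$ there are no extra directions and the identification is with teardrops on the figure eight itself.

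Next I would invoke the explicit description of the teardrop moduli space for Whitney spheres established in \Cref{claim:whitney}: for the standard almost complex structure on $\CC^n$, the moduli space $\mathcal M_J(L^{n,0}_A,\beta,(q_-\to q_+))$ is regular and homeomorphic to $S^{n-1}$, with the evaluation map a homeomorphism onto $L^{n,0}_A\setminus\{q_\pm\}$. Applying this with $n=1$ (the dimension of the Whitney isotropic when $k=0$), the moduli space is $S^{0}$ — a single point, once we account for the fact that the evaluation map identifies the moduli space after quotienting by the automorphisms of the domain disk. Concretely, the teardrop is the map $z\mapsto (x_1 z)$ evaluated at the point $p$ on the equatorial $0$-sphere lying on the correct branch; the holomorphicity and regularity are exactly the $n=1$ instances of the computation in \Cref{claim:whitney}, where automatic regularity in complex dimension one was used. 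So the count is one holomorphic teardrop.

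The remaining point, and the only place where anything genuinely needs checking beyond citing \Cref{thm:teardropexistence} and \Cref{claim:whitney}, is \emph{compactness}: one must rule out that a sequence of such teardrops degenerates by bubbling off a disk or by breaking along a Morse flow line or a second strip at another self-intersection. Here the bottleneck structure does the work: the double bottleneck on $K^{0,n+1}_{A,B}$ pinches the $\pi_{\CC_1}$-projection so that, by the open mapping principle, a holomorphic curve cannot cross the bottleneck, and the area $A$ of the teardrop class $\beta_A$ is strictly smaller than the area $B$ of the bottleneck strip as well as the areas of any configuration involving a second self-intersection — so no nontrivial breaking or bubbling can occur within the energy budget $A$. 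This confines the moduli space to the local model of the figure eight, where \Cref{claim:whitney} applies verbatim. I expect the main (though still routine) obstacle to be precisely this energy/bottleneck bookkeeping: verifying that every possible broken or bubbled configuration carries strictly more energy than $A$, so that the zero-dimensional moduli space is already compact and the single teardrop produced above is the entire count.
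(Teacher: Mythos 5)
Your overall route — specialize \cref{thm:teardropexistence} and \cref{claim:whitney} to $k=0$, where the teardrop moduli space becomes zero-dimensional — is exactly how the paper obtains the corollary, so the strategy is fine. The gap is in the step that produces the count of \emph{one}. Applying \cref{claim:whitney} with $n=1$ gives a moduli space $S^0$, which is \emph{two} points, and these are two genuinely distinct teardrops: the two lobes of the figure eight, with different images. They are not related by reparameterization, so ``quotienting by the automorphisms of the domain disk'' cannot cut the count in half — the moduli space of \cref{claim:whitney} already accounts for automorphisms, and its evaluation statement for $n=1$ would cover the figure eight minus its node, which has two components, one per teardrop. The actual reason the corollary counts a single teardrop is that the two lobes lie in different relative classes and, more to the point, only one lobe is present in $K^{0,n+1}_{A,B}$ at all: the double-bottlenecked trace is built from the truncation of $L^{1,n,+}$ at $\pi_{\RR_1}<1/2$ (then doubling the bottleneck), which cuts the second lobe open, so the class $\beta_A$ is represented by exactly the teardrop bounded by the surviving lobe. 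Your parenthetical ``lying on the correct branch'' gestures at this selection, but the justification you actually give for discarding the other point of $S^0$ is not valid, and this is precisely the content the corollary adds beyond the theorem.

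Separately, your compactness bookkeeping assumes $A<B$ (``the area $A$ of the teardrop class $\beta_A$ is strictly smaller than the area $B$ of the bottleneck strip''). That inequality is not part of the construction: $A$ and $B$ are independent parameters of $K^{0,n+1}_{A,B}$, and the paper explicitly analyzes both regimes (indeed the unobstructed case is $B<A$), so an energy argument resting on $A<B$ would fail exactly in the case of interest. You do not need it: the double bottleneck together with the open mapping principle confines any curve asymptotic to $((q_-,0)\to(q_+,0))$ in class $\beta_A$ to the local model, where for the standard $J$ all teardrops are classified (they are the maps $u_p$ of \cref{claim:whitney}) and regular by the same line-bundle splitting argument, so the zero-dimensional moduli space is already this finite set. (Minor points: the Whitney isotropic for $k=0$ is $L^{1,0,+}$, not $L^{0,0,+}$, and \cref{claim:whitney} is stated under $n\geq 2$ for grading reasons, so citing it for $n=1$ deserves a one-line remark that the regularity computation still applies.)
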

The presence of this contribution will turn on a non-trivial curvature term $T^A((q_+,0)\to (q_-,0())_-$ in the $A_\infty$ algebra. For $\CF(K^{0, n+1}_{A, B})$ to be unobstructed, there must be a Floer cochain whose coboundary is $T^A((q_+,0)\to (q_-,0))$. A candidate cochain would be $T^{B-A}((q_+,1)\to (q_-,1))$. 
As deforming cochains are required to have positive valuation, we see that $K^{0, n+1}_{A, B}$ is unobstructed if and only if $B<A$.
 
\section{Sample Computation}
\label{sec:applications}
The previous section suggests the following algorithm for computing a continuation map associated to an unobstructed Lagrangian cobordism. First, decompose the Lagrangian cobordism $K$ using \cref{thm:cobordismsaresurgery} into pieces $K_i$ which are either a suspension of an exact homotopy or surgery trace. 
If we can show that unobstructed of $K$ implies unobstructedness of each of the $K_i$ (\cref{con:hofunobstructed}), then \cref{thm:teardropexistence} suggests that each $K_i$ provides a continuation map between its ends. The continuation map associated to $K$ would then be given by composing all of these smaller continuation maps together.

More precisely, we conjecture that there exists a version of Floer cohomology for a Lagrangian cobordism with double bottlenecks, $\CF_{bot}(K)$, which has filtered $A_\infty$ projections
    \[\begin{tikzcd}
        & \CF_{bot}(K) \arrow{dl}{\pi^+}\arrow{dr}{\pi^-}\\
        \CF(L^+)&& \CF(L^-).
    \end{tikzcd}\]
As a vector space, $\CF_{bot}(K)$ is generated on $\mathcal I^{bot}(K)$. We observe from \cref{fig:morseprofile} that 
\[\mathcal I^{bot}(K)= \mathcal I(L^-)\cup \mathcal I(L^+)\cup (\mathcal I(L^+)[1]\setminus \{(q_-\to q_+)\})\cup\{y\}.\]
Since $\ind(y)=\ind(q_-\to q_+)+1$, we identify $\CF_{bot}(K)$ as a vector space with $\CF(L^-)\oplus \CF(L^+)[1]\oplus \CF(L^+)$. Under this identification the projections onto the first and last component agree with the maps $\pi^\pm$ above, and $\CF(L^+)[1]$ is a filtered $A_\infty$ ideal of $\CF_{bot}(K)$.
Then \cref{thm:teardropexistence} suggests that the maps induced by the Floer differential on $\CF(K)$, 
\begin{align*} \CF(L^+)\to \CF(L^+)[1] && \CF(L^-)\to \CF(L^+)[1]\end{align*}
are invertible (although the inverse will have negative Novikov valuation). A similar approach was used in \cite[Claim 4.4.7]{hicks2019wall}.

From this map we can construct weakly filtered\footnote{A weakly filtered map is allowed to decrease the filtration, see \cite[Definition A.1.3]{hicks2019wall}} homotopy equivalences from $\CF(K)$ to either $\CF(L^+)$ or $\CF(L^-)$. 
A similar story holds for Lagrangian cobordisms which are the suspension of an exact homotopy.

Problematically, weakly filtered maps \emph{do not} preserve the property of unobstructedness. If a weakly filtered map decreases the filtration by at most $\lambda$, it only defines a pushforward map on bounding cochains whose valuations are at least $\lambda$. In particular, if $\CF(L^+), \CF(L^-)$ are weakly filtered homotopy equivalent, and $\CF(L^+)$ is unobstructed, then $\CF(L^-)$ is unobstructed provided that the bounding cochain for $L^+$ has sufficiently large valuation.
 Therefore, each Lagrangian cobordism coming from a Lagrangian surgery trace $K: L^+\rightsquigarrow L^-$ only identifies a subset of the Maurer-Cartan elements of $L^+$ with those of $L^-$.
To see which bounding cochains are identified: every bounding cochain $\mathfrak b$ on $K$ induces bounding cochains on $\pi_*^\pm \mathfrak b$ on $L^\pm$, which sets up a correspondence between subsets of the Maurer Cartan spaces of $L^+$ and $L^-$. The objects $(L^\pm, \mathfrak b^\pm)$ are expected to be isomorphic in the Fukaya category, with the homotopy equivalences from $\CF_{bot}(K, \mathfrak b)$ to $\CF(L^\pm, \mathfrak b^\pm)$ providing a ``continuation map''.
Note that not every bounding cochain on $L^\pm$ can be achieved in this manner. This observation was employed in \cite{hicks2019wall} to recover wall-crossing formula for Lagrangian tori via Lagrangian cobordisms related to the mutation operation. 

More generally, given a Lagrangian cobordism $K: L^+\rightsquigarrow L^-$, we expect that the decomposition from \cref{thm:cobordismsarebottleneckedsurgery} will provide us with an algorithm which:
\begin{itemize}
    \item determines if a bounding cochain exists for $K$. For simplicity, write $K=K_n\circ \cdots \circ K_1$ where each $K_i: L_{i}\rightsquigarrow L_{i-1}$ is either a Lagrangian surgery trace or suspension of an exact homotopy. Let $\mathcal MC(L)$ denote the set of bounding cochains on $L$. Then the discussion above produces for each $i$ a subset $B_i\subset \mathcal MC(L_i)\times \mathcal MC(L_{i-1})$ of bounding cochains restricted from $K_i$. The fiber product \[B_1\times_{\mathcal MC(L_1)}B_2\times_{\mathcal MC(L_2)}\cdots \times_{\mathcal MC(L_{n-1})}B_n\] then describes the bounding cochains on $K$.
    \item computes the homotopy equivalence between $\CF(L^+)$ and $\CF(L^-)$ if they are unobstructed. This homotopy equivalence is the composition of the homotopy equivalences induced by each suspension or surgery trace cobordism.
\end{itemize}

The construction of such an algorithm is beyond the scope of this paper. We therefore conclude with two sample computations demonstrating how the proposed algorithm might work in practice. In \cref{subsec:initialcomputation}, we look at which choices of surgery parameters make the Lagrangian cobordism constructed in \cref{exam:obstructedLagrangian} unobstructed. In \cref{subsec:samplecomputation}, we look at a single Lagrangian surgery trace $K^{0,2}_{A,B}$. When $A>B$, we show that this Lagrangian cobordism is unobstructed and gives a continuation map between its ends (when equipped with appropriate bounding cochains). When $A<B$, we show that the Lagrangian cobordism is obstructed (and furthermore that the ends are not isomorphic objects in the Fukaya category for any choice of bounding cochain).
\subsection{A return to \cref{exam:obstructedLagrangian}}
\label{subsec:initialcomputation}
We now return to a variation of \cref{exam:obstructedLagrangian}. In particular, we discuss the conditions for when antisurgery followed by surgery give an unobstructed Lagrangian cobordism, and why when unobstructed these Lagrangian cobordisms are mapping cocylinders.

Consider the Lagrangian cobordism $K_{A_+, A_-}$ which is obtained concatenating the following Lagrangian submanifolds:
\begin{itemize} 
    \item A Lagrangian anti-surgery trace $(K_{A_+}^{1,1})^{-1} : S^1_{E_+}\sqcup S^1_{-E_+} \rightsquigarrow L_E$, where $L_E$ is an immersed Lagrangian submanifold in $T^*S^1$.
    \item A Lagrangian surgery trace $K_{A_-}^{1,1}: L_E\rightsquigarrow S^1_{E_-}\sqcup S^1_{-E_-}$.
\end{itemize}
The slices and shadow of the Lagrangian cobordism $K_{A_+, A_-}$ are drawn in  \cref{fig:simplecomputation} (a) and (b).
The relations between $E_+, E_-, A_+, A_-$ and $E$ are 
\begin{align*}
    E-2E_+=A_+ && E-2E_-=A_-.
\end{align*}
The immersed Floer cohomology of $K_{A_+, A_-}$ is a deformation of the Morse cohomology of $K_{A_+, A_-}$.
We take the standard Morse  function for the ends of the cobordism, which is generated on critical points:
\[\CM(S^1_{E_\pm}\sqcup S^1_{-E_\pm})= \Lambda\langle e_0^\pm, e_1^\pm, x_0^\pm, x_1^\pm\rangle.\]
$K_{A_+, A_-}$ topologically is two pairs of pants sharing a common neck; we take the our Morse function to be a perturbation of the Morse-Bott function which has maximums along the ends, and a minimally graded $S^1$ along the common neck. 
The generators and 0-dimensional flow lines contributing to $m^1: \CM(K_{A_+, A_-})\to \CM(K_{A_+, A_-})$ are drawn in \cref{fig:simplecomputation} (c).
Pairs of cancelling flow lines are indicated with dotted lines. 
\begin{figure}
    \centering
    \begin{tikzpicture}

\fill[brown!20, rounded corners]  (0,-10) rectangle (-2,-7.5);
\fill[brown!20, rounded corners]  (7.5,-10) rectangle (5.5,-7.5);
\fill[brown!20, rounded corners] (7.5,-14.5) rectangle (5.5,-12);
\fill[brown!20, rounded corners]  (0,-14.5) rectangle (-2,-12);
\begin{scope}[shift={(1,-1)}]
\fill[gray!20]  (-3,2.5) rectangle (-1.6,-2.5);
\draw[thick](-3,-0.5) -- (-1.6,-0.5) ;
\draw[thick] (-3,0.5) -- (-1.6,0.5);
\node at (-3,0) {$\uparrow$};
\node at (-1.6,0) {$\uparrow$};
\draw (-3,2.5) -- (-3,-2.5) (-1.6,-2.5) -- (-1.6,2.5);
\draw[dotted] (-3,2.5) -- (-1.6,2.5) (-3,-2.5) -- (-1.6,-2.5);
\end{scope}

\begin{scope}[shift={(6,-1)}]
\fill[gray!20]  (-3,2.5) rectangle (-1.6,-2.5);
\draw[thick, brown] (-1.6,1.3) .. controls (-1.8,1.3) and (-1.8,1.9) .. (-2.2,1.9) .. controls (-2.3,1.9) and (-2.3,-1.9) .. (-2.4,-1.9) .. controls (-2.8,-1.9) and (-2.8,-1.3) .. (-3,-1.3);

\draw (-3,2.5) -- (-3,-2.5) (-1.6,-2.5) -- (-1.6,2.5);
\draw[dotted] (-3,2.5) -- (-1.6,2.5) (-3,-2.5) -- (-1.6,-2.5);
\node at (-3,0) {$\uparrow$};
\node at (-1.6,0) {$\uparrow$};

\draw[thick, brown] (-1.62,-1.3) .. controls (-1.82,-1.3) and (-1.82,-1.9) .. (-2.2,-1.9) .. controls (-2.3,-1.9) and (-2.3,1.9) .. (-2.4,1.9) .. controls (-2.8,1.9) and (-2.82,1.3) .. (-3.02,1.3);

\end{scope}

\begin{scope}[shift={(3,-1)}]
\fill[gray!20]  (-3,2.5) rectangle (-1.6,-2.5);
\node at (-3,0) {$\uparrow$};
\node at (-1.6,0) {$\uparrow$};
\draw (-3,2.5) -- (-3,-2.5) (-1.6,-2.5) -- (-1.6,2.5);
\draw[dotted] (-3,2.5) -- (-1.6,2.5) (-3,-2.5) -- (-1.6,-2.5);
\draw[thick, brown] (-3,0.5) .. controls (-2.8,0.5) and (-2.8,1.3) .. (-2.4,1.3) .. controls (-2.3,1.3) and (-2.3,-1.3) .. (-2.2,-1.3) .. controls (-1.8,-1.3) and (-1.8,-0.5) .. (-1.6,-0.5);
\draw[thick, brown] (-3,-0.5) .. controls (-2.8,-0.5) and (-2.8,-1.3) .. (-2.4,-1.3) .. controls (-2.3,-1.3) and (-2.3,1.3) .. (-2.2,1.3) .. controls (-1.8,1.3) and (-1.8,0.5) .. (-1.6,0.5);
\end{scope}

\begin{scope}[shift={(8.5,-1)}]
\fill[gray!20]  (-3,2.5) rectangle (-1.6,-2.5);
\node at (-3,0) {$\uparrow$};
\draw[thick](-3,1.5) -- (-1.6,1.5);
\draw (-3,2.5) -- (-3,-2.5) (-1.6,-2.5) -- (-1.6,2.5);
\draw[dotted] (-3,2.5) -- (-1.6,2.5) (-3,-2.5) -- (-1.6,-2.5);
\draw[thick] (-3,-1.5) -- (-1.6,-1.5) ;
\node at (-1.6,0) {$\uparrow$};\end{scope}

\fill[gray!20]  (-2,-4) rectangle (7.25,-7);

\draw[dashed, <-] (-1.25,-3.65) -- (-1.25,-7);
\draw[dashed, <-] (0.75,-3.65) -- (0.75,-7);
\draw[dashed, <-] (3.75,-3.65) -- (3.75,-7);
\draw[dashed, <-]  (6.3,-3.75) -- (6.3,-7);

\draw[<-] (-1,1.7) .. controls (-0.75,2.2) and (0.25,2.2) .. (0.5,1.7);
\draw[<-] (1,1.7) .. controls (1.25,2.2) and (2.25,2.2) .. (2.5,1.7);
\draw[<-] (3,1.7) .. controls (3.5,2.2) and (5.5,2.2) .. (6,1.7);
\draw[<-]   ;
\node[above] at (-0.25,2.2) {\tiny Surgery};
\node[above] at (1.75,2.2) {\tiny Exact Homotopy};
\node[above] at (4.5,2.2) {\tiny Anti-Surgery};
\node at (5.25,-3.9) {$\mathbb C$};

\node at (7.5,-2.5) {$-E_+/\pi$};
\node at (-3,-1.5) {$-E_-/\pi$};
\node at (-3,-0.5) {$E_-/\pi$};
\node at (7.5,0.5) {$E_+/\pi$};

\begin{scope}[shift={(1.7,-2.8)}]

\node at (6,1.75) {$T^*S^1$};

\end{scope}

\node[teal, circle, scale=.2, fill] at (0.7,-1) {};
\node[teal, circle, scale=.2, fill] at (3.75,-1) {};

\fill[brown!20] (3.75,-5.5) .. controls (3.25,-5) and (1,-5) .. (0.5,-5) .. controls (0,-5) and (-0.5,-5.5) .. (-1.25,-5.5) .. controls (-0.75,-5.5) and (0,-6) .. (0.5,-6) .. controls (1,-6) and (3.25,-6) .. (3.75,-5.5);
\fill[brown!20] (3.75,-5.5) .. controls (4.25,-6) and (4.5,-6) .. (4.75,-6) .. controls (5,-6) and (5.25,-5.5) .. (5.75,-5.5) .. controls (5.25,-5.5) and (5,-5) .. (4.75,-5) .. controls (4.5,-5) and (4.25,-5) .. (3.75,-5.5);

\draw (-2,-5.5) .. controls (-1.5,-5.5) and (-1.5,-5.5) .. (-1.25,-5.5) .. controls (-0.75,-5.5) and (0,-6) .. (0.5,-6) .. controls (1,-6) and (3.25,-6) .. (3.75,-5.5) .. controls (3.25,-5) and (1,-5) .. (0.5,-5) .. controls (0,-5) and (-0.75,-5.5) .. (-1.25,-5.5);
\draw (5.75,-5.5) .. controls (5.25,-5.5) and (5,-6) .. (4.75,-6) .. controls (4.5,-6) and (4.25,-6) .. (3.75,-5.5) .. controls (4.25,-5) and (4.5,-5) .. (4.75,-5) .. controls (5,-5) and (5.25,-5.5) .. (5.75,-5.5);
\draw[pattern=north west lines, pattern color=red] (0.5,-5) .. controls (0,-5) and (0,-6) .. (0.5,-6) .. controls (1,-6) and (3.25,-6) .. (3.75,-5.5) .. controls (3.25,-5) and (1,-5) .. (0.5,-5);
\draw[pattern=north west lines, pattern color=teal] (4.75,-6) .. controls (4.5,-6) and (4.25,-6) .. (3.75,-5.5) .. controls (4.25,-5) and (4.5,-5) .. (4.75,-5) .. controls (5,-5) and (5,-6) .. (4.75,-6);
\draw (5.75,-5.5) -- (7.25,-5.5);
\node at (0.75,-5.5) {$A_-$};
\node at (4.5,-5.5) {$A_+$};

\begin{scope}[shift={(0,-0.5)}]

\node (v1) at (-1.5,-7.5) {$e_0^-$};
\node (v2) at (-1.5,-9) {$x_0^+$};
\node (v3) at (-0.5,-7.5) {$E_+^-$};
\node (v4) at (-0.5,-9) {$x_1^-$};
\node (v9) at (0.5,-9) {$y^-$};
\node (v5) at (6,-7.5) {$e_0^+$};
\node (v6) at (6,-9) {$x_0^+$};
\node (v7) at (7,-7.5) {$E_+^+$};
\node (v8) at (7,-9) {$x_1^+$};
\node (v11) at (1.5,-9) {$e^0$};
\node (v13) at (1.5,-10.5) {$x^0$};
\node (v12) at (3.75,-7.5) {$(q_+\to q_-)$};
\node (v14) at (3.5,-10.5) {$(q_+\to q_-)$};
\node (v10) at (5,-9) {$y^+$};
\draw[dotted]  (v1) edge[->,bend left] (v2);
\draw[dotted]  (v1) edge[->,bend right] (v2);
\draw[dotted]  (v3) edge[->,bend left] (v4);
\draw[dotted]  (v3) edge[->,bend right] (v4);
\draw[dotted]  (v5) edge[->,bend left] (v6);
\draw[dotted]  (v7) edge[->,bend left] (v8);
\draw[dotted]  (v5) edge[->,bend right] (v6);
\draw[dotted]  (v7) edge[->,bend right] (v8);
\draw  (v1) edge[->] (v9);
\draw  (v3) edge[->] (v9);
\draw  (v5) edge[->] (v10);
\draw  (v7) edge[->] (v10);
\draw  (v5) edge[->] (v11);
\draw  (v3) edge[->] (v11);
\draw  (v2) edge[->] (v13);
\draw  (v4) edge[->] (v13);
\draw  (v6) edge[->] (v13);
\draw  (v8) edge[->] (v13);

\draw[->,dotted, bend left =5]   (v9) edge (v13);
\draw[->,dotted, bend right=5]  (v9) edge (v13);
\draw[->,dotted, bend left =5]   (v11) edge (v13);
\draw[->,dotted, bend right=5]  (v11) edge (v13);
\draw[->,dotted, bend left =5]   (v10) edge (v13);
\draw[->,dotted, bend right=5]  (v10) edge (v13);
\end{scope}

\begin{scope}[shift={(0,-5)}]

\node (v1) at (-1.5,-7.5) {$e_0^-$};
\node (v2) at (-1.5,-9) {$x_0^+$};
\node (v3) at (-0.5,-7.5) {$E_+^-$};
\node (v4) at (-0.5,-9) {$x_1^-$};
\node (v9) at (0.5,-9) {$y^-$};
\node (v5) at (6,-7.5) {$e_0^+$};
\node (v6) at (6,-9) {$x_0^+$};
\node (v7) at (7,-7.5) {$E_+^+$};
\node (v8) at (7,-9) {$x_1^+$};
\node (v11) at (1.5,-9) {$e^0$};
\node (v13) at (1.5,-10.5) {$x^0$};
\node (v12) at (3.75,-7.5) {$(q_+\to q_-)$};
\node (v14) at (3.5,-10.5) {$(q_+\to q_-)$};
\node (v10) at (5,-9) {$y^+$};
\draw  (v12) edge[->, red] node [fill=white] {$T^{A_-}$}(v9);
\draw  (v12) edge[->, teal] node [fill=white] {$T^{A_+}$}(v10);
\draw  (v4) edge[->, red]node [fill=white] {$T^{A_-}$} (v14);
\draw  (v6) edge[->, teal]node [fill=white] {$T^{A_+}$} (v14);
\node[rounded corners, fill=teal!20] (v16) at (6,-11.5) {$m^0$};
\node[rounded corners, fill=red!20] (v15) at (0.5,-11.5) {$m^0$};
\draw  (v15) edge[->, red] node [red][fill=white] {$T^{A_-}$} (v14);
\draw  (v16) edge[->, teal] node [fill=white] {$T^{A_+}$}(v14);

\end{scope}

\node at (-4.5,-1) {(a) Slices};
\node at (-4.5,-5.5) {(b) Shadow};
\node at (-4.5,-9) {(c) Morse Differential};
\node at (-4.5,-14.5) {(d) Floer Differential};
\end{tikzpicture}     \caption{Anti-surgery followed by surgery.}
    \label{fig:simplecomputation}
\end{figure}
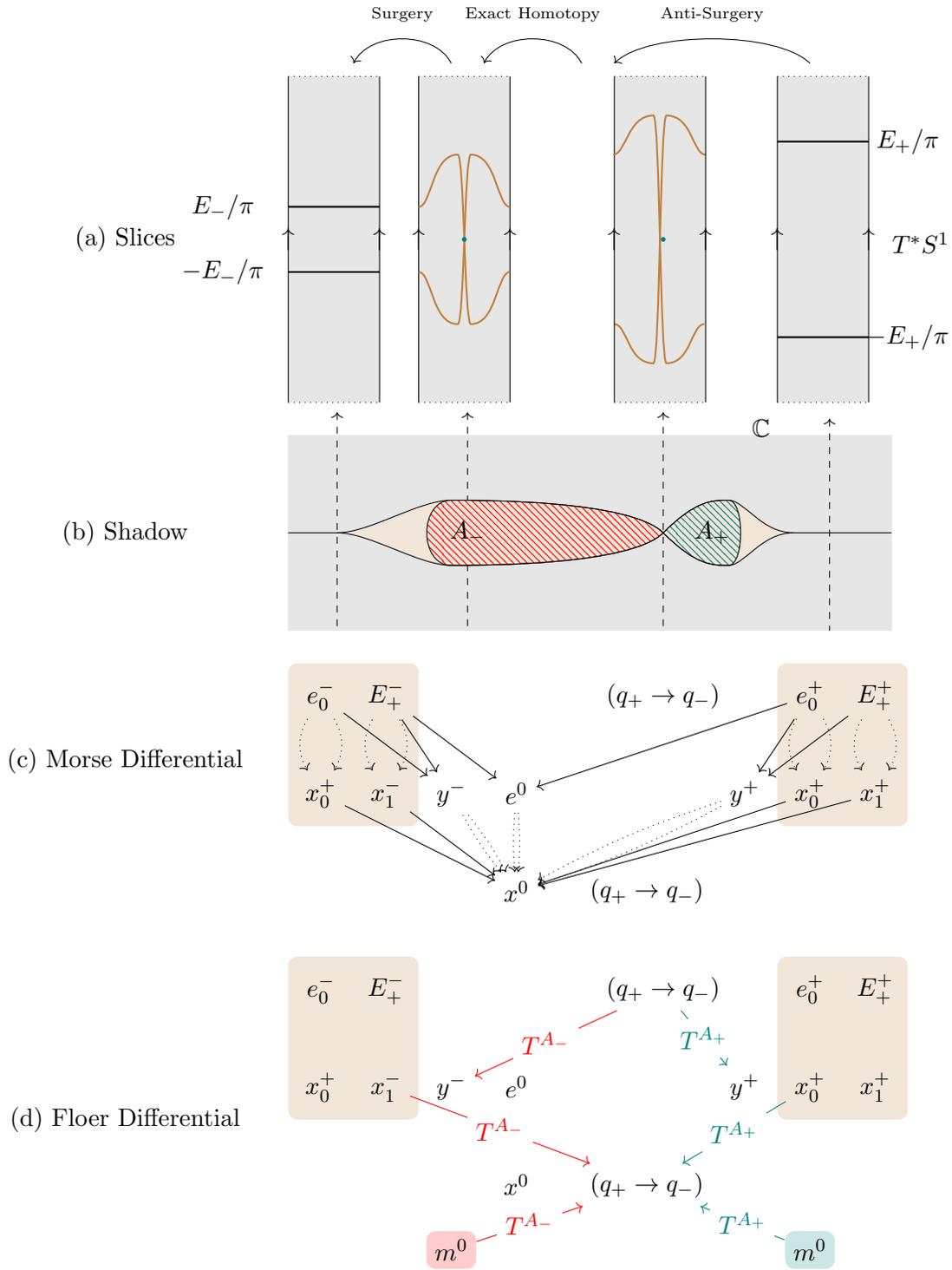
In addition to these Morse flow trees, the $A_\infty$ product structure on $\CF(K_{A_+, A_-})$ counts configurations of holomorphic teardrops.
By \cref{thm:teardropexistence} both the surgery and anti-surgery trace contribute holomorphic teardrops  with output on $(q_+\to q_-)$.
The lowest order contributions to $m^0$ and $m^1$ are listed in \cref{fig:simplecomputation} (d). 
The most important term is the curvature term,
\[m^0=(T^{A_+}-T^{A_-})(q^+\to q^-)+ \mathcal O(\min(A_+, A_-)).\]
We break into two cases. 
\subsubsection[Case 1: Obstructedness]{Case 1: $A_+\neq A_-$}
In the event where $A_+\neq A_-$, the $m^0: \Lambda \to \CF(K_{A_+, A_-})$ term is non-zero. Furthermore, since $\min(A_+, A_-)$ is the smallest area of a holomorphic teardrop, 
\[\val(\langle m^k(x_1, \ldots, x_k), (q_+\to q_-)\rangle) > \min(A_+, A_-)\]
for all $x_i$ satisfying $\val (x_i)>0$. 
It follows that there is no solution to the Maurer-Cartan equation, and $\CF(K_{A_+, A_-})$ is obstructed.
Furthermore, clearly $ S^1_{E_+}\sqcup S^1_{-E_+} $ is not isomorphic to  $S^1_{E_-}\sqcup S^1_{-E_-} $ as an object of the Fukaya category.
\subsubsection[Case 2: Continuation map]{Case 2: $A_+=A_-$}
The more interesting example is to consider is when $A_+=A_-$. 
In this setting, the curvature term vanishes, and $\CF(K_{A_+, A_-})$ is tautologically unobstructed.
A computation shows that the chain complex $\CF(K_{A_+, A_-})$ is a mapping cylinder; by using homotopy transfer theorem one can construct a $A_\infty$ morphism $\hat i^-: \CF( S^1_{E_-}\sqcup S^1_{-E_-} )\to \CF(K_{A_+, A_-})$.
The composition \[\Phi=\pi^+\circ \hat i^-:\CF( S^1_{E_-}\sqcup S^1_{-E_-} )\to \CF( S^1_{E_+}\sqcup S^1_{-E_+} )\]
is a continuation map, reflecting that $K_{A_+, A_-}$ should provide an equivalence of objects in the Fukaya category.

\subsection{Continuations, obstructions, and mapping cones}
\label{subsec:samplecomputation}
We now employ the algorithm proposed at the beginning of this section to compute a continuation map for an immersed Lagrangian cobordism.
Consider the double-section Lagrangian $L_{E_a}\subset T^*S^1$ discussed in \cref{subsubsec:runningexample}. 
Let $S_{\pm E}$ be the pair of sections  $\pm \frac{E}{2\pi}d\theta$ of $T^*S^1$.
When $0<2E<E_a$ there is a Lagrangian surgery trace cobordism $L_{E_a} \rightsquigarrow  S^1_{E}\sqcup S^1_{-E}$.
\begin{remark}
    The direction of the inequality   $0<2E<E_a$ is a bit subtle. At first glance, the inequality $0<E_a<2E$ appears natural because it is the one obtained by using the local surgery model associated with the Polterovich surgery of two Lagrangian submanifolds intersecting transversely at a single point. Because we apply \cref{prop:splittingX} to construct the Lagrangian surgery trace, the ends of the Lagrangian surgery trace (\cref{subsubsec:surgerytrace}) differ by a non-trivial amount of flux from the Polterovich surgery model. The flux swept out is opposite and greater than the area bounded by the surgery neck, leading to the inequality $0<2E<E_a$. We give a more detailed account in \cref{exam:fluxDirection}.
\end{remark}
One constructs the double bottleneck surgery trace (\cref{def:doubleBottleneckSurgeryTrace}) 
\[K_{A,B}^{0,2}:  L_{E_b}\rightsquigarrow S^1_{E}\sqcup S^1_{-E}\]
by subsequently applying a small exact homotopy, where the flux swept out on the positive end gives the relation  $0<E_b<E_a$. We will drop the superscript and from here on write $K_{A, B}$ for this Lagrangian cobordism.
Here, $A$ is the area of the small teardrop of the surgery, and $B$ is the area of the strip associated with the bottleneck; the slice above the second bottleneck is denoted $L_{E_a}$.
The relations between $A, B, E_a, E_b$ and $E$ are
\begin{align*}
    E_a-E_b=2B && E_a-2E=2A.
\end{align*}
See \cref{fig:bigdiagram} (a, b) for slice and shadow of the Lagrangian cobordism. 
We equip $K_{A, B}$ with an admissible Morse function; this has critical points $e_0, e_1, x_0, x_1$ near the negative end, $e_a, e_b, x_a, x_b$ corresponding to the Morse critical points of the slices $L_{E_a}$ and $L_{E_b}$, and an additional critical point $y$ from the surgery handle. 
The Lagrangian $K_{A, B}$ has two self-intersections; we call 3 generators of the Floer cohomology $(\pi \to 0)_a, (0\to \pi)_a$, and $(0\to \pi)_b$, and discard the fourth generator.

\begin{figure}
    \centering
    
\begin{tikzpicture}

\begin{scope}[shift={(-9,-12.5)},decoration={    markings,   , mark=at position 0.5 with {\arrow{>}}}]
\begin{scope}[decoration={    markings,   , mark=at position 0.5 with {\arrow{>}}}]

\fill[gray!20]  (-4,-3) rectangle (-2,0);
\draw[postaction={decorate}] (-4,0) -- (-4,-3);
\draw[postaction={decorate}](-2,0)--(-2,-3);
\draw[dashed] (-4,0) -- (-2,0)  (-2,-3) -- (-4,-3);

\node[circle, fill=black, scale=.25] at (-3.5,-1) {};
\node[circle, fill=black, scale=.25] at (-2.5,-1) {};
\node[circle, fill=black, scale=.25] at (-3.5,-2) {};
\node[circle, fill=black, scale=.25] at (-2.5,-2) {};
\node[above] at (-3.5,-1) {$e_0$};
\node[above] at (-2.5,-1) {$x_0$};
\node[below] at (-3.5,-2) {$e_1$};
\node[below] at (-2.5,-2) {$x_1$};
\end{scope}
\begin{scope}[shift={(5.75,0)}]

\fill[gray!20]  (-4,-3) rectangle (-2,0);
\draw[postaction={decorate}] (-4,0) -- (-4,-3);
\draw[postaction={decorate}](-2,0)--(-2,-3);
\draw[dashed] (-4,0) -- (-2,0)  (-2,-3) -- (-4,-3);

\end{scope}
\begin{scope}[shift={(2.75,0)}]

\fill[gray!20]  (-4,-3) rectangle (-2,0);
\draw[postaction={decorate}] (-4,0) -- (-4,-3);
\draw[postaction={decorate}](-2,0)--(-2,-3);
\draw[dashed] (-4,0) -- (-2,0)  (-2,-3) -- (-4,-3);

\end{scope}

\draw[thick] (-4,-2) -- (-2,-2);
\draw[thick] (-4,-1) -- (-2,-1);
\fill[gray!10]  (-4,-4) rectangle (4,-7);

\draw[fill=brown!20,thick] (-3,-5.5) .. controls (-2.5,-5.5) and (-2,-6) .. (-1.5,-6) .. controls (-1,-6) and (1,-6) .. (1.5,-5.5) .. controls (1,-5) and (-1,-5) .. (-1.5,-5) .. controls (-2,-5) and (-2.5,-5.5) .. (-3,-5.5);
\draw[fill=brown!20,thick] (4,-5.5) .. controls (3.5,-5) and (2,-5) .. (1.5,-5.5) .. controls (2,-6) and (3.5,-6) .. (4,-5.5);
\draw[pattern=north west lines, pattern color=orange] (4,-5.5) .. controls (3.5,-5) and (2,-5) .. (1.5,-5.5) .. controls (2,-6) and (3.5,-6) .. (4,-5.5);
\draw (-4,-5.5) -- (-3,-5.5);
\draw[pattern=north west lines, pattern color=orange] (1.5,-5.5) .. controls (1,-6) and (-1.5,-6) .. (-1.5,-5.5) .. controls (-1.5,-5) and (1,-5) .. (1.5,-5.5);

\node at (-4.5,-2) {$S^1_{-E}$};
\node at (-4.5,-1) {$S^1_{E}$};

\node[fill, scale=.5, circle] at (-1.5,-5.5) {};
\node at (-3,-0.5) {$\mathbb{C}^*$};
\node at (0.25,-0.25) {$\mathbb{C}^*$};
\node[left] at (-1.5,-5.5) {$y$};
\node at (0,-5.5) {$A$};
\node at (3.25,-5.5) {$B$};

\begin{scope}[shift={(-0.5,0)}]

\clip  (1.75,-5) rectangle (4.75,-6.25);
\draw[red, thick] (-1,-6) .. controls (-0.5,-6) and (1.5,-6) .. (2,-5.5) .. controls (2.5,-5) and (4,-5) .. (4.5,-5.5);
\draw[blue, thick] (4.5,-5.5) .. controls (4,-6) and (2.5,-6) .. (2,-5.5) .. controls (1.5,-5) and (-0.5,-5) .. (-1,-5);

\end{scope}

\begin{scope}[shift={(-2.25,-0.5)}]
\node[below right] at (1,-0.5) {$e$};
\node[above right] at (1,-1.5) {$x$};
\node[circle, fill=black, scale=.25] at (1,-0.5) {};
\node[circle, fill=black, scale=.25] at (1,-1.5) {};
\draw[thick] (1,-1.5) .. controls (1.5,-1.5) and (1.5,-2.25) .. (1.75,-2.25) .. controls (2,-2.25) and (2,0.25) .. (2.25,0.25) .. controls (2.5,0.25) and (2.5,-0.5) .. (3,-0.5);
\draw[thick] (1,-0.5) .. controls (1.5,-0.5) and (1.5,0.25) .. (1.75,0.25) .. controls (2,0.25) and (2,-2.25) .. (2.25,-2.25) .. controls (2.5,-2.25) and (2.5,-1.5) .. (3,-1.5);

\clip  (1.8,-0.7) rectangle (2.2,-1.3);
\draw[red, thick] (1.75,-2.25) .. controls (2,-2.25) and (2,0.25) .. (2.25,0.25);
\draw[blue, thick] (1.75,0.25) .. controls (2,0.25) and (2,-2.25) .. (2.25,-2.25);
\end{scope}
\begin{scope}[shift={(0.75,-0.5)}]
\node[below right] at (1,-0.5) {$e'$};
\node[above right] at (1,-1.5) {$x'$};
\node[circle, fill=black, scale=.25] at (1,-0.5) {};
\node[circle, fill=black, scale=.25] at (1,-1.5) {};
\draw[thick] (1,-1.5) .. controls (1.5,-1.5) and (1.5,-2) .. (1.75,-2) .. controls (2,-2) and (2,0) .. (2.25,0) .. controls (2.5,0) and (2.5,-0.5) .. (3,-0.5);
\draw[thick] (1,-0.5) .. controls (1.5,-0.5) and (1.5,0) .. (1.75,0) .. controls (2,0) and (2,-2) .. (2.25,-2) .. controls (2.5,-2) and (2.5,-1.5) .. (3,-1.5);

\clip  (1.8,-0.7) rectangle (2.2,-1.3);
\draw[red, thick] (1.75,-2) .. controls (2,-2) and (2,0) .. (2.25,0);
\draw[blue, thick] (1.75,0) .. controls (2,0) and (2,-2) .. (2.25,-2);
\end{scope}

\end{scope}

\draw[dashed, ->] (-12.5,-19.5) -- (-12.5,-16);
\draw[dashed, ->] (-7.5,-19.5) .. controls (-7.5,-19) and (-7.5,-17) .. (-7.5,-16.5) .. controls (-7.5,-16.25) and (-7.5,-16.25) .. (-7.5,-16.25) .. controls (-7.5,-16) and (-7.5,-16) .. (-7.75,-16) .. controls (-8,-16) and (-8.75,-16) .. (-9,-16) .. controls (-9.25,-16) and (-9.25,-16) .. (-9.25,-15.75);
\draw[dashed, ->] (-5,-19.5) .. controls (-5,-19) and (-5,-17) .. (-5,-16.75) .. controls (-5,-16.5) and (-5,-16) .. (-5.5,-16) .. controls (-5,-16) and (-5.75,-16) .. (-6,-16) .. controls (-6.25,-16) and (-6.25,-16) .. (-6.25,-16) .. controls (-6.5,-16) and (-6.5,-16) .. (-6.5,-15.75);

\begin{scope}[rotate=-90, shift={(21.5,-11.5)},decoration={    markings,    mark=at position 0.33 with {\arrow{>}} , mark=at position 0.66 with {\arrow{>}}}]

\draw[postaction={decorate}] (0.25,-1) .. controls (0.25,0) and (0.5,1) .. (1,1);
\draw[postaction={decorate}] (1.75,-1) .. controls (1.75,0) and (1.5,1) .. (1,1);
\draw[postaction={decorate}] (1,1) .. controls (1,2.5) and (0,3.25) .. (0,4);
\draw[postaction={decorate}] (-1,1) .. controls (-1,2) and (0,3.25) .. (0,4);
\draw [postaction={decorate}](-0.25,-1) .. controls (-0.25,0.5) and (0,3) .. (0,4);
\draw[postaction={decorate}] (2.25,-1) .. controls (2.25,1.5) and (2.5,2.25) .. (3,2.75);
\draw[postaction={decorate}] (-1,2.75) .. controls (-0.5,3.25) and (0,3.5) .. (0,4);
\draw[postaction={decorate}] (1.75,-1) .. controls (1.75,0.75) and (2,2.75) .. (2,4);
\draw[postaction={decorate}] (0.25,-1) -- (-0.25,-1);
\draw[postaction={decorate}] (1.75,-1)--(2.25,-1);
\draw[postaction={decorate}] (1.75,-1)--(1.5,-1)    (2.5,-1)--(2.25,-1) ;
\draw[postaction={decorate}] (0.25,-1) -- (0.5,-1) (-0.5,-1) -- (-0.25,-1);

\begin{scope}[shift={(0,0)}]

\draw[purple] (-0.5,-1) .. controls (-0.5,0) and (-0.5,1) .. (-1,1) (0.5,-1) .. controls (0.5,0) and (0.5,1) .. (1,1);
\end{scope}
\begin{scope}[shift={(3,0)}]

\draw[pink] (-1.5,-1) .. controls (-1.5,0) and (-1.5,1) .. (-2,1) (-0.5,-1) .. controls (-0.5,0) and (-0.5,1) .. (0,1);
\end{scope}
\draw[green] (-1,1) -- (-1,6.5) (3,1) -- (3,6.5);
\node[left] at (0.25,-1) {$e_0$};
\node[left] at (-0.25,-1) {$x_0$};
\node[left] at (2.25,-1) {$x_1$};
\node[left] at (1.75,-1) {$e_1$};
\node[left] at (1,1) {$y$};
\node[left] at (-1,1) {$y$};

\node[fill=black, circle, scale=.25] at (0.25,-1) {};
\node[fill=black, circle, scale=.25] at (-0.25,-1) {};
\node[fill=black, circle, scale=.25] at (2.25,-1) {};
\node[fill=black, circle, scale=.25] at (1.75,-1) {};
\node[fill=black, circle, scale=.25] at (1,1) {};
\node[fill=black, circle, scale=.25] at (-1,1) {};

\begin{scope}[]

\node[above right] at (2,4) {$e_a$};
\node[above right] at (0,4) {$x_a$};
\node[fill=black, circle, scale=.25] at (2,4) {};
\node[fill=black, circle, scale=.25] at (0,4) {};
\draw[postaction={decorate}] (2,4) -- (0,4);
\draw[postaction={decorate}] (2,4) -- (3,4) (-1,4) -- (0,4);
\end{scope}
\begin{scope}[shift={(0,2.5)}]

\node[right] at (2,4) {$e_b$};
\node[right] at (0,4) {$x_b$};
\node[fill=black, circle, scale=.25] at (2,4) {};
\node[fill=black, circle, scale=.25] at (0,4) {};
\draw[postaction={decorate}] (2,4) -- (0,4);
\draw[postaction={decorate}] (2,4) -- (3,4) (-1,4) -- (0,4);
\end{scope}
\draw[postaction={decorate}] (0,6.5) -- (0,4);
\draw[postaction={decorate}] (2,6.5) -- (2,4);

\draw[orange, dashed, thick] (1,4) -- (1,1) (-1,1) -- (-1,4);
\node at (1,4) {$\times$};
\node at (-1,4) {$\times$};
\node at (1,6.5) {$\times$};
\node at (-1,6.5) {$\times$};
\node[right] at (1,4) {$1_a$};
\node[right] at (-1,4) {$0_a$};
\node[right] at (1,6.5) {$1_b$};
\node[right] at (-1,6.5) {$0_b$};
\end{scope}

\node[rounded corners, fill=red!20] (v13) at (-5,-29.5) {$m^0$};
    
\fill[brown!20, rounded corners]  (-14.5,-26) rectangle (-12.5,-28.5);
\fill[brown!20, rounded corners]  (-10.5,-26) rectangle (-7.5,-30);
\fill[brown!20, rounded corners]  (-7,-26) rectangle (-4,-28.5);

    \node (v1) at (-14,-26.5) {$e_0$};
    \node (v2) at (-14,-28) {$x_0$};
    \node (v3) at (-13,-26.5) {$e_1$};
    \node (v4) at (-13,-28) {$x_1$};
    \node[fill=blue!20, rounded corners] (v6) at (-11,-28) {$y$};
    \node (v7) at (-9,-28) {$e_a$};
    \node at (-10,-29.5) {$x_a$};
    \node (v5) at (-10,-29.5) {};
    \node (v11) at (-8.5,-29.5) {$(0\to \pi)_a$};
    \node (v12) at (-9,-26.5) {$(\pi \to 0)_a$};
    \node (v10) at (-5,-28) {$(0\to \pi)_b$};
    \node (v9) at (-5,-26.5) {$e_b$};
    \node (v8) at (-6.5,-28) {$x_b$};
    \draw  (v1) edge[bend left=10, ->,dotted] (v2);
    \draw  (v1) edge[bend right=10, ->,dotted] (v2);
    \draw  (v3) edge[bend right=10, ->,dotted] (v4);
    \draw  (v3) edge[bend left=10, ->,dotted] (v4);
    \draw  (v4) edge[->] (v5);
    \draw  (v2) edge[->] (v5);
    \draw  (v6) edge[bend left=10, ->,dotted] (v5);
    \draw  (v6) edge[bend right=10, ->,dotted] (v5);
    \draw  (v7) edge[bend right=10, ->,dotted] (v5);
    \draw  (v7) edge[bend left=10, ->,dotted] (v5);
    \draw  (v3) edge[->] (v7);
    \draw  (v1) edge[->] (v6);
    \draw  (v3) edge[->] (v6);
    \draw  (v8) edge[->] (v5);
    \draw  (v9) edge[->] (v7);
    \draw  (v10) edge[->, purple] node[midway, fill=white]{\scriptsize$T^B$} (v11);
    \draw  (v12) edge[->,red]  node[midway, fill=brown!20]{\scriptsize$T^A$}(v6);
    \draw  (v4) edge[red, ->]  node[midway, fill=white]{\scriptsize$T^A$}(v11);
    \draw  (v13) edge[red,->]  node[midway, fill=white]{\scriptsize$T^A$}(v11);

\draw  (v12) edge[bend left=10,red,dotted] (v10);
\draw  (v12) edge[bend right=10,red,dotted] (v10);
\node at (-12,-12) {$S^1_E\sqcup S^1_{-E}$};
\node at (-9.25,-12) {$L_{E_a}$};
\node at (-6.25,-12) {$L_{E_b}$};
\node[red, fill=brown!20] at (-6,-27.5) {\scriptsize$\pm T^{B+E_b}$};
\draw[dashed] (-12.5,-19.5) .. controls (-12.5,-20) and (-12.5,-24.5) .. (-12.5,-25) .. controls (-12.5,-25.5) and (-12.5,-25.5) .. (-12.5,-25.5) .. controls (-12.5,-25.5) and (-12.5,-25.5) .. (-13.5,-26);

\draw[dashed] (-7.5,-19.5) .. controls (-7.5,-20) and (-7.5,-24.5) .. (-7.5,-25) .. controls (-7.5,-25) and (-7.5,-25.5) .. (-8.25,-25.5) .. controls (-9,-25.5) and (-9,-25.5) .. (-9,-26);
\draw[dashed] (-5,-19.5) .. controls (-5,-20) and (-5,-24.5) .. (-5,-25) .. controls (-5,-25.5) and (-5,-25.5) .. (-5.5,-25.5) .. controls (-6,-25.5) and (-6,-25.5) .. (-6,-26);
\node at (-15,-26) {Index};
\node at (-15,-26.5) {0};
\node at (-15,-28) {1};
\node at (-15,-29.5) {2};
\node[left] at (-14.3207,-15) {(a) Slices};
\node[left] at (-14,-19) {(b) Shadow};
\node[left] at (-14,-24.5) {(c) Morse Function};
\node[left] at (-14,-30) {(d) Floer Complex};
\end{tikzpicture}     \caption{Slices, shadow, Morse function and Floer complex of the surgery trace cobordism.}
    \label{fig:bigdiagram}
\end{figure}
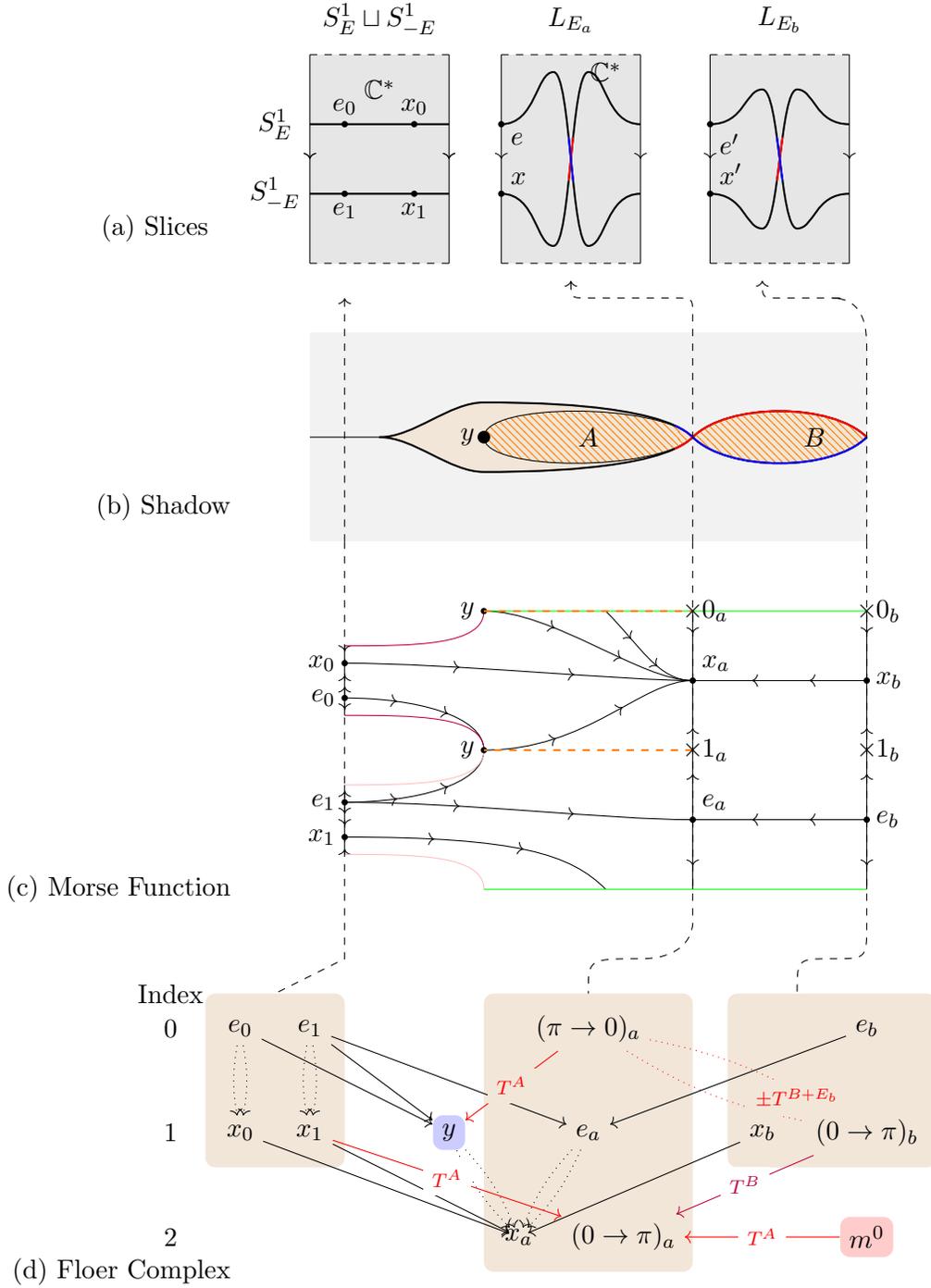

\subsubsection{Computation of low order products}
As it is difficult to compute the full product structure for Floer cochains of $K_{A, B}$, we restrict ourselves to computing smallest-order contributions to the $m^0$ and $m^1$ terms on $\CF_{bot}(K_{A, B})$. 
These lowest order computations are sufficient to prove unobstructedness of Floer groups by standard arguments using the filtration of $\CF_{bot}(K_{A, B})$.
\begin{itemize}
    \item The differential on the Morse complex of $\CM(K_{A, B})$ can be fully computed. The Morse flow-lines are drawn in \cref{fig:bigdiagram}(c). The differential is represented by black dotted and solid arrows in \cref{fig:bigdiagram}(d). The dotted arrows have canceling contributions.
    \item By \cref{claim:teardropm0}, there exists a holomorphic teardrop which can either be considered as having input on  $(0\to \pi)_a$ or output on $(\pi \to 0)_a$ with area $A$. :
    \begin{itemize}
        \item The teardrop with output on $(0\to \pi)_a$ appears in the $m^0$ term in the $A_\infty$ structure 
        \[m^0 = T^A \cdot (0\to \pi)_a+\mathcal (O_A)\]
        This is represented with the highlighted red term in \cref{fig:bigdiagram}(d). Additionally, this teardrop contributes to the differential, 
        \[\langle m^1(x_1), (0\to \pi)_a\rangle = T^A+ \mathcal O(A).\]
        \item The teardrop with input on $(\pi \to 0)_a$ gives a contribution to the differential
        \[\langle m^1(\pi \to 0)_a, y\rangle= T^A + \mathcal O(A).\] 
    \end{itemize} 
    \item The holomorphic strips contributing to the differential on $\CF_{bot}(K_{A, B})$ arise from the strips which appear on a double bottleneck.
    \[\langle m^1(\pi \to 0)_a, (0\to \pi)_b\rangle= T^{B+E_b}-T^{B+E_b}.\] 
    The double bottleneck also has a holomorphic strip pairing the generator which is ``doubled'' with maximal and minimal grading from the base:
    \[\langle m^1(0\to \pi)_b, (0\to \pi)_a\rangle= T^B + \mathcal O(B).\] 
\end{itemize}
These contributions are represented by red arrows in \cref{fig:bigdiagram}(d).
We obtain two cases which we consider separately: when $A>B$, and $A<B$. We discuss the Floer theoretic and geometric implications below.
\subsubsection[Continuation maps and Unobstructedness: Case I]{Continuation maps and Unobstructedness: $A>B$}
We first note that whenever $A>B$, the curvature term $m^0$ can be cancelled out at lowest order by the differential, as
\[m^1(T^{A-B}(0\to \pi)_b)=T^A (0\to \pi)_a+\mathcal O(A).\]
We can build a bounding cochain order by order using the filtration on $\CF_{bot}(K_{A, B})$, whose first term is 
\[\mathfrak b_{A, B}:= T^{A-B}(0\to \pi)_b+ \mathcal O(A-B) \in \mathcal MC(K_{A, B}).\]
This gives us a sufficient condition for this Lagrangian cobordism to be unobstructed: the flux swept out by the exact homotopy as defined by the double bottleneck (measured by the area $B$ of the holomorphic strip) must not be greater than the area of the surgery neck taken (as measured by the holomorphic teardrop $A$).
Since $\pi^+: \CF_{bot}(K_{A, B})\to \CF(L_{E_b})$ is an $A_\infty$ homomorphism, we obtain a bounding cochain $\pi^+_*\mathfrak b_{A, B}$ for $\CF(L_{E_b})$. We can verify this computation by checking the Lagrangian intersection Floer cohomology against a test Lagrangian $S^1_{E'}$.
Recall from \cref{subsubsec:runningexample}, the complex $\CF((L_{E_b},\pi^+_*\mathfrak b_{A, B}), S^1_{E'})$ is acyclic whenever the lowest order term of $\pi^+_*(\mathfrak b_{A, B})$ is not $T^{\frac{E_b}{2}-E'}$. In this case, the lowest order term of $\mathfrak b$ is
\[
    T^{A-B}=T^{\frac{E_a-2E}{2}-\frac{E_a-E_b}{2}}=T^{\frac{E_b}{2}-E}.
\]
So at lowest order, $\CF((L_{E_b},\pi^+_*\mathfrak b_{A, B}), S^1_{E'} )$ has nontrivial homology if and only if $E=E'$.

The computation shows that the ends of  $K_{A, B}$ cannot be distinguished at lowest order by testing them against objects $S^1_{E'}$ in the Fukaya category.
We now show that the ends have the same immersed Lagrangian Floer cohomology by showing that $(K_{A, B}, \mathfrak b)$ is a mapping cocylinder, providing a homotopy equivalence between the Floer cohomology of its ends. This will construct a continuation map between the ends of the cobordism. 
First, we note that the  projection $\pi^+: \CF_{bot}(K_{A, B}, \mathfrak b)\to \CF(L_{E_b})$ is a homotopy equivalence.
The homotopy can be described on generators as follows: let 
\begin{align*}
    \mathcal I^-=&\{e_0, x_0, e_1, x_1\}\\
    \mathcal I^0=&\{y, e_a, x_a, (0\to \pi)_a \}\\
    \mathcal I^+=&\{(\pi \to 0)_a, e_b, x_b, (0\to \pi)_b\}.
\end{align*}
 Let $\CF_{bot}(K_{A, B})|_{\mathcal I^*}$ be the subspace spanned by the appropriate set of generators. This is generally not an $A_\infty$ ideal, however
 \[\CF_{bot}(K_{A, B})|_{\mathcal I^0}\oplus \CF_{bot}(K_{A, B})|_{\mathcal I^+},\]
 is an $A_\infty$ ideal. 
We additionally consider the map $H:\CF_{bot}(K_{A, B})|_{\mathcal I^+}\to \CF_{bot}(K_{A, B})|_{\mathcal I^0}$ which on the given basis sends:
\begin{align*}
    H(e_a)=e_1-e_0 && H(y)=e_0\\
    H((0\to \pi)_a)= T^{-A}(x_1-x_0) && H(x_a)=x_0
\end{align*}
Let $i^+:\CF(L|_{E_b})\to \CF_{bot}(K_{A, B})$ be the chain-level inclusion 
\begin{align*}
    i^+(e_b)=e_b+e_1-e_0 &&  i^+(x_b)=x_b+x_0\\
    i^+((\pi \to 0)_b)= T^{-B}((\pi \to 0)_a+ T^A e_0) && i^+(0\to \pi)_b= (0\to \pi)_b+T^{B-A}(x_1-x_0)
\end{align*}
At lowest order, the map $i^+$ is a homotopy inverse to $\pi^+$  with homotopy given by $H$. 

By using the homotopy transfer theorem\footnote{There may be some concern that the valuation of $H$ is negative. This is in general not a problem as long as $H\circ m^0$ has positive valuation. Since we have canceled out $m^0$ by incorporation of a bounding cochain, we can use the homotopy transfer theorem here.} we may extend $i^+$ to an $A_\infty$ homomorphism $\hat i^+: \CF(L_{E_b},(\pi^+)_* \mathfrak b_{A, B})\to \CF_{bot}(K_{A, B},\mathfrak b_{A, B})$.
We combine the inclusion map with the projection $\pi^-: \CF_{bot}(K_{A, B}, \mathfrak b_{A, B})\to \CF(L^-, (\pi^-)_*\mathfrak b_{A, B})$ to obtain a continuation map 
\[\Phi:=\pi^-\circ \hat i^+:\CF(L_{E_b}, (\pi^+)_* \mathfrak b_{A, B})\to \CF(S^1_E\sqcup S^1_{-E},(\pi^-)_* \mathfrak b_{A, B}).\]

One can furthermore use the continuation map to study how modifications of the bounding cochain on $(L_{E_b}, \mathfrak b)$ relate to local systems on the Lagrangian $S^1_E, \sqcup S^1_{-E}$. 
As $\Phi$ is a weakly filtered $A_\infty$ homomorphism of energy loss $B-A$, there exists a pushforward map on degree 1 Maurer-Cartan solutions with valuation greater than $A-B$
\[\Phi_*:\{\mathfrak d \in \mathcal MC^1(L_{E_b}, \mathfrak b), \val(\langle \mathfrak d, (0\to \pi)_b\rangle )>A-B\}\to \mathcal MC^{1}(S^1_E\sqcup S^{-E}).\]
Given an element $\mathfrak d = c_1 (T^{A-B+\epsilon}+\mathcal O(A-B+\epsilon)) ((0\to \pi)_b)+2c_0(T^\epsilon +\mathcal O(\epsilon)) x_b$, we can compute the pushforward to first order:
\begin{align*}
    \Phi_*(\mathfrak d)= c_0 (T^\epsilon+\mathcal O(\epsilon)) x_0 + c_1 (T^\epsilon + \mathcal O(\epsilon)) x_1
\end{align*}
showing that $\Phi_*$ is a surjection.
If we interpret graded deformations in $\mathcal MC((L_{E_b}, \mathfrak b))$ as a local system, this suggests that all local systems on $S^1_E\sqcup S^1_{-E}$ are realized by deformations of $(L_{E_b}, \mathfrak b))$.

\subsubsection[Continuation maps and Unobstructedness: Case II]{Obstructedness: $B\geq A$}
The other setting of interest is when $B\geq A$, as drawn in \cref{fig:obstructedpants}. Then the Lagrangian cobordism $K_{A, B}$ is obstructed, in the sense that $\mathcal MC(K_{A, B})$ is empty. 
In this setting, $L_{E_b}$ is never isomorphic to $S^1_E\sqcup S^1_{-E}$ for any choice of deforming cochain, as $E_b< 2E$. 
This is easily observed as $L_{E_b}$ can be displaced from $S^1_E\sqcup S^1_{-E}$ by a Hamiltonian isotopy.
 
\appendix
\section{Monotone Two-Ended Lagrangian Cobordisms}
\label{subsec:widenornarrow}
\label{app:wideornarrow}
A Lagrangian submanifold $L\subset (X, \omega)$ is called monotone if there exists a $\lambda>0$ such that the homomorphism 
\begin{align*}
    \omega, \mu: \pi_2(X, L)\to \RR
\end{align*} 
are proportional so that $\omega=\lambda \mu$.
Monotone Lagrangian submanifolds provide an interesting subset of examples to work with as they are more general than exact Lagrangian submanifolds, and always have well-defined Floer cohomology. As a result, the Floer cohomology of monotone Lagrangian submanifolds has been studied extensively. The pearly-Floer cohomology of monotone Lagrangian submanifolds $\CF(L)$ is a deformation of the Morse cohomology \cite{biran2008lagrangian}; as such, the maximal rank of the Floer cohomology group is bounded by the cohomology of $L$. We say that $L$ is \emph{wide} if this bound is achieved, i.e. $\HF(L)\simeq H^\bullet(L; \Lambda)$.
Many constructions of wide monotone Lagrangian submanifolds exist (for example, every exact Lagrangian). 
In contrast, a Lagrangian submanifold is called narrow if $\HF(L)$ vanishes. The Chiang Lagrangian in dimension $n$ not a prime power is narrow over every characteristic \cite[Theorem 4.10]{smith2021monotone}. 
A dramatic example of a narrow monotone Lagrangian submanifold comes from \cite{oakley2016certain}, which produces examples of displaceable monotone Lagrangian submanifolds.
The wide-narrow dichotomy (introduced in \cite{biran2009rigidity}) observes that the relative sizes of the Floer cohomology groups and the cohomology groups of a monotone Lagrangian submanifold tends to characterize the ``rigidity'' of a Lagrangian submanifold. More precisely: quantitative measures of the size of a Lagrangian submanifold $L$ (such as the width, or Gromov width of the complement of $L$) are oftentimes small for narrow Lagrangians and large for wide Lagrangians. It was conjectured in \cite{biran2009rigidity} that all Lagrangian submanifolds are either wide or narrow. 
We include a construction (the main idea was communicated to us by Abouzaid and Auroux) of a monotone Lagrangian submanifold which is neither wide-nor-narrow.

For the same reason that monotonicity is a good property for studying Lagrangian submanifold, the theory of monotone Lagrangian cobordisms is similarly well-developed. For instance, it is known that every pair of monotone Lagrangian cobordant submanifolds are isomorphic in the Fukaya category. To our knowledge, we provide the first example of a 2-ended embedded monotone Lagrangian cobordism not given by suspension of a Hamiltonian isotopy.
\subsection{A Neither wide nor narrow monotone Lagrangian submanifold}
\label{subsec:notwidenornarrow}
The goal is to obtain a Lagrangian submanifold which is neither wide nor narrow, that is $\HF(L)\neq 0, H^\bullet(L)$.
The approach is the following: given Lagrangian submanifolds $L_1, L_2$ which fit into a monotone Lagrangian cobordism $K: (L_1, L_2)\rightsquigarrow L_3$, \cite{biran2013lagrangian} shows that $\CF(L_3)$ is a mapping cone of a morphism $\CF(L_1)\rightarrow \CF(L_2)$. 
If $L_1$ is narrow and $L_2$ is wide, then 
\[\HF(L_3)=\HF(L_2)=H^\bullet(L_2),\]
which is unlikely to be $H^\bullet(L_3)$.

The difficulty of this construction is that generally the surgery you produce is non-monotone. If $L_1$ and $L_2$ intersect transversely at distinct points $L_1\pitchfork L_2=\{x_1, \ldots, x_k\}$, then a candidate Lagrangian cobordism is the surgery trace cobordism. However, it is rarely the case that the Lagrangian submanifold obtained by surgering $L_1$ and $L_2$ at their intersection points, 
 \[L_3=L_1\#_{x_1, \ldots, x_k} L_2\]
  is a monotone Lagrangian submanifold, as any strip of index 1 with boundary on $L_1\cup L_2$ will give rise to a disk of Maslov index zero on the Lagrangian surgery. 
One might hope that $L_1$ and $L_2$ intersect at single point; problematically, the Lagrangian intersection Floer cohomology $\HF(L_1, L_2)$ would be non-vanishing, which seemingly contradicts the narrowness of $L_1$.

The proposal of Abouzaid and Auroux is to perform the above construction with Lagrangian submanifolds $L_1$ and $L_2$ which intersect cleanly along a single (non-point) connected component, and perform surgery along the component. The upshot is that (in comparison to the transverse intersection case) one would hope that there are no Maslov index 1 strips with boundary in $L_1\cup L_2$; intuitively, we should think that all of the Maslov index 1 strips have degenerated to flow-lines on the intersection locus for some choice of Morse function on $L_1\cap L_2$ corresponding to an infinitesimal Hamiltonian perturbation of $L_2$. 
In this setting, one can still define the Lagrangian surgery of $L_1$ and $L_2$ at their intersection (either using the ideas of \cite{fang2017geometric}, or surgery constructions from \cite{mak2018dehn,hicks2020tropical}); this comes with a surgery trace cobordism allowing us to apply the mapping cone construction of Biran and Cornea.

We then need to find monotone $L_1$ and $L_2$ which intersect transversely along a single connected component. Let $X$ be a symplectic manifold, and $L\subset X$ be a narrow monotone Lagrangian submanifold.
Inside of $X\times \bar X$, consider the Lagrangian submanifold $L_1=L\times L\subset X\times \bar X$.
By application of the K\"unneth formula, $L_1$ is a narrow monotone Lagrangian submanifold of $X\times \bar X$.

We also consider the diagonal Lagrangian $L_2=\{(x, x)\in X\times \bar X\;|\; x\in X\}$. This Lagrangian is monotone, and by identifying the Floer cohomology of the diagonal with the fixed-point Floer cohomology of $X\times \bar X$, we see that $L_2$ is wide.

These two Lagrangian submanifolds intersect cleanly along $\Delta_L:=\{(x, x)\in X\times \bar X\;|\; x\in L\}$, so the surgery $L_3:=L_1\#_{\Delta_L}L_2$ is a good candidate for a wide nor-narrow Lagrangian submanifold.
It remains to check if the surgery trace cobordism $K:( L_1, L_2)\rightsquigarrow L_3$ is monotone. A criterion is given by \cite[Lemma 6.3]{mak2018dehn} on the fundamental groups of the summands of the Lagrangian connect sum, namely at least one of $\pi_1(L_i)$ is torsion in $\pi_1(X\times \bar X)$.
This will clearly depend on the choice of $L$ and $X$, which we now fix.

As suggested to the author by Cheuk Yu Mak, we take the example $L=L^{p}_{0,m}\subset \CP^{m+1}=X$ from \cite[Theorem 1.4]{oakley2016certain} which is self-displaceable (and therefore narrow) and monotone. The topology of $L$ is $S^1\times S^m$. Since $X\times \bar X$ is simply connected, the image of $\pi_1(L_2)$ is torsion in $\pi_1(X\times \bar X)$. 
Therefore the surgery trace cobordism $K:( L_1, L_2)\rightsquigarrow L_3$ is monotone.

Finally, it remains to show that $L_3$ has different homology than $L_2$. A computation on the fundamental group shows that when $m>2$
\begin{align*}
    \pi_1(L_3)= &\pi_1(\CP^{m+1}\setminus \Delta_L)*_{\pi_1(\Delta_L)}\pi_1((L\times L)\setminus \Delta_L)\\
    =& \ZZ\neq \pi_1(L_2)
\end{align*}
as $L_2\cong \CP^{m+1}$.

\subsection{A 1-ended oriented embedded monotone Lagrangian cobordism}
\label{subsec:oneended}
We expand on ideas in the construction above to produce a 1-ended oriented embedded monotone Lagrangian cobordism. 
First, a short observation. Let $i: L\to X$ be a Lagrangian immersion. Let $\pi_2(X, i: L\to X)$ denote the set of disks $u: (D, \partial D)\to (X, L)$ with the property that $u|_{\partial D}$ factors through the immersion $i$.
If $i$ is an embedding, this is exactly the same as $\pi_2(X, i(L))$.
Suppose that $i^\pm: L\to X$ are exactly homotopic Lagrangian immersions. There is a canonical identification $\pi_2(X, i^+: L\to X)\cong\pi_2(X, i^-: L\to X)$, and the Maslov index and symplectic area
\begin{align*}
    \mu:& \pi_2(X, i^\pm: L\to X)\\
    \omega:& \pi_2(X, i^\pm: L\to X)
\end{align*}
are unchanged under this identification. This is not the case when we work with  $\pi_2(X, i^\pm(L))$, which cannot usually be identified. Even when they are (i.e. there are no birth/deaths of self-intersections), the symplectic area of polygons with corners of the self-intersection on $i^\pm(L)$ can change. However, from this we can conclude: 
\begin{claim}
    Suppose that  $i^+: L\to X$ is an embedding which is exactly homotopic to $i^-: L\to X$ (which may not be an embedding). If $i^-(L)\subset X$ is monotone, then $i^+(L)$ is monotone.
    \label{claim:homotopymonotone}
\end{claim}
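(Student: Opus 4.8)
The plan is to exploit the observation made just before the claim: for an exact homotopy $i^\pm : L \to X$, there is a canonical identification of the ``relative $\pi_2$ through the immersion'' groups, $\pi_2(X, i^+ \colon L \to X) \cong \pi_2(X, i^- \colon L \to X)$, under which both the Maslov index $\mu$ and the symplectic area $\omega$ are preserved. First I would make this identification precise: given the homotopy $i_t : L \times [0,1] \to X$ with $i_0 = i^-$, $i_1 = i^+$, a disk $u \colon (D, \partial D) \to (X, L)$ whose boundary factors as $\gamma \colon \partial D \to L$ followed by $i^-$ can be pushed forward by gluing on the cylinder $i_t \circ (\gamma \times \mathrm{id})$ to produce a disk with boundary factoring through $i^+$; this gives a bijection on homotopy classes. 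The Maslov index is unchanged because the Lagrangian loop of tangent planes along the boundary is deformed continuously (the homotopy is through Lagrangian immersions, so $i_t{}_* T_{\gamma(\theta)} L$ stays a Lagrangian loop and its Maslov class is a homotopy invariant). The area is unchanged because the added cylinder has zero symplectic area: this is exactly where exactness of the homotopy enters — $\int \omega$ over the cylinder equals the flux evaluated on $[\gamma]$, which is exact, hence integrates to zero over the closed loop $\partial D$. (If one is careful, one notes $[\gamma]$ need not be a cycle, but the disk $u$ caps it off, so the relevant boundary term genuinely vanishes; I would phrase this via $\omega(u^+) - \omega(u^-) = \int_{\text{cyl}} \omega = \Flux(\gamma)$ and exactness.)

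With this identification in hand, the proof is short. Since $i^+$ is an embedding, $\pi_2(X, i^+ \colon L \to X) = \pi_2(X, i^+(L))$ — every disk with boundary on the embedded submanifold $i^+(L)$ automatically has boundary factoring through $i^+$. Now suppose $i^-(L)$ is monotone, i.e. there is $\lambda > 0$ with $\omega = \lambda \mu$ on $\pi_2(X, i^-(L))$. The subtlety is that $\pi_2(X, i^-(L))$ is a priori a \emph{quotient} of $\pi_2(X, i^- \colon L \to X)$ only when $i^-$ is an embedding; in general for an immersion the natural map goes $\pi_2(X, i^- \colon L \to X) \to \pi_2(X, i^-(L))$ and need not be surjective (disks on $i^-(L)$ may have boundary passing through self-intersection branches). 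So I would be careful to state monotonicity of $i^-(L)$ as a hypothesis on $\pi_2(X, i^-(L))$ and deduce the relation $\omega = \lambda\mu$ on the image of $\pi_2(X, i^- \colon L \to X)$, which is what the homotopy transports. Then: $\omega = \lambda \mu$ holds on $\pi_2(X, i^- \colon L \to X) \cong \pi_2(X, i^+ \colon L \to X) = \pi_2(X, i^+(L))$, which is precisely monotonicity of $i^+(L)$, with the same proportionality constant $\lambda$.

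The main obstacle I anticipate is not analytic but bookkeeping: pinning down exactly which $\pi_2$'s are being compared and ensuring the exactness hypothesis is used at the right point. The statement in the excerpt quietly uses ``exactly homotopic,'' and if one only assumed a smooth (non-exact) Lagrangian homotopy the area would shift by the flux and monotonicity could genuinely fail — so the proof must visibly invoke exactness of $\Flux_{t_0}(i_t)$ when showing the interpolating cylinder is $\omega$-null. A secondary point to handle cleanly is the minimal-Maslov / torsion issue: if $\mu$ is identically zero on the relevant classes, ``monotone'' is usually taken to additionally require $\omega$ to vanish there too, and the identification handles this case uniformly since $\omega$ is transported to $\omega$ and $\mu$ to $\mu$. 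I do not expect to need any Gromov compactness or moduli-space input; the claim is purely a statement about symplectic areas and Maslov indices of disk classes, invariant under an exact Lagrangian homotopy.
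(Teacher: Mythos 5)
Your proposal is correct and is essentially the paper's own argument: the claim is deduced directly from the preceding observation, by transporting disk classes along the trace cylinder of the exact homotopy (which preserves $\mu$ by continuity of the Lagrangian boundary loops and preserves $\omega$ because the exact flux class evaluates to zero on the boundary loop), and then using that for the embedding $i^+$ one has $\pi_2(X, i^+\colon L\to X)=\pi_2(X, i^+(L))$, while monotonicity of $i^-(L)$ restricts to the image of $\pi_2(X, i^-\colon L\to X)$. One tiny correction to your parenthetical: $\gamma=u|_{\partial D}$ is already a closed loop in $L$, hence a cycle, so exactness of the flux class kills the cylinder area with no need to invoke the capping disk.
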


Let $L\subset X$ be a displaceable oriented monotone embedded Lagrangian submanifold, with a given parameterization $i_t^L:L\times [0, 1]_t\to X$ of the exact isotopy  which displaces the Lagrangian submanifold from itself (for example, from \cite[Theorem 1.4]{oakley2016certain}). 
Additionally we suppose that this exact isotopy is picked so that $i_t, i_{1-t}$ are disjoint for all $t\in (0, 1/4)$. Let $H_t: L\to \RR$ be the primitive of the flux form. Pick $C$ large enough so that 
\begin{equation}
    |H_t|<C/2.
    \label{eq:HamiltonianBound}
\end{equation}

Consider now the curve $\gamma_0: S^1=\RR_t/4\ZZ\to T^*S^1=\CC/(100C\ZZ)$ parameterized in \cref{fig:gamma0}, whose parameterization over $[1, 2]$ and $[3, 4]$ is given by $t+2\jmath C$ and $(t-2)-2\jmath C$ respectively. The self-intersection of the curve occurs at $\gamma_0(0.5)=\gamma_0(2.5)$. The parameterization is picked so that those segments have normal neighborhoods of radius $C$ over those portions of the parameterization. The area $A$ highlighted in blue (notably avoiding the normal neighborhoods) is chosen to be at least $10C$.

Since $\gamma_0$ parameterizes a Whitney sphere, we can construct  a null-cobordism $j^\gamma: D^2\to T^*S^1\times \CC$,
\[j^\gamma(D^2): \emptyset \rightsquigarrow \gamma_0(S^1).\]
We will denote the cobordism parameter on $T^*S^1\times \CC$ by $s$.
We parameterize the Lagrangian null-cobordism so that it is the suspension of an exact homotopy when $s\leq 1$, as drawn in the bottom of \cref{fig:oneEnded}.
The negative end of this cobordism has a chart 
\begin{align*}
     j^\gamma|_{s\leq 1}:S^1\times(-\infty, 1]_s\to T^*S^1\times \CC\\
     (t, s)\mapsto( \gamma_s(t), s+\jmath H^\gamma_s(t))
\end{align*}
Because we picked $A>10C$, we can ensure that the following criteria are met in the above chart:
\begin{enumerate}[(A)]
    \item During this portion of the Lagrangian null-cobordism, $\gamma_s(t)=\gamma(t)$ for all $t\in [1, 2]\cup[3,4]$. This means that $H^{\gamma}_s(t)$ is constant in the $t$ parameter over $[1,2]\cup [3,4]$. \label{item:constantOverSuspension}
    \item If $\gamma_s(t_0)=\gamma_s(t_1)$, with $t_0\neq t_1$, then $t_0, t_1\in \{0.5,2.5\}$. \label{item:fixedIntersection}
    \item For all $s\in [1/8,1]$, we have $H^{\gamma}_s(0.5)<-C, H^{\gamma}_s(2.5)>C$. \label{item:LargeHamiltonian}
    \item $\gamma_s|_{t\in (0,1)\cup(2,3)}$ remains disjoint from the normal neighborhoods of radius $C$ drawn in \cref{fig:gamma0}. \label{item:avoidsNeighborhoods}
\end{enumerate}

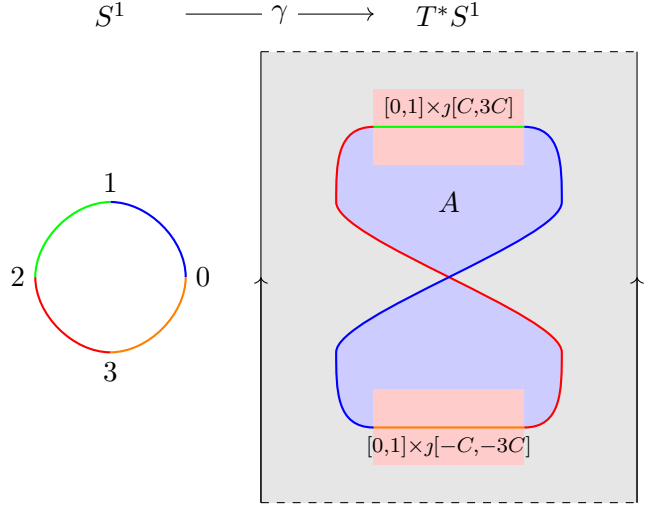
\begin{figure}
    \centering
\begin{tikzpicture}

    \fill[gray!20]  (4,-2) rectangle (-1,4);
        \fill[blue!20] (0.5,3) .. controls (0,3) and (0,2.5) .. (0,2) .. controls (0,1.5) and (3,0.5) .. (3,0) .. controls (3,-0.5) and (3,-1) .. (2.5,-1) .. controls (2,-1) and (1,-1) .. (0.5,-1) .. controls (0,-1) and (0,-0.5) .. (0,0) .. controls (0,0.5) and (3,1.5) .. (3,2) .. controls (3,2.5) and (3,3) .. (2.5,3);
    
        \fill[red!20]  (0.5,3.5) rectangle (2.5,2.5);
        \fill[red!20] (0.5,-0.5) rectangle (2.5,-1.5);
            \draw[red, thick] (2.5,-1) .. controls (3,-1) and (3,-0.5) .. (3,0) .. controls (3,0.5) and (0,1.5) .. (0,2) .. controls (0,2.5) and (0,3) .. (0.5,3);
            \draw[blue, thick] (2.5,3) .. controls (3,3) and (3,2.5) .. (3,2) .. controls (3,1.5) and (0,0.5) .. (0,0) .. controls (0,-0.5) and (0,-1) .. (0.5,-1);
            \draw[green, thick] (0.5,3) -- (2.5,3);
            \draw[orange, thick] (0.5,-1) -- (2.5,-1);
        \draw[blue, thick] (-2,1) .. controls (-2,1.5) and (-2.5,2) .. (-3,2);
        \draw[green, thick] (-3,2) .. controls (-3.5,2) and (-4,1.5) .. (-4,1);
        \draw[red, thick] (-4,1) .. controls (-4,0.5) and (-3.5,0) .. (-3,0);
        \draw[orange, thick] (-3,0) .. controls (-2.5,0) and (-2,0.5) .. (-2,1);
        \draw (-2,4.5) edge[->] node[fill=white]{$\gamma$} (0.5,4.5);
        \node at (1.5,3.25) {$\scriptstyle[0,1]\times \jmath [C, 3C]$};
        \node at (1.5,-1.25) {$\scriptstyle[0,1]\times \jmath [-C, -3C]$};
        \node at (1.5,2) {$A$};
    \draw(-1,-2) edge[->](-1,1)  -- (-1,4) (4,4) -- (4,-2)edge[->](4,1) ;
    \draw[dashed] (-1,4) -- (4,4) (-1,-2) -- (4,-2);
    
    \node at (1.5,4.5) {$T^*S^1$};
    \node at (-3,4.5) {$S^1$};
\node[above] at (-3,2) {$1$};
\node[left] at (-4,1) {$2$};
\node[below] at (-3,0) {$3$};
\node[right] at (-2,1) {$0$};
\end{tikzpicture}
     \caption{The path parameterized by $\gamma_0$.}
\label{fig:gamma0}
\end{figure}

We now describe the main idea of the construction of the null-cobordism. Consider the Lagrangian null-cobordism
\[j^\gamma\times i^L_0(D^2\times L): \emptyset \rightsquigarrow \gamma_0(S^1)\times i^L_0(L).\]
This is an immersed monotone Lagrangian null-cobordism, with self-intersection along $\{0\}\times L\times \RR_{<0}\subset \CC\times X\times \CC$. 
Since $L$ is displaceable from itself, we can hope to make this immersed Lagrangian null-cobordism an embedded Lagrangian null-cobordism by application of an exact homotopy. 

We first describe the negative end of this embedded Lagrangian null-cobordism. Consider a cutoff function $\rho_0(t):[0,1]\to [0,1]$ with the property that $\partial_t\rho<2$, $\rho|_{t<1/10}=0$ and $\rho|_{t>9/10}=1$. We look at the Lagrangian submanifold $\overline i_0^L: L\times S^1\to X\times T^*S^1$ drawn in \cref{fig:perturbedfigure8} which is obtained by concatenating:
\begin{itemize}
    \item The identity cobordism for $i_0:L\to X$ along the blue path,
    \item The suspension of the exact isotopy $i_{\rho_0(t)}:L\times[0,1]_t\to X$ (the green region)
    \item The identity cobordism for $i_1:L\to X$ along the red path,
    \item The suspension of the exact isotopy $i_{\rho_0(-t)}:L\times [0, 1]_t\to X$ (the orange region).
\end{itemize}
The blue path and red path are chosen to match the paths of $\gamma_0$ drawn in \cref{fig:gamma0}, and by our bound on $\partial_t\rho$, and the suspension cobordisms have shadow lying inside the neighborhoods surrounding $\gamma_0|_{t\in [1,2]\cup [3,4]}$.  Since $L$ is embedded and each of the four pieces described above are embedded; therefore the only self-intersections of $\overline i_0^L:L\times S^1\to X\times T^*S^1$ occur when the shadows of these four pieces overlap. By \cref{item:avoidsNeighborhoods}, the only such occurrence is when the blue and red path cross. This occurs at $t=0.5, 1.5$ by \cref{item:fixedIntersection}.
However the $X$-component of these pieces are given by $i_0:L\to X$ and $i_1:L\to X$, which were chosen to be disjoint. Therefore the Lagrangian parameterized by $\overline i_0^L$ is embedded.

We are now ready to modify the immersed Lagrangian null-cobordism $j^\gamma\times i^L_0:D^2\times L\to ( X\times T^*S^1\times \CC)$ so that the end agrees with $\overline i_0^L$. This Lagrangian cobordism is embedded over the portion where the cobordism parameter is greater than 1. 
We therefore need to replace $j^\gamma\times i^L_0|_{s\leq 1}$ with a suspension of an exact homotopy (defined for $s\leq 1)$ 
\begin{enumerate}[(i)]
    \item whose negative end is given by the Lagrangian $\overline i_0^L:S^1\times L\to T^*S^1\times X$;\label{item:negEnd}
    \item which agrees with $j^\gamma\times i^L_0$ in a neighborhood $s=1$; and \label{item:posEnd}
    \item is embedded.\label{item:embedded}
\end{enumerate}
The slices of the suspension are drawn in \cref{fig:oneEnded}. 
We now give an explicit parameterization of this exact homotopy. Let $\rho_s(t):[0, 1]_t\times[0, 1]_s\to [0, 1]$ be a smooth function drawn in \cref{fig:auxfunction} with
\begin{enumerate}[(a)]
    \item $\rho_s(0)=0$, $\rho_0(1)=1$ and $\rho_1(1)=0$;\label{item:rhoBoundary}
    \item $0\leq \partial_t\rho_s  < 2C$;\label{item:rhoSmallDerivative}
    \item When $0\leq s<1/4$ and $3/4\leq s \leq 1$, we have $\partial_s \rho_s=0$; and \label{item:rhoCobordism}
    \item When $0\leq t<1/10$ and $9/10 \leq t \leq 1$, we have $\partial_t \rho_s=0$. \label{item:gluesToCobordism}
\end{enumerate}

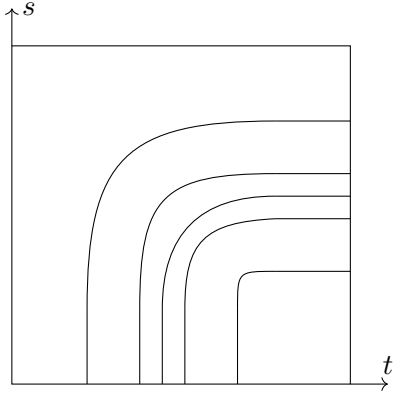
\begin{SCfigure}
    \begin{tikzpicture}

\draw  (-2,1) rectangle (2.5,-3.5)  {};
\draw (-1,-2.5) .. controls (-1,-0.5) and (-0.5,0) .. (1.5,0);
\draw (-1,-3.5) -- (-1,-2.5);
\draw (2.5,0) -- (1.5,0) (-0.3,-3.5) -- (-0.3,-2.5) (0,-3.5) -- (0,-2.5) (0.3,-3.5) -- (0.3,-2.5) (1,-3.5) -- (1,-2.5);
\draw (1.5,-0.7) -- (2.5,-0.7) (1.5,-1) -- (2.5,-1) (1.5,-1.3) -- (2.5,-1.3) (1.5,-2) -- (2.5,-2);
\draw (-0.3,-2.5) .. controls (-0.3,-1) and (0,-0.7) .. (1.5,-0.7);
\draw (0,-2.5) .. controls (0,-1.5) and (0.5,-1) .. (1.5,-1);
\draw (0.3,-2.5) .. controls (0.3,-1.6) and (0.55,-1.35) .. (1.5,-1.3);
\draw (1,-2.5) .. controls (1,-2) and (1,-2) .. (1.5,-2);
\draw[->] (2.5,-3.5)  -- (3,-3.5);
\draw[->] (-2,1) -- (-2,1.5);
\node[above] at (3,-3.5) {$t$};
\node[right] at (-2,1.5) {$s$};
\end{tikzpicture}     \caption{A contour plot of the  auxiliary function $\rho_{s}(t): [0,1]_t\times [0,1]_s\to [0,1]$. }
    \label{fig:auxfunction}
\end{SCfigure}
Consider now the exact homotopy of Lagrangian submanifolds defined for $s\in [0, 1]$
\begin{align*}
    \overline i_s^L: L\times S^1\to& X\times T^*S^1\\
    (q, t)\mapsto &\left\{\begin{array}{cc}
        \left(i_0^L(q), \gamma_s(t)\right)&\text{ if $t\in [0, 1)$}\\
        \left(i^L_{\rho_s(t-1)}(q), \gamma_s(t)-\jmath\frac{d\rho_s}{dt}H_{\rho_s(t-1)}(q)\right)& \text{ if $t\in [1, 2)$}\\
        \left(i^L_{s}(q), \gamma_s(t)\right) & \text{ if $t\in [2, 3)$}\\
        \left(i^L_{\rho_s(4-t)} ,\gamma_s(t)+\jmath\frac{d\rho_{s}}{dt}H_{\rho_s(4-t)}(q)\right) & \text{if $t\in [3, 4)$}
    \end{array}\right.
\end{align*}
The map $i_s^L$ is smooth by \cref{item:gluesToCobordism}.

Now consider the Lagrangian suspension cobordism of this exact homotopy
\begin{align*}
    j^L|_{(0,1)}: L\times S^1\times (0,1)\to& X\times T^*S^1\times \CC\\
    (q, t, s)\mapsto& \left(\overline i^L_s(q, t), s+\jmath \overline H_s(q, t)\right)
\end{align*}
where $\overline H_s(q, t)$ is the primitive for the exact homotopy $\overline i^L_s(q, t)$. 
By using \cref{item:rhoBoundary} of $\rho$, boundary of the suspension satisfies agrees with $\lj^\gamma\times i^L_0(D^2\times L)|_{s=1}$, and by \cref{item:gluesToCobordism} the domains of the suspension and $\lj^\gamma\times i^L_0(D^2\times L)|_{s\geq 1}$ can be glued together to give a smooth Lagrangian cobordism.

We now prove that $ j^L|_{(0,1)}$ is an embedding (\cref{item:embedded}).
Observe that the slices of $j^L|_{(0,1)}$ are embedded away from the locus $t\in \{0.5, 2.5\}, s>1/4$. By employing  \cref{item:LargeHamiltonian,eq:HamiltonianBound,item:rhoSmallDerivative} over the region where $s>1/4$, 
\begin{align*}
    \overline H_s(q, 0.5)&<H^{\gamma}_s(q, 0.5)+|\partial_t\rho_s||H_t|< C + 2\cdot \frac{C}{2}\\
    &\leq 0\leq C-2\cdot \frac{C}{2}<H^{\gamma}_s(q, 1.5)+|\partial_t \rho_s||H_t|< \overline H_s(q, 0.5) 
\end{align*}
so $j^L|_{(0,1)}$ is an embedding.

\begin{figure}
    \centering
    \begin{tikzpicture}

\fill[gray!20]  (4,-2) rectangle (-1,4);

        \draw[red, thick] (2.5,-1) .. controls (3,-1) and (3,-0.5) .. (3,0) .. controls (3,0.5) and (0,1.5) .. (0,2) .. controls (0,2.5) and (0,3) .. (0.5,3);
        \draw[blue, thick] (2.5,3) .. controls (3,3) and (3,2.5) .. (3,2) .. controls (3,1.5) and (0,0.5) .. (0,0) .. controls (0,-0.5) and (0,-1) .. (0.5,-1);
        \draw[green, thick] (0.5,3) -- (2.5,3);
        \draw[orange, thick] (0.5,-1) -- (2.5,-1);
    
\draw(-1,-2) edge[->](-1,1)  -- (-1,4) (4,4) -- (4,-2)edge[->](4,1) ;
\draw[dashed] (-1,4) -- (4,4) (-1,-2) -- (4,-2);

\draw[fill=green] (2.5,3) .. controls (2,3) and (2,2.25) .. (1.5,2.25) .. controls (1,2.25) and (1,3) .. (0.5,3) .. controls (1,3) and (1,3.75) .. (1.5,3.75) .. controls (2,3.75) and (2,3) .. (2.5,3);
\draw[fill=orange] (0.5,-1) .. controls (1,-1) and (1,-1.75) .. (1.5,-1.75) .. controls (2,-1.75) and (2,-1) .. (2.5,-1) .. controls (2,-1) and (2,-0.25) .. (1.5,-0.25) .. controls (1,-0.25) and (1,-1) .. (0.5,-1);

\node[blue, fill=gray!20] at (2.85,1.95) {$i_0^L(L)\times [0,1]$};
\node at (1.5,3) {$K_{i_t^L}^{-1}$};
\node[red, fill=gray!20] at (0.2,1.95) {\scriptsize$ i_1^L(L)\times [2,3]$};
\node at (1.5,-1) {$K_{i_t^L}$};
\end{tikzpicture}
     \caption{The Lagrangian submanifold $\overline i^L_0(q, t): L\times S^1\to X\times T^*S^1$, projected to the $S^1$ coordinate.}
    \label{fig:perturbedfigure8}
\end{figure}
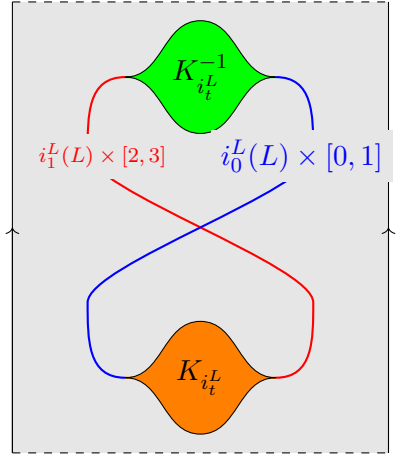
We now describe a Lagrangian null-cobordism of $\overline i^L_0(q, t)$. 

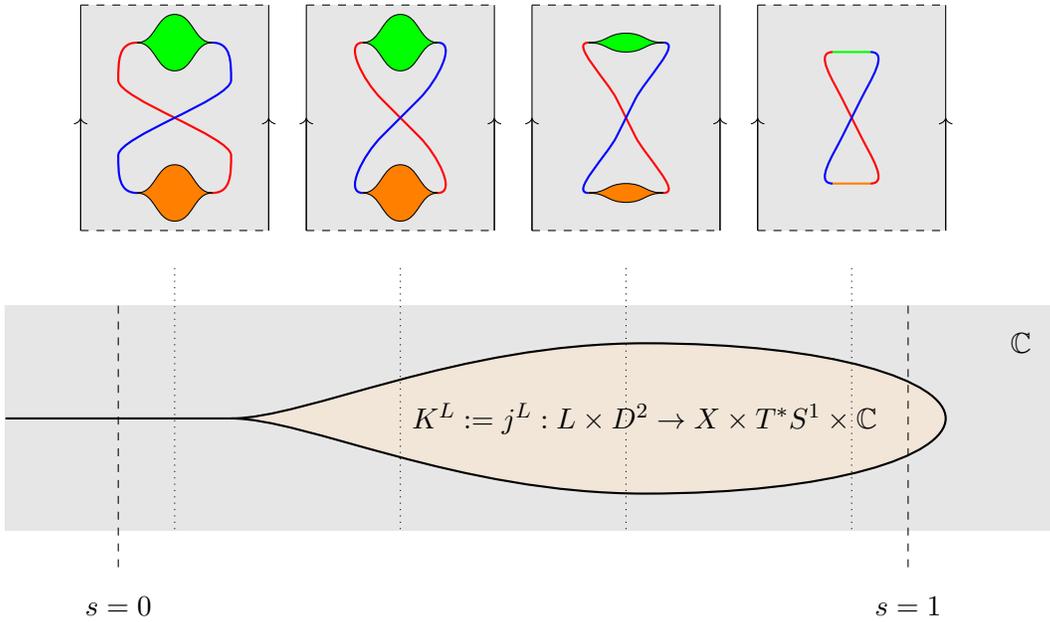
\begin{figure}
    \centering
    \begin{tikzpicture}
    \begin{scope}[shift={(3,-1)},scale=0.5]
    
    \fill[gray!20]  (4,-2) rectangle (-1,4);
    
            \draw[red, thick] (2.5,-1) .. controls (3,-1) and (3,-0.5) .. (3,0) .. controls (3,0.5) and (0,1.5) .. (0,2) .. controls (0,2.5) and (0,3) .. (0.5,3);
            \draw[blue, thick] (2.5,3) .. controls (3,3) and (3,2.5) .. (3,2) .. controls (3,1.5) and (0,0.5) .. (0,0) .. controls (0,-0.5) and (0,-1) .. (0.5,-1);
            \draw[green, thick] (0.5,3) -- (2.5,3);
            \draw[orange, thick] (0.5,-1) -- (2.5,-1);
        
    \draw(-1,-2) edge[->](-1,1)  -- (-1,4) (4,4) -- (4,-2)edge[->](4,1) ;
    \draw[dashed] (-1,4) -- (4,4) (-1,-2) -- (4,-2);

    \draw[fill=green] (2.5,3) .. controls (2,3) and (2,2.25) .. (1.5,2.25) .. controls (1,2.25) and (1,3) .. (0.5,3) .. controls (1,3) and (1,3.75) .. (1.5,3.75) .. controls (2,3.75) and (2,3) .. (2.5,3);
    \draw[fill=orange] (0.5,-1) .. controls (1,-1) and (1,-1.75) .. (1.5,-1.75) .. controls (2,-1.75) and (2,-1) .. (2.5,-1) .. controls (2,-1) and (2,-0.25) .. (1.5,-0.25) .. controls (1,-0.25) and (1,-1) .. (0.5,-1);
    
    \end{scope}

    \begin{scope}[shift={(6,-1)},scale=0.5]
    
    \fill[gray!20]  (4,-2) rectangle (-1,4);
    
            \draw[red, thick] (2.5,-1) .. controls (3,-1) and (2.5,0) .. (2,0.5) .. controls (1.5,1) and (1.5,1) .. (1,1.5) .. controls (0.5,2) and (0,3) .. (0.5,3);
            \draw[blue, thick] (2.5,3) .. controls (3,3) and (2.5,2) .. (2,1.5) .. controls (1.5,1) and (1.5,1) .. (1,0.5) .. controls (0.5,0) and (0,-1) .. (0.5,-1);
            \draw[green, thick] (0.5,3) -- (2.5,3);
            \draw[orange, thick] (0.5,-1) -- (2.5,-1);
        
    \draw(-1,-2) edge[->](-1,1)  -- (-1,4) (4,4) -- (4,-2)edge[->](4,1) ;
    \draw[dashed] (-1,4) -- (4,4) (-1,-2) -- (4,-2);

    \draw[fill=green] (2.5,3) .. controls (2,3) and (2,2.25) .. (1.5,2.25) .. controls (1,2.25) and (1,3) .. (0.5,3) .. controls (1,3) and (1,3.75) .. (1.5,3.75) .. controls (2,3.75) and (2,3) .. (2.5,3);
    \draw[fill=orange] (0.5,-1) .. controls (1,-1) and (1,-1.75) .. (1.5,-1.75) .. controls (2,-1.75) and (2,-1) .. (2.5,-1) .. controls (2,-1) and (2,-0.25) .. (1.5,-0.25) .. controls (1,-0.25) and (1,-1) .. (0.5,-1);
    
    \end{scope}

    \begin{scope}[shift={(9,-1)},scale=0.5]
    
    \fill[gray!20]  (4,-2) rectangle (-1,4);
    
            \draw[red, thick] (2.5,-1) .. controls (3,-1) and (2,0) .. (1.75,0.5) .. controls (1.5,1) and (1.5,1) .. (1.25,1.5) .. controls (1,2) and (0,3) .. (0.5,3);
            \draw[blue, thick] (2.5,3) .. controls (3,3) and (2,2) .. (1.75,1.5) .. controls (1.5,1) and (1.5,1) .. (1.25,0.5) .. controls (1,0) and (0,-1) .. (0.5,-1);
            \draw[green, thick] (0.5,3) -- (2.5,3);
            \draw[orange, thick] (0.5,-1) -- (2.5,-1);
        
    \draw(-1,-2) edge[->](-1,1)  -- (-1,4) (4,4) -- (4,-2)edge[->](4,1) ;
    \draw[dashed] (-1,4) -- (4,4) (-1,-2) -- (4,-2);

    \draw[fill=green] (2.5,3) .. controls (2,3) and (2,2.75) .. (1.5,2.75) .. controls (1,2.75) and (1,3) .. (0.5,3) .. controls (1,3) and (1,3.25) .. (1.5,3.25) .. controls (2,3.25) and (2,3) .. (2.5,3);
    \draw[fill=orange] (0.5,-1) .. controls (1,-1) and (1,-1.25) .. (1.5,-1.25) .. controls (2,-1.25) and (2,-1) .. (2.5,-1) .. controls (2,-1) and (2,-0.75) .. (1.5,-0.75) .. controls (1,-0.75) and (1,-1) .. (0.5,-1);
    
    \end{scope}

    \begin{scope}[shift={(12,-1)},scale=0.5]
    
    \fill[gray!20]  (4,-2) rectangle (-1,4);
    
            \draw[red, thick] (2,-0.75) .. controls (2.5,-0.75) and (2,0) .. (1.75,0.5) .. controls (1.5,1) and (1.5,1) .. (1.25,1.5) .. controls (1,2) and (0.5,2.75) .. (1,2.75);
            \draw[blue, thick] (2,2.75) .. controls (2.5,2.75) and (2,2) .. (1.75,1.5) .. controls (1.5,1) and (1.5,1) .. (1.25,0.5) .. controls (1,0) and (0.5,-0.75) .. (1,-0.75);
            \draw[green, thick] (1,2.75) -- (2,2.75);
            \draw[orange, thick] (1,-0.75) -- (2,-0.75);
        
    \draw(-1,-2) edge[->](-1,1)  -- (-1,4) (4,4) -- (4,-2)edge[->](4,1) ;
    \draw[dashed] (-1,4) -- (4,4) (-1,-2) -- (4,-2);

    \end{scope}

    \fill[gray!20]  (1.5,-3) rectangle (15.5,-6);
    \draw[fill=brown!20, thick] (1.5,-4.5) .. controls (2.5,-4.5) and (3.5,-4.5) .. (4.5,-4.5) .. controls (5.5,-4.5) and (7.5,-5.5) .. (10,-5.5) .. controls (12.5,-5.5) and (14,-5) .. (14,-4.5) .. controls (14,-4) and (12.5,-3.5) .. (10,-3.5) .. controls (7.5,-3.5) and (5.5,-4.5) .. (4.5,-4.5);
    \draw[dashed] (3,-3) -- (3,-6.5);
    \node at (3,-7) {$s=0$};
    
    \node at (13.5,-7) {$s=1$};
    \draw[dotted] (3.75,-2.5) -- (3.75,-6);
    \draw[dotted] (6.75,-2.5) -- (6.75,-6);
    \draw[dotted] (9.75,-2.5) -- (9.75,-6);
    \draw[dotted] (12.75,-2.5) -- (12.75,-6);
    \draw[dotted][dashed] (13.5,-3) -- (13.5,-6.5);
    \node at (10,-4.5) {$K^L:= j^L: L\times D^2\to X\times T^*S^1\times \mathbb C$};
    \node at (15,-3.5) {$\mathbb C$};
\end{tikzpicture}
         \caption{An embedded oriented monotone null-cobordism with connected ends}
    \label{fig:oneEnded}
\end{figure}

By \cref{item:negEnd,item:posEnd} we can form an embedded Lagrangian cobordism $K^L: \emptyset \to \bar i^L_0(L\times S^1)$  which is parameterized by $ j^L|_{(0, 1)}$ for slices with $0<s<1$, and parameterized by $i_1^L\times j^\gamma$ for slices $s>1$. We denote this parameterization by
\[
     j^L: L\times D^2\to X\times T^*S^1\times \CC
\]
which is an embedded Lagrangian null-cobordism.

We now prove that $ j^L: L\times D^2\to X\times T^*S^1\times \CC$ is monotone. There is an exact homotopy between $\tilde j^L$ and $i^L_1\times j^\gamma$.
Since the Maslov index and symplectic area decompose across products, and the symplectic area and Maslov index vanish on $\pi_2(X\times T^*S^1\times \CC, j :D^2\to T^*S^1\times \CC)$, $i^L_1\times j^\gamma: L\times D^2\to X\times T^*S^1\times \CC$ , the immersed Lagrangian is monotone with monotonicity constant matching $L$.
By \cref{claim:homotopymonotone}, $j^L: L\times D^2\times X\to T^*S^1\times \CC$ is monotone as well.

\subsection{A 2-ended monotone embedded Lagrangian Cobordism}
\label{subsec:2ended}
We now consider the Lagrangian $L_1=L^{p}_{k,m}\times L^{p}_{k,m}\subset X\times \bar X$. This can be displaced from itself in a way so that the resulting displacement avoids the diagonal Lagrangian $i^{L_2}:L_2\subset X\times \bar X$. Let $i^{S^1}: S^1\to T^*S^1$ be the zero section which intersects the figure eight exactly at the self-intersection of $\gamma_s$.
Consider the embedded Lagrangian submanifold $\bar i^{L_1}_0: L_1\times S^1\subset (X\times \bar X)\times T^*S^1$ from the previous section. We also take the product Lagrangian $i^{L_2}\times i^{S^1}:L_2\times S^1\to (X\times \bar X)\times T^*S^1$. 
These two Lagrangians intersect along $\Delta_L\times \{0\}\subset (X\times \bar X)\times T^*S^1$.

We take the Lagrangian surgery cobordism $K: (\bar i^{L_1}_0,i^{L_2}\times i^{S^1} )\rightsquigarrow \bar i^{L_1}_0\# (i^{L_2}\times i^{S^1})$. By \cref{subsec:notwidenornarrow}, this is a 3-ended monotone Lagrangian cobordism. We concatenate this with the Lagrangian cobordism $K^{L_1}: \emptyset \to \bar i^{L_1}_0$ constructed in \cref{subsec:oneended} to obtain a 2-ended Lagrangian cobordism $K\circ ((K^{L_1})^{-1}\cup L_2\times \RR): i^{L_2}\times i^{S^1}\to  \bar i^{L_1}_0\# (i^{L_2}\times i^{S^1})$.
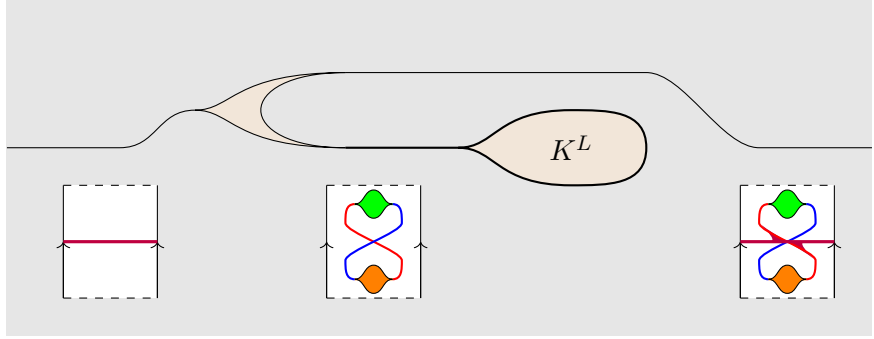
\begin{figure}
    \centering
    \begin{tikzpicture}

\fill[gray!20]  (3,-4.5) rectangle (14.5,-9);
\draw[fill=brown!20, thick] (7.5,-6.5) .. controls (8.5,-6.5) and (8.5,-6.5) .. (9,-6.5) .. controls (9.5,-6.5) and (9.5,-7) .. (10.5,-7) .. controls (11,-7) and (11.5,-7) .. (11.5,-6.5) .. controls (11.5,-6) and (11,-6) .. (10.5,-6) .. controls (9.5,-6) and (9.5,-6.5) .. (9,-6.5);

\node at (10.5,-6.5) {$K^L$};
\draw[fill=brown!20] (7.5,-5.5) .. controls (6,-5.5) and (6,-6) .. (5.5,-6) .. controls (6,-6) and (6,-6.5) .. (7.5,-6.5) .. controls (6,-6.5) and (6,-5.5) .. (7.5,-5.5);

\begin{scope}[shift={(7.5,-8)},scale=0.25]

\fill[white!20]  (4,-2) rectangle (-1,4);

        \draw[red, thick] (2.5,-1) .. controls (3,-1) and (3,-0.5) .. (3,0) .. controls (3,0.5) and (0,1.5) .. (0,2) .. controls (0,2.5) and (0,3) .. (0.5,3);
        \draw[blue, thick] (2.5,3) .. controls (3,3) and (3,2.5) .. (3,2) .. controls (3,1.5) and (0,0.5) .. (0,0) .. controls (0,-0.5) and (0,-1) .. (0.5,-1);
        \draw[green, thick] (0.5,3) -- (2.5,3);
        \draw[orange, thick] (0.5,-1) -- (2.5,-1);
    
\draw(-1,-2) edge[->](-1,1)  -- (-1,4) (4,4) -- (4,-2)edge[->](4,1) ;
\draw[dashed] (-1,4) -- (4,4) (-1,-2) -- (4,-2);

\draw[fill=green] (2.5,3) .. controls (2,3) and (2,2.25) .. (1.5,2.25) .. controls (1,2.25) and (1,3) .. (0.5,3) .. controls (1,3) and (1,3.75) .. (1.5,3.75) .. controls (2,3.75) and (2,3) .. (2.5,3);
\draw[fill=orange] (0.5,-1) .. controls (1,-1) and (1,-1.75) .. (1.5,-1.75) .. controls (2,-1.75) and (2,-1) .. (2.5,-1) .. controls (2,-1) and (2,-0.25) .. (1.5,-0.25) .. controls (1,-0.25) and (1,-1) .. (0.5,-1);

\end{scope}

\begin{scope}[shift={(13,-8)},scale=0.25]

\fill[white!20]  (4,-2) rectangle (-1,4);

\fill[fill=purple] (0.5,1) .. controls (1,1) and (0.5,1.5) .. (0,2) .. controls (0.5,1.5) and (2.5,0.5) .. (3,0) .. controls (2.5,0.5) and (2,1) .. (2.5,1) .. controls (2,1) and (1,1) .. (0.5,1);

        \draw[red, thick] (2.5,-1) .. controls (3,-1) and (3,-0.5) .. (3,0) .. controls (3,0.5) and (0,1.5) .. (0,2) .. controls (0,2.5) and (0,3) .. (0.5,3);
        \draw[blue, thick] (2.5,3) .. controls (3,3) and (3,2.5) .. (3,2) .. controls (3,1.5) and (0,0.5) .. (0,0) .. controls (0,-0.5) and (0,-1) .. (0.5,-1);
        \draw[green, thick] (0.5,3) -- (2.5,3);
        \draw[orange, thick] (0.5,-1) -- (2.5,-1);
    
\draw(-1,-2) edge[->](-1,1)  -- (-1,4) (4,4) -- (4,-2)edge[->](4,1) ;
\draw[dashed] (-1,4) -- (4,4) (-1,-2) -- (4,-2);

\draw[fill=green] (2.5,3) .. controls (2,3) and (2,2.25) .. (1.5,2.25) .. controls (1,2.25) and (1,3) .. (0.5,3) .. controls (1,3) and (1,3.75) .. (1.5,3.75) .. controls (2,3.75) and (2,3) .. (2.5,3);
\draw[fill=orange] (0.5,-1) .. controls (1,-1) and (1,-1.75) .. (1.5,-1.75) .. controls (2,-1.75) and (2,-1) .. (2.5,-1) .. controls (2,-1) and (2,-0.25) .. (1.5,-0.25) .. controls (1,-0.25) and (1,-1) .. (0.5,-1);
\draw[very thick, purple] (-1,1) -- (4,1);\end{scope}

\begin{scope}[shift={(4,-8)},scale=0.25]

\fill[white!20]  (4,-2) rectangle (-1,4);

\draw(-1,-2) edge[->](-1,1)  -- (-1,4) (4,4) -- (4,-2)edge[->](4,1) ;
\draw[dashed] (-1,4) -- (4,4) (-1,-2) -- (4,-2);

\draw[very thick, purple] (-1,1) -- (4,1);
\end{scope}

\draw (7.5,-5.5) .. controls (8,-5.5) and (11,-5.5) .. (11.5,-5.5) .. controls (12,-5.5) and (12.5,-6.5) .. (13,-6.5) .. controls (13.5,-6.5) and (14,-6.5) .. (14.5,-6.5);
\draw (5.5,-6) .. controls (5,-6) and (5,-6.5) .. (4.5,-6.5) .. controls (4,-6.5) and (3.5,-6.5) .. (3,-6.5);
\end{tikzpicture}
     \caption{A monotone 2-ended Lagrangian cobordism}
\end{figure} \printbibliography
\end{document}